\definecolor{intcolor}{HTML}{CA0020}
\definecolor{extcolor}{HTML}{0571B0}
\NewDocumentCommand{\interval}{sO{}mm}
 {
  \IfBooleanTF{#1}
   {
    \yannick_interval:NNnnn \left \right { } { #3 } { #4 }
   }
   {
    \yannick_interval:NNnnn \mathopen \mathclose { #2 } { #3 } { #4 }
   }
 }
\DeclareMathOperator{\Tr}{Tr}
\DeclareMathOperator{\erfc}{erfc}
\DeclareMathOperator{\sgn}{sgn}
\DeclareMathOperator{\supp}{supp}
\DeclareMathOperator{\E}{\mathbf{E}}
\DeclareMathOperator{\Prob}{\mathbf{P}}
\DeclareMathOperator{\Spec}{Spec}
\newcommand{\ov}{\overline}
\newcommand{\ii}{\mathrm{i}}
\renewcommand{\C}{\mathbf{C}}
\newcommand{\C}{\mathbf{C}}
\newcommand{\HC}{\mathbf{H}}
\renewcommand{\SS}{\mathcal{S}}
\newcommand{\un}{\underline}
\newcommand{\vx}{\bm{x}}
\newcommand{\vy}{\bm{y}}
\newcommand{\cN}{\mathcal{N}}
\newcommand{\wt}{\widetilde}
\newcommand{\wh}{\widehat}
\newcommand{\cE}{\mathcal{E}}
\newcommand{\R}{\mathbf{R}}
\newcommand{\N}{\mathbf{N}}
\newcommand{\Z}{\mathbf{Z}}
\newcommand{\sump}{\sideset{}{'}\sum}
\newcommand{\DD}{\mathbf{D}}
\newcommand{\cF}{\mathcal{F}}
\newcommand{\cO}{\mathcal{O}}
\newcommand{\co}{{\scriptstyle\mathcal{O}}}
\newcommand{\cB}{\mathcal{B}}
\newcommand{\cP}{\mathcal{P}}
\newcommand{\cQ}{\mathcal{Q}}
\newcommand{\cT}{\mathcal{T}}
\newcommand{\ed}{\mathfrak{e}}
\newcommand{\sym}{\mathrm{sym}}
\newcommand{\dif}{\operatorname{d}\!{}}
\DeclarePairedDelimiter{\braket}{\langle}{\rangle}%
\DeclarePairedDelimiter{\abs}{\lvert}{\rvert}%
\DeclarePairedDelimiter{\norm}{\lVert}{\rVert}%
\providecommand\given{}
\newcommand\SetSymbol[1][]{\nonscript\:#1\vert\allowbreak\nonscript\:\mathopen{}}
\DeclarePairedDelimiterX{\tuple}[1](){\renewcommand\given{\SetSymbol[\delimsize]}#1}
\DeclarePairedDelimiterX{\set}[1]{\{}{\}}{\renewcommand\given{\SetSymbol[\delimsize]}#1}
\DeclarePairedDelimiterX{\Set}[1]\{\}{\renewcommand\given{\SetSymbol[\delimsize]}#1}
\DeclarePairedDelimiterXPP{\landauO}[1]{\cO}(){}{#1}
\DeclarePairedDelimiterXPP{\landauo}[1]{\co}(){}{#1}
\DeclarePairedDelimiterXPP{\landauok}[1]{\co_k}(){}{#1}
\DeclarePairedDelimiterXPP{\landauOprec}[1]{\cO_\prec}(){}{#1}
\DeclarePairedDelimiterXPP{\Exp}[1]{\E}[]{}{\renewcommand\given{\SetSymbol[\delimsize]}#1}
\DeclareFontFamily{U}{mathx}{\hyphenchar\font45}
\DeclareFontShape{U}{mathx}{m}{n}{
      <5> <6> <7> <8> <9> <10>
      <10.95> <12> <14.4> <17.28> <20.74> <24.88>
      mathx10
      }{}
\DeclareSymbolFont{mathx}{U}{mathx}{m}{n}
\DeclareMathAccent{\widecheck}{0}{mathx}{"71}  
\numberwithin{equation}{section}
\newtheorem{theorem}{Theorem}[section]
\newtheorem{assumption}[theorem]{Assumption}
\newtheorem{lemma}[theorem]{Lemma}
\newtheorem{proposition}[theorem]{Proposition}
\newtheorem{definition}[theorem]{Definition}
\newtheorem{remark}[theorem]{Remark}
\newtheorem{convention}[theorem]{Convention}
\date{\today}
\author{Giorgio Cipolloni\(^{\dagger}\) \and L\'{a}szl\'{o} Erd\H{o}s} 
\address{IST Austria, Am Campus 1, 3400 Klosterneuburg, Austria}
\author{Dominik Schr\"oder\(^{\ast}\)}
\address{Institute for Theoretical Studies, ETH Zurich, Clausiusstr.\ 47, 8092 Zurich, Switzerland}
\email{giorgio.cipolloni@ist.ac.at} 
\email{lerdos@ist.ac.at}
\email{dschroeder@ethz.ch}
\thanks{\(^\dagger\)This project has received funding from the European Union's Horizon 2020 research and innovation programme under the Marie Sk\l odowska-Curie Grant Agreement No.\ 665385.}
\thanks{\(^\ast\)Supported by Dr.\ Max R\"ossler, the Walter Haefner Foundation and the ETH Z\"urich Foundation}
\subjclass[2010]{60B20, 15B52} 
\keywords{Dyson Brownian Motion, Local Law, Girko's Formula, Linear Statistics, Central Limit Theorem}
\title[Fluctuation around the circular law]{Fluctuation around the circular law for  random matrices with real entries}
\date{\today}
\begin{document}
\thispagestyle{empty}
\begin{abstract}
    We extend our recent result~\cite{1912.04100}  on the central limit theorem for the linear eigenvalue statistics
    of non-Hermitian  matrices \(X\) with independent, identically distributed \emph{complex}
    entries  to the \emph{real} symmetry class. We find that
    the expectation and variance  substantially differ from their complex counterparts, reflecting (i) the
    special spectral symmetry of real matrices onto the real axis; and (ii) the fact that real i.i.d.\ matrices have many real
    eigenvalues. %
    Our result generalizes the previously known special cases where either the test function is  analytic~\cite{MR3540493} or
    the first four moments of the matrix elements match the real Gaussian~\cite{MR3306005,1510.02987}. The key element of the proof is the analysis of several weakly dependent Dyson Brownian motions (DBMs). The conceptual novelty of the real case compared with~\cite{1912.04100} is that the correlation structure of the stochastic differentials in each individual DBM is non-trivial, potentially even jeopardising its well-posedness.
\end{abstract}

\maketitle

\section{Introduction}\label{sec:INT}
We consider an  ensemble of \(n\times n\) random matrices \(X\) with  \emph{real} i.i.d.\ entries of zero mean and variance \(1/n\); the corresponding model with \emph{complex} entries has been studied in~\cite{1912.04100}. According to the \emph{circular law}~\cite{MR1428519,MR2409368,MR773436} (see also~\cite{MR2908617}),
the density of the eigenvalues \(\{ \sigma_i\}_{i=1}^n\) of \(X\)
converges to the uniform distribution on the unit disk.
Our main result is that the fluctuation of their linear statistics is Gaussian, i.e.
\begin{equation}\label{lin stat def}
    L_n(f):= \sum_{i=1}^n f(\sigma_i)  - \E \sum_{i=1}^n f(\sigma_i)  \sim \cN (0, V_f)
\end{equation}
converges, as \(n\to \infty\),   to a centred normal distribution  for  regular test functions \(f\) with at least \(2+\delta\) derivatives.
We compute the variance  \(V_f\)  and the next-order deviation of the expectation \(\E \sum_{i=1}^n f(\sigma_i) \)
from the value \(\frac{n}{\pi}\int_{\abs{z}\le 1} f(z)\) given by the circular law. As in the complex case,
both quantities depend on the fourth cumulant of the single entry distribution of \(X\), but in the real case
they also incorporate the spectral symmetry of \(X\) onto the real axis. Moreover, the expectation carries additional
terms, some of them are concentrated around the real axis; a by-product of the approximately \(\sqrt{n}\)  real eigenvalues of \(X\).
For the Ginibre  (Gaussian) case they may be computed from the explicit density~\cite{MR1231689, MR1437734}, but for general distributions they were
not known before.
As expected, the spectral symmetry essentially enhances  \(V_f\) by a factor of two compared with the complex case
but this effect is modified by an additional term involving the fourth cumulant.  Previous works considered
either the  case of analytic test functions \(f\)~\cite{MR2738319, MR3540493} or the (approximately) Gaussian case, i.e.\ when \(X\) is the real Ginibre
ensemble or at least the first four moments of the matrix elements of \(X\) match the Ginibre ensemble~\cite{MR3306005,1510.02987}.
In both cases some terms in the unified formulas  for the expectation and the variance
vanish and thus the combined effect of the spectral symmetry, the eigenvalues
on the real axis, and the role of the fourth cumulant was not detectable in these works.
We remark that  a CLT for polynomial statistics of only the real eigenvalues for real Ginibre matrices
was proven in~\cite{MR3612267}.

In~\cite{MR2346510} the limiting random field \(L(f):=\lim_{n\to\infty}L_n(f)\) for complex Ginibre matrices has been identified as a projection of the \emph{Gaussian free field (GFF)}~\cite{MR2322706}. We extended this interpretation~\cite{1912.04100} to general complex i.i.d.\ matrices with non-negative fourth cumulant and obtained a rank-one perturbation of the projected GFF\@. As a consequence of the CLT in the present paper, we find that in the real case the limiting random field is a version of the same GFF, symmetrised with respect to the real axis, reflecting the fact that complex eigenvalues of real matrices come in pairs of complex conjugates.

In general, proving CLTs for the real symmetry class is considerably harder than for the complex one.
The techniques based upon the first four moment matching~\cite{MR3306005,1510.02987} are insensitive  to the symmetry class, hence these results are obtained
in parallel for both real and complex ensembles.
Beyond this method, however, most results on CLT for non-Hermitian matrices
were restricted to the complex case~\cite{MR4125967, MR1687948, MR3161483, MR2095933, MR2294978,MR2361453}, see the introduction of~\cite{1912.04100} for a detailed history, as well as for references to the analogous CLT problem for Hermitian ensembles and
log-gases.   The special role that the real axis plays in the spectrum of the
real case substantially complicates even the explicit formulas for the Ginibre ensemble both for the density~\cite{MR1437734}
as well as for the \(k\)-point correlation functions~\cite{MR173726, MR2530159,MR2185860}.
Besides the complexity of the explicit formulas, there are several conceptual reasons why the real case is more involved.
We now explain them since they directly motivated the new ideas in this paper compared with~\cite{1912.04100}.

In~\cite{1912.04100} we
started with Girko's formula~\cite{MR773436} in the form given in~\cite{MR3306005}
that relates the eigenvalues of \(X\) with
resolvents of a family of \(2n\times 2n\) Hermitian matrices
\begin{equation}\label{eq:herm}
    H^z:= \begin{pmatrix}
        0                   & X-z \\
        X^\ast-\overline{z} & 0
    \end{pmatrix}
\end{equation}
parametrized by \(z\in \C  \). For any smooth, compactly supported test function \(f\) we have
\begin{equation}\label{girko}
    \sum_{i=1}^n f(\sigma_i) = -\frac{1}{4\pi} \int_{\C } \Delta f(z)\int_0^\infty \Im \Tr  G^z(i\eta)\dif\eta \dif^2 z,
\end{equation}
where \(G^z(w):=  (H^z-w)^{-1}\) is the resolvent of \(H^z\). We therefore needed to understand the resolvent  \(G^z(i\eta)\)
along the imaginary axis on all scales \(\eta\in (0,\infty)\).

The main contribution to~\eqref{girko} comes from the  \(\eta\sim 1\) \emph{macroscopic} regime, which is handled
by proving a multi-dimensional CLT for resolvents with several \(z\) and \(\eta\) parameters and computing their expectation and covariance
by cumulant expansion. The local laws along the imaginary axis from~\cite{MR3770875, 1907.13631} serve as a basic input (in the current work, however, we need
to extend them for spectral parameters \(w\) away from the imaginary axis).
The core of the argument  in the real case is similar to the complex case in~\cite{1912.04100},    however several additional terms have to
be computed due to the difference between the real and complex cumulants. By explicit calculations, these additional terms break the rotational
symmetry in the \(z\) parameter and, unlike in the complex case,  the answer is not a function of \(\abs{z}\) any more.
The \emph{mesoscopic} regime \(n^{-1}\ll \eta\ll 1\) is treated together with the macroscopic one; the fact that only  the \(\eta\sim 1\) regime contributes to~\eqref{girko} is revealed \emph{a posteriori} after these calculations.

The scale \(\eta\lesssim n^{-1}\) in~\eqref{girko} requires a very different treatment since local laws are not applicable any more and individual eigenvalues
\(0\le \lambda_1^z\le \lambda_2^z \ldots\)
of \(H^z\)    near zero substantially influence the fluctuation of \(G^z(i\eta)\)
(since \(H^z\) has a symmetric spectrum, we consider only positive eigenvalues).
The main insight of~\cite{1912.04100} was that it is sufficient to establish  that the small eigenvalues, say,
\(\lambda_1^z\) and \(\lambda_1^{z'}\), are asymptotically independent if \(z\) and \(z'\) are relatively
far away, say \(\abs{z-z'}\ge n^{-1/100}\). This was achieved by exploiting the fast local equilibration
mechanism of the \emph{Dyson Brownian motion (DBM)}, which is the stochastic flow of eigenvalues  \({\bm \lambda}^z(t):= \{ \lambda_i^z(t)\}\)
generated by adding a  time-dependent Gaussian (Ginibre) component. The initial condition of this flow was chosen carefully
to almost reproduce \(X\) after a properly tuned short time.  We needed to follow the evolution
of \({\bm \lambda}^z(t)\) for different \(z\) parameters  simultaneously. These flows
are correlated since they are driven by the same random source.  We thus needed
to study a family of DBMs, parametrized by \(z\), with correlated driving Brownian motions.
The correlation structure is given by the \emph{overlap} of the eigenfunctions
of \(H^z\) and \(H^{z'}\).  We could show that this overlap is small, hence the Brownian
motions are essentially independent, if \(z\) and \(z'\) are far away. This step required
to develop a new type of local law for \emph{products} of resolvent,
e.g.\ for  \(\Tr G^z( \ii\eta) G^{z'}( \ii\eta')\) with \(\eta, \eta'\sim  n^{-1+\epsilon}\).
Finally, we trailed the joint evolution  of \({\bm \lambda}^z(t)\)
and \({\bm \lambda}^{z'}(t)\) by their independent Ginibre counterparts, showing that
they themselves are asymptotically independent.

We follow the same strategy in the current paper for the real case, but  we  immediately face with the basic
question: how do the low lying eigenvalues of \(H^z\), equivalently the small singular values of \(X-z\),
behave?   We do not need to compute their joint distribution,
but we need to approximate them with an appropriate Ginibre ensemble.
For \emph{complex} \(X\) in~\cite{1912.04100} the approximating Ginibre ensemble was naturally complex. For \emph{real} \(X\)
there seem to be two possibilities. The  key insight of our current analysis  is  that the small
singular values of \(X-z\) behave as those of  a \emph{complex} Ginibre matrix even though \(X\) is
\emph{real}, as long as \(z\) is genuinely complex (Theorem~\ref{theo:un}). In particular, we prove that the least singular value of \(X-z\) belongs to the complex universality class. Moreover, we prove that the small singular values of \(X-z_1\) and the ones of \(X-z_2\) are asymptotically independent as long as \(z_1\) and \(z_2\) are far from each other.

To explain the origin of this apparent mismatch, we will derive the DBM
\begin{equation}
    \label{eq:DDBM}
    \dif \lambda_i^z=\frac{\dif b_i^z}{\sqrt{n}}+\frac{1}{2n}\sum_{j\ne i} \frac{1+\Lambda_{ij}^z}{\lambda_i^z-\lambda_j^z}\dif t  +\ldots
\end{equation}
for \({\bm \lambda}^z(t)\), ignoring some additional terms with negative indices coming from the spectral symmetry of \(H^z\) (see~\eqref{eq:impnewDBM} and~\eqref{eq:newdbmrc}
for the precise equation).
The correlations of the driving Brownian motions are given by
\begin{equation}
    \label{eq:bbc}
    \E \dif b_i^z \dif b_j^{z'} = \frac{1}{2} \bigl[ \Theta_{ij}^{z,z'}+ \Theta_{ij}^{z,\overline{z}'}\bigr]\dif t
\end{equation}
with overlaps \(\Theta,\Lambda\) defined as
\begin{equation}
    \label{eq:ovint}
    \Theta_{ij}^{z, z'}: = 4\Re\bigl[\braket{ {\bm u}^{z'}_j, {\bm u}_i^z } \braket{  {\bm v}^z_i,  {\bm v}^{z'}_j}\bigr], \qquad
    \Lambda_{ij}^z: = \Theta_{ij}^{z,\overline{z}},
\end{equation}
where \(({\bm u}_i^z,  {\bm v}_i^z)\in \C^{2n}\) is
the (normalized) eigenvector of \(H^z\) corresponding to the eigenvalue \(\lambda^z_{ i}\). Note that \(\Theta^{z,z}_{ij} =\delta_{i,j}\),
and for \(j \ne  i \) we have that \(\Lambda^z_{ij}\approx 0\).
Moreover,  if \(z\) is very close to the real axis, then
the eigenvectors of \(H^z\) are essentially real and \(\Lambda_{ii}^z=\Theta_{ii}^{z,\overline{z}}\approx \Theta_{ii}^{z,z}=1\).
With \(z=z'\), this leads to~\eqref{eq:DDBM} being essentially a \emph{real} DBM with \(\beta=1\).
(We recall that  the  parameter \(\beta=1,2\), customarily indicating the real or complex
symmetry class of a random matrix,  also expresses  the ratio of the coefficient of the repulsion to
the strength of the diffusion   in the DBM setup.)
However, if  \(z\) and \(\bar z\) are far away, i.e.\ \(z\) is away from
the real axis, then we can show that the overlap \(\Lambda^z=\Theta^{z,\bar z}\) is small, hence \(\Lambda_{ij}^z \approx 0\) for all \(i,j\), including \(i=j\). Thus the variance of the
driving Brownian motions in~\eqref{eq:bbc} with \(z=z'\) is reduced by a factor of two, rendering~\eqref{eq:DDBM} essentially
a \emph{complex} DBM  with \(\beta=2\).

The appearance of \(\Lambda^z\) in~\eqref{eq:DDBM} and the second term \(\Theta^{z,\overline{z}'}\) in~\eqref{eq:bbc} is specific to the real symmetry class; they were not present
in the complex case~\cite{1912.04100}. They have three main effects for our analysis. First, they change the symmetry class of the
DBM~\eqref{eq:DDBM} as we just explained. Second, due to the symmetry relation \(\lambda_{-1}^z = -\lambda_1^z\) and \(b_{-1}^z=-b_{1}^z\),
the strength of the level repulsion  between \(\lambda_1^{z}\) and \(\lambda_{-1}^z\)  in~\eqref{eq:DDBM} is already critically small
even for \(\Lambda^z=0\), see e.g.~\cite[Appendix A]{MR3916329},
hence the well-posedness of~\eqref{eq:DDBM} does not follow from standard results on DBM\@.
Third,  \(\Theta^{z,\overline{z}}\) renders the driving Brownian motions \(\bm{b}^z=\{ b^z_i \}\) correlated for different
indices \(i\) even for the \emph{same} \(z\), since \(\Lambda^z_{ij}\) in general is nonzero.
In fact,  the vector \({\bm b}^z\) is even not Gaussian, hence strictly speaking it is only a multidimensional martingale but not a Brownian motion
in general.
In contrast, \(\Theta^{z,z}_{ij} = \delta_{i,j}\) and only the  overlaps
\smash{\(\Theta_{ij}^{z, z'}\)} for \emph{different} \(z\ne z'\) are nontrivial.
Thus in the complex case~\cite{1912.04100}, lacking the term \(\Theta^{z,\overline{z}}\) in~\eqref{eq:bbc}, the DBM~\eqref{eq:DDBM} for any fixed \(z\) was the conventional  DBM with independent
Brownian motions and parameter \(\beta=2\) (c.f.~\cite[Eq.~\eqref{cplx-eq:DBMeA}]{1912.04100})
and only the DBMs for \emph{different} \(z\)'s were  mildly correlated. In the real case
the correlations are already present within~\eqref{eq:DDBM} for the \emph{same} \(z\) due to \(\Lambda^z=\Theta^{z,\overline{z}}\ne 0\).

We note that Dyson Brownian motions with  nontrivial coefficients in the repulsion term have already been investigated
in~\cite{MR4009717}  (see also~\cite{1908.04060}) in the context of spectral universality of addition of random matrices twisted by Haar unitaries,
however the driving Brownian motions were independent. The issue of well-posedness, nevertheless, has already emerged  in~\cite{MR4009717} when
the more critical orthogonal group (\(\beta=1\)) was considered.  The corresponding part of our analysis partly relies on techniques
developed in~\cite{MR4009717}. We have already treated the dependence of Brownian motions for different \(z\)'s in~\cite{1912.04100}
for the complex case;  but the more general dependence structure characteristic to
the real case is  a new challenge that the current work resolves.

\subsection*{Acknowledgement} We are grateful to Peter Forrester for pointing out a missing term in \eqref{eq:expv} in the original manuscript.

\subsection*{Notations and conventions}
We introduce some notations we use throughout the paper. For integers \(k\in\N\) we use \([k]:= \set{1,\dots, k}\). We write \(\HC \) for the upper half-plane \(\HC := \set{z\in\C \given \Im z>0}\), \(\DD\subset\C\) for the open unit disk, and we use the notation \(\dif^2 z:= 2^{-1} \ii(\dif z\wedge \dif \overline{z})\) for the two dimensional volume form on \(\C\).
For positive quantities \(f,g\) we write \(f\lesssim g\) and \(f\sim g\) if \(f \le C g\) and \(c g\le f\le Cg\), respectively, for some constants \(c,C>0\) which depend only on the \emph{model parameters} appearing in~\eqref{eq:hmb}.
For any two positive real numbers \(\omega_*,\omega^\ast\in  \R_+\), by \(\omega_*\ll\omega^\ast\) we denote that \(\omega_*\le c\omega^\ast\) for some sufficiently small constant \(0<c\le 1/1000\).
We denote vectors by bold-faced lower case Roman letters \(\vx, \vy,\dots,\in\C^k\), for some \(k\in\N\), and use the notation \(\dif {\bm x}:=\dif x_1 \dots \dif x_k\). Vector and matrix norms, \(\norm{\vx}\) and \(\norm{A}\), indicate the usual Euclidean norm and the corresponding induced matrix norm. For any \(k\times k\) matrix \(A\) we set \(\braket{A} := k^{-1}\Tr  A\) to denote the normalized trace of \(A\).
Moreover, for vectors \(\vx,\vy\in\C^k\) and matrices \(A,B\in \C^{k\times k}\) we define
\[ \braket{\vx,\vy} := \sum \ov{x_i} y_i, \qquad \braket{ A,B}:= \braket{A^\ast B}=\frac{1}{k}\Tr A^\ast B.\]
We will use the concept of ``event with very high probability'' meaning that for any fixed \(D>0\) the probability of the event is bigger than \(1-n^{-D}\) if \(n\ge n_0(D)\). Moreover, we use the convention that \(\xi>0\) denotes an arbitrary small exponent which is independent of \(n\).

\section{Main results}\label{sec:MR}
We consider \emph{real i.i.d.\ matrices} \(X\), i.e.\ \(n\times n\) matrices whose entries are independent and identically distributed as \smash{\(x_{ab}\stackrel{\mathsf{d}}{=} n^{-1/2}\chi\)} for some real random variable \(\chi\), satisfying the following:
\begin{assumption}\label{ass:1}
    We assume that \(\E \chi=0\) and \(\E \chi^2=1\). In addition we assume the existence of high moments, i.e.\ that there exist constants \(C_p>0\), for any \(p\in\N \), such that
    \begin{equation}
        \label{eq:hmb}
        \E \abs{\chi}^p\le C_p.
    \end{equation}
\end{assumption}

The \emph{circular law}~\cite{MR1428519, MR863545, MR2908617, MR2663633, MR3813992, MR866352, MR773436, MR2575411, MR2409368} asserts that the empirical distribution of eigenvalues \({\{\sigma_i\}}_{i=1}^n\) of a complex i.i.d.\ matrix \(X\) converges to the uniform distribution on the unit disk \(\DD\), i.e.
\begin{equation}
    \label{eq:circlaw}
    \lim_{n\to \infty}\frac{1}{n}\sum_{i=1}^n f(\sigma_i)=\frac{1}{\pi}\int_\DD f(z)\dif^2z,
\end{equation}
with very high probability for any continuous bounded function \(f\). Our main result is a central limit theorem for the centred \emph{linear statistics}
\begin{equation}
    \label{eq:linstatmainr}
    L_n(f):= \sum_{i=1}^n f(\sigma_i)-\E \sum_{i=1}^n f(\sigma_i)
\end{equation}
for general real i.i.d.\ matrices and generic test functions \(f\), complementing the recent central limit theorem~\cite{1912.04100} for the linear statistics of \emph{complex i.i.d.\ matrices}. This CLT, formulated in Theorem~\ref{theo:CLT}, and its proof have two corollaries of independent interest that are formulated in Section~\ref{sec:GFF} and Section~\ref{sec:UNZ}.

In order to state the result we introduce some notations. For any function \(h\) defined on the boundary of the unit disk \(\partial\DD \) we define its Fourier transform as
\begin{equation}
    \label{eq:furtra}
    \wh{h}(k)=\frac{1}{2\pi}\int_0^{2\pi} h(e^{\ii\theta}) e^{-\ii \theta k}\dif\theta, \qquad k\in\Z .
\end{equation}
For \(f,g\in H^{2+\delta}(\Omega)\) for some domain \(\Omega\supset \ov\DD\) we define
\begin{equation}
    \label{eq:h12norm}
    \begin{split}
        \braket{ g,f}_{\dot{H}^{1/2}(\partial\DD )}&:= \sum_{k\in\Z }\abs{k} \ov{\wh{g}(k)}\wh{f}(k) , \qquad \norm{f}^2_{\dot{H}^{1/2}(\partial\DD )}:= \braket{f,f}_{\dot{H}^{1/2}(\partial\DD )}, \\
        \braket{g,f}_{{H}_0^{1}(\DD)}&:= \braket{\nabla g,\nabla f}_{L^2(\DD)}, \qquad \norm{f}_{{H}_0^{1}(\DD )}^2:= \braket{ f,f}_{{H}_0^{1}(\DD )},
    \end{split}
\end{equation}
where, in a slight abuse of notation, we identified \(f\) and \(g\) with their restrictions to \(\partial \DD \). We use the convention that \(f\) is extended to \(\mathbf{C}\) by setting it equal to zero on \(\Omega^c\). Finally, we introduce the projection
\begin{equation}\label{Psym def}
    (P_\sym f)(z) := \frac{f(z)+f(\ov z)}{2}.
\end{equation}
which maps functions on the complex plane to their symmetrisation with respect to the real axis.

\begin{theorem}[Central Limit Theorem for linear statistics]\label{theo:CLT}
    Let \(X\) be a real \(n\times n\) i.i.d.\ matrix satisfying Assumption~\ref{ass:1} with eigenvalues \(\{\sigma_i\}_{i=1}^n\), and denote the fourth \emph{cumulant}\footnote{Note that in the real case the fourth cumulant is given by \(\kappa_4=\kappa(\chi,\chi,\chi,\chi)=\E\chi^4-3\), while in the complex case~\cite{1912.04100} the relevant fourth cumulant was given by \(\kappa(\chi,\chi,\ov\chi,\ov\chi)=\E\abs{\chi}^4-2\).} of \(\chi\) by \(\kappa_4:= \E\chi^4-3\). Fix \(\delta>0\), let \(\Omega\subset \C \) be open and such that \(\overline{\DD }\subset \Omega\). Then, for complex-valued test functions \(f\in H^{2+\delta}(\Omega)\), the centred linear statistics \(L_n(f)\), defined in~\eqref{eq:linstatmainr}, converge
    \[ L_n(f) \Longrightarrow L(f), \]
    to complex Gaussian random variables \(L(f)\) with expectation \(\E L(f)=0\) and variance \(\E \abs{L(f)}^2=C(f,f)=: V_f\) and \(\E L(f)^2 = C(\ov f, f)\), where
    \begin{equation}\label{eq:cov}
        \begin{split}
            C(g,f)&:= \frac{1}{2\pi}\braket{ \nabla P_\sym g,\nabla P_\sym f}_{L^2(\DD )}+\braket{P_\sym g,P_\sym f}_{\dot{H}^{1/2}(\partial\DD )} \\
            &\quad + \kappa_4 \left(\frac{1}{\pi}\int_\DD \overline{g(z)}\dif^2 z- \frac{1}{2\pi}\int_0^{2\pi} \overline{g(e^{\ii\theta})}\dif \theta\right)\left(\frac{1}{\pi}\int_\DD f(z)\dif^2 z-\frac{1}{2\pi}\int_0^{2\pi} f(e^{\ii\theta})\dif\theta\right).
        \end{split}
    \end{equation}
    For the \(k\)-th moments we have an effective convergence rate of
    \[ \E L_n(f)^k \ov{L_n(f)}^l = \E L(f)^k \ov{L(f)}^l + \landauO*{n^{-c(k+l)}}\]
    for some constant \(c(k+l)>0\). Moreover, the expectation in~\eqref{eq:linstatmainr} is given by
    \begin{equation}
        \label{eq:expv}
        \begin{split}
            \E \sum_{i=1}^n f(\sigma_i)={}& E( f) + \landauO*{n^{-c}} \\
            E(f) :&={} \frac{n}{\pi}\int_\DD f(z) \dif^2 z  + \frac{1}{4\pi}\int_{\DD } \frac{f(\Re z)-f(z)}{(\Im z)^2}\dif^2z -\frac{\kappa_4}{\pi}\int_\DD f(z) (2\abs{z}^2-1)\dif^2 z \\
            &\quad +\frac{1}{8\pi}\int_\DD\Delta f(z)\,\dif^2z -\frac{1}{2\pi}\int_0^{2\pi}  f(e^{\ii\theta}) \dif \theta + \frac{1}{2\pi}\int_{-1}^{1} \frac{f(x)}{\sqrt{1-x^2}}\dif x
            +\frac{f(1)+f(-1)}{4}
        \end{split}
    \end{equation}
    for some small constant \(c>0\).
\end{theorem}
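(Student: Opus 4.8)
The plan is to follow the Girko-formula strategy of~\cite{1912.04100}, splitting the $\eta$-integral in~\eqref{girko} at scales $\eta\sim 1$ (macroscopic), $n^{-1}\ll\eta\ll 1$ (mesoscopic), and $\eta\lesssim n^{-1}$ (microscopic), and tracking the extra real-symmetry contributions at each scale. First I would establish the local laws for $G^z(w)$ with $w$ slightly away from the imaginary axis, extending~\cite{MR3770875,1907.13631}; this is needed because the cumulant expansion in the real case generates terms whose leading behaviour is genuinely $w$-dependent rather than purely $\eta$-dependent. The macroscopic/mesoscopic analysis then proceeds by a multivariate CLT for the family $\{\Tr G^{z_j}(\ii\eta_j)\}$, proven via cumulant expansion: one computes $\E\Tr G^z(\ii\eta)$ and the covariance $\Cov(\Tr G^z(\ii\eta),\Tr G^{z'}(\ii\eta'))$ to the needed order. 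The crucial difference from~\cite{1912.04100} is that the real third-order self-consistent terms and the real fourth cumulant $\kappa_4=\E\chi^4-3$ (rather than $\E\abs\chi^4-2$) produce additional contributions; bookkeeping these, and checking that after integration in $\eta$ and $z$ they reproduce the $P_\sym$-symmetrised $\dot H^{1/2}\oplus H^1_0$ norm and the $\kappa_4$-term of~\eqref{eq:cov}, is the bulk of the deterministic computation. Here the identity $\Tr G^z = \Tr G^{\ov z}$ (a consequence of $\ov X = X$, which forces the spectral symmetry $\sigma\mapsto\ov\sigma$) is what converts $f$ into $P_\sym f$ and doubles the Hermitian-type variance relative to the complex case.

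Next I would handle the expectation $\E\sum_i f(\sigma_i)$. The leading term $\frac n\pi\int_\DD f$ comes from the circular law, and the $\kappa_4$-correction $-\frac{\kappa_4}\pi\int_\DD f(z)(2\abs z^2-1)$ and the boundary term $-\frac1{2\pi}\int_0^{2\pi}f(e^{\ii\theta})$ arise from the subleading terms in $\E\Tr G^z(\ii\eta)$ exactly as in~\cite{1912.04100}, but now with the real cumulant. The genuinely new pieces are the singular-looking term $\frac1{4\pi}\int_\DD\frac{f(\Re z)-f(z)}{(\Im z)^2}\,\dif^2 z$, the arcsine contribution $\frac1{2\pi}\int_{-1}^1\frac{f(x)}{\sqrt{1-x^2}}\,\dif x$, and the endpoint masses $\frac{f(1)+f(-1)}4$: these reflect the $\asymp\sqrt n$ real eigenvalues of $X$ and their known limiting density on $[-1,1]$, together with the way the eigenvector overlap $\Lambda^z=\Theta^{z,\ov z}$ degenerates as $\Im z\to0$. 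I would extract them by a careful analysis of the small-$\eta$ contribution near the real axis, matching against the explicit real-Ginibre density~\cite{MR1437734} in the Gaussian case and then removing the Gaussianity assumption by the Green-function-comparison / four-moment argument; alternatively one can read them off directly from the $\Im z\to0$ asymptotics of the deterministic approximation to $\int_0^\infty\Im\Tr G^z(\ii\eta)\,\dif\eta$ after the cumulant expansion, which automatically produces the $1/(\Im z)^2$ kernel once one keeps the term in the self-consistent equation that is suppressed by $\Lambda^z_{ii}\approx 1$ near $\R$ but absent for complex $z$.

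The microscopic regime $\eta\lesssim n^{-1}$ is where Theorem~\ref{theo:un} enters: one needs that the small singular values of $X-z$ (for genuinely complex $z$) match the \emph{complex} Ginibre ensemble and are asymptotically independent for $\abs{z-z'}\ge n^{-1/100}$. Granting that input, the contribution of this regime to the fluctuation of~\eqref{girko} is negligible after the $\Delta f$ integration, exactly as in the complex case; the only care needed is near the real axis, where the relevant ensemble is instead real-Ginibre-like and contributes to the expectation (the terms isolated in the previous paragraph) but still not to the variance, because the real-axis strip has $z$-measure $O(n^{-1/2})$, too thin to affect $L_n(f)$ after smoothing against $\Delta f\in H^{\delta}$. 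Finally I would assemble the pieces: the macroscopic CLT gives convergence of all joint moments $\E L_n(f)^k\ov{L_n(f)}^l$ to the Gaussian moments with covariance~\eqref{eq:cov} at rate $n^{-c(k+l)}$, the expectation computation gives~\eqref{eq:expv} at rate $n^{-c}$, and the negligibility of the meso/microscopic corrections closes the argument.

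I expect the main obstacle to be the small-$\eta$ analysis in the real case that produces the expectation formula~\eqref{eq:expv}. Unlike the complex case, the eigenvector-overlap correction $\Lambda^z$ in the effective DBM~\eqref{eq:DDBM} does not vanish near the real axis, the driving martingale $\bm b^z$ is genuinely non-Gaussian with correlated components, and the level repulsion between $\lambda_1^z$ and $\lambda_{-1}^z$ is critically weak — so establishing well-posedness of~\eqref{eq:DDBM} and the requisite local equilibration (partly via the techniques of~\cite{MR4009717}) near $\R$, and then turning that into the precise $1/(\Im z)^2$ kernel and the arcsine/endpoint terms, is the delicate heart of the proof.
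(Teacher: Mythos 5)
Your overall architecture — Girko's formula, splitting the $\eta$-integral into macroscopic / mesoscopic / microscopic, cumulant expansion for the macroscopic CLT, a local law for products of resolvents, and a correlated-DBM argument for asymptotic independence of $\Tr G^{z_1}$ and $\Tr G^{z_2}$ — does match the paper's. The genuine gap is in where you extract the new real-specific expectation terms: the $\frac{1}{4\pi}\int_\DD\frac{f(\Re z)-f(z)}{(\Im z)^2}\dif^2 z$ kernel, the arcsine term $\frac{1}{2\pi}\int_{-1}^1\frac{f(x)}{\sqrt{1-x^2}}\dif x$, and the endpoint masses $\frac{f(\pm1)}{4}$. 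You propose to get these from a ``careful analysis of the small-$\eta$ contribution near the real axis, matching against the explicit real-Ginibre density'' and GFT, or from ``$\Im z\to0$ asymptotics ... once one keeps the term ... suppressed by $\Lambda^z_{ii}\approx1$''. That is not what happens, and I do not think it can be made to work as stated: the paper shows the regimes $I_0^{\eta_0}$ and $I_{\eta_0}^{\eta_c}$ are outright negligible, including for the expectation, so there is nothing to extract near the real axis, and a GFT comparison to Ginibre would only give agreement up to errors rather than an explicit formula for arbitrary $f$.

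In the paper the entire correction to $\E\sum_i f(\sigma_i)$ comes from the macroscopic expectation $\E\braket{G^z(\ii\eta)}$, computed by cumulant expansion of $\E\braket{\un{WG}A}$. The real symmetry class contributes the extra second-order cumulant $\kappa(w_{ab},w_{ab})=1$ (i.e.\ $\E x_{ab}^2=1/n\neq0$), absent in the complex case, which generates the term $\frac1n\sump_{ab}\braket{\Delta^{ab}G\Delta^{ab}GA}=\frac1n\braket{G^zAE\,G^{\ov z}E'}$ in~\eqref{eq real cumulant expansion}. Evaluating it with the two-resolvent local law yields the second term of $\cE$ in~\eqref{cE def}, which is an exact deterministic expression valid for every $z$, not an $\Im z\to0$ asymptotic and not connected to the DBM overlap $\Lambda^z_{ii}$. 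Its $\eta$-integral~\eqref{eq:etaint} equals $-\frac{\ii}{4n}(\log4+2\log|\Im z|)$ inside the disk, and a careful integration by parts against $\Delta f$ (Section~\ref{sec:mode}, equations~\eqref{eq:igoy}--\eqref{eq:simpl}) produces precisely the $1/(\Im z)^2$ kernel, the arcsine term, and $\frac{f(1)+f(-1)}{4}$. Note in particular that after symmetrising $f=P_\sym f$ the integrand $\frac{f(\Re z)-f(z)}{(\Im z)^2}$ is bounded, so no singular near-axis analysis is needed. Similarly, the $P_\sym$-symmetrisation of the variance comes from the same extra cumulant producing $\wh V_{i,j}=V(z_i,z_j,\cdot)+V(z_i,\ov{z_j},\cdot)$ in~\eqref{eq:exder} via the transpose identity $(G^z)^t=G^{\ov z}$, rather than from the trace identity you invoke. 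The small-$\eta$ DBM machinery — your ``delicate heart'' — is indeed delicate, but its role is only to prove negligibility of $I_{\eta_0}^{\eta_c}$; it contributes nothing to the formula~\eqref{eq:expv}.
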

\begin{remark}\leavevmode
    \begin{enumerate}[label= (\roman*)]
        \item Both expectation \(E(f)\) and covariance \(C(g,f)\) only depend on the symmetrised functions \(P_\sym f\) and \(P_\sym g\). Indeed, \(E(f)=E(P_\sym f)\), and the coefficient of \(\kappa_4\) in~\eqref{eq:cov} can also be written as an integral over \(P_\sym f\) and \(P_\sym g\).
        \item By polarisation, a multivariate central limit theorem as in~\cite[Corollary~\ref{cplx-cor:multCLT}]{1912.04100} follows immediately and any mixed \(k\)-th moments have an effective convergence rate of order \(n^{-c(k)}\).
        \item The variance \(V_f=\E\abs{L(f)}^2\) in Theorem~\ref{theo:CLT} is strictly positive whenever \(f\) is not constant on the unit disk (see~\cite[Remark~\ref{cplx-remark Vf pos}]{1912.04100}).
    \end{enumerate}
\end{remark}
\begin{remark}[Comparison with~\cite{1510.02987} and~\cite{MR3540493}]\leavevmode
    \begin{enumerate}[label= (\roman*)]
        \item The central limit theorem~\cite[Theorem 2]{1510.02987} is a special case of Theorem~\ref{theo:CLT}. Indeed,~\cite[Theorem 2]{1510.02987} implies that for real i.i.d.\ matrices with entries matching the real Ginibre ensemble to the fourth moment, and real-valued smooth test functions \(f\) compactly supported within the upper half of the unit disk \(L_n(f)\) converge to a real Gaussian of variance
              \begin{equation}
                  \label{eq:symnosym}
                  \frac{1}{4\pi} \braket{\nabla f,\nabla f}_{L^2(\DD)} = \frac{1}{2\pi} \braket{\nabla P_\sym f,\nabla P_\sym f}_{L^2(\DD)},
              \end{equation}
              where we used that \(z\mapsto f(z)\) and \(z\mapsto f(\ov z)\) are assumed to have disjoint support. Due to the moment matching assumption, \(\kappa_4=0\) in the setting of~\cite{1510.02987}.
        \item The central limit theorem~\cite[Corollary~2.6]{MR3540493} is also a special case of Theorem~\ref{theo:CLT}. Indeed,~\cite[Corollary~2.6]{MR3540493} implies that for real i.i.d.\ matrices and test functions \(f\) which are analytic in a neighbourhood of the unit disk and satisfy \(P_\sym f\colon\ov\DD\to\R\) the linear statistics \(L_n(f)\) converge to a Gaussian of variance
              \[
                  \begin{split}
                      \frac{1}{\pi} \int_\DD \abs{\partial_z f(z)}^2 \dif^2 z&= \frac{1}{4\pi} \braket{\nabla f,\nabla f}_{L^2(\DD)} + \frac{1}{2}\braket{f,f}_{\dot H^{1/2}(\partial\DD)}\\
                      &=\frac{1}{2\pi} \braket{\nabla P_\sym f,\nabla P_\sym f}_{L^2(\DD)} + \braket{P_\sym f,P_\sym f}_{\dot H^{1/2}(\partial\DD)}.
                  \end{split}\]
              Here in the first step we used the analyticity of \(f\) (see~\cite[Eq.~\eqref{cplx-eq:chan}]{1912.04100}), and in the second step we used that \(\braket{(\nabla f)(z),(\nabla f(\ov\cdot))(z)}=0\) and that \(\wh f(k)=0\) for \(k<0\) while \(\wh{f(\ov\cdot)}(k)=0\) for \(k>0\) by analyticity. We thus arrived at~\eqref{eq:cov}, since the coefficient of \(\kappa_4\) in~\eqref{eq:cov} vanishes also by analyticity of \(f\) in the setting of~\cite{MR3540493}.
    \end{enumerate}
\end{remark}

\begin{remark}[Comparison with the complex case]
    We remark that the limiting variance in the case of complex i.i.d.\ matrices, as studied in~\cite{1912.04100}, is generally different from the real case. In the complex case \(L_n(f)\) converges to a complex Gaussian with variance
    \[\begin{split}
            V_f^{(\C)} ={}& V_f^{(\C,1)} + \kappa_4 V_f^{(\C,2)},\\
            V_f^{(\C,1)}:={}& \frac{1}{4\pi} \norm{\nabla f}^2_{L^2(\DD)}+\frac{1}{2}\norm{f}^2_{\dot H^{1/2}(\partial(\DD))},\quad V_f^{(\C,2)}:= \abs{\braket{f}_{\DD}-\braket{f}_{\partial\DD}}^2,
        \end{split}\]
    where \(\braket{\cdot}_{\mathbf{D}}\) denotes the averaging over \(\mathbf{D}\) as in~\eqref{eq:cov}. In contrast, in the real case the limiting variance is given by
    \[ V_f^{(\R)} = 2 V_{P_\sym f}^{(\C,1)} + \kappa_4 V_{f}^{(\C,2)}.\]
    Thus the variances agree exactly in the case of analytic test functions by~\eqref{eq:symnosym} and \(V_{f}^{(\C,2)}=0\), while e.g.\ in the case of symmetric test functions, \(f=P_\sym f\) and vanishing fourth cumulant \(\kappa_4=0\) the real variance is twice as big as the complex one, \(V_f^{(\R)}=2 V_f^{(\C)}\).
\end{remark}

\begin{remark}[Real correction to the expected circular law]
    In~\cite[Theorem 6.2]{MR1437734} Edelman computed the density of genuinely complex eigenvalues of the real Ginibre ensemble to be
    \begin{equation}\label{eq:edelmanbulk}
        \rho_n(x+\ii y) := \sqrt{\frac{2n}{\pi}}\abs{y}e^{2n y^2} \erfc(\sqrt{2n}\abs{y}) \frac{\Gamma(n-1,n(x^2+y^2))}{\Gamma(n-1)}
    \end{equation}
    in terms of the \emph{upper incomplete Gamma function \(\Gamma(s,x)\)}.
    Using the large \(n\) asymptotics uniform in \(z=x+\ii y\) for the incomplete Gamma function~\cite[Eq.~(2.2)]{MR1421474} we obtain
    \[\rho_n(z) \approx  \sqrt{\frac{2n}{\pi}}\abs{\Im z}e^{2n (\Im z)^2} \erfc(\sqrt{2n}\abs{\Im z}) \erfc\Bigl(\sgn(\abs{z}-1)\sqrt{n(\abs{z}^2-1-2\log\abs{z})}\Bigr), \]
    which, using asymptotics of the error function for any fixed \(\abs{z}<1\),
    \[\sqrt{\frac{2n}{\pi}}\abs{\Im z}e^{2n (\Im z)^2} \erfc(\sqrt{2n}\abs{\Im z}) \approx \frac{1}{2\pi} - \frac{1}{8n\pi(\Im z)^2},\]
    gives that
    \[\rho_n(z) = \frac{1}{\pi} -\frac{1}{4\pi n}\frac{1}{(\Im z)^2} + \landauo{n^{-1}},\]
    in agreement with the second term in the rhs.\ of~\eqref{eq:expv} accounting for the \(n^{-1}\)-correction to the circular law away from the real axis.

    The situation very close to the real axis is much more subtle. The density of the real Ginibre eigenvalues is explicitly known~\cite[Corollary 4.3]{MR1231689} and it is asymptotically uniform on \([-1,1]\), see~\cite[Corollary 4.5]{MR1231689}, giving a singular correction of mass of order \(n^{-1/2}\) to the circular law. However, the abundance of real eigenvalues is balanced by the sparsity of genuinely complex eigenvalues in a narrow strip around the real axis --- a consequence of the factor \(\abs{y}\) in~\eqref{eq:edelmanbulk}. Since these two effects of order \(n^{-1/2}\)  cancel each other on the scale of our test functions \(f\), they are not directly visible in~\eqref{eq:expv}. Instead we obtain a smaller order correction of order \(n^{-1}\) specific to the real axis, in form of the second, the penultimate and the ultimate term in~\eqref{eq:expv}.
\end{remark}

\begin{remark}[Special case: Polynomial test functions]
    We remark that in~\cite{Forrester_2009,MR2515561} exact \(n\)-dependent formulae for \(\E \Tr X^k=\E \sum_i \sigma_i^k\) and real Ginibre \(X\) have been obtained. Translated into our scaling it follows from~\cite[Corollary 4]{Forrester_2009} that
    \begin{equation}\label{eq forrester}
        \E \Tr X^k = \begin{cases}
            1, & k\text{ even}, \\ 0, & k\text{ odd},
        \end{cases}
        + \landauok{1}
    \end{equation}
    for integers \(k\ge 1\), as \(n\to\infty\) (note that the trace is unnormalised). The asymptotics~\eqref{eq forrester} are consistent with~\eqref{eq:expv} since
    \[ \int_\DD \Delta z^k\,\dif^2 z=0, \quad\int_\DD z^k \dif^2 z = 0, \quad  \int_{-1}^1 (e^{\ii\theta})^k \dif \theta = 0,\quad \frac{1^k+(-1)^k}{4}=\begin{cases}
            \frac{1}{2}, & k\text{ even}, \\ 0,&k \text{ odd},
        \end{cases}\]
    and
    \[ \frac{1}{4\pi}\int_\DD \frac{(\Re z)^k-z^k}{(\Im z)^2}\dif^2 z =  \begin{cases}
            \frac{1}{2}-2^{-k}\binom{k-1}{k/2}, & k\text{ even}, \\ 0,&k \text{ odd},
        \end{cases},\quad \frac{1}{2\pi}\int_{-1}^1 \frac{x^k}{\sqrt{1-x^2}}\dif  x =  \begin{cases}
            2^{-k}\binom{k-1}{k/2}, & k\text{ even}, \\ 0,&k \text{ odd}.
        \end{cases} \]
\end{remark}

\subsection{Connection to the Gaussian free field}\label{sec GFF}\label{sec:GFF}

It has been observed in~\cite{MR2346510} that for complex Ginibre matrices the limiting random field \(L(f)\) can be viewed as a projection of the \emph{Gaussian free field (GFF)}~\cite{MR2322706}. In~\cite[Section~\ref{cplx-sec GFF}]{1912.04100} we extended this interpretation to general complex i.i.d.\ matrices with \(\kappa_4\ge 0\) and provided an interpretation as a rank-one perturbation of the projected GFF\@. The real case yields the symmetrised version of the same GFF with respect to the real axis, reflecting the fact that the complex eigenvalues of real matrices come in pairs of complex conjugates. We keep the explanation brief due to the similarity to~\cite[Section~\ref{cplx-sec GFF}]{1912.04100}.

The Gaussian free field on \(\C\) is a \emph{Gaussian Hilbert space} of random variables \(h(f)\) indexed by functions in the Sobolev space \(f\in H_0^1(\C)\) such that the map \(f\mapsto h(f)\) is linear and
\begin{equation}\label{eq GFF def}
    \E h(f) = 0 ,\quad \E \ov{h(f)} h(g) = \braket{f,g}_{H_0^1(\C)} = \braket{\nabla f,\nabla g}_{L^2(\C)}.
\end{equation}
The Sobolev space \(H^1_0(\C)=\ov{C_0^\infty(\C)}^{\norm{\cdot}_{H^1_0(\C)}}\) can be orthogonally decomposed into
\[ H_{0}^1(\DD) \oplus  H_{0}^1(\ov\DD^c)  \oplus  H_0^1(\DD\cup\ov\DD^c)^\perp, \]
i.e.\ the \( H^{1}_0\)-closure of smooth functions which are compactly supported in \(\DD\) or \(\ov\DD^c\), and their orthogonal complement \smash{\( H_0^1((\partial\DD)^c)^\perp\)}, the closed subspace of functions analytic outside of \(\partial\DD\) (see e.g.~\cite[Thm.~2.17]{MR2322706}). With the orthogonal projection \(P\) onto the first and third of these subspaces,
\[ P:= P_{ H_{0}^1(\DD)} + P_{ H_0^1((\partial\DD)^c)^\perp},\]
we have (see~\cite[Eq.~\eqref{cplx-eq f decomp}]{1912.04100})
\begin{equation}\label{eq f decomp}
    \begin{split}
        \norm*{Pf}_{ H_0^1(\C)}^2 &= \norm{ f }^2_{ H_0^1(\DD)} + 2\pi \norm{f }_{\dot H^{1/2}(\partial\DD)}^2.
    \end{split}
\end{equation}
If \(\kappa_4\ge0\), then \(L\) can be interpreted as
\begin{equation}\label{eq GFF interp}
    L = \frac{1}{\sqrt{2\pi}} P P_\sym h + \sqrt{\kappa_4} \Bigl(\braket{\cdot}_\DD-\braket{\cdot}_{\partial\DD}\Bigr)\Xi,
\end{equation}
where \(\Xi\) is a standard real Gaussian, independent of \(h\), and the projection of \(h\) is to be interpreted by duality, i.e.\ \((PP_\sym h)(f):= h(PP_\sym f)\), cf.~\cite[Eq.~\eqref{cplx-eq L kappa4 pos}]{1912.04100}. Indeed,
\[
    \E \abs*{\frac{1}{\sqrt{2\pi}}h(PP_\sym f) + \sqrt{\kappa_4}(\braket{f}_\DD-\braket{f}_{\partial\DD})\Xi}^2 = C(f,f),
\]
as a consequence of~\eqref{eq GFF def} and~\eqref{eq f decomp}.

\subsection{Universality of the local singular value statistics of \texorpdfstring{\(X-z\)}{X-z} close to zero}\label{sec:UNZ}

As a by-product of our analysis we obtain the universality of the small singular values of \(X-z\), and prove that (up to a rescaling) their distribution asymptotically agrees with the singular value distribution of a \emph{complex} Ginibre matrix \(\widetilde{X}\) if \(z\notin \mathbf{R}\), even though \(X\) is a \emph{real} i.i.d.\ matrix. In the following by \(\{\lambda_i^z\}_{i\in [n]}\) we denote the singular values of \(X-z\) in increasing order.

It is natural to express universality in terms of the \(k\)-point correlation functions \(p_{k,z}^{(n)}\) which are defined implicitly by
\begin{equation}
    \label{eq:defcorrfunc}
    \E \binom{n}{k}^{-1} \sum_{\{i_1,\dots, i_{k}\}\subset [n]} f( \lambda_{i_1}^z, \dots, \lambda_{i_k}^z) =\int_{\R^k} f({\bm x}) p_{k,z}^{(n)}({\bm x})\, \dif {\bm x},
\end{equation}
for test functions \(f\). The summation in~\eqref{eq:defcorrfunc} is over all the subsets of \(k\) distinct integers from \([n]\). Denote by \(p_k^{(\infty,\C)}\) the scaling limit of the \(k\)-point correlation function \(p_k^{(n,\C)}\) of the singular values of a complex \(n\times n\) Ginibre matrix \(\widetilde{X}\). See e.g.~\cite[Eqs.~(2.3)--(2.4)]{MR1236195} or~\cite[Eq.~(1.3)]{MR2162782} for the explicit expression of \(p_k^{(\infty,\C)}\).

\begin{theorem}[Universality of small singular values of \(X-z\)]\label{theo:un}
    Fix \(z\in \C\) with \(\abs{\Im z}\sim 1\), and \(\abs{z}\le 1-\epsilon\), for some small fixed \(\epsilon>0\). Let \(X\) be an i.i.d.\ matrix with real entries satisfying Assumption~\ref{ass:1}, and denote by \(\rho^z\) the self consistent density of states of the singular values of \(X-z\) (see~\eqref{eq:scdos} later). Then for any \( k\in\mathbf{N}\), and for any compactly supported test function \(F\in C_c^1(\R^k)\), it holds
    \begin{equation}
        \int_{\R^k}F({\bm x}) \left[ \rho^z(0)^{-k}p_{k,z}^{(n)}\left( \frac{{\bm x}}{n\rho^z(0)}\right) -p_k^{(\infty,\C)}({\bm x})\right]\dif {\bm x}=\mathcal{O}\left( n^{-c(k)}\right),
    \end{equation}
    where \(c(k)>0\) is a small constant only depending on \(k\). The implicit constant in \(\mathcal{O}(\cdot)\) may depend on \(k\), \(\norm{ F}_{C^1}\), and \(C_p\) from~\eqref{eq:hmb}.
\end{theorem}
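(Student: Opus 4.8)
The plan is to follow the three-scale strategy of~\cite{1912.04100}: a rigidity/local-law input on scales $\eta \gtrsim n^{-1+\epsilon}$, a Dyson Brownian motion analysis on scales down to the eigenvalue spacing that produces Gibbs-measure-universality after a short time $t$, and a Green function comparison argument matching $X$ at time $0$ with a suitable initial condition. Concretely, I would first decompose $X = \sqrt{1-t}\,X_0 + \sqrt{t}\,\widetilde{G}$ with $t = n^{-1+\omega}$ and $\widetilde{G}$ a \emph{real} Ginibre matrix, and study the Hermitisation $H^z_t$ of $X_t - z$; its low-lying eigenvalues follow the DBM~\eqref{eq:DDBM}. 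The key structural point, already highlighted in the introduction, is that for $\abs{\Im z}\sim 1$ the overlap $\Lambda^z = \Theta^{z,\bar z}$ is small, so in~\eqref{eq:DDBM} the repulsion coefficient is $\approx 1$ while the driving martingale has variance $\approx \tfrac12$, i.e.\ the effective inverse temperature is $\beta = 2$ --- this is precisely why the complex Ginibre limit $p_k^{(\infty,\C)}$ appears despite $X$ being real. I would need a local law for $H^z$ with spectral parameter off the imaginary axis (as announced in the excerpt) to control $\Lambda^z$ and the density $\rho^z$ near zero, and to verify the rigidity estimates that feed the DBM analysis.

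Second, I would run the DBM for time $t$ and invoke a local relaxation / universality statement for this modified DBM. Since the driving Brownian motions $\bm{b}^z$ are correlated across indices (via $\Lambda^z \ne 0$ in the off-diagonal) and the symmetry constraint $\lambda_{-i}=-\lambda_i$, $b_{-i}=-b_i$ makes the repulsion between $\lambda_1$ and $\lambda_{-1}$ critically weak, standard DBM universality does not apply off the shelf; here I would adapt the techniques of~\cite{MR4009717} (and the well-posedness analysis therein for critically-small repulsion), combined with the homogenisation/coupling argument for the flow near the spectral edge at $0$. The output is that the rescaled $k$-point functions of $\{\lambda_i^z(t)\}$ near zero coincide, up to $n^{-c}$, with those of a complex Ginibre Hermitisation at the same energy --- i.e.\ with $p_k^{(\infty,\C)}$ after rescaling by $\rho^z(0)$.

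Third, I would close the argument with a Green function comparison (four-moment-type) step showing that the small singular values of $X-z$ and of $X_t - z = \sqrt{1-t}\,X_0 + \sqrt{t}\,\widetilde{G} - z$ have the same local statistics up to $n^{-c}$, provided $X_0$ is chosen so that $X_t$ matches $X$ in the first three moments and approximately in the fourth (the standard moment-matching construction). Because $t = n^{-1+\omega}$ is tiny, this perturbation is small enough for the resolvent expansion to be controlled by the off-axis local law from step one, and the fourth-moment mismatch contributes only to lower-order corrections that are irrelevant for the leading $k$-point function. Assembling the three steps gives the claimed estimate with $c(k)>0$.

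I expect the main obstacle to be step two: establishing relaxation-to-equilibrium and universality for the DBM~\eqref{eq:DDBM} with its nonstandard correlated (and non-Gaussian) driving martingale and the near-critical repulsion across the $\pm$ symmetry. One must show that the off-diagonal correlations $\Lambda^z_{ij}$ ($i\ne j$) and the cross-$z$ overlaps are genuinely negligible on the relevant time and energy scales --- which in turn requires the new local law for products of resolvents $\Tr G^z(\ii\eta) G^{z'}(\ii\eta')$ extended to the real symmetry class --- and that the resulting flow is well-posed and contracts to the $\beta=2$ local equilibrium despite the weak $\lambda_1$--$\lambda_{-1}$ repulsion. Controlling this delicate interplay, rather than the comparison or local-law inputs, is where the real difficulty lies.
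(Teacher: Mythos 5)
Your proposal is essentially correct and follows the same three-step strategy as the paper: Green's function comparison to introduce a small Gaussian component, a short-time DBM relaxation using the smallness of the overlap $\Lambda^z=\Theta^{z,\bar z}$ (proved via the two-resolvent local law, Theorem~\ref{thm local law G2} together with Lemma~\ref{lem:lowbeta}) to identify the effective $\beta=2$ behaviour, and a path-wise coupling to a complex-Ginibre reference DBM. The paper implements your ``step two'' via the explicit chain of comparison processes $\bm\lambda\to\mathring{\bm\lambda}\to\widetilde{\bm\lambda}\stackrel{d}{=}\widetilde{\bm\mu}\to\bm\mu$ (Proposition~\ref{prop:realprop}, Lemmata~\ref{lem:firststepmason}--\ref{lem:secondstepmason}), which is precisely the regularisation/interpolation/energy-estimate and partial-decorrelation machinery you anticipated would be the main obstacle.
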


\begin{remark}
    Theorem~\ref{theo:un} states that the local statistics of the singular values of \(X-z\) close to zero, for \(\abs{\Im z}\sim 1\), asymptotically agree with the ones of a complex Ginibre matrix \(\widetilde{X}\), even if the entries of \(X\) are real i.i.d.\ random variables. It is expected that the same result holds for all (possibly \(n\)-dependent) \(z\) as long as \(\abs{\Im z}\gg n^{-1/2}\), while in the opposite regime \(\abs{\Im z}\ll n^{-1/2}\) the local statistics of the \emph{real} Ginibre prevails with an interpolating family of new statistics which emerges for \(\abs{\Im z}\sim n^{-1/2}\).
\end{remark}

Besides the universality of small singular values of \(X-z\), our methods also allow us to conclude the asymptotic independence of the small singular values of \(X-z_1\) and those of \(X-z_2\) for generic \(z_1,z_2\). More precisely, similarly to~\eqref{eq:defcorrfunc}, we define the correlation function \(p_{k_1,z_1;k_2,z_2}^{(n)}\) for the singular values of \(X-z_1\) and \(X-z_2\) implicitly by
\begin{equation}
    \label{eq:newcorrf}
    \E \binom{n}{k_1}^{-1}\binom{n}{k_2}^{-1} \sum_{\substack{\{i_1,\dots, i_{k_1}\}\subset [n] \\ \{j_1,\dots, j_{k_2}\}\subset [n]}} f({\bm \lambda}_{\bm i}^{z_1}, {\bm \lambda}_{\bm j}^{z_2}) =\int_{\R^{k_1}}\dif {\bm x}_1\int_{\R^{k_2}} \dif {\bm x}_2\, f({\bm x}_1,{\bm x}_2) p_{k_1,z_1; k_2,z_2}^{(n)}({\bm x}_1,{\bm x}_2),
\end{equation}
for any test function \(f\), and any \(k_1,k_2\in \mathbf{N}\), where we used the notations \({\bm \lambda}_{\bm i}^{z_1}:=(\lambda_{i_1}^{z_1},\dots, \lambda_{i_{k_1}}^{z_1})\) and \({\bm \lambda}_{\bm j}^{z_2}:= (\lambda_{j_1}^{z_2},\dots, \lambda_{j_{k_2}}^{z_2})\).

\begin{theorem}[Asymptotic independence of small singular values of \(X-z_1, X-z_2\)]\label{theo:indun}
    Let \(z_1,z_2\in \C\) be as \(z\) in Theorem~\ref{theo:un}, and assume that \(\abs{z_1-z_2}, \abs{z_1-\overline{z}_2}\sim 1\). Let \(X\) be an i.i.d.\ matrix with real entries satisfying Assumption~\ref{ass:1}, then for any \( k_1, k_2\in\mathbf{N}\), and for any compactly supported test function \(F\in C_c^1(\R^k)\), with \(k=k_1+k_2\), using the notation \({\bm x}=({\bm x}_1,{\bm x}_2)\), with \({\bm x}_l\in \mathbf{R}^{k_l}\), it holds
    \begin{equation}
        \int_{\R^k}F({\bm x}) \left[ \frac{1}{(\rho^{z_1})^{k_1}(\rho^{z_2})^{k_2}}p_{k_1,z_1;k_2, z_2}^{(n)}\left( \frac{{\bm x}_1}{n\rho^{z_1}},\frac{{\bm x}_2}{n\rho^{z_2}}\right) -p_{k_1}^{(\infty,\C)}({\bm x}_1)p_{k_2}^{(\infty,\C)}({\bm x}_2)\right]\dif {\bm x}=\mathcal{O}\left( n^{-c(k)}\right),
    \end{equation}
    where \(\rho^{z_l}=\rho^{z_l}(0)\), and \(c(k)>0\) is a small constant only depending on \(k\). The implicit constant in \(\mathcal{O}(\cdot)\) may depend on \(k\), \(\norm{ F}_{C^1}\), and \(C_p\) from~\eqref{eq:hmb}.
\end{theorem}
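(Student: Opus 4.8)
The plan is to follow the Dyson Brownian motion strategy of~\cite{1912.04100}: first reduce, via a Green function comparison, to a Gaussian-divisible ensemble, and then couple the joint eigenvalue flow of \(H^{z_1}\) and \(H^{z_2}\) with two genuinely independent complex Ginibre flows. The single-\(z\) part of this programme --- the complex-Ginibre universality of the small singular values of \(X-z\), together with the well-posedness of the underlying DBM --- is precisely the content of Theorem~\ref{theo:un}, which I take as given. The new ingredient in Theorem~\ref{theo:indun} is the \emph{decoupling} of the two flows; this mirrors the corresponding argument in the complex case, the only structural novelty being the two-term noise covariance~\eqref{eq:bbc}, which is why we must impose \(\abs{z_1 - z_2},\abs{z_1 - \ov z_2}\sim 1\) rather than merely \(\abs{z_1-z_2}\sim 1\).

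First I would remove a small Gaussian component from the entries of \(X\) using the local laws along the imaginary axis (in the form of~\cite{MR3770875, 1907.13631}, extended as in the body of the paper) and a resolvent-level moment-matching argument; this treats \(\Im\Tr G^{z_1}(\ii\eta)\) and \(\Im\Tr G^{z_2}(\ii\eta')\) jointly but introduces no idea beyond the single-\(z\) version. It then suffices to prove the statement for the Gaussian-divisible ensemble \(X_t\) obtained by adding a Ginibre component of variance \(ct/n\) per entry to a carefully chosen initial matrix \(X_0\), with \(t=n^{-1+\xi}\), where \(X_0\) reproduces the law of \(X\) up to negligible error at time \(t\). The small positive eigenvalues \(\{\lambda_i^{z_\ell}(t)\}\) of \(H^{z_\ell}_t\) --- equivalently the small singular values of \(X_t-z_\ell\) --- then evolve by~\eqref{eq:DDBM} (in the precise form derived in the body), driven by martingales \({\bm b}^{z_1},{\bm b}^{z_2}\) with covariance~\eqref{eq:bbc}--\eqref{eq:ovint}.

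The crucial estimate is that both cross-overlaps \(\Theta_{ij}^{z_1,z_2}\) and \(\Theta_{ij}^{z_1,\ov z_2}\) are of size \(n^{-c}\) for the \(\landauO{n^\xi}\) indices \(i,j\) near the spectral edge at zero; this is exactly where \(\abs{z_1-z_2},\abs{z_1-\ov z_2}\sim1\) enters, since \(\Theta^{z_1,\ov z_2}\) would be of order one if \(z_1\) were close to \(\ov z_2\). It follows from a local law for products of resolvents such as \(\Tr G^{z_1}(\ii\eta)G^{z_2}(\ii\eta')\) and \(\Tr G^{z_1}(\ii\eta)G^{\ov z_2}(\ii\eta')\) at scales \(\eta,\eta'\sim n^{-1+\xi}\). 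Consequently \(\E\,\dif b_i^{z_1}\,\dif b_j^{z_2} = \landauO{n^{-c}}\,\dif t\), i.e.\ the two driving noises are asymptotically independent; the same product local law at \(t=0\) shows that the small singular values of \(X_0-z_1\) and \(X_0-z_2\) are already asymptotically independent. I would then run the homogenisation/coupling argument: introduce independent complex Ginibre DBM flows \(\wh{\bm\mu}^{(1)}(t),\wh{\bm\mu}^{(2)}(t)\) from independent initial data close to those of \({\bm\lambda}^{z_1}(0),{\bm\lambda}^{z_2}(0)\), interpolate the noise covariance between the true coupled structure~\eqref{eq:bbc} and the block-diagonal one, and control the resulting continuity estimate for local observables near zero via level repulsion and rigidity. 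This shows that the joint \(k\)-point statistics of \(({\bm\lambda}^{z_1}(t),{\bm\lambda}^{z_2}(t))\) near zero agree with those of \((\wh{\bm\mu}^{(1)}(t),\wh{\bm\mu}^{(2)}(t))\) up to \(n^{-c}\); since the latter factorise with scaling limit \(p_{k_1}^{(\infty,\C)}p_{k_2}^{(\infty,\C)}\), undoing the Gaussian-divisible reduction yields the theorem.

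The main obstacle is the coupling for the \emph{coupled} DBM. Unlike the complex case, the within-\(z\) noise is itself non-trivially correlated through \(\Lambda^{z_\ell}=\Theta^{z_\ell,\ov z_\ell}\), and the repulsion between \(\lambda_1\) and \(\lambda_{-1}\) is critically small even without this correlation --- the difficulties flagged in the introduction. One must check that interpolating the \emph{cross}-covariance does not disturb the delicate \emph{within}-\(z\) structure, and that the continuity estimate survives the non-standard repulsion coefficient; here I would adapt the techniques of~\cite{MR4009717} to the correlated-noise setting, building on the single-\(z\) analysis behind Theorem~\ref{theo:un}. A secondary point is to keep the error terms in~\eqref{eq:DDBM} --- the ``\(+\ldots\)'' and the negative-index terms from the spectral symmetry of \(H^{z_\ell}\) --- from reintroducing correlations between the two flows, which is again handled by the same overlap bounds and product local laws.
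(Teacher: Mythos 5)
Your proposal follows essentially the same route as the paper: a Green function comparison to a Gaussian-divisible ensemble, a DBM coupling with independent complex Ginibre flows driven by the overlap bounds \(\Theta^{z_1,z_2},\Theta^{z_1,\ov z_2}\lesssim n^{-c}\) from the two-resolvent local law, adaptation of the well-posedness and coupling techniques of~\cite{MR4009717} to the correlated-noise and critically-repulsive setting, and a final Green function comparison to pass to correlation functions. Two small remarks: the paper does not treat Theorem~\ref{theo:un} as a prerequisite for Theorem~\ref{theo:indun} --- both follow simultaneously from the same chain of comparison processes \(\bm\lambda\approx\mathring{\bm\lambda}\approx\widetilde{\bm\lambda}\stackrel{\mathrm{d}}{=}\widetilde{\bm\mu}\approx\bm\mu\) via Proposition~\ref{prop:realprop} and Lemmata~\ref{lem:firststepmason}--\ref{lem:secondstepmason} --- and your aside that the product local law already gives asymptotic independence of the small singular values at \(t=0\) is neither used nor justified, since small eigenvector overlaps control the \emph{noise covariance} but not the joint \emph{eigenvalue} law; the independence is produced by the DBM short-time equilibration against independent Ginibre initial data, so this extraneous claim is not load-bearing.
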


\begin{remark}
    We stated Theorem~\ref{theo:un} for two different \(z_1,z_2\) for notational simplicity. The analogous result holds for any finitely many \(z_1,\dots, z_q\) such that \(|z_l-z_m|, |z_l-\overline{z}_m|\sim 1\), with \(l,m\in [q]\).
\end{remark}

\section{Proof strategy}

The proof of Theorem~\ref{theo:CLT} follows a similar strategy as the proof of~\cite[Thm.~\ref{cplx-theo:CLT}]{1912.04100} with several major changes. We use Girko's formula to relate the eigenvalues of \(X\) to the resolvent of the \(2n \times 2n\) matrix
\begin{equation}
    \label{eq:herher}
    H^z := \begin{pmatrix}
        0          & X-z \\
        (X-z)^\ast & 0
    \end{pmatrix},
\end{equation}
the so called \emph{Hermitisation} of~\(X-z\). We denote the eigenvalues of \(H^z\), which come in pairs symmetric with respect to zero, by \(\{\lambda^z_{\pm i}\}_{i\in[n]}\). The \emph{local law}, see Theorem~\ref{theo:Gll} below, asserts that the resolvent \(G(w)=G^z(w):= (H^z-w)^{-1}\) of \(H^z\) with \(\eta=\Im w\ne 0\) becomes approximately deterministic, as \(n\to \infty\). Its limit is expressed via the unique solution of the scalar equation
\begin{equation}
    \label{eq m}
    - \frac{1}{m^z} = w + m^z -\frac{\abs{z}^2}{w + m^z}, \quad \eta\Im m^z(w) >0,\quad \eta=\Im w\ne 0,
\end{equation}
which is a special case of the \emph{matrix Dyson equation} (MDE), see e.g.~\cite{MR3916109} and~\eqref{eq MDE} later. Note that on the imaginary axis \(m^z(\ii\eta)=\ii\Im m^z(\ii\eta)\).  We define the \emph{self-consistent density of states} of \(H^z\) and its extension to the upper half-plane by
\begin{equation}
    \label{eq:scdos}
    \rho^z(E):=\rho^z(E+\ii 0), \qquad \rho^z(w):= \frac{1}{\pi}\Im m^z(w).
\end{equation}
In terms of \(m^z\) the deterministic approximation to \(G^z\) is given by the \(2n \times 2n\) block matrix
\begin{equation}
    \label{eq M}
    M^z(w) := \begin{pmatrix}
        m^z(w)         & -z u^z(w) \\
        - \ov z u^z(w) & m^z(w)
    \end{pmatrix}, \quad u^z(w):=  \frac{m^z(w)}{w+ m^z(w)},
\end{equation}
where each block is understood to be a scalar multiple of the \(n\times n\) identity matrix. We note that \(m,u,M\) are uniformly bounded in \(z,w\), i.e.
\begin{equation}
    \label{eq M bound}
    \norm{M^z(w)}+\abs{m^z(w)}\lesssim 1,\quad \abs{u^z(w)}\le \abs{m^z(w)}^2 + \abs{u^z(w)}^2 \abs{z}^2 < 1,
\end{equation}
see e.g.~\cite[Eqs.~\eqref{cplx-eq M bound}--\eqref{cplx-mubound}]{1912.04100}.

The \emph{local law} for \(G^z(w)\)
in its full averaged and isotropic form  has been obtained for \(w\in\ii\R\) in~\cite{MR3770875} for the bulk regime \(\abs{1-\abs{z}}\ge \epsilon\) and in~\cite{1907.13631} for the edge regime \(\abs{1-\abs{z}}<\epsilon\). In fact, in
the companion paper~\cite{1912.04100} on the complex CLT the local law
for \(w\) on the imaginary axis was sufficient. For the real CLT, however, we need its extension to general spectral parameters \(w\) in the bulk \(\abs{1-\abs{z}}\ge\epsilon\) case that we state below.
We remark that tracial  and entry-wise form of   the local law in Theorem~\ref{theo:Gll}
has already been established in~\cite[Theorem 3.4]{MR3230002}.

\begin{theorem}[Optimal local law for \(G\)]\label{theo:Gll}
    For any \(\epsilon> 0\) and \(z\in\C\) with \(\abs{1-\abs{z}}\ge\epsilon\) the resolvent \(G^z\) at \(w\in \mathbf{H}\) with \(\eta=\Im w\) is very well approximated by the deterministic matrix \(M^z\) in the sense that
    \begin{equation}\label{single local law}
        \begin{split}
            \abs{\braket{(G^z(w)-M^z(w))A}} &\le \frac{ C_\epsilon\norm{A} n^\xi }{n\eta}, \\
            \abs{\braket{\vx,(G^z(w)-M^z(w))\vy}}&\le C_\epsilon\norm{\vx}\norm{\vy}n^\xi  \Bigl(\frac{1}{\sqrt{n\eta}}+\frac{1}{n\eta}\Bigr),
        \end{split}
    \end{equation}
    with very high probability for some \(C_\epsilon \le \epsilon^{-100}\), uniformly for \(\eta \ge n^{-100}\), \(\abs{1-\abs{z}}\ge \epsilon\), and for any deterministic matrices \(A\) and vectors \(\vx,\vy\), and \(\xi>0\).
\end{theorem}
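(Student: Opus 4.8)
The plan is to run the standard self-consistent-equation / fluctuation-averaging scheme for local laws, using as a priori input the entry-wise and tracial bounds on \(G^z(w)\) for general \(w\in\mathbf H\) already established in~\cite[Theorem 3.4]{MR3230002}, and upgrading them to the optimal averaged and isotropic estimates~\eqref{single local law}. This is the scheme carried out on the imaginary axis in~\cite{MR3770875, 1907.13631}; the only ingredient in it that depends on the \emph{position} of \(w\) in \(\mathbf H\) rather than merely on \(\eta=\Im w\) is the stability of the matrix Dyson equation, while the large-deviation bounds for the fluctuating error terms and the fluctuation-averaging gain are insensitive to \(\Re w\). Hence the real content is to verify uniform MDE-stability for \(\abs{1-\abs z}\ge\epsilon\) and \emph{all} \(w\in\mathbf H\), which is precisely why the bulk condition on \(z\) is imposed.

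First I would establish the stability of the MDE. By the block structure of \(H^z\) the \(2n\times 2n\) MDE for \(M^z(w)\) collapses to the scalar equation~\eqref{eq m}, and I would show that in the bulk regime its solution is \emph{regular} uniformly in \(w\in\mathbf H\): \(\abs{w+m^z(w)}\sim 1\), \(\rho^z(w)\lesssim 1\), and \(\rho^z(w)\gtrsim 1\) whenever \(w\) lies away from the spectral edges of \(H^z\). Consequently the stability operator \(\mathcal B^z(w):=1-M^z(w)\mathcal S[\,\cdot\,]M^z(w)\), with \(\mathcal S\) the self-energy operator of \(H^z\), is invertible with \(\norm{(\mathcal B^z(w))^{-1}}\lesssim 1\) away from the edges and with the usual square-root bound \(\norm{(\mathcal B^z(w))^{-1}}\lesssim\rho^z(w)^{-1}\lesssim(\kappa+\eta)^{-1/2}\) near an edge at distance \(\kappa\). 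These are the classical properties of the circular-law MDE, now needed off the imaginary axis; I would derive them by a direct analysis of the cubic equation~\eqref{eq m}, or by invoking the general MDE theory of~\cite{MR3916109} together with the flatness of the variance profile of \(H^z\).

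Next I would set up the self-consistent equation with controlled error. Writing \(\mathring W:=H^z-\E H^z\) and expanding \((H^z-w)G=I\) by a cumulant expansion — equivalently, by renormalising the product, \(\underline{\mathring W G}:=\mathring W G+\mathcal S[G]G\) — one arrives at \(G-M^z=-M^z\mathcal S[G-M^z](G-M^z)+M^z\mathcal D\) with a fluctuating error matrix \(\mathcal D=\mathcal D(G)\) built from \(\underline{\mathring W G}\) and higher-cumulant contributions. Feeding in \(\norm G\lesssim 1\) and the entry-wise bounds from~\cite[Theorem 3.4]{MR3230002}, I would estimate the averaged error \(\abs{\braket{\mathcal D A}}\) by the fluctuation-averaging mechanism — which supplies the extra factor \((n\eta)^{-1}\) — and the isotropic error \(\abs{\braket{\vx,\mathcal D\vy}}\) by the standard large-deviation bound for quadratic forms, of rate \((n\eta)^{-1/2}\); neither uses anything about \(w\) beyond \(\eta\) and the a priori bounds. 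Inverting \(\mathcal B^z(w)\) then transfers these bounds to \(G-M^z\) and yields~\eqref{single local law} away from the edges; near an edge the \(\rho^z(w)^{-1}\) from \((\mathcal B^z(w))^{-1}\) is compensated by the smallness of \(\rho^z(w)\) in the entry-wise input, and in any case we only claim the non-optimal rate \(n^\xi/(n\eta)\) (which is trivial once \(n\eta\le n^\xi\), since \(\norm{M^z}\lesssim 1\) and \(\abs{\braket{GA}}\le\norm A\)), so any residual polynomially bounded loss is absorbed into \(n^\xi\).

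Finally I would close the argument by the usual continuity/bootstrap in \(\eta\): start at \(\eta\sim 1\), where~\eqref{single local law} is elementary, and decrease \(\eta\) along a fine grid, at each step using the self-consistent equation to upgrade a crude bound into the optimal one. Uniformity in \(\Re w\) follows by running the grid in both coordinates of \(w\) and using the \(\eta^{-1}\)-Lipschitz dependence of \(G\) and \(M^z\) on \(w\), together with a union bound over the polynomially fine grid. The main obstacle is the stability step: controlling~\eqref{eq m} with uniform bounds over the whole upper half-plane, in particular through the spectral edges of \(H^z\). This is exactly where \(\abs{1-\abs z}\ge\epsilon\) enters — it keeps \(0\) either deep inside the bulk of \(\rho^z\) (if \(\abs z<1\)) or well outside its support (if \(\abs z>1\)) and ensures that the outer spectral edge of \(H^z\) is a regular square-root edge, thereby excluding the cusp-type degeneration near \(0\) that develops as \(\abs z\to 1\).
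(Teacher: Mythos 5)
Your global plan — MDE stability, cumulant/renormalised expansion with fluctuation averaging, bootstrap in \(\eta\), grid argument in \(w\) — is the same scheme the paper runs in Section~5, and you correctly identify that the only ingredient sensitive to the position of \(w\) in \(\HC\) (rather than to \(\eta\) alone) is the stability of the MDE. However, your stability claim contains a genuine error: you assert that \(\norm{(\cB^z(w))^{-1}}\lesssim 1\) away from the spectral edges. This is false. The \(4\times 4\) stability operator \(\cB=1-M\SS[\cdot]M\) has \emph{two} non-trivial eigenvalues, \(1\pm m^2-u^2\abs{z}^2\), not one. In the bulk of \(\rho^z\) (e.g.\ on the imaginary axis as \(\eta\to 0\) with \(\abs{z}<1\)) one has \(m\to \ii\pi\rho\) purely imaginary and \(\abs{m}^2+\abs{u}^2\abs{z}^2\to 1\), so \(1+m^2-u^2\abs{z}^2\to 0\): the eigenvalue with eigenvector \(E_-=(E_1-E_2)/\sqrt2\) degenerates exactly where your argument claims \(\cB^{-1}\) is bounded. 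Inverting \(\cB\) on the full space would therefore produce a divergent bound, and your proposal has no mechanism to compensate.

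The missing idea is the block-symmetry cancellation: by the Schur-complement structure of \(H^z\), \(\Tr E_1 G^z=\Tr E_2 G^z\), hence \(\braket{E_-,G^z-M^z}=0\) identically. This allows one to replace \(G-M\) by \(\cQ_\ast[G-M]\) (projection onto the complement of \(E_-\)) \emph{before} inverting \(\cB\), so only the eigenvalue \(1-m^2-u^2\abs{z}^2\) matters, and for that one the paper's Lemma~\ref{lemma B analysis} establishes \(\abs{1-m^2-u^2\abs{z}^2}\gtrsim\Im m\) when \(\abs{1-\abs{z}}\ge\epsilon\). A secondary omission: you renormalise with \(\SS\) (the complex-Ginibre covariance), but the matrix is real, so \(\E WG=-\E\SS[G]G-\E\cT[G]G\) with the transpose term \(\cT\); the discrepancy \(\braket{\vx,\cT[G]G\vy}\) must be bounded (via the Ward identity, by \((\Lambda+\rho)/(n\eta)\)) and folded into the error — straightforward, but it should be noted.
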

\begin{remark}[Cusp fluctuation averaging]
    For \(w\in\ii\R\) we may choose \(C_\epsilon=1\) by~\cite[Theorem~5.2]{1907.13631} which takes into account the \emph{cusp fluctuation averaging effect}. Since it is not necessary for the present work we refrain from adapting this technique for general \(w\) and rather present a conceptually simpler proof resulting in the \(\epsilon\)-dependent bounds~\eqref{single local law}.
\end{remark}

As in~\cite{1912.04100} we express the linear statistics~\eqref{lin stat def} of eigenvalues \(\sigma_i\) of \(X\) through the resolvent \(G^z\) via Girko's Hermitisation formula~\eqref{girko}
\begin{equation}
    \label{eq:GirkosplitA}
    \begin{split}
        L_n(f)&=\frac{1}{4\pi} \int_\C  \Delta f(z) \Big[\log\abs{\det (H^z-\ii T)}-\E  \log \abs{\det (H^z-\ii T)}\Big]\dif^2 z \\
        &\quad -\frac{n}{2\pi \ii} \int_\C  \Delta f(z)\left[\left(\int_0^{\eta_0}+\int_{\eta_0}^{\eta_c}+\int_{\eta_c}^T \right) \bigl[\braket{ G^z(\ii\eta)-\E G^z(\ii\eta)}\bigr]\dif \eta\right] \dif^2z \\
        &=:  J_T+I_0^{\eta_0}+I_{\eta_0}^{\eta_c}+I_{\eta_c}^T,
    \end{split}
\end{equation}
for \(\eta_0=n^{-1-\delta_0}\), \(\eta_c=n^{-1+\delta_1}\), and \(T=n^{100}\), where \(J_T\) in~\eqref{eq:GirkosplitA} corresponds to the rhs.\ of the first line in~\eqref{eq:GirkosplitA} whilst \(I_0^{\eta_0},I_{\eta_0}^{\eta_c},I_{\eta_c}^T\) correspond to the three different \(\eta\)-integrals in the second line of~\eqref{eq:GirkosplitA}. Here we used that by spectral symmetry of \(H^z\) it follows that \(\braket{G^z(\ii\eta)}\in\ii\R\) and therefore \(\Im\braket{G^z(\ii\eta)}=\braket{G^z(\ii\eta)}/\ii\) in order to obtain~\eqref{eq:GirkosplitA} from~\eqref{girko}. The regime \(J_T\) can be trivially estimated by~\cite[Lemma~\ref{cplx-lem:aprior}]{1912.04100}, while the regime \(I_0^{\eta_0}\) can be controlled using~\cite[Thm.\ 3.2]{MR2684367} as in~\cite[Lemma~\ref{cplx-lem:bbexp}]{1912.04100} (see~\cite[Remark~\ref{cplx-rem:altern}]{1912.04100} for an alternative proof). Both contributions are negligible. For the main term \(I_{\eta_c}^T\) we prove the following \emph{resolvent CLT}\@.
\begin{proposition}[CLT for resolvents]\label{prop clt resolvent}
    Let \(\epsilon>0\), \(\eta_1,\dots,\eta_p>0\), and \(z_1,\dots,z_p\in\C\) be such that for any \(i\ne j\), \(\min\{\eta_i,\eta_j\}\ge n^{\epsilon-1}\abs{z_i-z_j}^{-2}\). Then for any \(\xi>0\) the traces of the resolvents \(G_i=G^{z_i}(\ii \eta_i)\) satisfy an asymptotic Wick theorem
    \begin{equation}\label{eq CLT resovlent}
        \begin{split}
            \E\prod_{i\in[p]} \braket{G_i-\E G_i} &= \sum_{P\in\mathrm{Pairings}([p])}\prod_{\{i,j\}\in P} \E  \braket{G_i-\E G_i}\braket{G_j-\E G_j}  + \mathcal{O}\left(\Psi\right) \\
            &= \frac{1}{n^p}\sum_{P\in\mathrm{Pairings}([p])}\prod_{\{i,j\}\in P} \frac{\wh V_{i,j}+\kappa_4 U_i U_j}{2}+ \landauO{\Psi},
        \end{split}
    \end{equation}
    where
    \begin{equation}\label{eq psi error}
        \Psi:= \frac{n^\xi}{(n\eta_\ast)^{1/2}}\frac{1}{\min_{i\ne j}\abs{z_i-z_j}^4}\prod_{i\in[p]}\Bigl(\frac{1}{\abs{1-\abs{z_i}}}+\frac{1}{(\Im z_i)^2}\Bigr)\frac{1}{n\eta_i}, \qquad \eta_\ast:= \min_i\eta_i,
    \end{equation}
    and \(\wh V_{i,j}=\wh V(z_i,z_j,\eta_i,\eta_j)\) and \(U_i=U(z_i,\eta_i)\) are defined as
    \begin{equation}
        \label{eq:exder}
        \begin{split}
            \wh V(z_i,z_j,\eta_i,\eta_j)&:= V(z_i,z_j,\eta_i,\eta_j) + V(z_i,\ov{z_j},\eta_i,\eta_j)\\
            V(z_i,z_j,\eta_i,\eta_j)&:= \frac{1}{2}\partial_{\eta_i}\partial_{\eta_j} \log \bigl[ 1+(u_i u_j\abs{z_i}\abs{z_j})^2-m_i^2 m_j^2-2u_i u_j\Re z_i\overline{z_j}\bigr], \\
            U(z_i,\eta_i)&:= \frac{\ii}{\sqrt{2}}\partial_{\eta_i} m_i^2,
        \end{split}
    \end{equation}
    with \(m_i=m^{z_i}(\ii\eta_i)\) and \(u_i=u^{z_i}(\ii\eta_i)\) from~\eqref{eq m}--\eqref{eq M}.

    Moreover, the expectation of the normalised trace of \(G=G_i\) is given by
    \begin{equation}\label{EG exp}
        \E\braket{G} = \braket{M} + \cE + \landauO*{\Bigl(\frac{1}{\abs{1-\abs{z}}}+\frac{1}{\abs{\Im z}^2}\Bigr)\Bigl(\frac{1}{n^{3/2}(1+\eta)}+\frac{1}{(n\eta)^2}\Bigr)},
    \end{equation}
    where
    \begin{equation}\label{cE def}
        \cE := - \frac{\ii\kappa_4}{4n}\partial_\eta(m^4) + \frac{\ii}{4n}\partial_\eta \log\Bigl(1-u^2+2u^3\abs{z}^2-u^2(z^2+\ov z^2)\Bigr).
    \end{equation}
\end{proposition}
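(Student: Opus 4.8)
The plan is to establish both displays of Proposition~\ref{prop clt resolvent} by a single cumulant expansion in the entries of \(X\), following the scheme of~\cite{1912.04100} but redoing the combinatorics for the real symmetry class. The only structural difference from the complex case is that for real i.i.d.\ entries the sole nonzero second moment is \(\E x_{ab}^2=n^{-1}\); hence in the Hermitisation~\eqref{eq:herher} a derivative in \(x_{ab}\) can be paired with a second derivative in the \emph{same} variable \(x_{ab}\) arising either from the top-right block \(X-z\) or from the bottom-left block \((X-z)^\ast\), of the same copy \(H^{z_i}\) or of a different copy \(H^{z_j}\). This extra family of ``same-side'' contractions --- absent in~\cite{1912.04100} --- is precisely what will produce the second summand \(V(z_i,\ov{z_j},\cdot,\cdot)\) in \(\wh V\) (since \(M^z\) has off-diagonal entries \(-zu^z\) and \(-\ov z u^z\), flipping the block of copy \(j\) replaces \(z_j\) by \(\ov{z_j}\) in the resulting \(2\times2\) stability determinant) and the logarithmic term in \(\cE\).

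\textbf{Expectation~\eqref{EG exp}.}
Starting from \(H^z G=I+wG\), subtracting \(\E H^z\) and using the matrix Dyson equation~\eqref{eq MDE} for \(M^z\), one obtains a perturbed self-consistent equation \(\mathcal{B}[\E G-M^z]=M^z\,\mathcal{S}[\E G-M^z]\,(\E G-M^z)+M^z\,\E\,\underline{WG}+(\text{lower order})\), where \(\mathcal{S}\) is the self-energy operator of \(H^z\), \(\mathcal{B}=1-M^z\mathcal{S}[\cdot]M^z\) the stability operator, and \(\underline{WG}=WG+\mathcal{S}[G]G\) the second-moment-renormalised product, so that \(\E\,\underline{WG}\) keeps only third- and higher-cumulant contributions. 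The fourth-cumulant term, after contracting four \(M^z\)-blocks along the imaginary axis, yields the first summand \(-\tfrac{\ii\kappa_4}{4n}\partial_\eta(m^4)\) of \(\cE\); the odd (third-cumulant) contributions cancel to the order of~\eqref{EG exp}; and retaining the first \(O(1/n)\) correction in the cumulant expansion of \(\E\braket{WGWG}\) that is discarded in the leading local law produces the logarithmic term \(\tfrac{\ii}{4n}\partial_\eta\log(1-u^2+2u^3\abs{z}^2-u^2(z^2+\ov z^2))\). All remaining contributions are bounded using Theorem~\ref{theo:Gll}; the factor \(\abs{1-\abs{z}}^{-1}+\abs{\Im z}^{-2}\) in the error reflects the norm of \(\mathcal{B}^{-1}\) in the bulk together with the deterioration of the relevant quantities as \(z\) approaches the real axis.

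\textbf{Wick formula~\eqref{eq CLT resovlent}.}
I would induct on \(p\). Up to negligible errors the distinguished factor equals \(\braket{G_1-\E G_1}=\E\braket{M_1\underline{W_1G_1}}-\braket{M_1\underline{W_1G_1}}\), and I would apply the cumulant expansion in the entries of \(X\) entering \(W_1\) to \(\E\bigl[\braket{M_1\underline{W_1G_1}}\prod_{i\ge2}\braket{G_i-\E G_i}\bigr]\). A derivative \(\partial_{x_{ab}}\) then either (i) hits another factor \(\braket{G_j-\E G_j}\) and, contracted with a further derivative back on \(G_1\) through a joint cumulant, yields \(n^{-2}\) times a deterministic two-resolvent quantity built from \(M_1,M_j\), times \(\prod_{i\ne1,j}\braket{G_i-\E G_i}\) --- the second cumulant giving the pairing term, the fourth cumulant giving \(\kappa_4U_1U_j\); or (ii) hits \(G_1\) again, feeding the recursion at order \(p\) with an extra gain; or (iii) couples three or more factors through a single higher cumulant, which is of lower order. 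Symmetrising over the choice of distinguished factor and resumming produces the sum over pairings of Proposition~\ref{prop clt resolvent}, with error controlled by \(\Psi\) of~\eqref{eq psi error}; the explicit values \(\wh V_{ij}\) and \(\kappa_4U_iU_j\) come from the deterministic limits of the two-resolvent quantities, where the two admissible block-contractions of \(H^{z_i}\) with \(H^{z_j}\) produce the stability determinant \(1+(u_iu_j\abs{z_i}\abs{z_j})^2-m_i^2m_j^2-2u_iu_j\Re z_i\ov{z_j}\) at arguments \((z_i,z_j)\) and \((z_i,\ov{z_j})\), and the \(\partial_{\eta_i}\partial_{\eta_j}\log\) form follows, as in~\cite{1912.04100}, from resumming these second-moment contractions of the two-resolvent chain together with the identity \(\braket{G^z(\ii\eta)}=\tfrac{\ii}{2n}\partial_\eta\log\abs{\det(H^z-\ii\eta)}\).

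\textbf{Inputs and main obstacle.}
Besides Theorem~\ref{theo:Gll} (applied with the various \(M\)-blocks as the test matrix, and for the concentration of \(\braket{G_i}\) about its mean), the decisive input is a two-resolvent local law for \(\braket{G^{z_i}(\ii\eta_i)AG^{z_j}(\ii\eta_j)B}\) with \(z_i\ne z_j\), whose deterministic approximation holds with an error deteriorating like \(\abs{z_i-z_j}^{-2}\) as \(z_i\to z_j\) and only for \(\eta_i,\eta_j\gtrsim n^{\epsilon-1}\abs{z_i-z_j}^{-2}\); this is the origin of the hypothesis on the \(\eta_i\) and of the \(\min_{i\ne j}\abs{z_i-z_j}^{-4}\) factor in \(\Psi\), while the \((n\eta_\ast)^{-1/2}\) gain is the usual fluctuation-averaging improvement for averaged resolvent chains. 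I expect the main obstacle to be the bookkeeping of the real-case cumulant expansion: showing that the many additional contractions absent in~\cite{1912.04100} either reassemble precisely into the two extra explicit terms --- \(V(z_i,\ov{z_j},\cdot,\cdot)\) in \(\wh V\) and the logarithmic term in \(\cE\) --- or are genuinely negligible, and in particular that the third-cumulant contributions cancel at the precision of~\eqref{EG exp}; a secondary technical point is extending the two-resolvent local law uniformly down to \(\eta_i\sim n^{\epsilon-1}\abs{z_i-z_j}^{-2}\), which nevertheless follows the blueprint of the analogous estimate in~\cite{1912.04100}.
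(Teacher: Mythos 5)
Your proposal is correct and takes essentially the same route as the paper: a cumulant expansion in the real entries, with the same-side second-moment contraction producing a transposed resolvent and hence the replacement \(z\mapsto\ov z\) via \((G^z)^t=G^{\ov z}\), which yields both the extra \(V(z_i,\ov{z_j},\cdot,\cdot)\) in \(\wh V\) and the logarithmic term in \(\cE\), with the deterministic limits evaluated by the two-resolvent local law (Theorem~\ref{thm local law G2} plus the \(\wh\beta_\ast\)-bound of Lemma~\ref{lem:lowbeta}) and the error controlled exactly as you describe. The one technical device the paper adopts that you leave implicit is to deliberately keep the \emph{complex}-Ginibre self-renormalisation \(\underline{WG}\), so that the uncancelled \(\kappa(ab,ab)\) contraction surfaces as a clean separate summand \(\tfrac{1}{n}\braket{G^zAEG^{\ov z}E'}\) in~\eqref{eq real cumulant expansion}, which is what makes the extra real-case bookkeeping you flag a near-verbatim extension of the complex argument.
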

Proposition~\ref{prop clt resolvent} is the real analogue of~\cite[Prop.~\ref{cplx-prop:CLTresm}]{1912.04100}. The main differences are that (i) the \(V\)-term for the variance appears in a symmetrised form with \(z_j\) and \(\ov{z_j}\), (ii) the error term~\eqref{eq psi error} deteriorates as \(\Im z_i\approx 0\), and (iii) the expectation~\eqref{EG exp} has an additional subleading term which is even present in case \(\kappa_4=0\) (second term in~\eqref{cE def}).

Finally, in order to show that \(I_{\eta_0}^{\eta_c}\) in~\eqref{eq:GirkosplitA} is negligible, we prove that \(\braket{ G^{z_1}(\ii\eta_1)}\) and \(\braket{  G^{z_2}(\ii\eta_2)}\) are asymptotically independent if \(z_1\), \(z_2\) and \(z_1\), \(\overline{z}_2\) are far enough from each other, they are far away from the real axis, they are well inside \(\DD\), and \(\eta_0 \le \eta_1, \eta_2 \le \eta_c\). These regimes of the parameters \(z_1,z_2\) represent the overwhelming part of the \(\dif^2 z_1\dif^2 z_2\) integration in the calculation of \(\E \abs{I_{\eta_0}^{\eta_c}}^2\). The following proposition is the direct analogue of~\cite[Prop.~\ref{cplx-prop:indmr}]{1912.04100}.
\begin{proposition}[Independence of resolvents with small imaginary part]\label{prop:indmr}
    Fix \(p\in \N\). For any sufficiently small \(\omega_h,\omega_d>0\)
    there exist \(\omega_*,\delta_0,\delta_1\) with \(\omega_h\ll \delta_m\ll \omega_*\ll 1\), for \(m=0,1\), such that for any choice of \(z_1,\dots, z_p\) with
    \[
        \abs{z_l}\le 1-n^{-\omega_h}, \,\abs{z_l-z_m}\ge n^{-\omega_d}, \, \abs{z_l-\overline{z}_m}\ge n^{-\omega_d}, \, \abs{z_l-\overline{z}_l}\ge n^{-\omega_d},
    \]
    with \(l,m \in [p]\), \(l\ne m\), it follows that
    \begin{equation}
        \label{eq:indtrlm}
        \E \prod_{l=1}^p \braket{G^{z_l}(\ii\eta_l)}=\prod_{l=1}^p\E  \braket{G^{z_l}(\ii\eta_l)}+\landauO*{\frac{n^{p(\omega_h+\delta_0)+\delta_1}}{n^{\omega_*}}},
    \end{equation}
    for any \(\eta_1,\dots,\eta_p\in [n^{-1-\delta_0},n^{-1+\delta_1}]\).
\end{proposition}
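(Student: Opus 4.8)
The plan is to adapt the Dyson Brownian motion (DBM) coupling argument of the complex case~\cite{1912.04100} to the real symmetry class, exploiting the observation made in the introduction that \emph{away from the real axis} the real DBM~\eqref{eq:DDBM}--\eqref{eq:bbc} is, up to negligible corrections, an ordinary \(\beta=2\) DBM with independent driving noise. As in~\cite{1912.04100}, the statement reduces to asymptotic independence of the small eigenvalues of the \(H^{z_l}\) (equivalently of the small singular values of the \(X-z_l\)): since \(\eta_l\in[n^{-1-\delta_0},n^{-1+\delta_1}]\) is comparable to the eigenvalue spacing near zero, with overwhelming probability \(\braket{G^{z_l}(\ii\eta_l)}\) is a smooth \(\landauO{1}\) functional of the rescaled small eigenvalue configuration \(\Xi_l:=\{n\rho^{z_l}(0)\lambda^{z_l}_i\}_{i\le K}\); the rare event on which an anomalously small \(\lambda^{z_l}_1\) inflates \(\braket{G^{z_l}(\ii\eta_l)}\) is absorbed via moment bounds on \(\braket{G^{z_l}(\ii\eta_l)}\), exactly as in~\cite{1912.04100}. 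Thus it suffices to prove that \(\Xi_1,\dots,\Xi_p\) are asymptotically independent with an effective rate; the losses in the exponent of~\eqref{eq:indtrlm} arise from the proximity of the \(z_l\) to the spectral edge (the \(\omega_h\)-terms), the smallness of the \(\eta_l\) (the \(\delta_0\)-terms), and the length of the DBM flow (the \(\delta_1\)-term), while \(n^{-\omega_*}\) is the gain produced by the homogenisation step.

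I would then set up two flows. On the original side: remove a small Gaussian component from \(X\), add back a time-dependent real Ginibre part to obtain \(X_t\), and let \({\bm\lambda}^{z_l}(t)\) be the eigenvalues of \(H^{z_l}(X_t)\); these satisfy the coupled system~\eqref{eq:DDBM} with driving correlations~\eqref{eq:bbc}, which we run up to a time \(T=n^{-1+\omega}\) with \(\delta_0,\delta_1\ll\omega\ll\omega_*\). On the reference side: take \(p\) \emph{independent} complex Ginibre matrices \(\wt X^{(l)}\) (with an analogously reduced variance) and let \({\bm\mu}^{z_l}(t)\) be the eigenvalues of \(H^{z_l}(\wt X^{(l)}+\sqrt t\,\text{(complex Ginibre)})\); these satisfy \(p\) independent \emph{ordinary} \(\beta=2\) DBMs, and at time \(T\) the joint near-zero local statistics of the \({\bm\mu}^{z_l}(T)\) are \emph{exactly} a product of complex-Ginibre statistics. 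A short-time continuity (Green's-function comparison) estimate shows that passing from \(X\) to \(X_T\) changes \(\E\prod_l\braket{G^{z_l}(\ii\eta_l)}\) by only \(\landauO{n^{-c}}\).

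The core is the comparison of the coupled flow \(({\bm\lambda}^{z_l}(t))_{l\in[p]}\) with the reference \(({\bm\mu}^{z_l}(t))_{l\in[p]}\) via a homogenisation / energy estimate on the difference process. The two inputs are overlap bounds, derived from a local law for \emph{products} of resolvents \(\braket{G^{z}(\ii\eta)G^{z'}(\ii\eta')}\) at scales \(\eta,\eta'\sim n^{-1+\epsilon}\) -- the real analogue of the product local law of~\cite{1912.04100}, now with several additional deterministic terms coming from the off-diagonal block \(u^z\) of \(M^z\) and from \(z\leftrightarrow\ov z\) cross terms: (a) for \(l\ne m\), the cross-overlaps \(\Theta^{z_l,z_m}_{ij}\) and \(\Theta^{z_l,\ov z_m}_{ij}\) in~\eqref{eq:bbc} are \(\landauO{n^{-\omega_*}}\)-small because \(\abs{z_l-z_m},\abs{z_l-\ov z_m}\ge n^{-\omega_d}\); and (b) the \emph{same-\(z\)} overlap \(\Lambda^{z_l}_{ij}=\Theta^{z_l,\ov z_l}_{ij}\) is \(\landauO{n^{-\omega_*}}\)-small for \emph{all} \(i,j\) because \(\abs{z_l-\ov z_l}\ge n^{-\omega_d}\). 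By (a) and (b) the driving martingales in~\eqref{eq:DDBM} are essentially independent across both \(l\) and the index \(i\), and the repulsion coefficients satisfy \(1+\Lambda^{z_l}_{ij}=1+\landauO{n^{-\omega_*}}\); hence the coupled system is a small perturbation of \(p\) independent ordinary \(\beta=2\) DBMs. Running the Gronwall-type estimate -- while carrying along the extra terms with negative indices forced by the spectral symmetry \(\lambda^z_{-i}=-\lambda^z_i\), handled by the techniques of~\cite{MR4009717} -- gives agreement of the near-zero local statistics of \(({\bm\lambda}^{z_l}(T))_l\) and \(({\bm\mu}^{z_l}(T))_l\) up to \(\landauO*{n^{p(\omega_h+\delta_0)+\delta_1-\omega_*}}\). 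Chaining this with the reference factorisation and with the \(p=1\) version of the same two comparisons then yields~\eqref{eq:indtrlm}.

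The main obstacle is twofold, both features specific to the real case. First, the product-resolvent local law must be established with enough precision to certify the smallness of \(\Lambda^{z_l}=\Theta^{z_l,\ov z_l}\) all the way down to \(\abs{z_l-\ov z_l}\sim n^{-\omega_d}\): this is exactly where the estimate deteriorates as \(\Im z\to0\) (cf.\ the \((\Im z)^{-2}\) factors in~\eqref{eq psi error}), and it is the origin of the separation-from-the-real-axis hypothesis. Second, the homogenisation estimate must be closed for a DBM whose repulsion coefficient \(1+\Lambda\) is non-constant and whose driving martingale is correlated -- indeed not even Gaussian -- in a regime where, because \(\lambda^z_{-1}\) and \(\lambda^z_1\) repel only critically weakly, even the well-posedness of~\eqref{eq:DDBM} is not guaranteed by standard DBM theory; here the critical-\(\beta\) techniques of~\cite{MR4009717} are the right tool, adapted to our setting, where the smallness of \(\Lambda\) fortunately keeps the flow effectively in the benign \(\beta=2\) regime throughout the relevant range of \(z\).
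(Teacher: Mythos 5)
Your proposal is essentially the same as the paper's proof, identifying all the key structural ingredients: reduction to asymptotic independence of small eigenvalues of the $H^{z_l}$, a GFT step to introduce a Gaussian component, interpretation of the eigenvalues as a DBM with correlated driving martingales and a non-constant repulsion coefficient $1+\Lambda^{z_l}_{ij}$, overlap smallness from the product-resolvent local law (both cross-$z$ overlaps $\Theta^{z_l,z_m},\Theta^{z_l,\ov z_m}$ and the same-$z$ overlap $\Lambda^{z_l}=\Theta^{z_l,\ov z_l}$), a homogenisation/energy argument to compare with independent $\beta=2$ Ginibre DBMs, and the well-posedness issue resolved by the critical-$\beta$ techniques of \cite{MR4009717}. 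The one place where you compress what the paper spells out is the comparison ``coupled flow $\to$ reference flow'': the paper runs this in several stages through a regularised process (where large overlap values are cut off, the repulsion coefficient is reset to $1$, and the diffusion is slightly reduced to secure well-posedness at the critical $\beta=1$ threshold) and a partially-correlated process (which retains the correlated driving noise only for $O(n^{\omega_A})$ small indices so that, by construction, it is equal in distribution to a Ginibre-driven counterpart), and these intermediate couplings are where the separate exponent choices $\omega_r,\omega_A,\omega_c,\omega_L$ come in; you fold all of this into a single ``Gronwall-type estimate adapted to our setting,'' which is the right idea but hides the genuine technical work.
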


As in the complex case~\cite{1912.04100},
one key ingredient for both Propositions~\ref{prop clt resolvent} and~\ref{prop:indmr} is a local law for products of resolvents
\(G_1,G_2\) for \(G_i=G^{z_i}(w_i)\). We remark that local laws for products of resolvents have also been derived for (generalized)
Wigner matrices~\cite{MR3805203,2001.08725} and for sample covariance matrices~\cite{MR4119592}, as well as for the addition of random matrices~\cite{10.1093/imrn/rnaa210}.

Note that the deterministic approximation to \(G_1G_2\) is
not given simply by \(M_1M_2\) where \(M_i:=M^{z_i}(w_i)\) from~\eqref{eq M}.
To describe the correct approximation, as in~\cite[Section~\ref{cplx-sec local law G2}]{1912.04100}, we define the \emph{stability operator}
\begin{equation}
    \label{eq:stabop12}
    \wh\cB=\wh\cB_{12}=\wh\cB(z_1,z_2, w_1,w_2):= 1-M_1 \SS[\cdot]M_2,
\end{equation}
acting on the space of \(2n \times 2n\) matrices. Here the linear \emph{covariance} or \emph{self-energy operator} \(\SS\colon\C^{2n\times 2n}\to\C^{2n\times 2n}\) is defined as
\begin{equation}\label{S def}
    \SS\biggl[\begin{pmatrix}
            A & B \\ C & D
        \end{pmatrix}\biggr] :=  \wt\E \wt W \begin{pmatrix}
        A & B \\ C & D
    \end{pmatrix} \wt W= \begin{pmatrix}
        \braket{D} & 0 \\ 0 & \braket{A}
    \end{pmatrix} ,\quad \wt W=\begin{pmatrix}
        0 & \wt X \\ \wt X^\ast & 0
    \end{pmatrix}, \quad \wt X\sim\mathrm{Gin}_\C,
\end{equation}
i.e.\ it averages the diagonal blocks and swaps them. Here $\widetilde{\mathbf{E}}$ denotes the expectation with respect to $\widetilde{X}$, \(\braket{ A}=n^{-1}\text{Tr}A\) and \(\mathrm{Gin}_\C\) stands for the standard complex Ginibre ensemble. The ultimate equality in~\eqref{S def} follows directly from \(\E \wt x_{ab}^2=0\), \(\E\abs{\wt x_{ab}}^2=n^{-1}\). Note that as a matter of choice we define the stability operator~\eqref{eq:stabop12} with the covariance operator \(\SS\) corresponding to the complex rather than the real Ginibre ensemble. However, to leading order there is no difference between the two and the present choice is more consistent with the companion paper~\cite{1912.04100}. The effect of this discrepancy will be estimated in a new error term (see~\eqref{eq real cumulant expansion} later).

For any deterministic matrix \(B\) we define
\begin{equation}
    \label{eq M12 def}
    M_B^{z_1,z_2}(w_1,w_2):= \wh \cB_{12}^{-1}[M^{z_1}( w_1)B M^{z_2}(w_2)],
\end{equation}
which turns out to be the deterministic approximation to \(G_1 B G_2\). Indeed, from the local law for \(G_1,G_2\), Theorem~\ref{theo:Gll}, and~\cite[Thm.~\ref{cplx-thm local law G2}]{1912.04100} we immediately conclude the following theorem.
\begin{theorem}[Local law for \(G^{z_1}BG^{z_2}\)]\label{thm local law G2}
    Fix \(z_1,z_2\in\C\) with \(\abs{1-\abs{z_i}}\ge\epsilon\), for some \(\epsilon> 0\) and \(w_1,w_2\in\C\) with \(\abs{\eta_i}:=\abs{\Im w_i}\ge n^{-1}\) such that
    \[ \eta_\ast:= \min\{\abs{\eta_1},\abs{\eta_2}\}\ge n^{-1+\epsilon_*}\abs{\wh\beta_\ast}^{-1},\]
    for some small \(\epsilon_*>0\), where \(\wh\beta_\ast\) is the, in absolute value, smallest eigenvalue of \(\wh\cB_{12}\) defined in~\eqref{eq:stabop12}. Then, for any bounded deterministic matrix \(B\), \(\norm{B}\lesssim 1\), the product of resolvents \( G^{z_1}B G^{z_2}=G^{z_1}(w_1)BG^{z_2}(w_2)\) is well approximated by \(M_B^{z_1,z_2} =M_B^{z_1,z_2}(w_1,w_2)\) defined in~\eqref{eq M12 def}
    in the sense that
    \begin{equation}\label{final local law}
        \begin{split}
            \abs{\braket{A (G^{z_1} BG^{z_2}-M_B^{z_1,z_2})}}&\le \frac{ C_\epsilon\norm{A}n^\xi}{n\eta_*|\eta_1\eta_2|^{1/2}\abs{\wh\beta_\ast}}\Bigl(\eta_\ast^{1/12}+\frac{\eta_\ast^{1/4}}{\abs{\wh\beta_\ast}} +\frac{1}{\sqrt{n\eta_\ast}}+\frac{1}{(\abs{\wh\beta_\ast} n\eta_\ast)^{1/4}}\Bigr), \\
            \abs{\braket{\vx,(G^{z_1}BG^{z_2}-M_B^{z_1,z_2})\vy}}&\le \frac{ C_\epsilon\norm{\vx} \norm{\vy}n^\xi}{(n\eta_*)^{1/2}|\eta_1\eta_2|^{1/2}\abs{\wh\beta_\ast}}
        \end{split}
    \end{equation}
    for some \(C_\epsilon\) with very high probability for any deterministic \(A,\vx,\vy\) and \(\xi>0\). If \(w_1,w_2\in\ii\R\) we may choose \(C_\epsilon=1\), otherwise we can choose \(C_\epsilon\le\epsilon^{-100}\).
\end{theorem}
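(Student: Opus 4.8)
The plan is to realise the phrase ``we immediately conclude'' literally: to derive Theorem~\ref{thm local law G2} by feeding the single-resolvent local law of Theorem~\ref{theo:Gll} into the scheme used for the complex counterpart~\cite[Thm.~\ref{cplx-thm local law G2}]{1912.04100}, and to isolate the one point where the real symmetry class actually enters. The proof of the complex product local law is organised so that the random input enters only through the single-resolvent local law: one runs a cumulant expansion in the entries of \(X\) to obtain a self-consistent equation of the schematic form
\[
    \wh\cB_{12}\bigl[G_1 B G_2 - M_1 B M_2\bigr] = (\text{fluctuation}) + (\text{error}),
\]
estimates the fluctuation and error terms using Theorem~\ref{theo:Gll} (and its iterated/fluctuation-averaged versions) as a black box, and then inverts \(\wh\cB_{12}\), which is exactly what produces the factor \(\abs{\wh\beta_\ast}^{-1}\) in~\eqref{final local law}. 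The deterministic ingredients of this scheme --- the matrix \(M^z\) solving~\eqref{eq m}, the covariance operator \(\SS\) of~\eqref{S def}, the stability operator \(\wh\cB_{12}\), the object \(M_B^{z_1,z_2}\), and the quantitative bounds on \(\wh\cB_{12}^{-1}\) in terms of \(\wh\beta_\ast\) --- are, by our convention in~\eqref{S def}, defined through the \emph{complex} covariance operator, hence are literally the same as in~\cite{1912.04100} and their analysis carries over verbatim. So the whole architecture transfers and only the cumulant expansion itself has to be re-examined for real entries.

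Two new features appear, and handling them is the only genuine work. First, the second-order (Gaussian) contraction now produces the self-energy operator of the \emph{real} Hermitisation, which is \(\SS^{(\R)} = \SS + \SS_T\) rather than \(\SS\): a direct computation using \(\E \tilde x_{ab}^2 = n^{-1}\) shows that \(\SS_T\) sends a block matrix \(\bigl(\begin{smallmatrix} A & B \\ C & D\end{smallmatrix}\bigr)\) to \(n^{-1}\bigl(\begin{smallmatrix} 0 & C^T \\ B^T & 0\end{smallmatrix}\bigr)\), so that \(\norm{\SS_T[R]} \le n^{-1}\norm{R}\). Consequently \(M_1\SS_T[\cdot]M_2\) has operator norm \(\landauO{1/n}\); writing \(\wh\cB_{12}^{(\R)} = \wh\cB_{12}\bigl(1 - \wh\cB_{12}^{-1}M_1\SS_T[\cdot]M_2\bigr)\), the Neumann series converges in the regime \(\eta_\ast \ge n^{-1+\epsilon_*}\abs{\wh\beta_\ast}^{-1}\), where \(\abs{\wh\beta_\ast}\gg 1/n\) (automatic when \(\eta_\ast\gtrsim 1\) and forced by the assumption otherwise), so \((\wh\cB_{12}^{(\R)})^{-1}\) obeys the same bounds as \(\wh\cB_{12}^{-1}\), and replacing \(\wh\cB_{12}\) by \(\wh\cB_{12}^{(\R)}\) changes \(M_B^{z_1,z_2}\) only by \(\landauO{1/(n\abs{\wh\beta_\ast})}\), comfortably inside the error in~\eqref{final local law}. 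Second, the third and fourth cumulants of \(\chi\) need not vanish in the real case and generate additional terms in the expansion, but these carry the same extra powers of \((n\eta_\ast)^{-1/2}\) as the analogous complex-cumulant terms of~\cite{1912.04100} --- the power counting depends only on the number of factors, not on whether one pairs with \(\chi\) or \(\ov\chi\) --- and are absorbed into the stated error without affecting the leading deterministic approximation \(M_B^{z_1,z_2}\).

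With these two observations in hand the argument of~\cite[Thm.~\ref{cplx-thm local law G2}]{1912.04100} applies line by line and yields~\eqref{final local law} with the same error exponents; on the imaginary axis the improved constant \(C_\epsilon = 1\) is inherited from the sharper single-resolvent bound there (the remark following Theorem~\ref{theo:Gll}), while off the axis the loss \(C_\epsilon\le\epsilon^{-100}\) is simply the one already present in Theorem~\ref{theo:Gll}. The hard part is thus entirely concentrated in the short bookkeeping of the real cumulant expansion: checking that the real second moments do not generate any new \(\landauO{1}\) contraction beyond \(\SS\) and that the only correction, \(\SS_T\), is \(\landauO{1/n}\) in operator norm; once that is settled, everything else is the deterministic stability analysis of \(\wh\cB_{12}\) and the term-by-term estimates imported unchanged from the companion paper.
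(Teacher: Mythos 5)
Your proposal follows the same overall strategy as the paper's one-sentence citation: reduce Theorem~\ref{thm local law G2} to the complex companion result~\cite[Thm.~\ref{cplx-thm local law G2}]{1912.04100} with Theorem~\ref{theo:Gll} as the single-resolvent input, and observe that the real symmetry class enters only through the extra operator \(\cT\) (your \(\SS_T\)) produced by \(\E\chi^2\neq 0\). Your observation that \(\norm{\cT[R]}\le n^{-1}\norm{R}\), so that \(M_1\SS_T[\cdot]M_2\) perturbs \(\wh\cB_{12}\) only at order \(1/n\) and the Neumann series for \((\wh\cB_{12}^{(\R)})^{-1}\) converges under the standing assumption \(\eta_\ast\ge n^{-1+\epsilon_*}\abs{\wh\beta_\ast}^{-1}\), correctly captures the paper's remark that "to leading order there is no difference between the two" and is the right way to dismiss the \emph{deterministic} discrepancy between \(\SS\) and \(\SS^{(\R)}\).

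There is, however, a gap in the treatment of the \emph{random} contribution of \(\cT\). The self-renormalisation \(\underline{W\,\cdot\,}\) in~\eqref{self renorm} is built from the complex Ginibre covariance and cancels only the \(\kappa(ab,ba)=1\) second cumulant. In the real case the cumulant expansion of \(\E\braket{\underline{WG_1BG_2}A}\) therefore leaves behind a residual second-cumulant term, schematically \(\tfrac{1}{n}\sump_{ab}\E\braket{\Delta^{ab}G_1\Delta^{ab}G_1BG_2A}+\tfrac{1}{n}\sump_{ab}\E\braket{\Delta^{ab}G_1BG_2\Delta^{ab}G_2A}\), in which \(\cT\) is applied to \emph{random resolvent products}, not to the bounded deterministic matrices \(M_i\). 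The pure operator-norm bound \(\norm{\cT[G_1BG_2]}\lesssim\frac{1}{n}\norm{G_1}\norm{G_2}\sim\frac{1}{n\eta_1\eta_2}\) is then too crude: the target error in~\eqref{final local law} carries the additional small prefactor \(\bigl(\eta_\ast^{1/12}+\eta_\ast^{1/4}/\abs{\wh\beta_\ast}+\dots\bigr)\), so that for, e.g., \(\eta_\ast\sim 1\) the crude bound \(\sim 1/n\) is larger than the claimed \(\sim n^{-1}\abs{\wh\beta_\ast}^{-1}\bigl(\eta_\ast^{1/12}+\dots\bigr)\). To close this you need the Ward-identity improvement, exactly as the paper carries out for the single-resolvent analogue in~\eqref{eq T bound}, where \(\cT[G]G\) is bounded by \((\Lambda+\rho)/(n\eta)\) rather than \(1/(n\eta^2)\) by pairing \(GG^\ast=\Im G/\eta\). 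Your phrase "absorbed into the stated error" asserts precisely the estimate that needs to be proved. To be fair, the paper's own proof is a single sentence and leaves this step equally implicit; the point is that \(\norm{\cT}\lesssim 1/n\) alone does not establish it.
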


An effective lower bound on \(\Re\widehat{\beta}_*\), hence on \(\abs{\widehat{\beta}_*}\), will be given in Lemma~\ref{lem:lowbeta} later.

The paper is organised as follows: In Section~\ref{section CLT for linstats} we prove Theorem~\ref{theo:CLT} by combining Propositions~\ref{prop clt resolvent} and~\ref{prop:indmr}. In Section~\ref{section G local law} we prove the local law for \(G\) away from the imaginary axis, Theorem~\ref{theo:Gll}. In Section~\ref{sec:CLTres} we prove Proposition~\ref{prop clt resolvent}, the Central Limit Theorem for resolvents using Theorem~\ref{thm local law G2}. In Section~\ref{sec:IND} we prove Proposition~\ref{prop:indmr} again using Theorem~\ref{thm local law G2}, and conclude Theorem~\ref{theo:un}.

Note that Theorem~\ref{thm local law G2}, the local law for \(G^{z_1}BG^{z_2}\), is used in two different contexts. Traces of \(AG^{z_1}BG^{z_2}\), for some deterministic matrices \(A,B\in\C^{2n \times 2n}\), naturally arise along the cumulant expansion for \(\prod_i\braket{ G_i-\E G_i}\) in Proposition~\ref{prop clt resolvent}. The proof of Proposition~\ref{prop:indmr} is an analysis of weakly correlated DBMs, where the correlations are given by eigenvector overlaps~\eqref{eq:ovint}, whose estimate is reduced to an upper bound on \(\braket{\Im G^{z_1}\Im G^{z_2}}\).

\section{Central limit theorem for linear statistics: Proof of Theorem~\ref{theo:CLT}}\label{section CLT for linstats}

From Propositions~\ref{prop clt resolvent} and~\ref{prop:indmr} we conclude Theorem~\ref{theo:CLT} analogously to~\cite[Section~\ref{cplx-sec:PCLT}]{1912.04100}, we only describe the few minor modifications.

\begin{proof}[Proof of Theorem~\ref{theo:CLT}]
    We explain the four modifications compared with the proof of~\cite[Theorem~\ref{cplx-theo:CLT}]{1912.04100}. First, there are two additional terms in in the variance~\eqref{eq:exder} and expectation~\eqref{cE def} of the resolvent CLT, compared to~\cite[Eqs.~\eqref{cplx-eq:exder}--\eqref{cplx-prop clt exp}]{1912.04100}. These additional terms result in additional explicit terms in~\eqref{eq:expv} and~\eqref{eq:cov}. For the expectation in~\eqref{eq:expv} we have
    \begin{align}\label{correction exp eq}
         & -\frac{1}{2\pi\ii}\int_\C\Delta f(z) \frac{\ii}{4n}\int_0^\infty\partial_\eta \log\Bigl(1-u^2+2u^3\abs{z}^2-u^2(z^2+\ov z^2)\Bigr) \dif \eta \dif^2z                                                              \\\nonumber
         & \;=\frac{1}{4\pi}\int_{\DD } \frac{f(\Re z)-f(z)}{(\Im z)^2}\dif^2z -\frac{1}{2\pi}\int_0^{2\pi}  f(e^{\ii\theta}) \dif \theta + \frac{1}{2\pi}\int_{-1}^{1} \frac{f(x)}{\sqrt{1-x^2}}\dif x+\frac{f(1)+f(-1)}{4}
    \end{align}
    and for the variance in~\eqref{eq:cov} we have
    \begin{equation}\label{correction var eq}
        \begin{split}
            &-\frac{1}{8\pi^2}\int_\C\dif^2 z_1\int_\C\dif^2 z_2\Delta f(z_1)\ov{\Delta g(z_2)} \int_0^\infty\dif\eta_1 \int_0^\infty\dif\eta_2 V(z_1,\ov{z_2},\eta_1,\eta_2)\\
            &\qquad= \frac{1}{4\pi}\braket{\nabla g(\ov{\cdot}),\nabla f}_{L^2(\DD)} + \frac{1}{2}\braket{g(\ov{\cdot}),f}_{\dot H^{1/2}(\partial\DD)},
        \end{split}
    \end{equation}
    so that together with contribution from \(V(z_1,z_2,\eta_1,\eta_2)\) in~\eqref{eq:exder} we have
    \[\begin{split}
            &\frac{1}{4\pi}\braket{\nabla g +\nabla g(\ov{\cdot}),\nabla f}_{L^2(\DD)} + \frac{1}{2}\braket{g +g(\ov{\cdot}),f}_{\dot H^{1/2}(\partial\DD)} \\
            &\quad = \frac{1}{2\pi} \braket{\nabla P_\sym g,\nabla P_\sym f}_{L^2(\DD)} + \braket{P_\sym g,P_\sym f}_{\dot H^{1/2}(\partial\DD)}.
        \end{split}.\]
    The identities~\eqref{correction exp eq}--\eqref{correction var eq} will be proven separately below. The other two modifications concern the error terms in~\eqref{eq psi error} and~\eqref{EG exp}. Namely, there is an
    additional factor including \((\Im z_l)^{-2}\) (cf.~\cite[Eqs.~\eqref{cplx-eq psi error},~\eqref{cplx-prop clt exp}]{1912.04100}), and, finally,~\eqref{eq:indtrlm} holds under the additional assumption that \(\abs{z_l-\overline{z}_m} \ge n^{-\omega_d}\), and \(\abs{z_l-\overline{z}_l}\ge n^{-\omega_d}\) (cf.~\cite[Prop.~\ref{cplx-prop:indmr}]{1912.04100}). Both these issues can be handled in the same way as the constraints on \(\abs{z_l-z_m}\) have been treated in~\cite[Section~\ref{cplx-sec:PCLT}]{1912.04100} (see e.g.~\cite[Eq.~\eqref{cplx-eq:bound3}]{1912.04100}). This means that we additionally exclude the regimes of negligible volume \(\abs{z_l-\overline{z}_m}< n^{-\omega_d}\) or \(\abs{z_l-\overline{z}_l}< n^{-\omega_d}\) from the \(\dif z_1\dots \dif z_p\)-integral in~\cite[Eqs.~\eqref{cplx-eq:bound2},~\eqref{cplx-eq:wickprod2}]{1912.04100} using the almost optimal \emph{a priori} bound from~\cite[Lemma~\ref{cplx-lem:aprior}]{1912.04100}. Finally, the last modification concerns the application of~\cite[Lemma~\ref{cplx-lem:firststepmason}]{1912.04100} to analyze the regime $\eta\in [\eta_0,\eta_c]$ in the computation of $\E\sum_i f(\sigma_i)$. In the current case, we need to further assume that $|\Im z|$ is not too small to rely on Lemma~\ref{lem:firststepmason} exactly as in the proof of \cite[Lemma~\ref{cplx-lem:compe}]{1912.04100}. The contribution of the regime $|\Im z|\ll 1$ is again negligible by its small volume.
\end{proof}

\begin{proof}[Proof of~\eqref{correction exp eq}]
    With the short-hand notation \(z=x+\ii y\), we compute
    \begin{equation}
        \label{eq:etaint}
        \begin{split}
            &\int_0^\infty \frac{\ii}{4n}\partial_\eta \log\Bigl(1-u^2+2u^3\abs{z}^2-u^2(z^2+\ov z^2)\Bigr) \dif\eta \\
            &\quad = -\frac{\ii}{4n}\begin{cases}
                \log 4+2\log \abs{y},                                       & \abs{z}\le 1, \\
                \log\abs*{(x^2+y^2)^2+1-2(x^2-y^2)}-\log\abs*{(x^2+y^2)^2}, & \abs{z}>1,
            \end{cases}
        \end{split}
    \end{equation}
    using that $u=1+\mathcal{O}(\eta)$ for $|z|\le 1$ and $u=|z|^{-2}+\mathcal{O}(\eta)$ for $|z|> 1$, so that for~\eqref{correction exp eq} we need to compute
    \begin{equation}
        \label{eq:intbp}
        \frac{1}{4}\int_\C \Delta f(z) \bigl[(\log 4+2\log\abs{y})\bm1(\abs{z}\le 1)+(\log\abs{z-1}^2+\log\abs{z+1}^2-2\log\abs{z}^2)\bm1(\abs{z}\ge 1) \bigr] \dif^2z.
    \end{equation}
    We may assume that \(f\) is symmetric with respect to the real axis, i.e.\ \(f=P_\sym f\) with \(P_\sym\) as in~\eqref{Psym def} since \(L_n(f-P_\sym f)=0\) by symmetry of the spectrum and therefore \(L_n(f)=L_n(P_\sym f)\). Since the functions in~\eqref{eq:intbp} are singular we introduce an \(\epsilon\)-regularisation which enables us to perform integration by parts. In particular, the integral in~\eqref{eq:intbp} is equal to the \(\epsilon\to 0\) limit of
    \begin{equation}
        \label{eq:igoy}
        \begin{split}
            &\int_\C \partial_z\partial_{\ov{z}} f(z) \bigl[(\log 4+2\log\abs{y})\bm1(\abs{z}\le 1, \abs{y}\ge \epsilon) \\
                &\qquad+(\log\abs{z-1}^2+\log\abs{z+1}^2-2\log\abs{z}^2)\bm1(\abs{z}\ge 1, \abs{z\pm1}\ge\epsilon) \bigr] \dif^2z,
        \end{split}
    \end{equation}
    where \(\abs{z\pm1}\ge \epsilon\) denotes that \(\abs{z-1}\ge \epsilon\) and \(\abs{z+1}\ge \epsilon\), and we used that the contribution from the regimes \(\abs{y}\le \epsilon\) and \(\abs{z\pm1}\le \epsilon\) are negligible as \(\epsilon\to 0\). In the following equalities should be understood in the \(\epsilon\to 0\) limit.

    Since
    \[ \log\abs{z-1}^2+\log \abs{z+1}^2-2\log\abs{z}^2=\log 4+2\log \abs{y}\]
    for \(\abs{z}=1\), when integrating by parts in~\eqref{eq:igoy}, the terms where either \(\bm1(\abs{z}\le 1)\) or \(\bm1(\abs{z}>1)\) are differentiated are equal to zero, using that
    \begin{equation}
        \label{eq:intbpchf}
        \partial_z \bm1(\abs{z}\ge 1) \dif^2 z=\frac{\ii}{2} \bm1(\abs{z}=1) \dif \overline{z}.
    \end{equation}
    We remark that~\eqref{eq:intbpchf} is understood in the sense of distributions, i.e.\ the equality holds when tested against compactly supported test functions \(f\):
    \[
        -\int_\C\partial_z f(z)\bm1(\abs{z}\ge 1) \dif^2 z= \frac{\ii}{2} \int_{\abs{z}=1} f(z)\dif \overline{z}.
    \]
    Moreover, with a slightly abuse of notation in~\eqref{eq:intbpchf} by \(\bm1(\abs{z}=1) \dif \overline{z}\) we denote the clock-wise contour integral over the unit circle. This notation is used in the remainder of this section.

    Then, performing integration by parts with respect to \(\partial_{\overline{z}}\), we conclude that~\eqref{eq:igoy} is equal to
    \begin{equation}
        \label{eq:firststep}
        -\int_\C \partial_z f(z) \left[\frac{\ii}{y} \bm1(\abs{z}\le 1, \abs{y}\ge \epsilon)+\left(\frac{1}{\overline{z}-1}+\frac{1}{\overline{z}+1}-\frac{2}{\overline{z}} \right)\bm1(\abs{z}\ge 1, \abs{z\pm1}\ge \epsilon)\right]\dif^2 z.
    \end{equation}
    In order to get~\eqref{eq:firststep} we used that
    \[ \abs*{\partial_z f(x+\ii \epsilon)-\partial_z f(x-\ii \epsilon)}\cdot \abs*{\log \epsilon} \lesssim \epsilon^{\delta'},\]
    for some small fixed \(\delta'>0\), by \(f\in H^{2+\delta}\), and similarly all the other \(\epsilon\)-boundary terms tend to zero. This implies that when the \(\partial_{\overline{z}}\) derivative hits the \(\epsilon\)-boundary terms then these give a negligible contribution as \(\epsilon\to 0\). We now consider the two terms in~\eqref{eq:firststep} separately.

    Since the integral of \(y^{-1}\) over \(\DD \) is zero we can rewrite the first term in~\eqref{eq:firststep} as
    \[ -\int_\C \partial_z (f(z)-f(x)) \frac{\ii}{y} \bm1(\abs{z}\le 1, \abs{y}\ge \epsilon)\dif^2 z.\]
    Then performing integration by parts we conclude that the first term in~\eqref{eq:firststep} is equal to
    \begin{equation}
        \label{int eq:1}
        \begin{split}
            &-\frac{1}{2}\int_{\DD } \frac{f(x+\ii y)-f(x)}{y^2} \dif x\dif y-\frac{\ii}{2}\int_0^{2\pi} \frac{f(e^{\ii\theta})-f(\cos \theta)}{\sin \theta} e^{-\ii\theta} \dif \theta\\
        \end{split}
    \end{equation}
    where we used that
    \[ \abs*{\frac{f(x,\epsilon)-2f(x,0)+f(x,-\epsilon)}{\epsilon}}\lesssim \epsilon^{\delta'}, \]
    to show that the terms when the \(\partial_z\) derivative hits the \(\epsilon\)-boundary terms go to zero as \(\epsilon\to 0\). Note that the integrals in~\eqref{int eq:1} are absolutely convergent since \(f\) is symmetric with respect to the real axis. For the second term in~\eqref{int eq:1} we further compute
    \begin{equation}
        \begin{split}
            \int_0^{2\pi} \frac{f(e^{\ii\theta})-f(\cos \theta)}{\sin \theta} e^{-\ii\theta} \dif \theta
            &= \int_0^{2\pi} \frac{f(e^{\ii\theta})-f(\cos \theta)}{\sin \theta} (\cos\theta-\ii\sin\theta) \dif \theta\\
            & =- \ii \int_0^{2\pi} \Bigl(f(e^{\ii\theta})-f(\cos \theta)\Bigr) \dif \theta
        \end{split}
    \end{equation}
    where we used that the term with $\cos \theta/\sin \theta$ is zero by symmetry.

    With defining the domain
    \[ \Omega_\epsilon:= \{\abs{z}\ge 1\}\cap  \{\abs{z\pm1}\ge \epsilon\},\]
    the second term in~\eqref{eq:firststep} is equal to
    \begin{equation}
        \label{eq:omeps}
        -\int_{\Omega_\epsilon} \partial_z f(z) \left(\frac{1}{\overline{z}-1}+\frac{1}{\overline{z}+1}-\frac{2}{\overline{z}} \right)\dif^2 z.
    \end{equation}
    Since
    \[ \frac{1}{\overline{z}-1}+\frac{1}{\overline{z}+1}-\frac{2}{\overline{z}}\]
    is anti-holomorphic on \(\Omega_\epsilon\), performing integration by parts with respect to \(\partial_z\) in~\eqref{eq:omeps}, we obtain
    \begin{equation}
        \label{eq:2}
        -\int_{\Omega_\epsilon} \partial_z f(z) \left(\frac{1}{\overline{z}-1}+\frac{1}{\overline{z}+1}-\frac{2}{\overline{z}} \right)\dif^2 z=\frac{\ii}{2}\int_{\partial\Omega_\epsilon}  f(z) \left(\frac{1}{\overline{z}-1}+\frac{1}{\overline{z}+1}-\frac{2}{\overline{z}} \right)\dif \overline{z}.
    \end{equation}
    Taking the limit \(\epsilon\to 0\) in the r.h.s.\ of~\eqref{eq:2} we conclude
    \begin{equation}
        \label{Omega eq:6}
        \begin{split}
            \lim_{\epsilon\to 0}\frac{\ii}{2}\int_{\partial\Omega_\epsilon}  f(z) \left(\frac{1}{\overline{z}-1}+\frac{1}{\overline{z}+1}-\frac{2}{\overline{z}} \right)\dif \overline{z}&=\frac{\pi}{2} \bigl[f(1)+f(-1)\bigr]- \int_0^{2\pi} f(e^{\ii\theta}) \dif \theta \\
            &\quad +\lim_{\epsilon\to 0} \left(\int_\epsilon^{\pi-\epsilon}+\int_{\pi+\epsilon}^{2\pi-\epsilon} \right) f(e^{\ii\theta})\frac{e^{-2\ii\theta}}{e^{-2\ii\theta}-1} \dif \theta.
        \end{split}
    \end{equation}
    The last term in~\eqref{Omega eq:6} simplifies to
    \begin{equation}
        \label{eq:simpl}
        \begin{split}
            \lim_{\epsilon\to 0} \left(\int_\epsilon^{\pi-\epsilon}+\int_{\pi+\epsilon}^{2\pi-\epsilon} \right) f(e^{\ii\theta})\frac{e^{-2\ii\theta}}{e^{-2\ii\theta}-1} \dif \theta
            &=  \lim_{\epsilon\to 0} \left(\int_\epsilon^{\pi-\epsilon}+\int_{\pi+\epsilon}^{2\pi-\epsilon} \right) f(e^{\ii\theta})\Big[ \frac{\ii}{2} \frac{\cos\theta}{\sin\theta}
                +\frac{1}{2}\Big] \dif \theta \\
            &= \frac{1}{2} \int_0^{2\pi} f(e^{\ii\theta}) \dif \theta,
        \end{split}
    \end{equation}
    by symmetry. By combining~\eqref{int eq:1}--\eqref{eq:simpl} we conclude~\eqref{correction exp eq}.
\end{proof}

\begin{proof}[Proof of~\eqref{correction var eq}]
    By change of variables \(z_2\to\ov{z_2}\) we can then write
    \begin{equation}\label{eq symmetrised variance}
        \begin{split}
            &\int_\C \dif^2 z_1 \int_\C \dif^2 z_2 \int_0^\infty \dif\eta_1 \int_0^\infty\dif \eta_2  \Delta f(z_1) \Delta \ov{g(z_2)}V(z_1,\ov{z_2},\eta_1,\eta_2) \\
            &\quad = \int_\C \dif^2 z_1 \int_\C \dif^2 z_2 \int_0^\infty \dif\eta_1 \int_0^\infty\dif \eta_2 \Delta f(z_1) \Delta \ov{g(\ov{z_2})} V(z_1,z_2,\eta_1,\eta_2)
        \end{split}
    \end{equation}
    such that~\cite[Lemma~\ref{cplx-lem:vi}]{1912.04100} is applicable and~\eqref{correction var eq} follows.
\end{proof}

\section{Local law away from the imaginary axis: Proof of Theorem~\ref{theo:Gll}}\label{section G local law}
The goal of this section is to prove a local law for \(G=G^z(w)\) for \(z\) in the bulk, as stated in Theorem~\ref{theo:Gll}. We do not follow the precise \(\epsilon\)-dependence in the proof explicitly but it can be checked from the arguments below that \(C_\epsilon=\epsilon^{-100}\) clearly suffices. We denote the unique solution to the deterministic matrix equation (see e.g.~\cite{MR3916109})
\begin{equation}\label{eq MDE}
    -1 = \SS[M] M + Z M + wM,\quad Z:= \begin{pmatrix}
        0 & z \\ \ov{z}&0
    \end{pmatrix}, \quad \Im M>0, \quad \Im w>0
\end{equation}
by \(M=M^z(w)\), where we recall the definition of \(\SS\) from~\eqref{S def}. The solution to~\eqref{eq MDE} is given by~\eqref{eq M}. To keep notations compact, we first introduce a commonly used (see, e.g.~\cite{MR3068390}) notion of high-probability bound.
\begin{definition}[Stochastic Domination]\label{def:stochDom}
    If \[X=\tuple*{ X^{(n)}(u) \given n\in\N, u\in U^{(n)} }\quad\text{and}\quad Y=\tuple*{ Y^{(n)}(u) \given n\in\N, u\in U^{(n)} }\] are families of non-negative random variables indexed by \(n\), and possibly some parameter \(u\) in a set \(U^{(n)}\), then we say that \(X\) is stochastically dominated by \(Y\), if for all \(\epsilon, D>0\) we have \[\sup_{u\in U^{(n)}} \Prob\left[X^{(n)}(u)>n^\epsilon  Y^{(n)}(u)\right]\leq n^{-D}\] for large enough \(n\geq n_0(\epsilon,D)\). In this case we use the notation \(X\prec Y\). Moreover, if we have \(\abs{X}\prec Y\) for families of random variables \(X, Y\), we also write \(X=\mathcal{O}_\prec(Y)\).
\end{definition}
Let us assume that some a-priori bounds
\begin{equation}\label{eq init bound}
    \abs{\braket{\vx,(G-M)\vy}}\prec \Lambda, \qquad \abs{\braket{A(G-M)}}\prec \xi
\end{equation}
for some deterministic control functions \(\Lambda\) and \(\xi\) depending on \(w,z\) have already been established, uniformly in \(\vx, \vy, A\) under the constraint \(\norm{\vx}, \norm{\vy}, \norm{A}\le1\). From the resolvent equation \(1=(W-Z-w)G\) we obtain
\begin{equation}\label{eq res eq}
    -1=-WG+ZG+wG = \SS[G]G + ZG + wG -\un{WG},
\end{equation}
where we introduced the \emph{self-renormalisation}, denoted by underlining, of a random
variable of the form \(W f(W)\) for some regular function \(f\) as
\begin{equation}\label{self renorm}
    \un{W f(W)} := W f(W)- \wt\E \wt W (\partial_{\wt W} f)(W), \qquad \wt W = \begin{pmatrix}
        0 & \wt X \\ \wt X^\ast & 0
    \end{pmatrix},\quad \wt X \sim \mathrm{Gin}_\C,
\end{equation}
with \(\wt X\) independent of \(X\). The choice of defining the self-renormalisation in terms of the complex rather than real Ginibre ensemble has the consequence that an additional error term needs to be estimated. For real Ginibre we have
\[ \E WG = -\E\SS[G]G - \E\cT[G]G, \quad \cT\Biggl[\begin{pmatrix}
            a & b \\c&d
        \end{pmatrix}\Biggr] = \frac{1}{n} \begin{pmatrix}
        0 & c^t \\b^t&0
    \end{pmatrix}, \]
but the renormalisation comprises only the \(\SS[G]\) term, i.e.\
\[\underline{WG} = WG + \E \SS[G]G,\]
thus the \(\cT\)-term needs to be estimated. By the Ward identity \(GG^\ast=G^\ast G=\eta^{-1}\Im G\) it follows that
\begin{equation}\label{eq T bound}
    \begin{split}
        \abs{\braket{\vx,\cT[G]G\vy}}\le \frac{1}{n}\sqrt{\braket{\vx,G G^\ast\vx}}\sqrt{\braket{\vy,G^\ast G\vy}}=\frac{1}{n\eta}\sqrt{\braket{\vx,\Im G\vx}}\sqrt{\braket{\vy,\Im G\vy}} \prec \frac{\Lambda+\rho}{n\eta},
    \end{split}
\end{equation}
where \(\rho:=\pi^{-1}\Im m\) from~\eqref{eq:scdos}. By~\cite[Theorem~4.1]{MR3941370} it follows that
\[ \abs{\braket{\vx,(WG+\SS[G]G+\cT[G]G)\vy}}\prec \sqrt{\frac{\rho+\Lambda}{n\eta}}, \quad \abs{\braket{A(WG+\SS[G]G+\cT[G]G) }}\prec\frac{\rho+\Lambda}{n\eta} \]
and therefore, together with the bound~\eqref{eq T bound} on the \(\cT\)-term we obtain
\begin{equation}\label{fluctuation av bound}
    \abs{\braket{\vx,\un{WG}\vy}}\prec \sqrt{\frac{\rho+\Lambda}{n\eta}},\quad \abs{\braket{A\un{WG}}}\prec \frac{\rho+\Lambda}{n\eta}.
\end{equation}

We now consider the \emph{stability operator} \(\cB:=1-M\SS[\cdot]M\) which expresses the stability of~\eqref{eq MDE} against small perturbations. Since \(\SS\) only depends on the four block traces of the input matrix, and \(M\) is a multiple of the identity matrix in each block, the operator \(\cB\) can be understood as an operator acting on \(2\times 2\) matrices after taking a \emph{partial trace}. Henceforth for all practical purposes we may identify \(\cB\) with this four dimensional operator. Written as a \(4\times 4\) matrix, it is given by
\begin{equation}\label{cal B}
    \cB = \begin{pmatrix}
        B_1 & 0 \\ B_2 & 1
    \end{pmatrix}, \quad B_1 = \begin{pmatrix}
        1-u^2\abs{z}^2 & -m^2 \\ -m^2 & 1-u^2\abs{z}^2
    \end{pmatrix}, \quad B_2 = \begin{pmatrix}
        m u z & mu z \\ mu\ov z & mu\ov z
    \end{pmatrix},
\end{equation}
with \(m,u\) defined in~\eqref{eq m}--\eqref{eq M}. Here the rows and columns of \(\cB\) are ordered in such a way that \(2\times 2\) matrices are mapped to vectors as in
\[ \begin{pmatrix}
        a & b \\c&d
    \end{pmatrix}\Rightarrow  \begin{pmatrix}
        a \\d\\b\\c
    \end{pmatrix}.\]
We first record some spectral properties of \(\cB\) in the following lemma, the proof of which we defer to the end of the section. Note that \(\mathcal{B}^*\) refers to the adjoint of \(\mathcal{B}\) with respect to the scalar product \(\braket{ A,B}=(2n)^{-1}\text{Tr} A^*B\), for any deterministic matrices \(A,B\in\C^{2n\times 2n}\).
\begin{lemma}\label{lemma B analysis}
    Let \(w\in\HC\), \(z\in\C\) be bounded spectral parameters, \(\abs{w}+\abs{z}\lesssim1\). Then the operator \(\cB\) has the trivial eigenvalues \(1\) with multiplicity \(2\), and furthermore has two non-trivial eigenvalues, and left and right eigenvectors
    \[ \begin{split}
            \cB[E_-] &=(1+m^2-u^2\abs{z}^2)E_-\quad \cB^\ast[E_-]=\ov{(1+m^2-u^2\abs{z}^2)} E_-, \\
            \cB[V_r] &= (1-m^2-u^2\abs{z}^2)V_r, \quad \cB^\ast[V_l]=\ov{(1-m^2-u^2\abs{z}^2)}V_l,
        \end{split}  \]
    where \(E_-:= (E_1-E_2)/\sqrt{2}\) and
    \begin{equation}
        \label{e1e2}
        E_1:=\begin{pmatrix}
            1 & 0 \\0&0
        \end{pmatrix}, \quad E_2:= \begin{pmatrix}
            0 & 0 \\0&1
        \end{pmatrix},\quad V_r := \begin{pmatrix}
            m^2+u^2 \abs{z}^2 & -2muz \\ -2mu\ov{z} &  m^2+u^2 \abs{z}^2
        \end{pmatrix}, \quad V_l:= \frac{1}{\ov{\braket{V_r}}}.
    \end{equation}
    Moreover, for the second non-trivial eigenvalue we have the lower bound
    \begin{equation}\label{eq beta lower bound lemma}
        \abs{1-m^2-u^2\abs{z}^2}\gtrsim \begin{cases}
            \Im m,     & \abs{1-\abs{z}}\ge\epsilon, \\
            (\Im m)^2, & \abs{1-\abs{z}}<\epsilon.
        \end{cases}
    \end{equation}
\end{lemma}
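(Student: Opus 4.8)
\emph{Plan of proof.} I would split the statement into the purely algebraic description of the eigenstructure of \(\mathcal{B}\) and the analytic lower bound~\eqref{eq beta lower bound lemma}, which ultimately rests on the shape of the self-consistent density \(\rho^z\).

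\emph{Eigenvalues and eigenvectors.} Using the \(4\times 4\) representation~\eqref{cal B}, which is block lower-triangular in the splitting of a \(2\times 2\) matrix into its diagonal and off-diagonal parts, the characteristic polynomial factors as \((1-\lambda)^2\det(B_1-\lambda)\); the first factor yields the trivial eigenvalue \(1\) with multiplicity two, and \(\det(B_1-\lambda)=(1-u^2\abs{z}^2-\lambda)^2-m^4\) gives the two non-trivial eigenvalues \(1-u^2\abs{z}^2\pm m^2\). Rather than lifting the eigenvectors of \(B_1\) through the \(B_2\)-block, I would verify the asserted eigenvectors directly against \(\mathcal{B}=1-M\SS[\cdot]M\): from \(\SS[E_1-E_2]=-(E_1-E_2)\) together with \(M(E_1-E_2)M=(m^2-u^2\abs{z}^2)(E_1-E_2)\) one gets \(\mathcal{B}[E_-]=(1+m^2-u^2\abs{z}^2)E_-\); observing that the matrix \(V_r\) of~\eqref{e1e2} is exactly \(M^2\), and that \(\SS[M^2]\) equals \(m^2+u^2\abs{z}^2\) times the identity (its diagonal blocks coincide and \(\SS\) annihilates the off-diagonal ones), one gets \(\mathcal{B}[V_r]=\mathcal{B}[M^2]=(1-m^2-u^2\abs{z}^2)M^2\). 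For the adjoint statements I would use that \(\SS\) is self-adjoint with respect to \(\braket{\cdot,\cdot}\), so that \(\mathcal{B}^\ast[A]=A-\SS[M^\ast AM^\ast]\), and repeat the two computations with \(m,u\) replaced by \(\overline m,\overline u\); since \(V_l\) is a scalar multiple of the identity, \(\mathcal{B}^\ast[V_l]=\overline{(1-m^2-u^2\abs{z}^2)}V_l\) follows from \(\SS[(M^\ast)^2]=\overline{m^2+u^2\abs{z}^2}\) times the identity, and the normalisation \(V_l=1/\overline{\braket{V_r}}\) is chosen precisely so that \(\braket{V_l,V_r}=1\).

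\emph{Lower bound, the easy regime.} I would extract two elementary consequences of~\eqref{eq m}: multiplying out and using \(u=m/(w+m)\) gives \(m^2=u^2\abs{z}^2-u\), so the relevant eigenvalue can be rewritten as \(\beta_+:=1-m^2-u^2\abs{z}^2=1-2m^2-u\); and taking imaginary parts in \(1/m=\abs{z}^2/(w+m)-(w+m)\) gives
\[ \Im m=(\eta+\Im m)\bigl(\abs{m}^2+\abs{z}^2\abs{u}^2\bigr), \]
whence \(\abs{m}^2+\abs{z}^2\abs{u}^2=\Im m/(\eta+\Im m)<1\) and therefore
\[ \abs{\beta_+}\ge 1-\abs{m}^2-\abs{z}^2\abs{u}^2=\frac{\eta}{\eta+\Im m}. \]
Since \(\abs{m}<1\) by~\eqref{eq M bound}, this already yields \(\abs{\beta_+}\gtrsim\Im m\ge(\Im m)^2\) whenever \(\eta\gtrsim(\Im m)^2\), which settles the edge case \(\abs{1-\abs{z}}<\epsilon\) in that range as well as part of the bulk case.

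\emph{Lower bound, the hard regime and the main obstacle.} The remaining regime \(\eta\ll(\Im m)^2\), in which \(w\) lies close to \(\mathbf R\) and to the bulk of the spectrum of \(H^z\), is the crux. On the imaginary axis it is still immediate: there \(m=\ii\Im m\), hence \(\beta_+=\eta/(\eta+\Im m)+2(\Im m)^2\ge 2(\Im m)^2\), and the bulk bound \(\abs{\beta_+}\gtrsim\Im m\) (with an \(\epsilon\)-dependent constant) follows by combining this with the monotonicity of \(\eta\mapsto\Im m^z(\ii\eta)\) and the value \(\beta_+=2(1-\abs{z}^2)\) at \(\eta=0\). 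For general \(w\in\HC\) I would invoke the shape analysis of \(\rho^z\) from~\cite{1907.13631} (see also~\cite{MR3916109}): the holomorphic function \(w\mapsto\beta_+(w)\) degenerates only over the edges of \(\supp\rho^z\), all of which lie on \(\mathbf R\), and its local behaviour together with that of \(\Im m\) is governed by the edge type --- a square-root edge (the only kind present when \(\abs{1-\abs{z}}\ge\epsilon\)) gives \(\abs{\beta_+}\sim\Im m\), the near-cusp at the origin that forms when \(\abs{z}\) is close to \(1\) gives \(\abs{\beta_+}\sim(\Im m)^2\), and away from all edges \(\abs{\beta_+}\sim 1\); alternatively, since \(m^z,u^z\) and hence \(\beta_+\) are the same functions as in~\cite{1912.04100}, one may deduce the general-\(w\) bound from the imaginary-axis one by a perturbative argument. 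I expect this near-edge analysis to be the only genuine difficulty; the rest is direct computation.
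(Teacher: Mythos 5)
Your algebraic verification of the eigenstructure is correct and in fact more explicit than the paper (which merely asserts ``direct computation''). The observation that \(V_r=M^2\) and that \(\mathcal{B}[M^2]=M^2-M\,\SS[M^2]\,M=(1-m^2-u^2\abs{z}^2)M^2\) is a clean route that the paper does not spell out.

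However, there is a genuine gap in your treatment of the lower bound~\eqref{eq beta lower bound lemma}, and it stems from stopping one step short in a computation you essentially already had in hand. You establish \(\abs{\beta_+}\ge 1-\abs{m}^2-\abs{z}^2\abs{u}^2=\eta/(\eta+\Im m)\), note that this handles \(\eta\gtrsim(\Im m)^2\), and then declare the complementary regime \(\eta\ll(\Im m)^2\) ``the crux'', to be resolved by shape analysis of \(\rho^z\) near edges or by a perturbative argument off the imaginary axis --- neither of which you carry out. But no such machinery is needed for the \((\Im m)^2\) bound. Taking the real part of \(\beta_+=1-m^2-u^2\abs{z}^2\) and using \(\Re(m^2)=(\Re m)^2-(\Im m)^2=\abs{m}^2-2(\Im m)^2\) together with \(\Re(u^2)\le\abs{u}^2\) gives, for \emph{every} \(w\in\HC\),
\begin{equation*}
\abs{\beta_+}\ge\Re\beta_+ \ge \bigl(1-\abs{m}^2-\abs{u}^2\abs{z}^2\bigr)+2(\Im m)^2 = \frac{\eta}{\eta+\Im m}+2(\Im m)^2\ge 2(\Im m)^2.
\end{equation*}
This is exactly the quantity you computed as an \emph{equality} on the imaginary axis, but it is a one-sided bound for all \(w\). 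With it, the ``hard regime'' for the second inequality in~\eqref{eq beta lower bound lemma} disappears entirely, and the only place where edge-shape information is genuinely needed is the first inequality \(\abs{\beta_+}\gtrsim\Im m\) when \(\abs{1-\abs{z}}\ge\epsilon\) (which you, like the paper, reduce to the square-root behaviour of \(\rho^z\) at a regular edge). Your appeal to monotonicity of \(\eta\mapsto\Im m^z(\ii\eta)\) is also not justified and is not required once the above inequality is in place.

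In short: the eigenvector part is fine and slightly slicker than the paper's; the lower bound part is incomplete because you missed that \(\Re\beta_+\ge 2(\Im m)^2\) holds unconditionally, and the fallback you sketch (shape analysis / perturbation for the \((\Im m)^2\) bound) is both unnecessary and not actually carried out.
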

Corresponding to the two non-trivial eigenvalues of \(\cB\) we define the \emph{spectral projections}
\[\cP_\ast:=\braket{E_-,\cdot}E_-,\quad \cP:=\braket{V_l,\cdot}V_r,\quad \cQ_\ast:=1-\cP_\ast,\quad \cQ:=1-\cP_\ast-\cP.\]
From~\eqref{eq MDE} and~\eqref{eq res eq} it follows that
\begin{equation}\label{B[G-M] eq}
    \cB[G-M] = M\SS[G-M](G-M) - M\un{WG}.
\end{equation}
We now distinguish the two cases \(\rho\sim1\) and \(\rho\ll1\). In the former we obtain
\begin{equation}\label{BQast}
    \norm{\cQ_\ast\cB^{-1}}_{\norm{\cdot}\to\norm{\cdot}}\lesssim \frac{1}{\abs{1-m^2-u^2\abs{z}^2}} \lesssim 1
\end{equation}
by~\eqref{eq beta lower bound lemma}. Since \(\braket{E_-,G}=\braket{E_-,M}=0\) by block symmetry, it follows that
\[G-M=\cQ_\ast[G-M]=\cQ_\ast\cB^{-1}\cB[G-M]\]
and thus
\begin{subequations}\label{eq bootstrap ast}
    \begin{equation}
        \begin{split}
            \braket{\vx,(G-M)\vy} &=  \Tr \left[(\cQ_\ast \cB^{-1})^\ast[\vx\vy^\ast]\right]^\ast \cB[G-M] \\
            &= \sum_{i=1}^4 \braket{\vx_i, (M\SS[G-M](G-M)-M\un{WG}) \vy_i}\\
            & =\landauOprec*{\xi\Lambda+\sqrt{\frac{\rho+\Lambda}{n\eta}}},
        \end{split}
    \end{equation}
    where we used that the image of \(\vx\vy^\ast\) under \((\cQ_\ast\cB^{-1})^\ast\) is of rank at most \(4\), hence it can be written as \(\sum_{i=1}^4 {\bm x}_i{\bm y}_i^\ast\) with vectors of bounded norm. Similarly, for general matrices \(A\) we find
    \begin{equation}
        \begin{split}
            \braket{A(G-M)} & = \braket{\left[(\cQ \cB^{-1})^\ast[A^\ast]\right]^\ast \cB[G-M]}\\
            & = \braket{\left[(\cQ_\ast \cB^{-1})^\ast[A^\ast]\right]^\ast (M\SS[G-M](G-M)-M\un{WG})}\\
            &=\landauOprec*{\frac{\rho+\Lambda}{n\eta}+\xi^2}.
        \end{split}
    \end{equation}
\end{subequations}

In the complementary case \(\rho\ll 1\) we similarly decompose
\begin{equation}\label{eq G-M decomp}
    \begin{split}
        G-M ={}& \cP[G-M] +\cP_\ast[G-M] + \cQ[G-M] =  \theta V_r + \cQ[G-M],\quad \theta:=\braket{V_l,G-M}.
    \end{split}
\end{equation}
Now we apply \(\cB\) to both sides of~\eqref{eq G-M decomp} and take the inner product with \(V_l\) to obtain
\begin{equation}\label{eq theta indirect}
    \braket{V_l,\cB[G-M]} = (1-m^2-u^2\abs{z}^2) \theta + \braket{V_l,\cB\cQ[G-M]}
\end{equation}
from~\eqref{B[G-M] eq}. For the spectral projection \(\cQ\) we find
\begin{equation}
    \begin{split}
        \cB^{-1}\cQ &= \cQ \cB^{-1} = \begin{pmatrix}
            0 & 0 \\ B_3 & 1
        \end{pmatrix}, \quad B_3 = \frac{mu}{m^2+u^2\abs{z}^2}\begin{pmatrix}
            z & z \\ \ov z & \ov z
        \end{pmatrix}.
    \end{split}
\end{equation}
Thus it follows that
\begin{equation}
    \norm{\cB^{-1}\cQ}_{\norm{\cdot}\to\norm{\cdot}} \lesssim \frac{\abs{m u z}}{\abs{m^2 + u^2\abs{z}^2}} \lesssim 1
\end{equation}
since in the regime \(\rho\ll1\) we have \(\abs{1-m^2-u^2\abs{z}^2}\ll1\) due to \(\abs{\Im u^2}\ll 1\) which follows by a simple calculation.

By using~\eqref{B[G-M] eq} in~\eqref{eq theta indirect} it follows that
\begin{equation}\label{theta eq}
    \abs{\theta} \prec \frac{1}{\rho}\Bigl(\frac{\rho+\Lambda}{n\eta}+\xi^2\Bigr)
\end{equation}
from~\eqref{eq init bound},~\eqref{fluctuation av bound} since, due to~\(\abs{\abs{z}-1}\gtrsim \epsilon\), we have \(\abs{1-m^2-u^2\abs{z}^2}\ge\rho\) according to~\eqref{eq beta lower bound lemma}. For general vectors \(\vx,\vy\) it follows from~\eqref{eq G-M decomp},~\eqref{theta eq} and inserting \(1=\cB^{-1}\cB\) similarly to~\eqref{eq bootstrap ast} that
\begin{subequations}\label{eq bootstrap}
    \begin{equation}
        \begin{split}
            \braket{\vx,(G-M)\vy} &= \landauOprec*{\frac{\rho+\Lambda}{\rho n\eta}+\frac{\xi^2}{\rho}} + \braket{\left[(\cQ \cB^{-1})^\ast[\vx\vy^\ast]\right]^\ast \cB[G-M]} \\
            &= \landauOprec*{\frac{\rho+\Lambda}{\rho n\eta}+\frac{\xi^2}{\rho}} +\sum_{i=1}^4 \braket{\vx_i, (M\SS[G-M](G-M)-M\un{WG}) \vy_i}\\
            & =\landauOprec*{\frac{\rho+\Lambda}{\rho n\eta}+\frac{\xi^2}{\rho}+\xi\Lambda+\sqrt{\frac{\rho+\Lambda}{n\eta}}},
        \end{split}
    \end{equation}
    and
    \begin{equation}
        \begin{split}
            \braket{A(G-M)} & =\landauOprec*{\frac{\rho+\Lambda}{\rho n\eta}+\frac{\xi^2}{\rho}} + \braket{\left[(\cQ \cB^{-1})^\ast[A^\ast]\right]^\ast \cB[G-M]}\\
            & =\landauOprec*{\frac{\rho+\Lambda}{\rho n\eta}+\frac{\xi^2}{\rho}} + \braket{\left[(\cQ \cB^{-1})^\ast[A^\ast]\right]^\ast (M\SS[G-M](G-M)-M\un{WG})}\\
            &=\landauOprec*{\frac{\rho+\Lambda}{\rho n\eta}+\frac{\xi^2}{\rho}}.
        \end{split}
    \end{equation}
\end{subequations}

By using the bounds in~\eqref{eq bootstrap ast} and~\eqref{eq bootstrap} in the two complementary regimes we improve the input bound in~\eqref{eq init bound}. We can iterate this procedure and obtain
\begin{equation}
    \abs{\braket{\vx,(G-M)\vy}}\prec \frac{1}{n\eta}+\sqrt{\frac{\rho}{n\eta}}, \qquad \abs{\braket{A(G-M)}} \prec\frac{1}{n\eta}.
\end{equation}
In order to make sure the iteration yields an improvement one needs an priori bound on \(\xi\) of the form \(\xi\ll 1\) since otherwise \(\xi^2\) is difficult to control. For large \(\eta\) such an a priori bound is trivially available which can then be iteratively bootstrapped by monotonicity down to the optimal \(\eta\gg n^{-1}\). For details on this standard argument the reader is referred to e.g.~\cite[Section~3.3]{MR4089499}. Then the local law for any \(\eta>0\) readily follows by exactly the same argument as in~\cite[Appendix A]{Cipolloni2020_2}. This completes the proof of Theorem~\ref{theo:Gll}.\qed%

\begin{proof}[Proof of Lemma~\ref{lemma B analysis}]
    The fact that \(\cB\) has the eigenvalue \(1\) with multiplicity 2, and the claimed form of the remaining two eigenvalues and corresponding eigenvectors can be checked by direct computations.
    Taking the imaginary part of~\eqref{eq m} we have
    \begin{equation}\label{eq im m eq}
        (1-\abs{m}^2 - \abs{u}^2 \abs{z}^2) \Im m = (\abs{m}^2 + \abs{u}^2\abs{z}^2) \Im w,
    \end{equation}
    which implies
    \begin{equation}\label{mubound}
        \abs{m}^2 + \abs{u}^2\abs{z}^2< 1, \qquad \lim_{\Im w\to 0} (\abs{m}^2+\abs{u}^2\abs{z}^2) =1, \quad \Re w\in\overline{\supp\rho}
    \end{equation}
    as \(\Im m\) and \(\Im w\) have the same sign. Here \(\supp\rho\) should be understood as the support of the self-consistent density of states, as defined in~\eqref{eq:scdos}, restricted to the real axis.
    The second bound in~\eqref{eq beta lower bound lemma} then follows from~\eqref{mubound} and
    \begin{equation}\label{beta lower bound rho}
        \abs{1-m^2-u^2\abs{z}^2}\ge \Re(1-m^2-u^2\abs{z}^2) = 1-(\Re m)^2 + (\Im m)^2 -\Re(u^2)\abs{z}^2\gtrsim (\Im m)^2.
    \end{equation}

    The bound~\eqref{beta lower bound rho} can be improved in the case \(\rho\ll1\) if \(w\) is near a \emph{regular edge} of \(\rho\), i.e.\ where \(\rho\) locally vanishes as a square-root. According to~\cite[Eq.~(15b)]{Cipolloni2020} the density \(\rho\) has two regular edges \(\pm\sqrt{\ed_+}\) if \(\abs{z}\le 1-\epsilon\), and four regular edges in \(\pm\sqrt{\ed_+},\pm\sqrt{\ed_-}\) for \(\abs{z}\ge 1+\epsilon\), where
    \[ \ed_\pm := \frac{8(1-\abs{z}^2)^2 \pm (1+8\abs{z}^2)^{3/2}-36(1-\abs{z}^2)+27}{8\abs{z}^2}\gtrsim 1.\]
    By the explicit form of \(\ed_\pm\) it follows that \(\ed_\pm\gtrsim 1\) whenever \(\abs{1-\abs{z}}\ge \epsilon\).
    In contrast, if \(\abs{z}=1\), then \(\rho\) has a \emph{cusp} singularity in \(0\) where it locally vanishes like a cubic root. Near a regular edge we have \(\Im m \lesssim \sqrt{\Im w}\), and therefore from~\eqref{eq im m eq}
    \[ (1- \abs{m}^2 - \abs{u}^2 \abs{z}^2 ) \gtrsim \sqrt{ \Im w } \gtrsim \Im m \]
    and it follows that
    \[ \abs{1- m^2 - u^2 \abs{z}^2} \gtrsim \Im m,\]
    proving also the first inequality in~\eqref{eq beta lower bound lemma}.
\end{proof}

\section{CLT for resolvents: Proof of Proposition~\ref{prop clt resolvent}}\label{sec:CLTres}
The goal of this section is to prove the CLT for resolvents, as stated in Proposition~\ref{prop clt resolvent}. The proof is very similar to~\cite[Section~\ref{cplx-sec:CLTres}]{1912.04100} and we focus on the differences specific to the real case. Within this section we consider resolvents \(G_1,\dots,G_p\) with \(G_i=G^{z_i}(\ii\eta_i)\) and \(\eta_i\ge n^{-1}\). As a first step we recall the leading-order approximation of  \(G=G_i\)
\begin{equation}\label{eq G-M replacement}
    \braket{G-M}=-\braket{\un{WG}A} + \landauOprec*{\frac{1}{\abs{\beta}(n\eta)^2}}, \quad A:= (\cB^\ast)^{-1}[1]^\ast M
\end{equation}
from~\cite[Eq.~\eqref{cplx-eq G-M exp}]{1912.04100}, where the stability operator \(\cB\) has been defined in~\eqref{cal B}. Here \(\beta\) is the eigenvalue of \(\cB\) with eigenvector \((1,1,0,0)\) and is bounded by (see~\cite[Eq.~\eqref{cplx-beta bound}]{1912.04100})
\begin{equation}\label{beta bound}
    \abs{\beta} \gtrsim \abs{1-\abs{z}}+\eta^{2/3}.
\end{equation}
One important input for the proof of Proposition~\ref{prop clt resolvent} is a lower bound on the eigenvalues of the stability operator \(\wh\cB\), defined in~\eqref{eq:stabop12}, the proof of which we defer to the end of the section. Note that the two-body stability operator \(\wh\cB\) and its eigenvalues \(\wh\beta,\wh\beta_\ast\) are consistently decorated by hats (\(\wh{\cdot}\)) to distinguish them from their one-body analogues \(\cB,\beta\). We will consistently equip \(\cB, \wh\cB\) and their eigenvalues, \(\beta, \wh\beta, \wh\beta_\ast\) with indices when instead of \(M\) they are defined with the help of \(M_i= M^{z_i} (w_i)\); e.g.\ \(\wh\beta_*^{1i}\) is
the lowest eigenvalue of \(\wh\cB_{1i} = \wh\cB(z_1, z_i, w_1, w_i)\) defined analogously to~\eqref{eq:stabop12}.
\begin{lemma}\label{lem:lowbeta}
    For $z_1,z_2\in\mathbf{C}$, $w_1,w_2\in \mathbf{C}\setminus\mathbf{R}$ such that \(\abs{z_i},\abs{w_i}\lesssim 1\) the two non-trivial eigenvalues \(\wh\beta,\wh\beta_\ast\) of \(\wh\cB\) satisfy
    \begin{equation}\label{eq beta lower bound}
        \min\{\Re{\wh\beta},\Re{\wh\beta_\ast} \}\gtrsim \abs{z_1-z_2}^2+ \min\{ \abs{w_1+\ov{w_2}},\abs{w_1-\ov{w_2}}\}^2 + \abs{\Im w_1} + \abs{\Im w_2}
    \end{equation}
\end{lemma}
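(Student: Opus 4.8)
The plan is to exploit the block structure of $\widehat{\cB}_{12}=1-M_1\SS[\cdot]M_2$ in the same way as the analysis of $\cB$ in Lemma~\ref{lemma B analysis}. Since $\SS$ only depends on the four block traces of its argument and $M_i=M^{z_i}(w_i)$ is scalar in each block, $\widehat{\cB}_{12}$ descends to a $4\times 4$ matrix acting on $(a,d,b,c)$-coordinates, with the same triangular shape $\begin{psmallmatrix}\widehat B_1&0\\\widehat B_2&1\end{psmallmatrix}$ as in~\eqref{cal B}. The two non-trivial eigenvalues $\widehat\beta,\widehat\beta_\ast$ are then the eigenvalues of the $2\times 2$ block $\widehat B_1$; by a direct computation analogous to the one behind Lemma~\ref{lemma B analysis} they are given by
\[
\widehat\beta = 1 - m_1 m_2 + u_1 u_2 z_1\ov{z_2}\ \text{(up to a cross-term)},\qquad
\widehat\beta_\ast = 1 + m_1 m_2 - u_1 u_2 z_1\ov{z_2},
\]
with $m_i=m^{z_i}(w_i)$, $u_i=u^{z_i}(w_i)$ — the precise algebraic form being a $2$-parameter generalization of the formula $1\mp m^2 \mp u^2|z|^2$ from Lemma~\ref{lemma B analysis}, and the leading $z_1=z_2$, $w_1=w_2$ specialization reproducing $1-m^2-u^2|z|^2$ and $1+m^2-u^2|z|^2$. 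I would first carry out this diagonalization carefully and record the exact expressions for $\widehat\beta,\widehat\beta_\ast$.

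Next I would estimate $\Re\widehat\beta$ and $\Re\widehat\beta_\ast$ from below. The strategy is a continuity/perturbation argument off the diagonal $z_1=z_2=z$, $w_1=w_2=w$: there the bound is exactly~\eqref{eq beta lower bound lemma} (i.e.\ $\gtrsim \Im m$, hence $\gtrsim \Im w$ in the bulk $|1-|z||\ge\epsilon$, using~\eqref{eq im m eq} and $|m|^2+|u|^2|z|^2<1$). For $\widehat\beta_\ast$, taking real parts as in~\eqref{beta lower bound rho} immediately gives $\Re\widehat\beta_\ast \ge 1 - \text{(product terms of modulus}<1) \gtrsim \Im w_1+\Im w_2$ after using $|m_i|^2+|u_i|^2|z_i|^2<1$ and Cauchy--Schwarz on the cross terms. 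The delicate eigenvalue is $\widehat\beta$, which is the generalization of the smallest eigenvalue $\beta$ of $\cB$, and here the three contributions in~\eqref{eq beta lower bound} must be produced separately: (i) the $|z_1-z_2|^2$ term comes from expanding $1-m_1m_2+u_1u_2z_1\ov{z_2}$ around $z_1=z_2$ and using that $\partial_z$ of the scalar equation~\eqref{eq m} is controlled, so the second-order Taylor term in $z_1-z_2$ dominates; (ii) the $\min\{|w_1+\ov{w_2}|,|w_1-\ov{w_2}|\}^2$ term reflects the symmetry $m^z(\ov w)=-\ov{m^z(w)}$ of the Hermitization spectrum and comes from the analogous expansion in the $w$-variables around $w_1=\pm\ov{w_2}$; (iii) the $\Im w_1+\Im w_2$ term is the ``bulk'' contribution already present on the diagonal. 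I would combine these by splitting into the regime where $|z_1-z_2|+\min\{|w_1+\ov{w_2}|,|w_1-\ov{w_2}|\}$ is small (perturbative, terms (i)+(ii)+(iii) via Taylor expansion with explicit control of first and second derivatives of $m,u$ in $z,w$) versus bounded below (in which case $\Re\widehat\beta\gtrsim 1$ trivially by a compactness/continuity argument since $\widehat\beta$ cannot vanish away from the diagonal by the explicit formula together with $|m_i|,|u_i z_i|$ bounds).

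The main obstacle I expect is controlling the perturbative regime (i)+(ii) uniformly: one must show that when $z_1-z_2$ and $w_1\mp\ov{w_2}$ are both small, the real part of $\widehat\beta$ is bounded below by the \emph{sum} of their squares plus $\Im w_1+\Im w_2$, with no cancellation. This requires explicit lower bounds on the relevant Hessian of $(z_1,z_2,w_1,w_2)\mapsto 1-m_1m_2+u_1u_2z_1\ov{z_2}$ restricted to the appropriate directions, which in turn rests on quantitative non-degeneracy of the scalar MDE~\eqref{eq m} — concretely, lower bounds on $\partial_w m$ and $\partial_z m$ in the bulk that follow from implicit differentiation of~\eqref{eq m} together with~\eqref{eq M bound} and~\eqref{eq beta lower bound rho}. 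Once these derivative bounds are in place the rest is a routine (if slightly tedious) second-order Taylor estimate; assembling the three regimes then yields~\eqref{eq beta lower bound}.
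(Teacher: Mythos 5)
The reduction to the two non-trivial eigenvalues of a $2\times 2$ block is the right starting point and is shared with the paper, but your explicit diagonalization is wrong. Writing $\wh\cB=1-M_1\SS[\cdot]M_2$ as a $4\times 4$ matrix in the $(a,d,b,c)$-basis gives the lower triangular block structure you anticipate, with the non-trivial block $I-R$ for $R=\begin{pmatrix}z_1\ov{z_2}u_1u_2 & m_1m_2\\ m_1m_2 & \ov{z_1}z_2 u_1u_2\end{pmatrix}$; its eigenvalues are $\tau_\pm=(\Re z_1\ov{z_2})u_1u_2\pm\sqrt{(m_1m_2)^2-(\Im z_1\ov{z_2})^2(u_1u_2)^2}$, so $\wh\beta,\wh\beta_\ast=1-\tau_\pm$ genuinely involve a square root, and do not simplify to $1\mp(m_1m_2-u_1u_2 z_1\ov{z_2})$. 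Moreover your expression has a sign error even on the diagonal: at $z_1=z_2$, $w_1=w_2$ your $\wh\beta$ specializes to $1-m^2+u^2\abs{z}^2$, not the $1-m^2-u^2\abs{z}^2$ from Lemma~\ref{lemma B analysis}. So the algebraic basis for the subsequent Taylor analysis is not in place.

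Beyond this, the Taylor/Hessian strategy you propose is a genuinely different route from the paper's, and the gap is not cosmetic. The paper proves $\max\{\Re\tau,\Re\tau_\ast\}\le 1$ ineffectively via a chain of elementary inequalities (starting from $\max\Re\Spec(R)\le\lambda_{\max}((R+R^\ast)/2)$ and the constraint $\abs{m_i}^2+\abs{u_iz_i}^2<1$), and then extracts the effective bound by a \emph{stability-of-equality} argument: assuming the chain is $\epsilon^2$-saturated, it shows $\abs{z_1-z_2}$, $\min\{\abs{w_1-\ov{w_2}},\abs{w_1+\ov{w_2}}\}$ and $\sqrt{\abs{\Im w_i}}$ are all $O(\epsilon)$. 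This needs no uniform control on derivatives of $m,u$, which is exactly where your plan would run into trouble: the lemma is stated for all $\abs{z_i},\abs{w_i}\lesssim 1$ with no bulk restriction, so near the spectral edge $\abs{m_i}\ll1$ the map $w\mapsto m$ has square-root behaviour and your first/second derivative bounds are not uniform. The paper handles exactly this degeneracy by a four-way case split on the sizes of $\abs{z_1}$ and $\abs{m_1}$ (small $\abs{z_1}$, $\abs{m_1}\sim 1$, $\abs{m_1}\lesssim\sqrt{\epsilon}$, etc.), which your plan does not anticipate. Finally, the compactness argument you invoke for the far-from-degeneracy regime does not go through as stated, since $\Im w_i\to 0$ is a boundary of the parameter domain where $m$ is not globally smooth; the paper's chain of inequalities gives the ineffective bound $\le 1$ uniformly without needing this.
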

\begin{proof}[Proof of Proposition~\ref{prop clt resolvent}]
    The proof of Proposition~\ref{prop clt resolvent} goes in two steps. First, we use~\eqref{eq G-M replacement} and a cumulant expansion in order to prove the asymptotic representation of the expectation in~\eqref{EG exp}. In the second step we then turn to the computation of higher moments and establish an asymptotic Wick theorem in the form of~\eqref{eq CLT resovlent}.

    We use the notation $\Delta^{ab}$ for the matrix $(\Delta^{ab})_{cd}=\delta_{ac}\delta_{bd}$ and decompose \(W=\sum_{ab}w_{ab}\Delta^{ab}\). For each \(a,b\) we then perform a cumulant expansion and obtain
    \begin{equation}\label{eq real cumulant expansion}
        \E\braket{\un{WG}A}= -\frac{1}{n}\sump_{ab} \E \braket{\Delta^{ab}G\Delta^{ab}GA} + \sum_{k\ge 2} \sum_{ab}\sum_{\bm\alpha\in\{ab,ba\}^k}\frac{\kappa(ab,\bm\alpha)}{k!}\E\partial_{\bm\alpha}\braket{\Delta^{ab} GA},
    \end{equation}
    which has an additional term compared to the complex case~\cite[Eq.~\eqref{cplx-eq single WGA exp}]{1912.04100} since the self-renormalisation~\eqref{self renorm} was chosen such that it only takes the \(\kappa(ab,ba)=1\) and not the \(\kappa(ab,ab)=1\) cumulant into account. Here \(\kappa(ab,cd,ef,\ldots)\) denotes the joint cumulant of the random variables \(w_{ab},w_{cd},w_{ef},\ldots\), and we denote partial derivatives by \(\partial_{\bm\alpha}:=\partial_{w_{\alpha_1}}\cdots \partial_{w_{\alpha_k}}\) for tuples \(\bm\alpha=(\alpha_1,\ldots,\alpha_k)\), with \(\alpha_i\in [n]\times [n]\). In~\eqref{eq real cumulant expansion} we introduced the notation
    \[ \sump_{ab}:= \sum_{a\le n}\sum_{b>n} + \sum_{a>n}\sum_{b\le n}.\]
    We note that by Assumption~\ref{ass:1} the cumulants \(\kappa(\alpha_1,\ldots,\alpha_k)\) satisfy the scaling
    \begin{equation}\label{kappa scaling}
        \abs*{\kappa(\alpha_1,\dots,\alpha_k)} \lesssim n^{-k/2}.
    \end{equation}
    For the second term in~\eqref{eq real cumulant expansion} we find exactly as in~\cite[Eqs.~\eqref{cplx-g-m single ref}--\eqref{cplx-MA eq}]{1912.04100} that
    \begin{equation}\label{exp kge2}
        \sum_{k\ge 2} \sum_{ab}\sum_{\bm\alpha\in\{ab,ba\}^k}\frac{\kappa(ab,\bm\alpha)}{k!}\partial_{\bm\alpha}\braket{\Delta^{ab} GA} =\frac{\ii \kappa_4}{4n} \partial_\eta(m^4) + \landauOprec*{\frac{1}{\abs{\beta}}\Bigl(\frac{1}{n^{3/2}(1+\eta)}+\frac{1}{(n\eta)^2}\Bigr)}.
    \end{equation}
    For the first term in~\eqref{eq real cumulant expansion}, which is new compared to~\cite[Eq.~\eqref{cplx-eq single WGA exp}]{1912.04100}, we rewrite
    \begin{equation*}
        \frac{1}{n}\sump_{ab} \braket{\Delta^{ab}G\Delta^{ab}GA} = \frac{1}{n} \braket{GA E G^t E' }=\frac{1}{n} \braket{G^z A E G^{\ov z} E' },
    \end{equation*}
    where we used that \((G^z)^t=G^{\ov z}\), and the convention that formulas containing \((E,E')\) are understood so that the matrices \(E,E'\) are summed over the assignments \((E,E')=(E_1,E_2)\) and \((E,E')=(E_2,E_1)\) with
    \[ E_1:=\begin{pmatrix}
            1 & 0 \\0&0
        \end{pmatrix}\qquad E_2:=\begin{pmatrix}
            0 & 0 \\0&1
        \end{pmatrix}.\]
    From the local law~\cite[Theorem~\ref{cplx-thm local law G2}]{1912.04100} for products of resolvents and the bound on \(\abs{\wh\beta_\ast}\) from Lemma~\ref{lem:lowbeta} we can thus conclude
    \begin{equation}\label{eq additional exp term}
        \begin{split}
            \frac{1}{n}\sump_{ab} \braket{\Delta^{ab}G\Delta^{ab}GA} &= \frac{1}{n}\braket{M_{AE}^{z,\ov z}E'} + \landauOprec*{\frac{1}{\abs{z-\ov z}^2} \frac{1}{(n\eta)^2}}\\
            &= \frac{m}{n}\frac{ m^4  + m^2 u^2 \abs{z}^2 -2u^4 \abs{z}^4 +  2 u^2 (x^2-y^2)  }{(1-m^2- u^2\abs{z}^2 )(1+u^4\abs{z}^4-m^4 - 2 u^2(x^2-y^2)  )}\\
            &\qquad+ \landauOprec*{\frac{1}{\abs{z-\ov z}^2} \frac{1}{(n\eta)^2}},
        \end{split}
    \end{equation}
    where \(z=x+\ii y\), and the second step follows by explicitly computing the inverse
    \[ M_{AE}^{z,\ov z} = (1-M^z\SS[\cdot]M^{\ov z})^{-1}[M^z A E M^{\ov z}]\]
    in terms of the entries of \(M\), noting that \(m^{z}=m^{\ov z}\) and \(u^{z}=u^{\ov z}\). Then, using the definition \(v:=-\ii m>0\) and that
    \[ \abs{z}^2u^2+v^2=u,\qquad u'=-\frac{2uv}{1+u-\abs{z}^2u^2}, \qquad v^2=u(1-\abs{z}^2u)\]
    we obtain
    \begin{equation}\label{exp deriv derivation}
        \begin{split}
            &m\frac{ m^4  + m^2 u^2 \abs{z}^2 -2u^4 \abs{z}^4 +  2 u^2 (x^2-y^2)  }{(1-m^2- u^2\abs{z}^2 )(1+u^4\abs{z}^4-m^4 - 2 u^2(x^2-y^2)  )} \\
            &\qquad = -\frac{\ii u'}{2} \frac{u-3\abs{z}^2u^2+2u(x^2-y^2)}{1-u^2+2u^3\abs{z}^2-2u^2(x^2-y^2)}.
        \end{split}
    \end{equation}
    Now~\eqref{EG exp} follows from combining~\eqref{eq G-M replacement} and~\eqref{eq real cumulant expansion}--\eqref{exp deriv derivation}.

    We now turn to the computation of higher moments for which we recall from~\eqref{eq G-M replacement} and~\eqref{EG exp} that
    \begin{equation}\label{eq prod G-EG exp}
        \begin{split}
            \prod_{i\in[p]} \braket{G_i-\E G_i} &= \prod_{i\in[p]} \braket{G_i-M_i-\cE_i} + \landauOprec*{\frac{\psi}{n\eta}}\\
            & = \prod_{i\in[p]} \braket{-\un{WG_i}A_i-\cE_i} + \landauOprec*{\frac{\psi}{n\eta}}
        \end{split}
    \end{equation}
    with \(A_i\) as in~\eqref{eq G-M replacement} and \(\cE_i\) as in~\eqref{cE def}, and
    \begin{equation}
        \label{eq:smallpsi}
        \psi:= \prod_i \Bigl(\frac{1}{\abs{\beta_i}}+\frac{1}{(\Im z_i)^2}\Bigr)\frac{1}{n\eta_i}\le \prod_i \Bigl(\frac{1}{\abs{1-\abs{z_i}}}+\frac{1}{(\Im z_i)^2}\Bigr)\frac{1}{n\eta_i}
    \end{equation}
    with the bound on \(\beta_i\) from~\eqref{beta bound}.
    We begin with the cumulant expansion of \(\un{W G_1}\) to obtain
    \begin{equation}\label{eq G-M first exp}
        \begin{split}
            &\E \prod_{i\in[p]}\braket{-\un{W G_i}A_i-\cE_i} \\
            &\quad= \E\biggl(\frac{1}{n}\sump_{ab}\braket{\Delta^{ab}G_1 \Delta^{ab}G_1A_1}-\braket{\cE_1}\biggr)\prod_{i\ne 1}\braket{-\un{W G_i}A_i-\cE_i} \\
            &\qquad+ \sum_{i\ne1}\E\wh\E \braket{\wh W G_1 A_1}\braket{\wh W G_i A_i-\un{WG_i \wh W G_i} A_i}\prod_{j\ne 1,i}\braket{-\un{W G_j}A_j-\cE_j}\\
            & \qquad +\sum_{k\ge 2}\sum_{ab}\sum_{\bm\alpha\in\{ab,ba\}^k} \frac{\kappa(ba,\bm \alpha)}{k!} \E \partial_{\bm\alpha}\Bigl[\braket{-\Delta^{ba}G_1A_1}\prod_{i\ne 1}\braket{-\un{W G_i}A_i-\cE_i} \Bigr],
        \end{split}
    \end{equation}
    where, compared to~\cite[Eq.~\eqref{cplx-eq g-m 2 first exp}]{1912.04100}, the first line on the rhs.\ has an additional term specific to the real case, and \(\wh W\), as opposed to \(\wt W\) in~\eqref{self renorm}, is the Hermitisation of an independent real Ginibre matrix \(\wh X\) with expectation \(\wh\E\). The expansion of the third line on the rhs.\ of~\eqref{eq G-M first exp} is completely analogous to~\cite{1912.04100} since for cumulants of degree at least three nothing specific to the complex case was used. Therefore we obtain, from combining\footnote{Note that the definition of \(\cE\) in~\cite[Eq.~\eqref{cplx-cE def}]{1912.04100} differs from~\eqref{cE def} in the present paper.}~\cite[Eqs.~\eqref{cplx-k=2 est},~\eqref{cplx-eq kappa ge 3 conclusion}]{1912.04100}, that
    \begin{equation}\label{higher cum recall}
        \begin{split}
            &\sum_{k\ge 2}\sum_{ab}\sum_{\bm\alpha\in\{ab,ba\}^k} \frac{\kappa(ba,\bm \alpha)}{k!} \E \partial_{\bm\alpha}\Bigl[\braket{-\Delta^{ba}G_1A_1}\prod_{i\ne 1}\braket{-\un{W G_i}A_i-\cE_i} \Bigr] \\
            & = -\frac{\ii\kappa_4}{4n}\partial_{\eta_1}(m_1^4) \E \prod_{i\ne1} \braket{-\un{WG_i}A_i-\cE_i}+\sum_{i\ne1} \frac{\kappa_4 U_1 U_i}{2n^2} \E \prod_{j\ne1,i}\braket{-\un{WG_j}A_j-\cE_j}+\landauO*{\frac{n^\xi\psi}{\sqrt{n\eta_\ast}}},
        \end{split}
    \end{equation}
    where
    \[U_i := -\sqrt{2}\braket{M_i}\braket{M_i A_i}=\frac{\ii}{\sqrt{2}}\partial_{\eta_i}m_i^2.\]
    Recall the definition of \(\cE_i\) in~\eqref{cE def}, then using~\eqref{eq additional exp term}--\eqref{exp deriv derivation} and~\eqref{higher cum recall} in~\eqref{eq G-M first exp} we thus have
    \begin{equation}\label{eq wick reduction}
        \begin{split}
            &\E \prod_{i\in[p]}\braket{-\un{W G_i}A_i-\cE_i} \\
            &= \sum_{i\ne1}\E\Bigl(\frac{\kappa_4 U_1 U_i}{2n^2}+\wh\E \braket{\wh W G_1 A_1}\braket{\wh W G_i A_i-\un{WG_i \wh W G_i} A_i}\Bigr)\prod_{j\ne 1,i}\braket{-\un{W G_j}A_j-\cE_j}+\landauO*{\frac{n^\xi\psi}{\sqrt{n\eta_\ast}}}.
        \end{split}
    \end{equation}

    It remains to consider the variance term in~\eqref{eq wick reduction} for which we use the identity
    \begin{equation}\label{eq WA WB real comp}
        \wh\E \braket{\wh W A} \braket{\wh W B} = \frac{1}{2n^2} \braket{A E (B + B^t) E'} = \frac{\braket{A E_1 (B+B^t) E_2} + \braket{A E_2 (B+B^t) E_1}}{2n^2}
    \end{equation}
    in order to compute
    \begin{equation}\label{eq variance step 1}
        \begin{split}
            &\wh\E \braket{\wh W G_1 A_1}\braket{\wh W G_i A_i-\un{WG_i \wh W G_i} A_i} \\
            &\quad = \frac{1}{2n^2}\braket{G_1 A_1 E (G_i A_i + A_i^t G_i^t)E'-G_1 A_1 E (\un{G_i A_i W G_i}+\un{G_i^t W A_i^t G_i^t})E'},
        \end{split}
    \end{equation}
    where, compared to~\cite[Eqs.~\eqref{cplx-eq tr W tr W}--\eqref{cplx-eq Wick Gauss term}]{1912.04100}, there is an additional term with transposition. Here the self-renormalisation e.g.\ in \(\un{G_i A_i W G_i}\) is defined analogously to~\eqref{self renorm} with the derivative acting on both \(G_i\)'s. For the second term in~\eqref{eq variance step 1} we identify the leading order contribution using the fact that \(G^z(w)^t=G^{\ov z}(w)\) and denoting \(G_{\bar i}=G^{\ov{z_i}}(\ii\eta_i)\) as
    \begin{equation}\label{eq three G self renorm}
        \begin{split}
            \braket{G_1 A_1 E (\un{G_i A_i W G_i}+\un{G_i^t W A_i^t G_{\bar i}^t})E'}&=- \braket{G_1\SS[G_1 A_1 E G_i A_i] G_i E'+G_1 \SS[G_1 A_1 E G_{\bar i}] A_i^t G_{\bar i} E'} \\
            &\quad +\braket{\un{G_1 A_1 EG_i A_i W G_i E'} + \un{ G_1 A_1 EG_{\bar i} W A_i^t G_{\bar i} E'}}
        \end{split}
    \end{equation}
    for which we use the local law from Theorem~\ref{thm local law G2} to conclude that the main terms in~\eqref{eq variance step 1} are
    \begin{equation}\label{eq local law appl}
        \begin{split}
            &\braket{G_1 A_1 E (G_i A_i + A_i^t G_{\bar i})E'+G_1\SS[G_1 A_1 E G_i A_i] G_i E'+G_1 \SS[G_1 A_1 E G_{\bar i}] A_i^t G_{\bar i} E'} \\
            &\quad= \wh V_{1,i} + \landauOprec*{\frac{1}{n\abs{\wh\beta_\ast^{1i}}^2\eta_\ast^{1i}|\eta_1\eta_i|^{1/2}}+\frac{1}{n^2\abs{\wh\beta_\ast^{1i}}^2(\eta_\ast^{1i})^2|\eta_1\eta_i|}} \\
            &\wh V_{1,i}:= \braket{M_{A_1 E}^{z_1,z_i} A_i E' + M_{A_1 E A_i^t}^{z_1,\ov{z_i}}  E' + \SS[M_{A_1 E}^{z_1,z_i}A_i]M_{E'}^{z_i,z_1}+ \SS[M_{A_1 E}^{z_1,\ov{z_i}}]A_i^t M_{E'}^{\ov{z_i},z_1} },
        \end{split}
    \end{equation}
    where \(\abs{\wh\beta_\ast^{1i}}\gtrsim |z_1-z_i|^2\) from Lemma~\ref{lem:lowbeta}, and $\eta_*^{1i}:=\min\{\eta_1,\eta_i\}$. By an explicit computation similarly to~\cite[Eq.~\eqref{cplx-eq:vw}]{1912.04100} it follows that
    \begin{equation}
        \wh V_{1,i} = V(z_1,z_i,\eta_1,\eta_i) + V(z_1,\ov{z_i},\eta_1,\eta_i)
    \end{equation}
    with \(V\) being exactly as in the complex case, i.e.\ as in~\eqref{eq:exder}. For the error term in~\eqref{eq three G self renorm} we claim that
    \begin{equation}\label{eq three G self-renorm claim}
        \E\abs{ \braket{\un{G_1 A_1 EG_i A_i W G_i E'}}}^2+ \E\abs{\braket{\un{ G_1 A_1 EG_{\bar i} W A_i^t G_{\bar i} E'}}}^2 \lesssim \Bigl(\frac{1}{n\eta_1\eta_i\eta_\ast^{1i}}\Bigr)^2.
    \end{equation}
    The CLT for resolvents, as stated in~\eqref{eq CLT resovlent} follows from inserting~\eqref{eq variance step 1}--\eqref{eq three G self-renorm claim} into~\eqref{eq wick reduction}, and iteration of~\eqref{eq wick reduction} for the remaining product.

    In order to conclude the proof of Proposition~\ref{prop clt resolvent} it remains to prove~\eqref{eq three G self-renorm claim}. Introduce the shorthand notation \(G_{i1i}\) for generic finite sums of products of \(G_i, G_1, G_i\) (or \(G_{\bar i}\) in place of \(G_i\)) with arbitrary bounded deterministic matrices, e.g.\ \(G_i E' G_1 A_1 EG_i A_i\) appearing in the first term in~\eqref{eq three G self-renorm claim}. We will prove the more general claim
    \begin{equation}\label{Gi1i claim}
        \E\abs{\braket{\un{WG_{i1i}}}}^2 \lesssim \Bigl(\frac{1}{n\eta_1\eta_i\eta_\ast^{1i}}\Bigr)^2.
    \end{equation}
    The proof is similar to~\cite[Eq.~\eqref{cplx-eq G1Gii claim}]{1912.04100}. Therefore we focus on the differences. In the cumulant expansion of~\eqref{Gi1i claim} there is an additional term compared to~\cite[Eq.~\eqref{cplx-G1Gii first}]{1912.04100} given by
    \begin{equation}\label{eq GGG additonal term}
        \begin{split}
            &\frac{1}{n}\sump_{ab} \E\braket{\Delta^{ab} G_i \Delta^{ab}G_{i1i}+\Delta^{ab} G_{i1} \Delta^{ab}G_{1i}+\Delta^{ab} G_{i1i} \Delta^{ab}G_{i} }\braket{\un{WG_{i1i}}} \\
            &\quad=\frac{1}{n} \E\braket{G_{1iii}+G_{i1i1}}\braket{\un{WG_{i1i}}},
        \end{split}
    \end{equation}
    where we combined two terms of type \(G_{1iii}\) into one since in our convention \(G_{1iii}\) is a short-hand notation for generic sums of products. We now perform another cumulant expansion of~\eqref{eq GGG additonal term} to obtain
    \begin{equation}\label{eq GGG additonal term 2}
        \begin{split}
            &\frac{1}{n}\E\braket{G_{1iii}+G_{i1i1}}\braket{\un{WG_{i1i}}} \\
            &= \frac{1}{n^2}\E\braket{G_{1iii}+G_{i1i1}}^2 \\
            &\quad+ \frac{1}{n}\E\wt\E \braket{\wt W (G_{1iii1}+G_{iii1i}+G_{ii1ii}+G_{i1iii}+G_{1i1i1}+G_{i1i1i})}\braket{\wt W G_{i1i}}\\
            &\quad + \sum_{k\ge 2} \landauO*{\frac{1}{n^{(k+3)/2}}} \sump_{ab} \sum_{\bm\alpha\in\{ab,ba\}^k} \E \partial_{\bm\alpha} \Bigl[\braket{G_{1iii}+G_{i1i1}}\braket{\Delta^{ab}G_{i1i}}\Bigr],
        \end{split}
    \end{equation}
    where the first line on the rhs.\ corresponds to the term where the remaining \(W\) acts on \(G_{i1i}\) within its own trace as in~\eqref{eq GGG additonal term}, and in the last line we used the scaling bound~\eqref{kappa scaling} for \(\kappa\). In order to estimate~\eqref{eq GGG additonal term} we recall~\cite[Lemma~\ref{cplx-lemma general products}]{1912.04100}.
    \begin{lemma}\leavevmode\label{lemma general products}
        Let $w_1,w_2,\dots$, $z_1,z_2,\dots$, denote arbitrary spectral parameters with $\eta_i=\Im w_i>0$. Let $G_j=G^{z_j}(w_j)$, then with $G_{j_1\dots j_k}$ we denote generic products of resolvents $G_{j_1},\dots,G_{j_k}$, or their adjoints/transpositions (in that order, each $G_{j_i}$ appears exactly once) with bounded deterministic matrices in between, e.g. $G_{1i1}=A_1G_1A_2G_i A_3G_1A_4$.
        \begin{enumerate}[label=(\roman*)]
            \begin{subequations}
                \item For \(j_1,\dots j_k\) we have the isotropic bound
                \begin{equation}\label{eq general iso bound}
                    \abs{\braket{\vx,G_{j_1\dots j_k}\vy}} \prec \norm{\vx}\norm{\vy}\sqrt{\eta_{j_1}\eta_{j_k}}\Bigl(\prod_{n=1}^k \eta_{j_n}\Bigr)^{-1}.
                \end{equation}
                \item For \(j_1,\dots,j_k\) and any \(1\le s< t\le k\) we have the averaged bound
                \begin{equation}\label{eq general av bound}
                    \abs{\braket{G_{j_1\dots j_k}}} \prec \sqrt{\eta_{j_{s}}\eta_{j_{t}}}\Bigl(\prod_{n=1}^k \eta_{j_n}\Bigr)^{-1}.
                \end{equation}
            \end{subequations}
        \end{enumerate}
    \end{lemma}
    Since only $\eta_1, \eta_i$ play a role within the proof of~\eqref{eq three G self-renorm claim}, we drop the indices from $\eta_*^{1i}$ and use the notation $\eta_*=\eta_*^{1i}$. For the first term in~\eqref{eq GGG additonal term 2} we use~\eqref{eq general av bound} to obtain
    \begin{equation}\label{eq new new term}
        \frac{1}{n^2}\abs{\braket{G_{1iii}+G_{i1i1}}}^2 \prec \frac{1}{n^2\eta_1^2\eta_i^2\eta_\ast^2}.
    \end{equation}
    Similarly for the second term we use~\eqref{eq WA WB real comp} and again~\eqref{eq general av bound} to bound it by
    \begin{equation}
        \begin{split}
            &\frac{1}{n}\abs{\wt\E \braket{\wt W (G_{1iii1}+G_{iii1i}+G_{ii1ii}+G_{i1iii}+G_{1i1i1}+G_{i1i1i})}\braket{\wt W G_{i1i}}}\\
            &\qquad \prec\frac{1}{n^3\eta_1^2\eta_i^2\eta_\ast^3}\le \frac{1}{n^2\eta_1^2\eta_i^2\eta_\ast^2}
        \end{split}
    \end{equation}
    since \(\eta_\ast\ge 1/n\). Finally, for the last term of~\eqref{eq GGG additonal term 2} we estimate
    \begin{equation}\label{new kge2 term}
        \abs*{\landauO*{\frac{1}{n^{(k+7)/2}}} \sump_{ab}\sum_c \sum_{\bm\alpha} \partial_{\bm\alpha} \Bigl[(G_{1iii}+G_{i1i1})_{cc} (G_{i1i})_{ba}\Bigr]}\prec \frac{1}{n^{2}\eta_1^2\eta_i^2\eta_\ast^2}
    \end{equation}
    for any \(k\ge 2\). Indeed, for \(k\ge 3\) the claim~\eqref{new kge2 term} follows trivially from~\eqref{eq general iso bound} and the observation that the bound~\eqref{eq general iso bound} remains invariant under the action of derivatives. Indeed, differentiating a term like \((G_{i1i})_{ab}\) gives rise to the terms \((G_{i})_{aa}(G_{i1i})_{bb},(G_{i1})_{ab}(G_{1i})_{ab},\dots\) for all of which~\eqref{eq general iso bound} gives the same estimate as for \((G_{i1i})_{ab}\) since the presence of an additional factor of \(G_1\) or \(G_i\) is compensated by the fact that the same type of \(G\) appears two additional times as the first or last factor in some product. For the \(k=2\) case we observe that by parity at least one factor will be off-diagonal in the sense that it has two distinct summation indices from \(\{a,b,c\}\) giving rise to an additional factor of \((n\eta_\ast)^{-1/2}\) by summing up one of the indices with the Ward identity. For example, for the term with \((G_{1iii})_{cc}(G_{i1})_{bb}(G_{1i})_{aa}(G_i)_{ba}\) we estimate
    \[ \begin{split}
            n^{-9/2}\abs*{\sump_{ab}\sum_c (G_{1iii})_{cc}(G_{i1})_{bb}(G_{1i})_{aa}(G_i)_{ba}}&\prec n^{-9/2} \frac{n}{\eta_1^{3/2}\eta_i^{7/2}}\sump_{ab} \abs{(G_i)_{ba}}\\
            &\le n^{-3} \frac{1}{\eta_1^{3/2}\eta_i^{7/2}} \sum_b \sqrt{\sum_a \abs{(G_i)_{ba}}^2}\\
            &=n^{-3} \frac{1}{\eta_1^{3/2}\eta_i^{4}} \sum_b \sqrt{(\Im G_i)_{bb}} \prec \frac{1}{n^2\eta_1^{3/2}\eta_i^{4}}.
        \end{split} \]
    Thus, in general we obtain a bound of
    \[ \frac{1}{n^{3/2}}\Bigl(\frac{1}{\eta_1^{3/2}\eta_i^{7/2}}+\frac{1}{\eta_1^{5/2}\eta_i^{5/2}}\Bigr)\frac{1}{\sqrt{n\eta_\ast}}\lesssim \frac{1}{n^{2}\eta_1^2\eta_i^2\eta_\ast^2}.\]
    By combining~\eqref{eq new new term}--\eqref{new kge2 term} we obtain a bound of \((n\eta_1\eta_i\eta_\ast)^{-2}\) on the additional term~\eqref{eq GGG additonal term}. The remaining terms can be estimated as in~\cite[Eq.~\eqref{cplx-eq G1Gii claim}]{1912.04100} and we conclude the proof of~\eqref{Gi1i claim} and thereby Proposition~\ref{prop clt resolvent}.
\end{proof}
\begin{proof}[Proof of Lemma~\ref{lem:lowbeta}]
    The claim~\eqref{eq beta lower bound} is equivalent to the claim
    \begin{equation}\label{t ineq effective}
        \max\{\Re \tau,\Re \tau_\ast\} \le 1-c\bigl[\abs{z_1-z_2}^2+ \min\{ \abs{w_1+\ov{w_2}}^2,\abs{w_1-\ov{w_2}}^2\} + \abs{\Im w_1} + \abs{\Im w_2} \bigr], \quad c>0,
    \end{equation}
    where \(\tau,\tau_\ast\) are the eigenvalues of the matrix
    \begin{equation}
        R := \begin{pmatrix}
            z_1 \ov{z_2} u_1 u_2 & m_1 m_2 \\ m_1 m_2 & \ov{z_1} z_2 u_1 u_2
        \end{pmatrix},
    \end{equation}
    thus $\widehat{\beta}=1-\tau$, $\widehat{\beta}_*=1-\tau_*$.
    We first check that~\eqref{t ineq effective} holds true ineffectively, i.e.\ with \(c=0\). We claim that
    \begin{equation}\label{Re spec ineq}
        \max\Re\Spec(A) \le \lambda_{\max }\Bigl(\frac{A+A^\ast}{2}\Bigr) := \max\Spec\Bigl(\frac{A+A^\ast}{2}\Bigr)
    \end{equation}
    holds for any square matrix \(A\). Indeed, suppose that \(A\vx=\lambda \vx\), \(\norm{\vx}=1\) and \((A+A^\ast)/2\le M\) in the sense of quadratic forms. We then compute
    \[0\ge \braket*{\vx,\Bigl(\frac{A+A^\ast}{2}-M\Bigr)\vx}=\frac{\braket{\vx,A\vx}+\braket{A\vx,\vx}}{2}-M = \Re\lambda-M, \]
    from which~\eqref{Re spec ineq} follows by choosing \(M\) to be the largest eigenvalue of \((A+A^\ast)/2\).

    Since \(R\) is such that its entrywise real part is given by \(\Re R=(R+R^\ast)/2\), from~\eqref{Re spec ineq} we conclude the chain of inequalities
    \begin{subequations}
        \begin{align}
             & \max\{\Re \tau,\Re \tau_\ast\}\le\lambda_{\max }\begin{pmatrix}
                                                                   \Re(z_1 \ov{z_2} u_1 u_2) & \Re(m_1 m_2) \\ \Re(m_1 m_2) & \Re(\ov{z_1} z_2 u_1 u_2)
                                                               \end{pmatrix} \label{eq:0}                                                 \\
             & =(\Re u_1u_2)(\Re z_1\ov{z_2})+\sqrt{\bigl(\abs{\Im u_1 u_2}\abs{\Im z_1\ov{z_2}}+\abs{\Re m_1 m_2}\bigr)^2-2\abs{\Im u_1 u_2}\abs{\Im z_1\ov{z_2}}\abs{\Re m_1 m_2}} \label{eq:35} \\
             & \le(\Re u_1u_2)(\Re z_1\ov{z_2}) + \abs{\Im u_1 u_2}\abs{\Im z_1\ov{z_2}} + \abs{\Re m_1 m_2} \label{eq:4}                                                                          \\
             & \le\abs[\Big]{(\Re u_1u_2)(\Re z_1\ov{z_2}) + \abs{\Im u_1 u_2}\abs{\Im z_1\ov{z_2}}  }+ \abs{\Re m_1 m_2}\label{eq:45}                                                             \\
             & =\sqrt{\abs{z_1 z_2 u_1 u_2}^2- \bigl(\Re u_1 u_2 \abs{\Im z_1\ov{z_2}}-\Re z_1 \ov{z_2}\abs{\Im u_1 u_2}\bigr)^2 } + \sqrt{\abs{m_1 m_2}^2-[\Im m_1 m_2]^2} \label{eq:5}           \\
             & \le\abs{z_1 z_2 u_1 u_2} + \abs{m_1 m_2} \label{eq:6}                                                                                                                               \\
             & =\sqrt{(\abs{u_1 z_1}^2 + \abs{m_1}^2)(\abs{u_2 z_2}^2 + \abs{m_2}^2)-(\abs{u_1 z_1 m_2}-\abs{u_2 z_2 m_1})^2} \label{eq:7}                                                         \\
             & \le\sqrt{(\abs{m_1}^2+\abs{z_1 u_1}^2)(\abs{m_2}^2+\abs{z_2 u_2}^2)} \label{eq:8}                                                                                                   \\
             & \le 1,\label{eq:9}
        \end{align}
    \end{subequations}
    where in the last step we used~\eqref{mubound}.

    We now assume that for some \(0\le\epsilon\ll 1\) we have
    \begin{equation}\label{assumption}
        \max\{\Re \tau,\Re \tau_\ast\} \ge 1-\epsilon^2,
    \end{equation}
    i.e.\ that all inequalities in~\eqref{eq:0}--\eqref{eq:9} are in fact equalities up to an \(\epsilon^2\) error. The assertion~\eqref{t ineq effective} is then equivalent to
    \begin{equation}\label{eq z1 z2 claim}
        \abs{z_1-z_2} + \min\{ \abs{w_1+\ov{w_2}},\abs{w_1-\ov{w_2}}\} + \sqrt{\abs{\Im w_1}} + \sqrt{\abs{\Im w_2}} \lesssim \epsilon,
    \end{equation}
    the proof of which we present now.

    The fact that~\eqref{eq:8}--\eqref{eq:9} is \(\epsilon^2\)-saturated implies the saturation
    \begin{equation}\label{m z u almost id}
        \abs{m_i}^2+\abs{z_i u_i}^2 = 1 + \landauO{\epsilon^2},
    \end{equation}
    and, consequently,
    \begin{equation}\label{eq u sim 1}
        \abs{u_i}\sim 1.
    \end{equation}
    Indeed, suppose that \(\abs{u_i}\ll 1\), then on the one hand since \(u_i = u_i^2\abs{z_i}^2 - m_i^{2}\), it follows that \(\abs{m_i}\ll 1\), while on the other hand \(\abs{1-\abs{m_i}^2}\ll1\) from~\eqref{m z u almost id} which would be a contradiction. From~\eqref{eq im m eq} it follows that
    \[ \abs{m_i}^2  + \abs{u_i}^2\abs{z_i}^2 \le 1 - c\,\Im w_i,\]
    from which we conclude \(\abs{\Im w_1}+\abs{\Im w_2}\lesssim \epsilon^2\), i.e.\ the bound on the last two terms in~\eqref{eq z1 z2 claim}.
    The \(\epsilon^2\)-saturation of~\eqref{eq:7}--\eqref{eq:8} implies that
    \[\begin{split}
            \landauO{\epsilon} = \abs{u_1 z_1 m_2}-\abs{u_2 z_2 m_1} &= \sqrt{1-\abs{m_1}^2}\abs{m_2}-\sqrt{1-\abs{m_2}^2}\abs{m_1} +\landauO{\epsilon^2}\\
            &= \sqrt{1-\abs{u_1 z_1}^2}\abs{u_2 z_2}-\sqrt{1-\abs{u_2 z_2 }^2}\abs{u_1 z_1} +\landauO{\epsilon^2}.
        \end{split}\]
    Thus it follows that
    \begin{equation}\label{eq m1 m2}
        \abs{m_1}=\abs{m_2}+\landauO{\epsilon}, \qquad \abs{z_1 u_1}= \abs{z_2 u_2} + \landauO{\epsilon}.
    \end{equation}

    In the remainder of the proof we distinguish the cases
    \begin{enumerate}[label= (C\arabic*)]
        \item\label{case1} \(\epsilon\ll\abs{z_1}\) and \(\abs{m_1}\sim 1\),
        \item\label{case2} \(\abs{z_1}\lesssim\epsilon\),
        \item\label{case4} \(\abs{m_1}\lesssim\sqrt{\epsilon}\) and \(\abs{z_1}\sim 1\),
        \item\label{case5} \(\sqrt{\epsilon}\ll\abs{m_1}\ll1\) and \(\abs{z_1}\sim 1\),
    \end{enumerate}
    where we note that this list is exhaustive since \(\abs{z_1}\ll1\) implies \(\abs{m_1}\sim 1\) from~\eqref{m z u almost id}.

    In case~\ref{case1} we have \(\abs{z_2}\sim\abs{z_1}\) and \(\abs{m_1}\sim\abs{m_2}\sim1\) from~\eqref{eq u sim 1}--\eqref{eq m1 m2}. By the near-saturation of~\eqref{eq:5}--\eqref{eq:6} it follows that \(\Im m_1 m_2=\landauO{\epsilon}\) and therefore with~\eqref{eq m1 m2} that
    \begin{equation}\label{m1 pm m2}
        m_1=\pm\ov{m_2}+\landauO{\epsilon},
    \end{equation}
    hence \(\abs{\Re m_1 m_2}\sim 1\). From the \(\epsilon^2\)-saturation of~\eqref{eq:35}--\eqref{eq:4} and~\eqref{eq:5}--\eqref{eq:6} it then follows that
    \begin{equation}\label{eq z u relations zsmall}
        \abs{\Im u_1 u_2}\abs*{\Im\frac{ z_1\ov{z_2}}{\abs{z_1 z_2}}}=\landauO*{\frac{\epsilon^2}{\abs{z_1}^2}},\quad (\Re u_1 u_2)\abs*{\Im \frac{z_1\ov{z_2}}{\abs{z_1 z_2}}} = \Bigl(\Re \frac{z_1\ov{z_2}}{\abs{z_1 z_2}}\Bigr)\abs{\Im u_1u_2} + \landauO*{\frac{\epsilon}{\abs{z_1}}},
    \end{equation}
    and~\eqref{eq z u relations zsmall} implies
    \begin{equation}\label{eq Im u z eps}
        \abs{\Im u_1u_2}+\abs*{\Im\frac{ z_1\ov{z_2}}{\abs{z_1 z_2}}}\lesssim \frac{\epsilon}{\abs{z_1}}.
    \end{equation}
    Indeed, the first equality in~\eqref{eq z u relations zsmall} implies that at least one of the two factors is at most of size \(\epsilon/\abs{z_1}\ll 1\) in which case the second equality implies that the other factor satisfies the same bound since \(\abs{u_1 u_2}\sim 1\). Thus there exists some \(c\in\R\), \(\abs{c}\sim 1\) such that \(z_2=c z_1+\landauO{\epsilon}\) and \(u_2=\pm\abs{c}^{-1}\ov{u_1}+\landauO{\epsilon/\abs{z_1}}\) since the two proportionality constants \(c\) and \(\pm \abs{c}^{-1}\) are related by~\eqref{eq m1 m2}. On the other hand, from the MDE~\eqref{eq m} we have that
    \begin{equation}\label{eq u1 u2}
        u_2 = u_2^2\abs{z_2}^2 -m_2^2 = \ov{u_1}^2 \abs{z_1}^2 - \ov{m_1}^2 + \landauO{\epsilon} = \ov{u_1} + \landauO{\epsilon}
    \end{equation}
    and thus \(\abs{c}=1+\landauO{\epsilon/\abs{z_1}}\). Finally, since~\eqref{eq:4}--\eqref{eq:45} is assumed to be saturated up to an \(\epsilon^2\)-error, \(\Re u_1 u_2\) and \(\Re z_1 \ov{z_2}\) have the same sign which, together with~\eqref{eq u1 u2}, fixes \(c>0\), and we conclude \(z_2=z_1+\landauO{ \epsilon} \). Finally, with
    \begin{equation}\label{w eq}
        w_2 = \frac{m_2}{u_2} - m_2 =\pm\Bigl(\frac{\ov{m_1}}{\ov{u_1}}-\ov{m_1}\Bigr)+\landauO{\epsilon} = \pm\ov{w_1} + \landauO{\epsilon}
    \end{equation}
    the claim~\eqref{eq z1 z2 claim} follows.

    In case~\ref{case2} the conclusion \(z_2=z_1+\landauO{\epsilon}\) follows trivially from~\eqref{eq m1 m2} and~\eqref{eq u sim 1}. Next, just as in case~\ref{case1}, we conclude~\eqref{m1 pm m2} and therefore from~\eqref{eq m} that
    \[ u_2 = u_2^2 \abs{z_2}^2 - m_2^2 = -\ov{m_1}^2 + \landauO{\epsilon} = \ov{u_1}+\landauO{\epsilon},\]
    and thus~\eqref{eq z1 z2 claim} follows just as in~\eqref{w eq}.

    Finally, we consider the case \(\abs{m_i}\ll 1\), i.e.~\ref{case4} and~\ref{case5}. If \(\abs{m_i}\ll1\), then from~\eqref{m z u almost id}, \(\abs{1-\abs{z_i u_i}^2}\ll1\), and therefore from~\eqref{eq m}, \(\abs{1-\abs{u_i}}\ll1\) and consequently \(\abs{1-u_i\abs{z_i}^2}=\abs{m_i^2/u_i}\ll 1\) and \(\abs{1-u_i}+\abs{1-\abs{z_i}^2}\ll1\). If \(\abs{m_1}\lesssim \sqrt{\epsilon}\), then it follows from~\eqref{eq m1 m2} that also \(\abs{m_2}\lesssim\sqrt{\epsilon}\). From solving the equation~\eqref{eq m} for \(u_i\) we find
    \begin{equation}\label{eq u approx cusp}
        u_i= \frac{1+\sqrt{1+4\abs{z_i}^2 m_i^2}}{2\abs{z_i}^2} = \frac{1}{\abs{z_i}^2} + \landauO{\abs{m_i}^2},
    \end{equation}
    where the sign choice is fixed due to \(\abs{1-u_i}\ll 1\).

    In case~\ref{case4} from~\(\abs{m_i}\lesssim\sqrt{\epsilon}\) it follows that \(u_i=\abs{z_i}^{-2}+\landauO{\epsilon}\), and thus with~\eqref{eq:5}--\eqref{eq:6} and \(\Re u_1 u_2\sim1\) we can conclude
    \begin{equation}\label{Im z u relation}
        \abs{\Im z_1\ov{z_2}} = \frac{\Re z_1\ov{z_2}}{\Re u_1 u_2}\abs{\Im u_1 u_2}+\landauO{\epsilon} = \landauO{\epsilon}, \quad \abs{\Im u_1 u_2}=\landauO{\epsilon}.
    \end{equation}
    Together with~\eqref{eq m1 m2} and the saturation of~\eqref{eq:4}--\eqref{eq:45}, we obtain \(z_1=z_2+\landauO{\epsilon}\) and \(u_1=\ov{u_2}+\landauO{\epsilon}\) by the same argument as after~\eqref{eq Im u z eps}. Equation~\eqref{eq m} implies that \(m_2=\pm\ov{m_1}+\landauO{\epsilon}\) and we are able to conclude~\eqref{eq z1 z2 claim} just as in~\eqref{w eq}.

    In case~\ref{case5} from~\eqref{eq m1 m2} we have \(\abs{m_2}\sim\abs{m_1}\). By saturation of~\eqref{eq:5}--\eqref{eq:6} it follows that
    \[\Im \frac{m_1m_2}{\abs{m_1 m_2}} = \landauO*{\frac{\epsilon}{\abs{m_1}}}\]
    and therefore, together with~\eqref{eq m1 m2} we conclude that~\eqref{m1 pm m2} also holds in this case. Now we use the saturation of~\eqref{eq:35}--\eqref{eq:4} to conclude
    \[ \abs{\Im u_1 u_2} \abs{\Im z_1\ov{z_2}} \abs{\Re m_1 m_2} \lesssim \epsilon^2 \Bigl(\abs{\Re m_1 m_2} + \abs{\Im u_1 u_2}\abs{\Im z_1\ov{z_2}}\Bigr).\]
    Together with the fact that \(\abs{\Im u_1 u_2}\abs{\Im z_1\ov{z_2}}\lesssim\abs{m_i}^2\sim\abs{\Re m_1 m_2}\) from~\eqref{m1 pm m2},~\eqref{eq u approx cusp}, this implies \(\abs{\Im u_1 u_2}\abs{\Im z_1\ov{z_2}}\lesssim \epsilon^2\). Finally, the \(\epsilon^2\)-saturation of~\eqref{eq:5}--\eqref{eq:6} shows that~\eqref{eq z u relations zsmall} (with \(\abs{z_1}\sim\abs{z_2}\sim 1\)) also holds in case~\ref{case5} and we are able to conclude~\eqref{eq z1 z2 claim} just like in case~\ref{case1}.
\end{proof}

\section{Asymptotic independence of resolvents: Proof of Proposition~\ref{prop:indmr}}\label{sec:IND}
For any fixed \(z\in\C\) let \(H^z\) be defined in~\eqref{eq:herher}. Recall that we denote the eigenvalues of \(H^z\) by \(\{\lambda_{\pm i}^z\}_{i\in [n]}\), with \(\lambda_{-i}^z=-\lambda_i^z\), and by \(\{{\bm w}_{\pm i}^z\}_{i\in [n]}\) their corresponding orthonormal eigenvectors. As a consequence of the symmetry of the spectrum of \(H^z\) with respect to zero, its eigenvectors are of the form \({\bm w}_{\pm i}^z=({\bm u}_i^z,\pm {\bm v}_i^z)\), for any \(i\in [n]\). The eigenvectors of \(H^z\) are not well defined if \(H^z\) has multiple eigenvalues. This minor inconvenience can be easily solved by a tiny Gaussian regularization (see~\eqref{eq:gpert} and Remark~\ref{rem:gpaaa} later).

\begin{convention}\label{rem:no0}
    We omitted the index \(i=0\) in the definition of the eigenvalues of \(H^z\). In the remainder of this section we always assume that all the indices are not zero, e.g we use the notation
    \[\sum_{i=-n}^n := \sum_{i=-n}^{-1}+ \sum_{i=1}^n,\]
    and we use \(\abs{i}\le A\), for some \(A>0\), to denote \(0<\abs{i}\le A\), etc.
\end{convention}

The main result of this section is the proof of Proposition~\ref{prop:indmr} which follows by Proposition~\ref{prop:indeig} and the local law in Theorem~\ref{theo:Gll}.

\begin{proposition}[Asymptotic independence of small eigenvalues of \(H^{z_l}\)]\label{prop:indeig}
    Fix \(p\in \N\), and let \(\{\lambda_{\pm i}^{z_l}\}_{i=1}^n\) be the eigenvalues of \(H^{z_l}\), with \(l\in [p]\). For any \(\omega_d, \omega_h, \omega_f>0\) sufficiently small constants such that \(\omega_h\ll \omega_f\ll \omega_d\ll 1\), there exist constants \(\omega\), \(\widehat{\omega}, \delta_0,\delta_1>0\), with \(\omega_h\ll \delta_m\ll \widehat{\omega}\ll \omega\ll\omega_f\), for \(m=0,1\), such that for any fixed \(z_1,\dots,z_p\in \C \) so that \(\abs{z_l}\le 1-n^{-\omega_h}\), \(\abs{z_l-z_m},\abs{z_l-\overline{z}_m}, \abs{z_l-\overline{z}_l}\ge n^{-\omega_d}\), with \(l,m\in [p]\), \(l\ne m\), it follows that
    \begin{equation}\label{eq:indA}
        \begin{split}
            \E  \prod_{l=1}^p \frac{1}{n}&\sum_{\abs{i_l}\le n^{\widehat{\omega}}} \frac{\eta_l}{(\lambda_{i_l}^{z_l})^2+\eta_l^2}=\prod_{l=1}^p\E   \frac{1}{n}\sum_{\abs{i_l}\le n^{\widehat{\omega}}} \frac{\eta_l}{(\lambda_{i_l}^{z_l})^2+\eta_l^2} \\
            &\qquad\quad+\mathcal{O}\left(\frac{n^{\widehat{\omega}}}{n^{1+\omega}} \sum_{l=1}^p\frac{1}{\eta_l}\times\prod_{m=1}^p\left( 1+\frac{n^\xi}{n\eta_m}\right)+\frac{n^{p\xi+2\delta_0} n^{\omega_f}}{n^{3/2}}\sum_{l=1}^p \frac{1}{\eta_l}+\frac{n^{p\delta_0+\delta_1}}{n^{\widehat{\omega}}}\right),
        \end{split}
    \end{equation}
    for any \(\xi>0\), where \(\eta_1,\dots, \eta_p\in [n^{-1-\delta_0},n^{-1+\delta_1}]\) and the implicit constant in \(\mathcal{O}(\cdot)\) may depend on \(p\).
\end{proposition}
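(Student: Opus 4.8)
The plan is to prove Proposition~\ref{prop:indeig} by adapting the Dyson-Brownian-motion argument used for the complex case in~\cite[Prop.~\ref{cplx-prop:indmr}]{1912.04100}, incorporating the new features forced by the real symmetry class. After the tiny Gaussian regularisation that makes the eigenvectors of each \(H^{z_l}\) from~\eqref{eq:herher} well defined, the first step is to set up an interpolating flow: I would choose a real i.i.d.\ initial matrix \(X_0\) (with first four moments tuned appropriately) so that \(X_t\), obtained from \(X_0\) by adding a real Ginibre component run for a short time \(t\) of size a small power above \(n^{-1}\), has — simultaneously for every \(z_l\) — the same local singular-value statistics near zero as \(X\). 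Comparing the \(X_0\)-statistics with the \(X\)-statistics at time \(t\) is a Green-function comparison argument as in~\cite{1912.04100}; its one nonstandard input is the local law for products of resolvents, Theorem~\ref{thm local law G2}, now bookkept with the real-symmetry relation \((G^z)^t = G^{\ov z}\), which is exactly why the lower bound on \(\wh\beta_\ast\) in Lemma~\ref{lem:lowbeta} is stated symmetrically in \(w_2\) and \(\ov{w_2}\). This step produces the \(n^{p\xi+2\delta_0}n^{\omega_f}n^{-3/2}\sum_l\eta_l^{-1}\) term in~\eqref{eq:indA}.

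The second step is to write down the coupled system of SDEs for the joint evolution of the positive eigenvalues \(\bm\lambda^{z_l}(t)=\{\lambda^{z_l}_{\pm i}(t)\}\) along the flow, i.e.\ the real DBM~\eqref{eq:DDBM} with driving-martingale covariance~\eqref{eq:bbc} and overlap kernels~\eqref{eq:ovint}. Here the first genuine obstacle appears: because of the spectral symmetry \(\lambda_{-1}=-\lambda_1\), \(b_{-1}=-b_1\), the repulsion between \(\lambda_1\) and \(\lambda_{-1}\) is already critically weak even for \(\Lambda^z=0\), so standard DBM existence theory does not apply; I would instead adapt the well-posedness argument of~\cite{MR4009717} (developed there for the more singular \(\beta=1\) case) together with an a priori rigidity input near the hard edge at zero.

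The central step is to show that the driving martingales for different \(z_l\) are asymptotically independent. After summing over indices \(i,j\) close to zero and using \(\Theta^{z,z}_{ij}=\delta_{ij}\), this reduces — via the definition~\eqref{eq:ovint} — to an upper bound on \(\braket{\Im G^{z_l}(\ii\eta)\,\Im G^{z_m}(\ii\eta')}\) at scales \(\eta,\eta'\sim n^{-1+\epsilon}\). Theorem~\ref{thm local law G2} together with \(\abs{\wh\beta_\ast}\gtrsim \abs{z_l-z_m}^2\) from Lemma~\ref{lem:lowbeta} bounds this quantity by a negative power of \(\abs{z_l-z_m}\), hence by \(n^{-c}\) in the regime \(\abs{z_l-z_m}\ge n^{-\omega_d}\). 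The additional hypotheses \(\abs{z_l-\ov{z_m}}\ge n^{-\omega_d}\) and \(\abs{z_l-\ov{z_l}}\ge n^{-\omega_d}\), absent in~\cite{1912.04100}, enter precisely here: they control the second term \(\Theta^{z,\ov{z}'}\) in~\eqref{eq:bbc} and force \(\Lambda^{z_l}=\Theta^{z_l,\ov{z_l}}\) to be off-diagonally small, so that each single flow is effectively a \(\beta=2\) DBM.

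Finally, I would run the coupling: introduce reference eigenvalue flows \(\wt{\bm\lambda}^{z_l}(t)\) driven by genuinely independent Brownian motions — the eigenvalue flows of independent complex Ginibre matrices — started from matched initial data, and estimate \(\sum_l\abs{\lambda^{z_l}_{i_l}(t)-\wt\lambda^{z_l}_{i_l}(t)}\) by a Grönwall argument in which the driving term is controlled by the overlap smallness from the previous step and the remainder by DBM rigidity. This keeps the two systems \(n^{-\omega}\)-close on the scale of the first \(n^{\wh\omega}\) eigenvalues, giving the DBM-coupling error \(n^{\wh\omega-1-\omega}\sum_l\eta_l^{-1}\prod_m(1+n^\xi/(n\eta_m))\) and, after truncating the sum in~\eqref{eq:indA}, the error \(n^{p\delta_0+\delta_1-\wh\omega}\). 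Since the reference flows are independent by construction, the smoothed observables \(\frac1n\sum_{\abs{i_l}\le n^{\wh\omega}}\eta_l((\lambda^{z_l}_{i_l})^2+\eta_l^2)^{-1}\) asymptotically factorise. The main difficulty throughout is the interplay of the critical level repulsion at zero with the nontrivial within-\(z\) correlation \(\Lambda^z\); the rest is a careful real-symmetry adaptation of~\cite{1912.04100} leaning on~\cite{MR4009717}.
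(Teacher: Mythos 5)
Your high-level strategy matches the paper's: Green-function comparison to add a Gaussian component (Lemma~\ref{lem:GFTGFT}), DBM with correlated driving martingales~\eqref{eq:impnewDBM}, well-posedness via~\cite{MR4009717}, overlap smallness via the two-resolvent local law and Lemma~\ref{lem:lowbeta}, and comparison with independent Ginibre flows. Two small slips: the GFT step itself only needs the single-resolvent local law (the product local law is what goes into the overlap bound, Lemma~\ref{lem:lambound}); and the observable truncation error is the third term in~\eqref{eq:indA}, not a by-product of the Gr\"onwall estimate.

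The real gap is in the final coupling step, which you compress into ``a Gr\"onwall argument\dots started from matched initial data.'' Three things go wrong with this as stated, and the paper's three-step chain \(\bm\lambda \to \mathring{\bm\lambda} \to \wt{\bm\lambda}\stackrel{\mathsf d}{=}\wt{\bm\mu}\to \bm\mu\) is precisely designed to fix them. First, the drift of~\eqref{eq:impnewDBM} carries a non-removable coefficient \(1+\Lambda^z_{ij}(t)\) in the repulsion. Since \(\abs{\Lambda^z_{ij}}\) is only \(n^{-\omega_E}\)-small (not \(O(1/n)\)), a crude Gr\"onwall bound on the difference of two flows fails: the extra term \(\sum_{j}\Lambda^z_{ij}/(\lambda_i-\lambda_j)\) is not summable at the required precision. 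The paper therefore interpolates in \(\alpha\in[0,1]\) (process \({\bm x}^z(t,\alpha)\) in~\eqref{eq:defintpro}), runs a short-range cutoff, and controls \(\partial_\alpha x_i\) by an \(\ell^2\) energy estimate with an exponentially decaying weight \(\bm\chi\) around zero (Lemma~\ref{lem:ee}); this produces the \(n^{-1-\omega}\) closeness in Proposition~\ref{prop:realprop}, not a direct Gr\"onwall. Second, the driving vectors \({\bm b}^{z_l}\) are correlated even \emph{within} a single \(z_l\) (via \(\Lambda^{z_l}\)) and are in general not Gaussian, so one cannot couple them directly to independent Ginibre Brownian motions on a common probability space. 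The paper instead builds the partially correlated process \(\wt{\bm\lambda}\) (original martingales for \(\abs{i}\le n^{\omega_A}\), fresh i.i.d.\ drivers for \(\abs{i}>n^{\omega_A}\)) and an independent replica \(\wt{\bm\mu}\) with the \emph{same} law of the covariance matrix \(\mathring{\underline C}\), giving the crucial distributional identity~\eqref{eq:nefdis}; without this trick the ``independence of the reference flows'' you invoke has no way to enter. Third, the comparison between \(\mathring{\bm\lambda}\) and \(\wt{\bm\lambda}\) (and likewise \(\wt{\bm\mu}\) and \(\bm\mu\)) is necessarily between processes with \emph{different} initial data (original eigenvalues vs.\ Ginibre singular values), so Gr\"onwall from matched data is not available; one needs the path-wise coupling/fixed-energy-universality machinery of Proposition~\ref{pro:ciala} with the fast local equilibration of the DBM. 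In short, the idea of ``couple to independent Ginibre DBMs and use overlap smallness plus rigidity'' is right, but the actual argument requires the interpolation/energy method, the distributional equality via the \(\wt{\bm\lambda},\wt{\bm\mu}\) intermediaries, and the homogenisation step, none of which a matched-initial-data Gr\"onwall bound can replace.
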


\begin{proof}[Proof of Proposition~\ref{prop:indmr}]
    Let \(\rho^{z_l}\) be the self consistent density of states of \(H^{z_l}\), and define its quantiles \(\gamma_i^{z_l}\) by
    \[
        \frac{i}{n}=\int_0^{\gamma_i^{z_l}} \rho^{z_l}(x) \dif x, \qquad i\in [n],
    \]
    and \(\gamma_{-i}^{z_l}=-\gamma_i^{z_l}\) for \(i\in [n]\). Then, using the local law in Theorem~\ref{theo:Gll}, by standard application of Helffer-Sj\"ostrand formula (see e.g.~\cite[Lemma 7.1, Theorem 7.6]{MR3068390} or~\cite[Section 5]{MR2871147} for a detailed derivation), we conclude the following rigidity bound
    \begin{equation}
        \label{eq:hoplastrig}
        \abs{\lambda_i^{z_l}-\gamma_i^{z_l}}\le \frac{n^{100\omega_h}}{n}, \qquad \abs{i}\le n^{1-10\omega_h},
    \end{equation}
    with very high probability, uniformly in \(\abs{z_l}\le 1-n^{-\omega_h}\). Then Proposition~\ref{prop:indmr} follows by Proposition~\ref{prop:indeig} and~\eqref{eq:hoplastrig} exactly as in~\cite[Section~\ref{cplx-sec:ririri}]{1912.04100}. We remark that in the current case we additionally require that \(\abs{z_l-\ov{z}_m}\), \(\abs{z_l-\ov{z}_l}\gtrsim n^{-\omega_d}\) compared to~\cite[Proposition~\ref{cplx-prop:indeig}]{1912.04100}, but this does not cause any change in the proof in~\cite[Section~\ref{cplx-sec:ririri}]{1912.04100}.
\end{proof}

Section~\ref{sec:IND} is divided as follows: in Section~\ref{sec:PO} we state the main technical results needed to prove Proposition~\ref{prop:indeig} and conclude its proof.
In Section~\ref{sec:un} we prove Theorem~\ref{theo:un}, which will follow by the results stated in Section~\ref{sec:PO}.
In Section~\ref{sec:lamblamb} we estimate the overlaps of eigenvectors, corresponding to small indices, of \(H^{z_l}\), \(H^{z_m}\) for \(l\ne m\); this is the main input to prove the asymptotic independence in Proposition~\ref{prop:indeig}.
In Section~\ref{sec:newse} and Section~\ref{sec:wpe} we prove several technical results stated in Section~\ref{sec:PO}.
In Section~\ref{sec:hos} we present Proposition~\ref{cplx-pro:ciala} which is a modification of the path-wise coupling of DBMs close to zero from~\cite[Proposition~\ref{cplx-pro:ciala}]{1912.04100} to the case when the driving martingales in the DBM have a small correlation.
This is needed to deal with the (small) correlation of \({\bm \lambda}^{z_l}\), the eigenvalues of \(H^{z_l}\), for different \(l\)'s.

\subsection{Overview of the proof of Proposition~\ref{prop:indeig}}\label{sec:PO}

The main result of this section is the proof Proposition~\ref{prop:indeig}, which is essentially about the asymptotic independence of the eigenvalues \(\lambda_i^{z_l}\), \(\lambda_j^{z_m}\), for \(l\ne m\) and small indices \(i\) and \(j\). We do not prove this feature directly, instead we will compare \(\lambda_i^{z_l}\), \(\lambda_j^{z_m}\) with similar eigenvalues \smash{\(\mu_i^{(l)}\), \(\mu_j^{(m)}\)} coming from independent Ginibre matrices, for which independence is straightforward by construction. The comparison is done by exploiting the strong local equilibration of the Dyson Brownian motion (DBM) in several steps. For convenience, we record the sequence of approximations in Figure~\ref{fig:processes}. We remark that \(z_1,\dots, z_p\) are fixed as in Proposition~\ref{prop:indeig} throughout this section.

First, via a standard Green's function comparison argument (GFT) in Lemma~\ref{lem:GFTGFT} we prove that we may replace \(X\) by an i.i.d.\ matrix with a small Gaussian component. In the next step we make use of this Gaussian component and interpret the eigenvalues \(\bm\lambda^z\) of \(H^z\) as the short-time evolution \(\bm\lambda^z(t)\) of the eigenvalues of an auxiliary matrix \(H_t^z\) according to the Dyson Brownian motion. Proposition~\ref{prop:indeig} is thus reduced to proving asymptotic independence of the flows \({\bm \lambda}^{z_l}(t)\) for different \(l\in [p]\) after a short time \(t=t_f\), a bit bigger than \(n^{-1}\).
The corresponding DBM describing the eigenvalues of \(H_t^z\) (see~\eqref{eq:impnewDBM} later) differs from the standard DBM in two related aspects: (i) the driving martingales are weakly correlated, (ii) the interaction term has a coefficient slightly deviating from one. Note that the stochastic driving terms \(b_i\) in~\eqref{eq:impnewDBM} are martingales but not Brownian motions (see Appendix~\ref{sec:derdbm} for more details). Both effects come from the small but non-trivial overlap of the eigenvectors \( {\bm w}_i^{z_l}\) with \( \overline{\bm w}_j^{z_l}\). They also influence the well-posedness of the DBM, so an extra care is necessary. We therefore define two comparison processes. First we regularise the DBM by (i) setting the coefficient of the interaction equal to one, (ii) slightly reducing the diffusion term, and (iii) cutting off the possible large values of the correlation. The resulting process, denoted by \(\mathring{\bm \lambda}(t)\) (see~\eqref{eq:nuproc} later), will be called the \emph{regularised DBM}. Second, we artificially remove the correlation in the driving martingales for large indices. This \emph{partially correlated DBM}, defined in~\eqref{eq:nuproc3} below, will be denoted by \(\widetilde{\bm \lambda}(t)\). We will show that in both steps the error is much smaller than the relevant scale \(1/n\). After these preparations, we can directly compare the \emph{partially correlated DBM} \(\widetilde{\bm \lambda}(t)\) with its Ginibre counterpart \(\widetilde{\bm \mu}(t)\) (see~\eqref{eq:nuproc2} later) since their distribution is the same. Finally, we remove the partial correlation in the process \(\widetilde{\bm \mu}(t)\) by comparing it with a purely independent Ginibre DBM \({\bm \mu}(t)\), defined in~\eqref{eq:ginev} below.

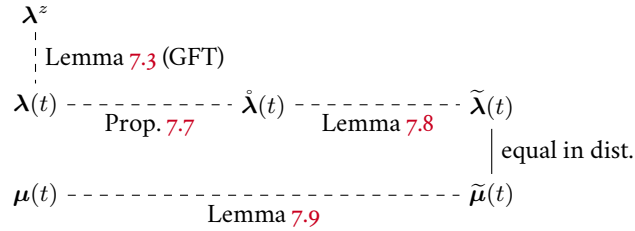
\begin{figure}[h]
    \centering
    \begin{tikzpicture}[node distance = 1.2cm and 3cm,on grid]
        \begin{scope}
            \node (a) at (0,2) {\(\bm\lambda^{z}\)};
            \node (b1) [below= of a] {\(\bm\lambda(t)\)};
            \node (b2) [right= of b1] {\(\mathring{\bm\lambda}(t)\)};
            \node (b3) [right= of b2] {\(\wt{\bm\lambda}(t)\)};
            \node (c2) [below= of b3] {\(\wt{\bm\mu}(t)\)};
            \node (c1) [below= of b1] {\(\bm\mu(t)\)};
            \path (a) edge[dashed] node[right] {Lemma~\ref{lem:GFTGFT} (GFT)} (b1);
            \path (b1) edge[dashed] node[below] {Prop.~\ref{prop:realprop}} (b2);
            \path (b2) edge[dashed] node[below] {Lemma~\ref{lem:firststepmason}} (b3);
            \path (b3) edge node[right] {equal in dist.} (c2);
            \path (c1) edge[dashed] node[below] {Lemma~\ref{lem:secondstepmason}} (c2);
        \end{scope}
    \end{tikzpicture}
    \caption{Proof overview for Proposition~\ref{prop:indeig}: The collections of eigenvalues \(\bm\lambda^{z_l}\) of \(H^{z_l}\) for different \(l\)'s are approximated by several stochastic processes. The processes \({\bm \mu}=\bm\mu^{(l)}\) are independent for different \(l\)'s by definition.}\label{fig:processes}
\end{figure}

Now we define these processes precisely. From now on we assume that \(p=2\) in Proposition~\ref{prop:indeig} to make our presentation clearer. The case \(p\ge 3\) is completely analogous. Consider the Ornstein-Uhlenbeck (OU) flow
\begin{equation}
    \label{eq:OUflow}
    d\widehat{X}_t=-\frac{1}{2}\widehat{X}_t \dif t+\frac{\dif \widehat{B}_t}{\sqrt{n}}, \qquad \widehat{X}_0=X,
\end{equation}
for a time
\begin{equation}
    \label{eq:time1}
    t_f:= \frac{n^{\omega_f}}{n},
\end{equation}
with some small exponent \(\omega_f>0\) given as in Proposition~\ref{prop:indeig}, in order to add a small Gaussian component to \(X\). Here \(\widehat{B}_t\) in~\eqref{eq:OUflow} is a standard matrix valued real Brownian motion, i.e.\ \( \widehat{B}_{ab}\), \(a,b\in[n]\) are i.i.d.\ standard real Brownian motions, independent of \(\widehat{X}_0\). Then we can construct an i.i.d.\ matrix \(\widecheck{X}_{t_f}\) such that
\begin{equation}
    \label{eq:consOU}
    \widehat{X}_{t_f}\stackrel{\mathsf{d}}{=}\widecheck{X}_{t_f}+\sqrt{ct_f} U,
\end{equation}
for some explicit constant \(c>0\) very close to \(1\), and \(U\) is a real Ginibre matrix independent of \(\widecheck{X}_{t_f}\). Using a simple Green's function comparison argument (GFT), by~\cite[Lemma~\ref{cplx-lem:GFTGFT}]{1912.04100}, we conclude the following lemma.
\begin{lemma}\label{lem:GFTGFT}
    The eigenvalues of \(H^{z_l}\) and the eigenvalues of \(\wh H^{z_l}_{t_f}\), with \(t_f=n^{-1+\omega_f}\) obtained from replacing \(X\) by \(\wh X_{t_f}\), are close in the sense that for any sufficiently small \(\omega_f,\delta_0,\delta_1>0\) it holds
    \begin{equation}
        \label{eq:stgft2}
        \E  \prod_{l=1}^2\frac{1}{n}\sum_{\abs{i_l}\le n} \frac{\eta_l}{(\lambda_{i_l}(H^{z_l}))^2+\eta_l^2}=\E  \prod_{l=1}^2\frac{1}{n}\sum_{\abs{i_l}\le n} \frac{\eta_l}{(\lambda_{i_l}(\wh H^{z_l}_{t_f}))^2+\eta_l^2}+\mathcal{O}\left(\frac{n^{2\xi+2\delta_0} t_f}{n^{1/2}}\sum_{l=1}^2 \frac{1}{\eta_l}\right),
    \end{equation}
    where \(\eta_l\in [n^{-1-\delta_0},n^{-1+\delta_1}]\).
\end{lemma}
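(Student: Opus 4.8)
Since the Green's function comparison (GFT) argument is insensitive to the symmetry class, I would follow the proof of~\cite[Lemma~\ref{cplx-lem:GFTGFT}]{1912.04100} essentially verbatim. Write \(f_l:=2\braket{\Im G^{z_l}(\ii\eta_l)}=\tfrac1n\sum_{\abs{i_l}\le n}\tfrac{\eta_l}{(\lambda_{i_l}^{z_l})^2+\eta_l^2}\) and \(F:=f_1f_2\), regarded as a function of the \(n^2\) entries of \(X\), and write \(F(\wh X_t)\) for \(F\) evaluated at the Hermitisations \(\wh H^{z_l}_t\) built from \(\wh X_t\). The plan is to interpolate between \(X\) and \(\wh X_{t_f}\) along the Ornstein--Uhlenbeck flow~\eqref{eq:OUflow}: since \(\wh X_0=X\), one has \(\E F(\wh X_{t_f})-\E F(X)=\int_0^{t_f}\tfrac{\dif}{\dif t}\E F(\wh X_t)\,\dif t\), so it is enough to show \(\abs*{\tfrac{\dif}{\dif t}\E F(\wh X_t)}\lesssim n^{-1/2+2\xi+2\delta_0}\sum_{l=1}^2\eta_l^{-1}\) uniformly for \(t\in[0,t_f]\); integrating over an interval of length \(t_f\) then produces the factor \(t_f\) in~\eqref{eq:stgft2}.

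For the time derivative I would apply Itô's formula — using that the driving matrix Brownian motion has independent standard real entries, so \(\dif[\wh X^{ab}_t,\wh X^{cd}_t]=n^{-1}\delta_{ac}\delta_{bd}\dif t\) — and then a cumulant expansion of \(\E[\wh X^{ab}_t\,\partial_{ab}F]\). The drift term and the second-order Itô term are then exactly cancelled by the second cumulant \(\kappa_2(\wh X^{ab}_t)=1/n\) (the OU flow preserves the mean and variance of each entry), leaving
\begin{equation*}
    \frac{\dif}{\dif t}\E F(\wh X_t)=-\frac12\sum_{ab}\sum_{k\ge 2}\frac{\kappa_{k+1}^{(t)}(ab)}{k!}\,\E\bigl[\partial_{ab}^{k+1}F(\wh X_t)\bigr],
\end{equation*}
where \(\kappa_j^{(t)}(ab)\) denotes the \(j\)-th cumulant of the \((a,b)\) entry of \(\wh X_t\). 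Since \(\wh X^{ab}_t=e^{-t/2}x_{ab}+(\text{independent Gaussian})\), we have \(\abs{\kappa_j^{(t)}(ab)}=e^{-jt/2}\abs{\kappa_j(n^{-1/2}\chi)}\lesssim n^{-j/2}\) for \(j\ge3\), uniformly in \(t\ge0\), with a \(j\)-dependent constant coming from~\eqref{eq:hmb}. The only difference from the complex case is that here the complex joint cumulants of \((x_{ab},\overline{x_{ab}})\) get replaced by the real cumulants of \(x_{ab}\), which changes nothing in what follows.

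The remaining task is to estimate the summands. Differentiation inserts a rank-\(\le2\) perturbation into each resolvent via \(\partial_{x_{ab}}G^z=-G^z(\partial_{x_{ab}}H^z)G^z\), so \(\partial_{ab}^{k+1}F\) is a finite sum of products of normalised traces of chains of resolvents \(G^{z_l}(\ii\eta_l)\) with such perturbations inserted. I would bound each chain by the single-resolvent local law on the imaginary axis together with the Ward identity \(G^z(G^z)^\ast=\eta^{-1}\Im G^z\), used to turn pairs of resolvents into \(\eta^{-1}\Im G^z\) so as to avoid a naive \(\eta^{-(k+1)}\) blow-up. After performing the \(\sum_{ab}\) summation, using \(\braket{\Im G^z(\ii\eta)}\prec1+n^\xi(n\eta)^{-1}\) and \((n\eta_l)^{-1}\le n^{\delta_0}\), the dominant contribution comes from the \(k=2\) term and is bounded by \(n^{-1/2+2\xi+2\delta_0}\sum_{l=1}^2\eta_l^{-1}\), while the \(k\ge3\) terms carry extra powers of \(n^{-1/2}\); off the very-high-probability event on which these bounds hold one uses the deterministic estimate \(\norm{G^z}\le\eta^{-1}\le n^{1+\delta_0}\). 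Integrating over \(t\in[0,t_f]\) then gives~\eqref{eq:stgft2}. The hard part is the power counting in this final step: one has to keep track of the \(n^{\delta_0}\)-losses coming from the small-\(\eta\) regime \(\eta\sim n^{-1-\delta_0}\) and verify that each is compensated by a Ward estimate, so that the per-unit-time bound really scales like \(\eta_l^{-1}\) (not \(\eta_l^{-2}\)) and carries only the bounded power \(n^{2\xi+2\delta_0}\) — which is carried out exactly as in~\cite[Lemma~\ref{cplx-lem:GFTGFT}]{1912.04100}.
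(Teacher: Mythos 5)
Your proposal is correct and matches the paper's own treatment: the paper proves Lemma~\ref{lem:GFTGFT} simply by citing the complex analogue in~\cite{1912.04100} and observing that the GFT argument is insensitive to the symmetry class (indeed, the only change is replacing the joint cumulants of \((x_{ab},\overline{x_{ab}})\) by the real cumulants of \(x_{ab}\), which does not affect the cumulant scaling \(\abs{\kappa_{k+1}^{(t)}}\lesssim n^{-(k+1)/2}\) or the power counting). Your derivation of the cumulant-expansion identity for \(\frac{\dif}{\dif t}\E F(\wh X_t)\), the cancellation of the drift and second-order Itô terms against \(\kappa_2=1/n\), and the Ward-identity based estimates on the \(k\ge2\) terms are exactly the steps carried out in the cited complex lemma, so this is the same argument.
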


Next, we consider the matrix flow
\begin{equation}
    \label{eq:bigDBM}
    \dif X_t=\frac{\dif B_t}{\sqrt{n}}, \qquad X_0=\widecheck{X}_{t_f},
\end{equation}
and denote by \(H_t^z\) the Hermitisation of \(X_t-z\). Here \(B_t\) is a real standard matrix valued Brownian motion independent of \(X_0\) and \(\widehat{B}_t\). Note that by construction \(X_{ct_f}\) is such that
\begin{equation}
    \label{eq:impGFT}
    X_{ct_f}\stackrel{\mathsf{d}}{=}\widehat{X}_{t_f}.
\end{equation}

Denote the eigenvalues and eigenvectors of \(H_t^z\) by
\[{\bm \lambda}^z(t)=\set*{\lambda_{\pm i}^z(t)\given i\in[n]},\quad  \set*{{\bm w}_{\pm i}^z(t) \given i\in[n]}=\set*{({\bm u}_i^z(t),\pm {\bm v}_i^z(t)) \given i\in[n]},\]
and the resolvent by \(G_t^z(w):=(H_t^z-w)^{-1}\) for \(w\in\HC\). For any \(\bm{w}=({\bm u}, {\bm v})\), with \({\bm u},{\bm v}\in \C^n\) define the projections \(P_1,P_2\colon\C^{2n}\to\C^n\) by
\begin{equation}
    \label{eq:defpro}
    P_1{\bm w}= {\bm u}, \qquad P_2{\bm w} ={\bm v},
\end{equation}
and, for any \(z,z'\in \C\), define the \emph{eigenvector overlaps} by
\begin{equation}
    \label{eq:defbiglam}
    \Theta_{ij}^{z,z'}=\Theta_{ij}^{z,z'}(t):=  4\Re[\braket{ P_1 {\bm w}_j^{z'}(t),P_1{\bm w}_i^{z}(t)}\braket{ P_2 {\bm w}_i^{z}(t),P_2 {\bm w}_j^{z'}(t)}], \quad \abs{i},\abs{j}\le n.
\end{equation}
Note that by the spectral symmetry of \(H_t^z\) it holds
\begin{equation}
    \label{eq:sym}
    \Theta_{ij}^{z,z}=\delta_{i,j}-\delta_{i,-j}, \quad \Theta_{ij}^{z,z'}=\Theta_{ji}^{z',z}, \quad \abs{\Theta_{ij}^{z,z'}}\le 1,
\end{equation}
for any \(\abs{i},\abs{j}\le n\). The coefficients \(\Theta_{ij}^{z,z'}(t)\) are small with high probability due to the following lemma whose proof is postponed to Section~\ref{sec:lamblamb}.
\begin{lemma}[Eigenvectors overlaps are small]\label{lem:lambound}
    For any sufficiently small constants \(\omega_h, \omega_d>0\), there exists \(\omega_E>0\) so that for any \(z,z'\in \C\) such that \(\abs{z},\abs{z'}\le 1-n^{-\omega_h}\), \(\abs{z-z'}\ge n^{-\omega_d}\), we have
    \begin{equation}
        \label{eq:lab1}
        \sup_{0\le t\le T}\sup_{\abs{i},\abs{j}\le n} \abs*{\Theta_{ij}^{z,z'}(t)}\le n^{-\omega_E},
    \end{equation}
    with very high probability for any fixed \(T\ge 0\).
\end{lemma}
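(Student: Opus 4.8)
The plan is to deduce the smallness of the overlaps from the local law for products of resolvents, Theorem~\ref{thm local law G2}, together with the lower bound on the stability operator \(\wh\cB_{12}\) from Lemma~\ref{lem:lowbeta}, via a positivity (``padding'') argument. Recall from~\eqref{eq:bigDBM} that \(H_t^z\) has eigenvectors \({\bm w}_{\pm i}^z(t)=({\bm u}_i^z(t),\pm {\bm v}_i^z(t))\); by the block symmetry of the Hermitisation the matrix \(\Im G_t^z(E+\ii\eta)\) is block–diagonal, with \((1,1)\)-block \(\eta\sum_{m>0} c_m^z(E,\eta)\,{\bm u}_m^z(t)({\bm u}_m^z(t))^\ast\) and \((2,2)\)-block \(\eta\sum_{m>0} c_m^z(E,\eta)\,{\bm v}_m^z(t)({\bm v}_m^z(t))^\ast\), where \(c_m^z(E,\eta)=[(\lambda_m^z-E)^2+\eta^2]^{-1}+[(\lambda_m^z+E)^2+\eta^2]^{-1}\ge 0\) and \(c_i^z(\lambda_i^z,\eta)\ge\eta^{-2}\). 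I would fix indices \(\abs{i},\abs{j}\le n\), set \(E=\lambda_i^z(t)\), \(E'=\lambda_j^{z'}(t)\), and choose a mesoscopic scale \(\eta=\eta'=n^{-1/2-2\omega_d}\). Dropping all but the \(m=i\) (resp.\ \(m=j\)) terms in the PSD ordering and using \(\abs{\Theta_{ij}^{z,z'}(t)}\le 4\,\abs{\braket{{\bm u}_i^z(t),{\bm u}_j^{z'}(t)}}\,\abs{\braket{{\bm v}_i^z(t),{\bm v}_j^{z'}(t)}}\le 2(\abs{\braket{{\bm u}_i^z(t),{\bm u}_j^{z'}(t)}}^2+\abs{\braket{{\bm v}_i^z(t),{\bm v}_j^{z'}(t)}}^2)\) (with the convention~\eqref{eq:sym}), one obtains the key inequality
\[
\abs{\Theta_{ij}^{z,z'}(t)}\ \lesssim\ n\eta\eta'\,\braket{\Im G_t^z(\lambda_i^z(t)+\ii\eta)\,\Im G_t^{z'}(\lambda_j^{z'}(t)+\ii\eta')}.
\]

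The next step is to bound the right-hand side by \(\landauOprec{n^{2\omega_d}}\) using the product local law. Writing \(\Im G=\tfrac1{2\ii}(G(w)-G(\bar w))\) and expanding, the average above is a finite combination of terms \(\braket{E_1 G_t^z(w_1)E_1 G_t^{z'}(w_2)}\) (and the analogue with \(E_2\)) with \(\abs{\Im w_a}=\eta\) and \(z\ne z'\); each is handled by Theorem~\ref{thm local law G2} with \(A=B=E_1\). The deterministic approximation \(\braket{E_1 M_{E_1}^{z,z'}(w_1,w_2)}\) is bounded by \(\norm{\wh\cB_{12}^{-1}}\lesssim\abs{\wh\beta_\ast}^{-1}\), and by Lemma~\ref{lem:lowbeta} (estimate~\eqref{eq beta lower bound}) together with \(\abs{z-z'}\ge n^{-\omega_d}\) we have \(\abs{\wh\beta_\ast}\ge\Re\wh\beta_\ast\gtrsim\abs{z-z'}^2\gtrsim n^{-2\omega_d}\), so this main term is \(\landauOprec{n^{2\omega_d}}\). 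The same lower bound ensures that the admissibility hypothesis \(\eta_\ast\ge n^{-1+\epsilon_\ast}\abs{\wh\beta_\ast}^{-1}\) of Theorem~\ref{thm local law G2} holds for our choice of \(\eta\) once \(\omega_d,\epsilon_\ast\) are small, and that the error in~\eqref{final local law} is at most \(n^{\xi+O(\omega_h)}\,(n\eta^2)^{-1}\abs{\wh\beta_\ast}^{-1}\,\eta^{1/12}\lesssim n^{\xi+O(\omega_h)+6\omega_d-1/24-\omega_d/6}\) up to the comparable \((n\eta_\ast)^{-1/2}\) and \((\abs{\wh\beta_\ast}n\eta_\ast)^{-1/4}\) contributions (the \(\epsilon\)-dependent constant \(C_\epsilon\) of Theorem~\ref{thm local law G2} is at most \(n^{100\omega_h}\) since \(\abs{1-\abs{z}}\ge n^{-\omega_h}\), hence harmless). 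Consequently \(\abs{\Theta_{ij}^{z,z'}(t)}\lesssim n\eta^2\bigl(n^{2\omega_d}+n^{\text{(small negative)}}\bigr)\lesssim n^{-2\omega_d}\) whenever \(\omega_h,\omega_d,\xi\) are chosen sufficiently small.

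To promote this to the supremum over \(\abs{i},\abs{j}\le n\) and \(0\le t\le T\) I would argue as follows. The high-probability bound of Theorem~\ref{thm local law G2} holds for deterministic spectral parameters, so replace the random energies \(\lambda_i^z(t),\lambda_j^{z'}(t)\) by nearest points of an \(n^{-C}\)-grid, using that \(E\mapsto\braket{\Im G_t^z(E+\ii\eta)\,\Im G_t^{z'}(E'+\ii\eta')}\) is Lipschitz with a polynomial constant (differentiation produces extra resolvent factors of norm \(\lesssim\eta^{-2}\)); by rigidity and the norm bound coming from Theorem~\ref{theo:Gll} the relevant energies lie in a bounded region, so the hypotheses \(\abs{w_a}\lesssim 1\) are met. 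A union bound over the \(n^{O(1)}\) choices of \(i,j\) and grid points is then harmless. Uniformity in \(t\in[0,T]\) follows from discretising \(t\) on an \(n^{-C}\)-grid, using the (polynomial in \(n\)) Lipschitz continuity of \(t\mapsto\lambda_k^z(t)\) and \(t\mapsto{\bm w}_k^z(t)\), and the fact that for every \(t\) the matrix \(X_t\) from~\eqref{eq:bigDBM} is an i.i.d.\ matrix with variance \((1+o(1))/n\) and uniformly bounded moments, so that Theorem~\ref{thm local law G2} and the rigidity it implies apply uniformly in \(t\); non-uniqueness of the eigenvectors is handled by the tiny Gaussian regularisation~\eqref{eq:gpert} (cf.\ Remark~\ref{rem:gpaaa}). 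Setting \(\omega_E\) to be any positive number smaller than \(2\omega_d\) (e.g.\ \(\omega_E=\omega_d/2\)) then yields~\eqref{eq:lab1}.

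\textbf{Main obstacle.} The delicate point is the choice of the mesoscopic scale \(\eta\): it must be small enough that the padding loss \(n\eta^2\) beats the size \(\abs{\wh\beta_\ast}^{-1}\lesssim n^{2\omega_d}\) of the deterministic resolvent product, yet large enough that the error in~\eqref{final local law} — which deteriorates like a negative power of \(\eta\) and is amplified by \(\abs{\wh\beta_\ast}^{-1}\) — remains negligible and that the admissibility condition \(\eta_\ast\ge n^{-1+\epsilon_\ast}\abs{\wh\beta_\ast}^{-1}\) holds. Threading this needle forces a hierarchy \(\omega_h\ll\omega_d\) among the small exponents, with \(\epsilon_\ast,\xi\) likewise small and \(\omega_E\) a small multiple of \(\omega_d\); carrying out this bookkeeping, and keeping track of the several error terms (each carrying a spurious \(n^\xi\)), is the bulk of the work. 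The remaining ingredients — the positivity bound, the invocations of Theorem~\ref{thm local law G2} and Lemma~\ref{lem:lowbeta}, and the grid/continuity arguments — are routine.
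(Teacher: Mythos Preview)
Your proof is correct and follows essentially the same approach as the paper: bound the overlap by \(\eta^2\) times a trace of \(\Im G^z E_k \Im G^{z'} E_k\), control the latter via Theorem~\ref{thm local law G2} together with Lemma~\ref{lem:lowbeta}, and upgrade to a uniform statement by a grid/H\"older-continuity argument. The paper differs only in minor bookkeeping: it evaluates at the deterministic quantiles \(\gamma_{i_0}^z(t)\) rather than the random eigenvalues (using rigidity~\eqref{eq:hoplastrigt}, which eliminates the energy-grid step), and it chooses \(\eta=n^{-12/23}\), yielding \(\omega_E=1/23-2\omega_d-100\omega_h\) instead of your \(\omega_E<2\omega_d\).
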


Most of the DBM analysis is performed for a fixed \(z\in \{z_1,z_2\}\), with \(z_1,z_2\) as in Proposition~\ref{prop:indeig}, for this purpose we introduce the notation
\begin{equation}
    \label{eq:deflamzz}
    \Lambda_{ij}^z(t):= \Theta_{ij}^{z,\overline{z}}(t),
\end{equation}
for any \(\abs{i},\abs{j}\le n\). In particular, note \(\Theta_{ij}^{z,\overline{z}}=\Theta_{ij}^{\overline{z},z}\) and so that by~\eqref{eq:sym} it follows that \(\Lambda_{ij}^z(t)=\Lambda_{ji}^z(t)\).

By the derivation of the DBM in Appendix~\ref{sec:derdbm}, using the fact that \(\overline{\bm w}^z={\bm w}^{\overline{z}}\), for \(z=z_l\) with \(l\in [2]\), it follows that~\eqref{eq:bigDBM} induces the flow
\begin{equation}
    \label{eq:impnewDBM}
    \dif \lambda_i^z(t)=\frac{\dif b_i^z}{\sqrt{n}}+\frac{1}{2n}\sum_{j\ne i} \frac{1+\Lambda_{ij}^z(t)}{\lambda_i^z(t)-\lambda_j^z(t)}\dif t, \quad  \lambda_i^z(0)=\lambda_i^z, \qquad \abs{i}\le n,
\end{equation}
on the eigenvalues \(\{\lambda_{i}^z(t)\}_{\abs{i}\le n}\) of \(H_t^z\). Here \(\{\lambda_i^z\}_{\abs{i}\le n}\) are the eigenvalues of the initial matrix \(H^z\). The martingales \(\{b_i^z\}_{i\in [n]}\), with \(b_i^z(0)=0\), and $\Lambda_{ij}^z(t)$, the overlap of eigenvectors in~\eqref{eq:deflamzz},~\eqref{eq:defbiglam}, are defined on a probability space \(\Omega_b\) equipped with the filtration
\begin{equation}
    \label{eq:deffiltr}
    (\mathcal{F}_{b,t})_{0\le t\le T}:=\bigl(\sigma(X_0, (B_s)_{0\le s\le t})\bigr)_{0\le t\le T},
\end{equation}
where \(B_s\) is defined in~\eqref{eq:bigDBM}. The martingale differentials in~\eqref{eq:impnewDBM} are such that (see~\eqref{eq:defBBB}--\eqref{eq:halcov})
\begin{equation}
    \label{eq:recallBBB}
    \begin{split}
        \dif b^z_i:={}&\dif B^z_{ii}+\dif \overline{B^z_{ii}},\quad \text{with} \quad  \dif B^z_{ij}:=\braket{\bm u^z_i,(\dif B) \bm v^z_i}, \quad i,j\in [n], \\
        \Exp*{ \dif b_i^z \dif b_j^z\given \mathcal{F}_{b,t}}={}&\frac{\delta_{ij}+\Lambda_{ij}^z(t)}{2} \dif t, \quad i,j\in [n],
    \end{split}
\end{equation}
and \(\dif b_{-i}^z=-\dif b_i^z\) for \(i\in [n]\). Here we used the notation \(\Omega_b\) for the probability space to emphasize that is the space where the martingales \({\bm b}^z\) are defined, since in Section~\ref{sec:COMPPRO} we will introduce another probability space which we will denote by \(\Omega_\beta\).

In the remainder of this section we will apply Lemma~\ref{lem:lambound} for \(z=z_1, z'=z_2\) and \(z=z_1, z'=\overline{z}_2\) and \(z=z_l, z'=\overline{z}_l\), for \(l\in [2]\), with \(z_1,z_2\) fixed as in Proposition~\ref{prop:indeig}. We recall that throughout this section we assumed that \(p=2\) in Proposition~\ref{prop:indeig}. Note that \(\Lambda_{ij}^{z_1}\), \(\Lambda_{ij}^{z_2}\), \(\Theta_{ij}^{z_1,z_2}\), \(\Theta_{ij}^{z_1,\overline{z}_2}\) with \(\abs{i},\abs{j}\le n\), are not well-defined if \(H_t^{z_1}\), \(H_t^{z_2}\) have multiple eigenvalues. This minor inconvenience can easily be resolved by a tiny regularization
as in~\cite[Lemma 6.2]{1908.04060} (which is the singular values counterpart of~\cite[Proposition 2.3]{MR4009717}).
Using this result,  we may,  without loss of generality, assume that the eigenvalues of \(H_t^{z_l}\) are almost surely distinct for any fixed time \(t\ge 0\). Indeed, if this were not the case then we replace \(H_0^{z_l}\) by
\begin{equation}
    \label{eq:gpert}
    H_{0, \mathrm{reg}}^{z_l}:=\left( \begin{matrix}
            0                                  & X-z_l+e^{-n}Q \\
            X^\ast-\overline{z}_l+e^{-n}Q^\ast & 0
        \end{matrix} \right),
\end{equation}
with \(Q\) being a complex \(n\times n\) Ginibre matrix independent of \(X\), i.e.\ we may regularize \(X\) by adding an exponentially small Gaussian component. Then, by~\cite[Lemma 6.2]{1908.04060}, \(H_{t,\mathrm{reg}}^{z_l}\), the evolution of \(H_{0, \mathrm{reg}}^{z_l}\) along the flow~\eqref{eq:bigDBM}, does not have multiple eigenvalues almost surely; additionally, the eigenvalues of \(H_{0, \mathrm{reg}}^{z_l}\) and the ones of \(H_0^{z_l}\) are exponentially close. Hence, by Fubini's theorem, \(\{\Lambda_{ij}^{z_l}(t)\}_{\abs{i},\abs{j}\le n}\), with \(l\in [2]\), and \(\{\Theta_{ij}^{z_1,z_2}(t)\}_{\abs{i},\abs{j}\le n}\), \(\{\Theta_{ij}^{z_1,\overline{z}_2}(t)\}_{\abs{i},\abs{j}\le n}\) are well-defined for almost all \(t\ge 0\); we set them equal to zero whenever they are not well defined.

\begin{remark}\label{rem:gpaaa}
    The perturbation of \(X\) in~\eqref{eq:gpert} is exponentially small, hence does not change anything in the proof of the local laws in Theorem~\ref{theo:Gll} and Theorem~\ref{thm local law G2} or in the Green's function comparison (GFT) argument in Lemma~\ref{lem:GFTGFT}, since these proofs deal with scales much bigger than \(e^{-n}\). This implies that any local law or GFT result which holds for \(H_t^{z_l}\) then holds true for \(H_{t, \mathrm{reg}}^{z_l}\) as well. Hence, in the remainder of this section we assume that~\cite[Lemma 6.2]{1908.04060} holds true for \(H_t^{z_l}\) (the unperturbed matrix).
\end{remark}

The process~\eqref{eq:impnewDBM} is well-defined in the sense of Proposition~\ref{prop:wpmainDBM}, whose proof is postponed to Section~\ref{sec:wpe}.

\begin{proposition}[The DBM in~\eqref{eq:impnewDBM} is well-posed]\label{prop:wpmainDBM}
    Fix \(z\in \{z_1,z_2\}\), and let \(H_t^z\) be defined by the flow~\eqref{eq:bigDBM}. Then the eigenvalues \({\bm \lambda}(t)\) of \(H_t^z\) are the unique strong solution to~\eqref{eq:impnewDBM} on \([0,T]\), for any \(T>0\), such that
    \({\bm \lambda}(t)\) is adapted to the filtration \((\mathcal{F}_{b,t})_{0\le t\le T}\),  \({\bm \lambda}(t)\) is \(\gamma\)-H\"older continuous for any \(\gamma\in (0,1/2)\), and
    \[\Prob\Bigl(\lambda_{-n}(t)<\cdots < \lambda_{-1}(t)<0<\lambda_1(t)< \cdots < \lambda_{n}(t), \text{ for almost all } t\in [0,T]\Bigr)=1.\]
\end{proposition}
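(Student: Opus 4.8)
The plan is to take the ordered eigenvalue process $\bm\lambda(t)=\{\lambda_i(t)\}_{\abs i\le n}$ of the matrix flow $H_t^z$ as the candidate solution and to verify existence, the three regularity properties, and uniqueness in turn.

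\emph{Existence and regularity.} The Itô calculus for the eigenvalues of $H_t^z$, carried out in Appendix~\ref{sec:derdbm}, shows that $\bm\lambda(t)$ solves~\eqref{eq:impnewDBM} with the driving martingales $b_i^z$ and coefficients $\Lambda_{ij}^z(t)$ as in~\eqref{eq:recallBBB}; since both $b^z_i$ and $\Lambda^z_{ij}$ are built from $H_t^z$, the process $\bm\lambda(t)$ is manifestly adapted to $(\mathcal{F}_{b,t})_{0\le t\le T}$. For the $\gamma$-Hölder continuity, $\gamma\in(0,1/2)$, I would use Weyl's inequality $\abs{\lambda_i(t)-\lambda_i(s)}\le\norm{H_t^z-H_s^z}=\norm{X_t-X_s}$ together with the moment bound $\E\norm{X_t-X_s}^{2p}\lesssim_p\abs{t-s}^p$ for every $p$, which via Kolmogorov's continuity criterion yields $\gamma$-Hölder regularity of $t\mapsto X_t$ in operator norm for every $\gamma<1/2$, hence of $\bm\lambda$. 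The strict ordering for almost every $t$ comes from the Gaussian regularisation~\eqref{eq:gpert}: by~\cite[Lemma~6.2]{1908.04060} the matrix $H_t^z$ has simple spectrum for each fixed $t$ almost surely, so by Fubini this holds for a.e.\ $t$ simultaneously, and continuity of the labelled eigenvalues in $t$ then upgrades this to $\lambda_{-n}(t)<\cdots<\lambda_{-1}(t)<0<\lambda_1(t)<\cdots<\lambda_n(t)$ for a.e.\ $t\in[0,T]$.

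\emph{Uniqueness.} Let $\bm\lambda(t)$ and $\bm\lambda'(t)$ be two solutions of~\eqref{eq:impnewDBM} in the stated class; being strong solutions they are driven by the same martingales $b_i^z$ and share the same coefficient process $\Lambda^z_{ij}(t)$ (both determined by $H_t^z$, not by the solution). Then the differences $a_i(t):=\lambda_i(t)-\lambda_i'(t)$ have vanishing martingale part — it cancels — hence are absolutely continuous, with $a_{-i}=-a_i$, so that $D(t):=\sum_{\abs i\le n}a_i(t)^2$ obeys $D(0)=0$ and, after symmetrising the drift in $i\leftrightarrow j$ using $\Lambda^z_{ij}=\Lambda^z_{ji}$ and $a_i-a_j=(\lambda_i-\lambda_j)-(\lambda_i'-\lambda_j')$,
\[
\dot D(t)=-\frac{1}{2n}\sum_{i\ne j}\bigl(1+\Lambda_{ij}^z(t)\bigr)\frac{\bigl(a_i(t)-a_j(t)\bigr)^2}{\bigl(\lambda_i(t)-\lambda_j(t)\bigr)\bigl(\lambda_i'(t)-\lambda_j'(t)\bigr)}\le 0 ,
\]
where the inequality uses $1+\Lambda^z_{ij}\ge0$ (valid since $\abs{\Lambda^z_{ij}}\le1$ by~\eqref{eq:sym}) and that both solutions are strictly ordered the same way for a.e.\ $t$, so each denominator is positive. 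Hence $D\equiv0$ on $[0,T]$ and $\bm\lambda'=\bm\lambda$. To make this rigorous one runs it against a localisation: the $(\lambda_1,\lambda_{-1})$ pair is treated separately (there $\lambda_1-\lambda_{-1}=2\lambda_1$, so its contribution to $\dot D$ is $-\tfrac1n(1+\Lambda^z_{1,-1})\,a_1^2/(\lambda_1\lambda_1')\le0$), and one localises on the event that all remaining gaps of both processes stay above a threshold, removing the localisation with the a priori control below.

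\emph{Main obstacle.} The real difficulty — and the reason~\eqref{eq:impnewDBM} is not covered by classical DBM theory — is the a priori integrability of the drift: one must show $\int_0^T\abs{\lambda_i(t)-\lambda_j(t)}^{-1}\dif t<\infty$ almost surely for all $i\ne j$, both to give meaning to~\eqref{eq:impnewDBM} for the matrix eigenvalues and to close the localisation in the uniqueness step. For $j\ne-i$ with both indices in the bulk this follows from the usual $\beta=1$ level-repulsion and rigidity estimates, but for $j=-i$ the spectral symmetry $\lambda_{-1}=-\lambda_1$ makes the repulsion $\tfrac1{2n}\tfrac{1+\Lambda^z_{1,-1}}{\lambda_1-\lambda_{-1}}$ critically weak — already for $\Lambda=0$ the pair $(\lambda_1,\lambda_{-1})$ repels only as strongly as a Bessel process of dimension near $2$, cf.~\cite[Appendix~A]{MR3916329}. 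Following~\cite{MR4009717}, I would handle this by a one-sided comparison of $\lambda_1(t)$ (equivalently of $\lambda_1-\lambda_{-1}$) with a Bessel process of dimension $1+(1+\Lambda^z_{1,-1})/(1+\Lambda^z_{11})$, which Lemma~\ref{lem:lambound} pins at $2-\landauO{n^{-\omega_E}}>1$ uniformly in $t$ once the bounded contributions of the distant eigenvalues and of the $\pm j$ pairs (which nearly cancel near $\lambda_1=0$) are discarded; for Bessel dimension strictly above $1$ one has $\E\int_0^T\lambda_1(t)^{-1}\dif t<\infty$, which delivers the missing integrability and, with the two steps above, the well-posedness of~\eqref{eq:impnewDBM}. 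This Bessel-comparison input is the technical heart of the argument and is carried out in Section~\ref{sec:wpe}.
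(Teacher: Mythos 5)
Your existence argument — Weyl's inequality for Hölder regularity, the Gaussian regularisation~\eqref{eq:gpert} together with~\cite[Lemma~6.2]{1908.04060} for almost-sure simplicity of the spectrum, and the Itô calculus of Appendix~\ref{sec:derdbm} to identify the SDE — is exactly the route the paper takes. The difference is in how the remaining technical content is discharged, and here you diverge from the paper on both the uniqueness step and the no-collision/drift-integrability step.

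For uniqueness you propose a self-contained $L^2$-contraction: subtract two strong solutions, observe that the driving martingales cancel since both are adapted to the same filtration, and symmetrise the drift difference to obtain $\dot D\le 0$ using $1+\Lambda^z_{ij}\ge 0$. The algebra and the sign argument (including the $j=-i$ pairs, where $(a_i-a_{-i})^2=4a_i^2$ and $\lambda_i\lambda_i'>0$) check out, and the hypothesis $|\Lambda^z_{ij}|\le 1$ is indeed available from~\eqref{eq:sym}. This is a more elementary and transparent argument than what the paper actually does: the paper's proof of Proposition~\ref{prop:wpmainDBM} in Section~\ref{sec:wpe} is essentially a citation to~\cite[Theorem~5.2]{MR4009717} (and~\cite[Theorem~6.3]{1908.04060} for the block structure), noting that the only new feature — the correlated martingales — plays no role in that proof since only bounded quadratic variation and Hölder continuity of $\bm{M}(t)$ are used. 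Your contraction argument buys transparency and independence of external references; the paper's citation buys brevity and reuse of already-vetted machinery.

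On the integrability of the drift at the origin you correctly identify the $\pm\lambda_1$ pair as the critical obstruction (this is exactly the point the introduction highlights with the reference to~\cite[Appendix~A]{MR3916329}), and your formula for the effective Bessel dimension $1+(1+\Lambda_{1,-1}^z)/(1+\Lambda_{11}^z)$ matches the coefficients of the SDE. However, a one-sided Bessel comparison with random, time-dependent dimension is \emph{not} what is carried out here — Section~\ref{sec:wpe} contains no such comparison, and the cited~\cite[Theorem~5.2]{MR4009717}, like the self-contained argument actually written out in Appendix~\ref{sec:wp} (Proposition~\ref{prop:wp}, following~\cite[Lemma~4.3.3]{MR2760897}), proceeds instead by truncating the singular kernel, solving the cut-off SDE, and then using the log-energy Lyapunov function $f(\bm{x})=-2\sum_{k\ne l}a_{|k-l|}\log|x_k-x_l|$ with exponentially growing weights $a_{k+1}=a_k^5$ to show the stopping times $\tau_\epsilon$ diverge as $\epsilon\to 0$. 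That argument handles all pairs simultaneously, including $\pm i$, without needing a separate Bessel analysis; by contrast your Bessel route, while heuristically on target, leaves significant work unaddressed (the drift from the other $\pm j$ pairs near $\lambda_1=0$ does not exactly cancel, and $\Lambda^z_{1,\pm1}(t)$ is random and only controlled on a high-probability event, so a genuine one-sided comparison must be localised and the dimension kept uniformly $>1$). So: correct diagnosis of the obstacle, reasonable alternative method, but a different and less developed route than the one the paper relies on.
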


In order to prove that the term \(\Lambda_{ij}^z\) in~\eqref{eq:impnewDBM} is irrelevant, we will couple the driving martingales in~\eqref{eq:impnewDBM} with the ones of a DBM that does not have the additional term \(\Lambda_{ij}^z\) (see~\eqref{eq:nuproc} below). For this purpose we have to consider the correlation of \(\{b_i^{z_1}\}_{\abs{i}\le n}\), \(\{b_i^{z_2}\}_{\abs{i}\le n}\) for two different \(z_1,z_2\in \C\) as in Proposition~\ref{prop:indeig}. In the following we will focus only on the driving martingales with positive indices, since the ones with negative indices are defined by symmetry. The martingales \({\bm b}^{z_l}=\{b_i^{z_l}\}_{i\in [n]}\), with \(l=1,2\), are defined on a common probability space equipped with the filtration \((\mathcal{F}_{b,t})_{0\le t\le T}\) from~\eqref{eq:deffiltr}.

We consider \({\bm b}^{z_1}\), \({\bm b}^{z_2}\) jointly as a \(2n\)-dimensional martingale \(({\bm b}^{z_1},{\bm b}^{z_2})\). Define the naturally reordered indices
\[\mathfrak{i}=(l-1)n+i, \qquad \mathfrak{j}=(m-1)n+j,\]
with \(l,m\in [2]\), \(i,j\in [n]\), and \(\mathfrak{i}, \mathfrak{j}\in [2n]\). Then the correlation between \({\bm b}^{z_1}\), \({\bm b}^{z_2}\) is given by
\begin{equation}\label{eq:defCC}
    C_{\mathfrak{i}\mathfrak{j}}(t)\dif t:= \Exp*{\dif b_i^{z_l}\dif  b_j^{z_m}\given \mathcal{F}_{b,t}}=\frac{\Theta_{ij}^{z_l,z_m}(t)+\Theta_{ij}^{z_l,\overline{z}_m}(t)}{2} \dif t \qquad \mathfrak{i},\mathfrak{j} \in [2n].
\end{equation}
Note that \(C(t)\) is a positive semi-definite matrix. In particular, taking also negative indices into account, for a fixed \(z\in \{z_1,z_2\}\), the family of martingales \({\bm b}^z=\{b_i^z\}_{\abs{i}\le n}\) is such that
\begin{equation}\label{eq:defcovz}
    \Exp*{\dif b_i^z \dif b_j^z\given  \mathcal{F}_{b,t}}=\frac{\delta_{i,j}-\delta_{i,-j}+\Lambda_{ij}^z(t)}{2} \dif t, \qquad \abs{i},\abs{j}\le n.
\end{equation}

\subsubsection{Comparison of \texorpdfstring{\(\bm{\lambda}\)}{l} with the regularised process \texorpdfstring{\(\mathring{\bm{\lambda}}\)}{l}}\label{sec:remlamb}
By Lemma~\ref{lem:lambound} the overlaps  \(\Theta_{ij}^{z,z'}\) are typically small for any \(z,z'\in\C\) such that \(\abs{z},\abs{z'}\le 1-n^{-\omega_h}\) and \(\abs{z-z'}\ge n^{-\omega_d}\). We now define their cut-off versions (see~\eqref{eq:ringlamthe} below). We only consider positive indices, since negative indices are defined by symmetry. Throughout this section we use the convention that regularised objects will be denoted by circles. Let \(z_l\), with \(l\in [2]\) be fixed throughout Section~\ref{sec:IND} as in Proposition~\ref{prop:indeig}. Define the \(2n\times 2n\) matrix \(\mathring{C}(t)\) by
\begin{equation}\label{eq:ringmat}
    \mathring{C}_{\mathfrak{i}\mathfrak{j}}(t):=\frac{\mathring{\Theta}_{ij}^{z_l,z_m}(t)+\mathring{\Theta}_{ij}^{z_l,\overline{z}_m}(t)}{2} \qquad i,j \in [n], \quad \mathfrak{i},\mathfrak{j}\in [2n],
\end{equation}
where \(\mathring{\Theta}_{ij}^{z_l,z_l}=\delta_{ij}\) for \(i,j \in [n]\), and
\begin{equation}\label{eq:ringlamthe}
    \begin{split}
        \mathring{\Theta}_{ij}^{z_1,z_2}(t):&=\Theta_{ij}^{z_1,z_2}(t) \cdot \bm1\left(\mathcal{A}(t)\le n^{-\omega_E}\right), \quad \mathring{\Theta}_{ij}^{z_l,\overline{z}_m}(t):=\Theta_{ij}^{z_l,\overline{z}_m}(t) \cdot \bm1\left(\mathcal{A}(t)\le n^{-\omega_E}\right), \\
        \mathcal{A}(t)&=\mathcal{A}^{z_1,z_2}(t):=\max_{\abs{i},\abs{j}\le n}\abs{\Lambda_{ij}^{z_1}(t)}+\abs{\Lambda_{ij}^{z_2}(t)}+\abs{\Theta_{ij}^{z_1,\overline{z}_2}(t)}+\abs{\Theta_{ij}^{z_1,z_2}(t)}
    \end{split}
\end{equation}
for any \(l,m\in [2]\), recalling that \(\Lambda_{ij}^{z_l}=\Theta_{ij}^{z_l,\overline{z}_l}\). Note that by Lemma~\ref{lem:lambound} it follows that \(\mathring{C}(t)=C(t)\) on a set of very high probability, and \(\mathring{C}(t)=\frac{1}{2}I\), with \(I\) the \(2n\times 2n\) identity matrix, on the complement of this set, for any \(t\in [0,T]\). In particular, \(\mathring{C}(t)\) is positive semi-definite for any \(t\in [0,T]\), since \(C(t)\), defined as a covariance in~\eqref{eq:defCC}, is positive semi-definite. The purpose of the cut-off in~\eqref{eq:ringmat} it is to ensure the well-posedness of the process~\eqref{eq:nuproc} below.

We compare the processes \({\bm \lambda}^{z_l}(t)\) in~\eqref{eq:impnewDBM} with the \emph{regularised processes} \(\mathring{\bm \lambda}^{z_l}(t)\) defined, for \(z=z_l\), by
\begin{equation}\label{eq:nuproc}
    \dif \mathring{\lambda}_i^z=\frac{\dif \mathring{b}_i^z }{\sqrt{n(1+n^{-\omega_r})}}+\frac{1}{2n}\sum_{j\ne i} \frac{1}{\mathring{\lambda}_i^z-\mathring{\lambda}_j^z}\dif t, \qquad \mathring{\lambda}_i^z(0)=\lambda_i^z(0), \qquad \abs{i}\le n,
\end{equation}
with \(\omega_r>0\) such that \(\omega_f\ll \omega_r\ll \omega_E\). We organise the martingales \({\bm b}^{z_1},\bm b^{z_2}\) with positive indices into a single \(2n\)-dimensional vector \({\bm b}=({\bm b}^{z_1},{\bm b}^{z_2})\) with a correlation structure given by~\eqref{eq:defCC}. Then by Doob's martingale representation theorem~\cite[Theorem~18.12]{MR1876169} there exists a standard Brownian motion \(\mathfrak{w}=(\mathfrak{w}^{(1)}, \mathfrak{w}^{(2)})\in \R^{2n}\) realized on an extension \((\widetilde{\Omega}_b,\widetilde{\mathcal{F}}_{b,t})\) of the original  probability space \((\Omega_b,\mathcal{F}_{b,t})\) such that \(\dif {\bm b}=\sqrt{C}\dif \mathfrak{w}\), with \(\sqrt{C}=\sqrt{C(t)}\) the matrix square root of \(C(t)\). Moreover, \( \mathfrak{w}(t)\) and \(C(t)\) are adapted to the filtration \(\widetilde{\mathcal{F}}_{b,t}\). Then the martingales \(\mathring{{\bm b}}^{z_l}=\{\mathring{b}_i^{z_l}\}_{i\in [n]}\), with \(l\in [2]\), are defined by \(\mathring{{\bm b}}^{z_l}(0)=0\) and
\begin{equation}
    \label{eq:circbm}
    \begin{pmatrix}
        \dif \mathring{{\bm b}}^{z_1}(t) \\
        \dif \mathring{{\bm b}}^{z_2}(t)
    \end{pmatrix} := \sqrt{\mathring{C}(t)}\begin{pmatrix}
        \dif \mathfrak{w}^{(1)}(t) \\
        \dif \mathfrak{w}^{(2)}(t)
    \end{pmatrix},
\end{equation}
where \smash{\(\sqrt{\mathring C(t)}\)} denotes the matrix square root of the positive semi-definite matrix \(\mathring{C}(t)\). For negative indices we define \(\mathring{b}_{-i}=-\mathring{b}_i\), with \(i\in [n]\). The purpose of the additional factor \(1+n^{-\omega_r}\) in~\eqref{eq:nuproc} is to ensure the well-posedness of the process, since \(\mathring{\bm b}^z\) is a small deformation of a family of i.i.d.\ Brownian motions with variance \(1/2\), and the well-posedness of~\eqref{eq:nuproc} is already critical for those Brownian motions (it corresponds to the GOE case, i.e.\ \(\beta=1\)). The well-posedness of the process~\eqref{eq:nuproc} is proven in Appendix~\ref{sec:wp}. The main result of this section is the following proposition, whose proof is deferred to Section~\ref{sec:newse}.

\begin{proposition}[The \emph{regularised process} \(\mathring{\bm \lambda}\) is close to \({\bm \lambda}\)]\label{prop:realprop}
    For any sufficiently small \(\omega_d,\omega_h, \omega_f>0\) such that \(\omega_h\ll\omega_f\ll 1\) there exist small constants \(\widehat{\omega}, \omega>0\) such that \(\omega_h \ll \widehat{\omega}\ll \omega\ll \omega_f\), and that for \(\abs{z_l-\overline{z}_l},\abs{z_l-\overline{z}_m},\abs{z_l-z_m}\ge n^{-\omega_d}\), \(\abs{z_l}\le 1-n^{-\omega_h}\), with \(l\ne m\), it holds
    \[ \abs{\lambda_i^{z_l}(ct_f)-\mathring{\lambda}_i^{z_l}(ct_f)}\le n^{-1-\omega}, \qquad \abs{i}\le n^{\widehat{\omega}}, \]
    with very high probability, where \(t_f=n^{-1+\omega_f}\) and \(c>0\) is defined in~\eqref{eq:consOU}.
\end{proposition}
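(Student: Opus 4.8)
The plan is to compare the two DBMs \eqref{eq:impnewDBM} and \eqref{eq:nuproc} by running them from the same initial condition and estimating the difference \(\lambda_i^{z}(t)-\mathring\lambda_i^{z}(t)\) for small indices via an energy-method / Gronwall argument in the spirit of the coupling in \cite[Section~\ref{cplx-sec:hos}]{1912.04100}, with the extra work being to absorb the error produced by (i) the nontrivial coefficient \(1+\Lambda_{ij}^z\) in the drift, (ii) the slight reduction of the diffusion by the factor \((1+n^{-\omega_r})^{-1/2}\), and (iii) the cut-off that replaces \(\Theta\) by \(\mathring\Theta\) when \(\mathcal A(t)>n^{-\omega_E}\). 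First I would fix \(z=z_l\), work on the very-high-probability event \(\Xi\) where Lemma~\ref{lem:lambound} holds so that \(\mathring C(t)=C(t)\) and \(\sup_{t,i,j}|\Lambda_{ij}^z(t)|\le n^{-\omega_E}\) for all \(t\in[0,T]\); on this event the cut-off in \eqref{eq:ringlamthe} is inactive, so \(\mathring b^z\) and \(b^z\) are built from the same Brownian motion \(\mathfrak w\) via \(\dif b^z=\sqrt{C}\,\dif\mathfrak w\), \(\dif\mathring b^z=\sqrt{\mathring C}\,\dif\mathfrak w=\sqrt C\,\dif\mathfrak w\); hence the martingale parts of the two processes coincide up to the deterministic scalar factor \((1+n^{-\omega_r})^{-1/2}=1+\landauO{n^{-\omega_r}}\). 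This is the key simplification: on \(\Xi\) we are really comparing two processes with (almost) the \emph{same} driving noise, differing only through deterministic multiplicative constants of size \(n^{-\omega_E}\) (in the drift) and \(n^{-\omega_r}\) (in the diffusion).

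Next I would set \(\delta_i(t):=\sqrt n\,(\lambda_i^z(t)-\mathring\lambda_i^z(t))\) and write down its SDE by subtracting \eqref{eq:nuproc} from \eqref{eq:impnewDBM}. The drift difference splits into a \emph{main interaction term} \(\frac{1}{2n}\sum_{j\ne i}\bigl[\frac{1}{\lambda_i-\lambda_j}-\frac{1}{\mathring\lambda_i-\mathring\lambda_j}\bigr]=-\frac{1}{2n}\sum_{j\ne i}\frac{\delta_i-\delta_j}{\sqrt n(\lambda_i-\lambda_j)(\mathring\lambda_i-\mathring\lambda_j)}\), which has the good sign (it is dissipative, as in all DBM coupling arguments), plus \emph{forcing terms}: the term \(\frac{1}{2n}\sum_{j\ne i}\frac{\Lambda_{ij}^z(t)}{\lambda_i^z-\lambda_j^z}\) coming from the modified repulsion coefficient, and the small discrepancy \((1-(1+n^{-\omega_r})^{-1/2})\dif b_i^z/\sqrt n\) in the diffusion. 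The standard machinery — short-range cutoff of the sum at index scale \(n^{\ell}\) for suitable \(\widehat\omega\ll\ell\ll\omega\), rigidity \eqref{eq:hoplastrig} together with the local law to control denominators \(|\lambda_i-\lambda_j|\gtrsim |i-j|/n\) down to scale \(n^{-1+o(1)}\), and a maximum-principle / \(\ell^\infty\)-type Gronwall estimate as in \cite[Proposition~\ref{cplx-pro:ciala}]{1912.04100}, or alternatively the finite-speed-of-propagation estimate of \cite{MR4009717} for critical \(\beta=1\) repulsion — then yields \(\sup_{|i|\le n^{\widehat\omega}}|\delta_i(ct_f)|\) controlled by \(t_f\) times the size of the forcing. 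The forcing from \(\Lambda\) contributes roughly \(n^{-\omega_E}\) per unit time after summing \(\frac{1}{n}\sum_j\frac{1}{|\lambda_i-\lambda_j|}\lesssim \log n\) against the eigenvalue density; integrated over \([0,ct_f]\) with \(t_f=n^{-1+\omega_f}\) this gives \(\lesssim n^{-1+\omega_f-\omega_E+o(1)}\), and the diffusion discrepancy similarly gives \(\lesssim n^{-1/2}\cdot n^{-\omega_r}\cdot t_f^{1/2}=n^{-1+\omega_f/2-\omega_r}\). Choosing the hierarchy \(\omega_h\ll\widehat\omega\ll\omega\ll\omega_f\ll\omega_r\ll\omega_E\) (consistent with the constraints already imposed in \eqref{eq:nuproc} and Lemma~\ref{lem:lambound}) makes both errors \(\le n^{-1-\omega}\), which is the claim.

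The main obstacle is the well-posedness and stability of the comparison at the critical \(\beta=1\) repulsion strength near the symmetry point \(\lambda_{-1}=-\lambda_1\): the level repulsion between \(\lambda_1\) and \(\lambda_{-1}\) is already borderline (as noted after \eqref{eq:DDBM}, cf.\ \cite[Appendix~A]{MR3916329}), so the usual DBM coupling estimates that rely on a comfortable \(\beta>1\) or on strong level repulsion do not apply off the shelf; this is precisely why the factor \((1+n^{-\omega_r})^{-1/2}\) was inserted in \eqref{eq:nuproc}, and the delicate point is to show that this mild reduction of the noise, combined with the dissipativity of the interaction difference, is enough to keep the coupled trajectories from crossing and to close the Gronwall estimate uniformly for \(t\in[0,ct_f]\) — this is where the techniques of \cite{MR4009717} (and their singular-value adaptation in \cite{1908.04060}) enter. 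A secondary technical point is that \(b^z\) is only a martingale, not a Brownian motion, so the Doob representation \(\dif b^z=\sqrt C\,\dif\mathfrak w\) on the extended space \((\widetilde\Omega_b,\widetilde{\mathcal F}_{b,t})\) must be used carefully, and the \(C^1\)-in-time regularity of \(C(t)\) (equivalently of the overlaps \(\Theta\)) needed to make sense of \(\sqrt{C(t)}\) as an adapted process has to be invoked as in the cited references; but these are bookkeeping issues rather than genuine obstacles.
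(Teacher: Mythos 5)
Your high-level plan is sound: fix \(z=z_l\), work on the high-probability event from Lemma~\ref{lem:lambound} where \(\mathring C(t)=C(t)\) so that \(\mathring{\bm b}^{z}\) and \(\bm b^z\) are built from the same driving Brownian motion, write the SDE for \(\lambda_i-\mathring\lambda_i\), split into a dissipative main interaction term and forcing terms from the \(\Lambda^z_{ij}\)-drift and the \((1+n^{-\omega_r})^{-1/2}\) diffusion reduction, and conclude by Gronwall. The order-of-magnitude bookkeeping (forcing size \(n^{-1+\omega_f-\omega_E}\) and \(n^{-1+\omega_f/2-\omega_r}\)) is also in the right ballpark. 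But two steps in your argument have genuine gaps that the paper's proof exists precisely to circumvent.

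First, the bound \(\frac{1}{n}\sum_{j\ne i}|\lambda_i-\lambda_j|^{-1}\lesssim\log n\) that you use to control the \(\Lambda\)-forcing is not available: rigidity \eqref{eq:hoplastrig} controls \(|\lambda_i-\gamma_i|\), hence \(|\lambda_i-\lambda_j|\gtrsim|i-j|/n\) only for \(|i-j|\) larger than the rigidity scale, and gives nothing at all about the nearest-neighbour gap, which can be arbitrarily small. Consequently your forcing term \(\frac{1}{2n}\sum_j\Lambda^z_{ij}/(\lambda_i-\lambda_j)\) cannot be estimated term by term and a maximum-principle / \(\ell^\infty\)-Gronwall cannot close. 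The paper instead uses an \(\ell^2\)-energy estimate and absorbs the singular forcing into the good negative term by Cauchy--Schwarz (see~\eqref{eq:A1}): the point is that \(B_{ij}\) contains the \emph{squared} singularity \((\hat x_i-\hat x_j)^{-2}\), so one can trade one power of \((\hat x_i-\hat x_j)^{-1}\) against \(B_{ij}^{1/2}\) without any gap lower bound. Moreover the estimate is not uniform in \(i\) — it only holds for \(|i|\lesssim n^{\widehat\omega}\) — and this localisation is achieved through a spatial cut-off function \(\bm\chi\) (see~\eqref{eq:cutoff1}--\eqref{eq:cutoff}) that decays exponentially away from zero. This cut-off is the genuine technical innovation over~\cite{MR4009717} and is forced by the fact that the available bound \(|\Lambda^z_{ij}|\le n^{-\omega_E}\) is far weaker than the \(n^{-1+a}\) bound available there; you do not mention it, and without it the estimate does not close.

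Second, you assume rigidity / a priori gap control for the \emph{regularised} process \(\mathring{\bm\lambda}\) (and implicitly for the coefficients \(B_{ij}\) involving both processes), but this is a separate input that cannot be borrowed from~\eqref{eq:hoplastrig}: \(\mathring{\bm\lambda}\) is only defined as the solution of an SDE, not as eigenvalues of a matrix, so the local law has to be derived directly from the flow. In the paper this is Lemma~\ref{lem:alphall}, whose proof again must avoid the crude absolute-value bound on \(\Lambda_{ij}\) and instead rewrite the relevant double sums as traces of operator products built from the eigenvectors (see~\eqref{eq:defoperator}--\eqref{eq:term3}). Closely related to both points, the paper does not perform a direct comparison at all; it interpolates in \(\alpha\) between \(\mathring{\bm\lambda}\) and an auxiliary process close to \(\bm\lambda\) (\eqref{eq:defintpro}), differentiates in \(\alpha\) to obtain a \emph{deterministic} linear discrete parabolic equation~\eqref{eq:ubo} for \(v_i=\partial_\alpha\hat x_i\) whose coefficient \(B_{ij}\) is manifestly positive and carries no stochastic forcing, and then handles the diffusion discrepancy separately in Lemma~\ref{lem:comp} via a Lyapunov-function / stopping-time argument. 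Your one-shot comparison folds these two steps together and in doing so loses both the structural positivity that makes the energy estimate linear and the separation that lets each forcing term be treated with the appropriate (rather different) technique.
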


\subsubsection{Definition of the partially correlated processes \texorpdfstring{\(\widetilde{\bm \lambda}\), \(\widetilde{\bm \mu}\)}{lu}}\label{sec:COMPPRO}
The construction of the \emph{partially correlated processes} for \(\mathring{\bm \lambda}^{z_l}(t)\) is exactly the same as in the complex case~\cite[Section~\ref{cplx-sec:COMPPRO}]{1912.04100}; we present it here as well for completeness. We want to compare the correlated processes \(\mathring{\bm \lambda}^{z_l}(t)\), with \(l=1,2\), defined on a probability space \(\widetilde{\Omega}_b\) equipped with a filtration \(\widetilde{\mathcal{F}}_{b,t}\) with carefully constructed independent processes \({\bm \mu}^{(l)}(t)\), \(l=1,2\) on a different probability space \(\Omega_\beta\) equipped with a filtration \(\mathcal{F}_{\beta,t}\), which is defined in~\eqref{eq:newfiltrbet} below. We choose \({\bm \mu}^{(l)}(t)\) to be a \emph{complex Ginibre DBM}, i.e.\ it is given as the solution of
\begin{equation}
    \label{eq:ginev}
    \dif\mu_i^{(l)}(t)=\frac{\dif \beta_i^{(l)}}{\sqrt{2n}}+\frac{1}{2n}\sum_{j\ne  i} \frac{1}{\mu_i^{(l)}(t)-\mu_j^{(l)}(t)} \dif t, \quad \mu_i^{(l)}(0)=\mu_i^{(l)}, \qquad \abs{i}\le n,
\end{equation}
with \(\mu_i^{(l)}\) the singular values, taken with positive and negative sign, of independent complex Ginibre matrices \(X^{(l)}\), and \({\bm \beta}^{(l)}=\{\beta_i^{(l)}\}_{i\in [n]}\) being independent vectors of i.i.d.\ standard real Brownian motions, and \(\beta_{-i}^{(l)}=-\beta_i^{(l)}\) for \(i\in [n]\). The filtration \(\cF_{\beta,t}\) is defined by
\begin{equation}
    \label{eq:newfiltrbet}
    \bigl(\mathcal{F}_{\beta,t}\bigr)_{0\le t\le T}:=\bigl(\sigma(X^{(l)}, ({\bm \beta}_s^{(l)})_{0\le s\le t}, (\widetilde{\bm \zeta}_s^{(l)})_{0\le s\le t};{l\in [2]})\bigr)_{0\le t\le T},
\end{equation}
with \(\widetilde{\bm \zeta}^{(l)}\) standard real i.i.d.\ Brownian motions, independent of \({\bm \beta}^{(l)}\), which will be used later in the definition of the processes in~\eqref{eq:nuproc2}.

The comparison of \(\mathring{\bm \lambda}^{z_l}(t)\) and \({\bm \mu}^{(l)}(t)\) is done via two intermediate \emph{partially correlated processes} \(\widetilde{{\bm \lambda}}^{(l)}(t)\), \( \widetilde{{\bm \mu}}^{(l)}(t)\) so that for a time \(t\ge 0\) large enough \(\widetilde{\lambda}_i^{(l)}(t)\), \(\widetilde{\mu}_i^{(l)}(t)\) for small indices \(i\) will be close to \(\mathring{\lambda}^{z_l}_i(t)\) and \(\mu_i^{(l)}(t)\), respectively, with very high probability. Additionally, the processes \(\widetilde{{\bm \lambda}}^{(l)}\), \( \widetilde{{\bm \mu}}^{(l)}\) will be constructed
such that they have the same joint distribution:
\begin{equation}
    \label{eq:nefdis}
    \left( \widetilde{{\bm \lambda}}^{(1)}(t),\widetilde{{\bm \lambda}}^{(2)}(t)\right)_{0\le t\le T}\stackrel{\mathsf{d}}{=}\left(\widetilde{{\bm \mu}}^{(1)}(t), \widetilde{{\bm \mu}}^{(2)}(t)\right)_{0\le t\le T},
\end{equation}
for any \(T>0\).

Fix \(\omega_A>0\) such that \(\omega_h\ll \omega_A\ll \omega_f\), and for \(l\in [2]\) define the process \(\widetilde{\bm \lambda}^{(l)}(t)\) to be the solution of
\begin{equation}
    \label{eq:nuproc3}
    \dif \widetilde{\lambda}^{(l)}_i(t)=\frac{1}{2n}\sum_{j\ne  i} \frac{1}{\widetilde{\lambda}^{(l)}_i(t)-\widetilde{\lambda}^{(l)}_j(t)} \dif t+\begin{cases}
        \bigl(n (1+n^{-\omega_r})\bigr)^{-1/2}\dif \mathring{b}_i^{z_l}, & \abs{i}\le n^{\omega_A}     \\
        (2n)^{-1/2}\dif \widetilde{b}_i^{(l)},                           & n^{\omega_A}< \abs{i}\le n,
    \end{cases}
\end{equation}
with initial data \(\widetilde{\bm \lambda}^{(l)}(0)\) being the singular values, taken with positive and negative sign, of independent complex Ginibre matrices \(\widetilde{Y}^{(l)}\) independent of \({\bm \lambda}^{z_l}(0)\). Here \(\dif \mathring{b}_i^{z_l}\) is the martingale differential from~\eqref{eq:nuproc} which is used for small indices in~\eqref{eq:nuproc3}. For large indices we define the driving martingales to be an independent collection \(\set{\{\widetilde{b}_i^{(l)}\}_{i=n^{\omega_A}+1}^n \given l\in [2]}\) of two vector-valued i.i.d.\ standard real Brownian motions which are also independent of \(\set{\{\mathring{b}_{\pm i}^{z_l}\}_{i=1}^n\given l\in [2]}\), and that \(\widetilde{b}_{-i}^{(l)}=-\widetilde{b}_i^{(l)}\) for \(i\in [n]\). The martingales \(\mathring{\bm b}^{z_l}\), with \(l \in [2]\), and \(\set{\{\widetilde{b}_i^{(l)}\}_{i=n^{\omega_A}+1}^n \given l\in [2]}\) are defined on a common probability space  that we continue to denote by \(\widetilde{\Omega}_b\) with the common filtration \(\widetilde{\mathcal{F}}_{b,t}\), given by
\[
    \bigl(\widetilde{\mathcal{F}}_{b,t}\bigr)_{0\le t\le T}:=\bigl(\sigma(X_0,\widetilde{Y}^{(l)}, (B_s)_{0\le s\le t}, (\widetilde{\bm b}^{(l)})_{0\le s\le t};{l\in [2]})\bigr)_{0\le t\le T}.
\]
The well-posedness of~\eqref{eq:nuproc3}, and of~\eqref{eq:nuproc2} below, readily follows by exactly the same arguments as in Appendix~\ref{sec:wp}.

Notice that \(\mathring{\bm \lambda}(t)\) and \(\widetilde{\bm \lambda}(t)\) differ in two aspects: the driving martingales with large indices for \(\widetilde{\bm \lambda}(t)\) are set to be independent, and the initial conditions are different. Lemma~\ref{lem:firststepmason} below states that these differences are negligible for our purposes (i.e.\ after time \(ct_1\) the two processes at small indices are closer than the rigidity scale \(1/n\)). Its proof is postponed to Section~\ref{sec:noncelpiu}. Let \(\rho_{\mathrm{sc}}(E) =\frac{1}{2\pi}\sqrt{4-E^2}\) denote the semicircle density.

\begin{lemma}[The partially correlated process \(\widetilde{\bm \lambda}\) is close to \(\mathring{\bm \lambda}\)]\label{lem:firststepmason}
    Let \(\mathring{\bm \lambda}^{z_l}(t)\), \(\widetilde{\bm \lambda}^{(l)}(t)\), with \(l\in [2]\), be the processes defined in~\eqref{eq:nuproc} and~\eqref{eq:nuproc3}, respectively. For any sufficiently small \(\omega_h,\omega_f>0\) such that \(\omega_h\ll \omega_f\ll 1\) there exist constants \(\omega, \widehat{\omega}>0\) such that \(\omega_h\ll \widehat{\omega}\ll \omega\ll \omega_f\), and that for \(\abs{z_l}\le 1-n^{-\omega_h}\) it holds
    \begin{equation}
        \label{eq:firshpb}
        \abs[\bigr]{\rho^{z_l}(0)\mathring{\lambda}_i^{z_l}(ct_f)-\rho_{\mathrm{sc}}(0) \widetilde{\lambda}_i^{(l)}(ct_f) }\le n^{-1-\omega}, \qquad \abs{i}\le n^{\widehat{\omega}},
    \end{equation}
    with very high probability, where \(t_f:= n^{-1+\omega_f}\) and \(c>0\) is defined in~\eqref{eq:consOU}.
\end{lemma}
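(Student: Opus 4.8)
The plan is to prove Lemma~\ref{lem:firststepmason} by the standard three-scale analysis of the Dyson Brownian motion flows, following the blueprint of the complex case~\cite[Section~\ref{cplx-sec:noncelpiu}]{1912.04100} but being careful about the two differences between~\eqref{eq:nuproc} and~\eqref{eq:nuproc3}: (i) the driving martingales at large indices \(\abs{i}>n^{\omega_A}\) are decorrelated in \(\widetilde{\bm \lambda}^{(l)}\), and (ii) the initial data are different (Ginibre for \(\widetilde{\bm \lambda}^{(l)}(0)\), and \({\bm \lambda}^{z_l}(0)\) for \(\mathring{\bm \lambda}^{z_l}(0)\)). The first observation is that the \emph{same} coupling trick used there applies: we write \(\mathring{\bm \lambda}^{z_l}\) and \(\widetilde{\bm \lambda}^{(l)}\) as solutions of DBMs with identical interaction term (coefficient exactly one, diffusion constant exactly \((n(1+n^{-\omega_r}))^{-1}\) for small indices, \((2n)^{-1}\) for large ones) driven by the same small-index martingales \(\mathring{b}_i^{z_l}\), so that their difference process satisfies a deterministic parabolic equation with only a boundary/forcing term coming from the large indices.

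Concretely, I would introduce the interpolating process and apply the \emph{homogenization}/\emph{coupling} results for DBM near the spectral edge-adjacent bulk point \(0\) — precisely the mechanism recalled in Section~\ref{sec:hos} (Proposition~\ref{cplx-pro:ciala} in the present enumeration), which handles DBMs with weakly correlated drivers. The argument proceeds in three stages. \textbf{Stage 1:} Use the rigidity bound~\eqref{eq:hoplastrig} (valid for both processes via the local law, Theorem~\ref{theo:Gll}, applied to the respective Hermitisations, using that \(\rho^{z_l}(0)\sim 1\) because \(\abs{z_l}\le 1-n^{-\omega_h}\) is in the bulk) to control the locations of all \(\abs{i}\le n^{1-10\omega_h}\) on scale \(n^{-1+100\omega_h}\). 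This makes the two flows start (after a negligible GFT-type replacement of initial data, using that local statistics of Ginibre and of \(X^{(l)}-z_l\) agree on scale \(n^{-1}\), which is where the matching of \(\widetilde{\bm\lambda}^{(l)}(0)\) and \({\bm\lambda}^{z_l}(0)\) enters) close enough to be fed into the coupling estimate. \textbf{Stage 2:} Show the contribution of the decorrelated large-index drivers is negligible: the difference in the quadratic variation of the two driving martingale families is supported on \(\abs{i}>n^{\omega_A}\), and the influence of such a perturbation on \(\lambda_i\) for \(\abs{i}\le n^{\widehat\omega}\) is suppressed by the parabolic smoothing of the DBM over the time window \([0,ct_f]\) — a factor \(\lesssim (n^{\widehat\omega}/n^{\omega_A})\)-type gain, which is why we need \(\widehat\omega\ll\omega_A\). \textbf{Stage 3:} Assemble: apply Proposition~\ref{cplx-pro:ciala} to get \(\abs{\rho^{z_l}(0)\mathring\lambda_i^{z_l}(ct_f) - \rho_{\mathrm{sc}}(0)\widetilde\lambda_i^{(l)}(ct_f)}\le n^{-1-\omega}\) for \(\abs{i}\le n^{\widehat\omega}\), with \(\omega\) chosen so that \(\omega_h\ll\widehat\omega\ll\omega\ll\omega_f\).

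One technical point worth flagging: since the interaction terms in~\eqref{eq:nuproc} and~\eqref{eq:nuproc3} are \emph{already identical by construction} (both have coefficient \(1\) in the sum and the regularised correlation has been discarded), there is no residual \(\Lambda^z_{ij}\) term to handle here — that was already dealt with in Proposition~\ref{prop:realprop}. Thus the only genuinely new ingredients relative to~\cite{1912.04100} are bookkeeping the factor \(1+n^{-\omega_r}\) in the diffusion constant (a harmless time-rescaling, since \(\omega_r\gg\omega_f\) means the rescaling is negligible over \([0,ct_f]\)), and confirming that the well-posedness of both flows (Appendix~\ref{sec:wp}) makes the pathwise coupling legitimate. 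I would also need to verify that the semicircle rescaling \(\rho_{\mathrm{sc}}(0)\) on the \(\widetilde{\bm\lambda}\) side matches the \(\rho^{z_l}(0)\) rescaling on the \(\mathring{\bm\lambda}\) side after homogenization — this is automatic because the coupling statement is formulated for the \emph{rescaled} eigenvalues \(n\rho(0)\lambda_i\), and the self-consistent density at \(0\) for \(H^{z_l}\) in the bulk has the same square-root-cusp local behavior near its relevant reference point as the semicircle near \(0\) (both vanish linearly away from \(0\), i.e.\ \(\rho(0)\sim 1\)).

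The main obstacle I expect is Stage 2, controlling the effect of decorrelating the large-index drivers: one must show that injecting independent noise into \(\widetilde\lambda_i\) for \(\abs i>n^{\omega_A}\) perturbs the small-index eigenvalues by at most \(n^{-1-\omega}\). This requires a quantitative \emph{finite-speed-of-propagation}/parabolic-regularity estimate for the DBM: the perturbation at index \(j\) reaches index \(i\) in time \(\gtrsim n^{-1}\abs{i-j}\) (in rescaled units), so over the window \(ct_f=n^{-1+\omega_f}\) only indices within \(n^{\omega_f}\) of each other communicate significantly, and the residual long-range influence carries a small prefactor. This is exactly the type of estimate the path-wise coupling machinery of~\cite{MR4009717} and~\cite[Section~\ref{cplx-sec:hos}]{1912.04100} (invoked via Proposition~\ref{cplx-pro:ciala}) is designed to deliver; the work is to check its hypotheses apply verbatim with the weakly-correlated, partially-decorrelated driver structure here, which I believe it does since the correlations at small indices are bounded by \(n^{-\omega_E}\) with \(\omega_E\gg\omega_r\gg\omega_f\) by Lemma~\ref{lem:lambound}.
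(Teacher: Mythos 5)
Your plan correctly identifies the key invocation (Proposition~\ref{pro:ciala}) and the key structural facts: the interaction terms of \(\mathring{\bm\lambda}^{z_l}\) and \(\widetilde{\bm\lambda}^{(l)}\) are identical by construction, and the small-index drivers are literally the same martingales \(\mathring b_i^{z_l}\), so the only driver discrepancy lives at indices \(|i|>n^{\omega_A}\). This makes checking Assumption~\ref{ass:close}(b) trivial (\(L_{ij}\equiv 0\) for \(|i|,|j|\le n^{\omega_A}\)), and Assumption~\ref{ass:close}(a) follows from \(|\mathring\Lambda_{ij}|\le n^{-\omega_E}\) via Lemma~\ref{lem:lambound}. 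That is essentially all the paper's proof does, with the choices \(\nu=\omega_h,\omega_K=\omega_A,\omega_Q=\omega_E\), plus the remark that the \((1+n^{-\omega_r})\)-factor mismatch with the pure Ginibre reference process is a harmless \(n^{-1-\omega_r}\)-sized perturbation.

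However, your Stage~1 contains a genuine gap: the proposed ``negligible GFT-type replacement of initial data, using that local statistics of Ginibre and of \(X^{(l)}-z_l\) agree on scale \(n^{-1}\)'' is both unnecessary and circular. The two initial data sets are \emph{not} close and are not asserted to match at scale \(n^{-1}\): \(\mathring{\bm\lambda}^{z_l}(0)={\bm\lambda}^{z_l}(0)\) are the eigenvalues of \(H^{z_l}\), while \(\widetilde{\bm\lambda}^{(l)}(0)\) are the singular values of a fresh complex Ginibre \(\widetilde Y^{(l)}\). The claim that these two ensembles have matching local statistics on scale \(n^{-1}\) is precisely the conclusion of Theorem~\ref{theo:un}, not an available input, and there is no moment matching to run a GFT between them. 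The entire point of Proposition~\ref{pro:ciala} (and of the underlying fixed-energy homogenization of~\cite{MR3541852,MR3914908,MR3916329}) is that the two flows become close after time \(t_f=n^{-1+\omega_f}\) \emph{despite} having genuinely different initial data, as long as \({\bm s}(0)\) is \((g,G)\)-regular (which follows from the local law/rigidity~\eqref{eq:llt0}) and \({\bm r}(0)\) is Ginibre. You should drop Stage~1's initial-data matching entirely; the correct step is simply to verify \((g,G)\)-regularity of \({\bm\lambda}^{z_l}(0)\) and feed both flows, with their unmodified initial data, into Proposition~\ref{pro:ciala}.

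A smaller remark on Stage~2: the finite-speed-of-propagation analysis you sketch is internal to Proposition~\ref{pro:ciala}; once you have established that the drivers satisfy Assumption~\ref{ass:close}(b) (which, again, holds in the strongest form since the small-index martingales are identical), you do not need to re-derive that mechanism. You should verify the hypotheses, not reprove the proposition.
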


Finally, \(\widetilde{{\bm \mu}}^{(l)}(t)\), the comparison process of \({\bm \mu}^{(l)}(t)\), is given as the solution of the following DBM
\begin{equation}\label{eq:nuproc2}
    \dif \widetilde{\mu}^{(l)}_i(t)=\frac{1}{2n}\sum_{j\ne  i} \frac{1}{\widetilde{\mu}^{(l)}_i(t)-\widetilde{\mu}^{(l)}_j(t)} \dif t+\begin{cases}
        \bigl(n (1+n^{-\omega_r})\bigr)^{-1/2} \dif \mathring{\zeta}_i^{z_l}, & \abs{i}\le n^{\omega_A},    \\
        (2n)^{-1/2}\dif \widetilde{\zeta}_i^{(l)},                            & n^{\omega_A}< \abs{i}\le n,
    \end{cases}
\end{equation}
with initial data \(\widetilde{\bm \mu}^{(l)}(0)={\bm \mu}^{(l)}\). We now explain how to construct the driving martingales in~\eqref{eq:nuproc2} so that~\eqref{eq:nefdis} is satisfied. For this purpose we closely follow~\cite[Eqs.~\eqref{cplx-eq:vecla}--\eqref{cplx-eq:BMsost}]{1912.04100}. We only consider positive indices, since the negative indices are defined by symmetry. Define the \(2n^{\omega_A}\)-dimensional martingale \(\underline{\mathring{\bm b}}:=\set{\{\mathring{b}_i^{z_l}\}_{i\in [n^{\omega_A}]}\given l\in [2]}\).  Throughout this section underlined vectors or matrices denote their restriction to the first \(i\in [n^{\omega_A}]\) indices within each \(l\)-group, i.e.
\[
    {\bm v}\in \C^{2n}\Longrightarrow \underline{\bm v}\in \C^{2n^{\omega_A}}, \quad \text{with} \quad \underline{v}_i:=\begin{cases}
        v_i                & \text{if} \quad i\in [n^{\omega_A}]     \\
        v_{i+n^{\omega_A}} & \text{if} \quad  i\in n+[n^{\omega_A}].
    \end{cases}
\]
Then we define \(\underline{\mathring{C}}(t)\) as the \(2n^{\omega_A}\times 2n^{\omega_A}\) positive semi-definite matrix which consists of the four blocks corresponding to index pairs \(\{ (i,j)\in [n^{\omega_A}]^2\}\) of the matrix \(\mathring{C}(t)\) defined in~\eqref{eq:ringmat}.
Similarly to~\eqref{eq:circbm}, by Doob's martingale representation theorem, we obtain \(\dif \underline{\mathring{\bm b}}=(\underline{\mathring{C}})^{1/2} \dif {\bm \theta}\) with \({\bm \theta}(t):=\set{\{\theta_i^{(l)}(t)\}_{i\in [n^{\omega_A}]}\given l\in [2]}\) a family of i.i.d.\ standard real Brownian motions. We define an independent copy \(\underline{\mathring{C}}^\#(s)\) of \(\underline{\mathring{C}}(s)\) and \(\underline{\bm \beta}:=\set{\{\beta_i^{(l)}\}_{i\in [n^{\omega_A}]}\given l\in [2]}\) such that \((\underline{\mathring{C}}^\#(t),\underline{\bm \beta}(t))\) has the same joint distribution as \((\underline{\mathring{C}}(t),{\bm\theta}(t))\).
We then define the families \(\underline{\mathring{\bm \zeta}}:=\set{\{\mathring{\zeta}_i^{z_l}\}_{i\in [n^{\omega_A}]}\given l\in [2]}\) by \(\underline{\mathring{\bm \zeta}}(0)=0\) and
\begin{equation}
    \label{eq:defpor}
    \dif \underline{\mathring{\bm \zeta}}(t):= \left(\underline{\mathring{C}}^\#(t)\right)^{1/2} \dif \underline{\bm \beta}(t),
\end{equation}
and extend this to negative indices by \(\zeta_{-i}^{z_l}=-\zeta_i^{z_l}\) for \(i\in [n^{\omega_A}]\). For indices \(n^{\omega_A}< \abs{i}\le n\), instead, we choose \(\{\widetilde{\zeta}_{\pm i}^{(l)}\}_{i=n^{\omega_A}+1}^n\) to be independent families
(independent of each other for different \(l\)'s, and also independent of \(\bm\beta\)) of i.i.d.\ Brownian motions defined on the same probability space \(\Omega_\beta\). Note that~\eqref{eq:nefdis} follows by the construction in~\eqref{eq:defpor}.

Similarly to Lemma~\ref{lem:firststepmason} we also have that \({\bm \mu}(t)\) and \(\widetilde{\bm \mu}(t)\) are close thanks to the carefully designed relation between their driving Brownian motions. The proof of this lemma is postponed to Section~\ref{sec:noncelpiu}.

\begin{lemma}[The partially correlated process \(\widetilde{\bm \mu}\) is close to \({\bm \mu}\)]\label{lem:secondstepmason}
    For any sufficiently small \(\omega_d,\omega_h, \omega_f>0\), there exist constants \(\omega, \widehat{\omega}>0\) such that \(\omega_h\ll \widehat{\omega}\ll \omega\ll \omega_f\), and that for \(\abs{z_l-z_m},\abs{z_l-\overline{z}_m}, \abs{z_l-\overline{z}_l}\ge n^{-\omega_d}\), \(\abs{z_l}\le 1-n^{-\omega_h}\), with \(l,m\in [2]\), \(l\ne m\), it holds
    \begin{equation}
        \label{eq:firshpb2}
        \abs*{\mu_i^{(l)}(ct_f)- \widetilde{\mu}_i^{(l)}(ct_f) }\le n^{-1-\omega}, \quad \abs{i}\le n^{\widehat{\omega}}, \quad l\in [2],
    \end{equation}
    with very high probability, where \(t_f=n^{-1+\omega_f}\) and \(c>0\) is defined in~\eqref{eq:consOU}.
\end{lemma}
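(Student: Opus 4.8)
The plan is to follow closely the analogous argument in the complex case~\cite[proof of Lemma~\ref{cplx-lem:secondstepmason}]{1912.04100}, adapting it to the fact that the driving martingales for small indices now carry the regularised correlation structure $\underline{\mathring{C}}^\#$ rather than being independent. The key point is that $\bm\mu^{(l)}(t)$ and $\widetilde{\bm\mu}^{(l)}(t)$ are defined by the \emph{same} DBM interaction kernel and, by construction, their driving martingales agree after a short time in a very strong sense: for large indices $n^{\omega_A}<\abs i\le n$ the processes $\mu_i^{(l)}$ and $\widetilde\mu_i^{(l)}$ use genuinely independent families $\bm\beta^{(l)}$ and $\widetilde{\bm\zeta}^{(l)}$, but these are irrelevant for the behaviour at small indices by finite speed of propagation; for small indices $\abs i\le n^{\omega_A}$ the increments $\dif\underline{\mathring{\bm\zeta}}=(\underline{\mathring C}^\#)^{1/2}\dif\underline{\bm\beta}$ differ from the corresponding increments driving $\bm\mu^{(l)}$ (which are $(2n)^{-1/2}\dif\bm\beta^{(l)}$, i.e.\ driven by $\tfrac12 I$) only through $\underline{\mathring C}^\#-\tfrac12 I$ and the extra factor $(1+n^{-\omega_r})^{-1/2}$, both of which are bounded by $n^{-\omega_r}$ in operator norm on the event from Lemma~\ref{lem:lambound}.

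First I would set up the interpolation exactly as in~\cite{1912.04100}: both $\bm\mu(t)$ and $\widetilde{\bm\mu}(t)$ start from the same initial data $\bm\mu^{(l)}$, so there is no initial-condition discrepancy to propagate (this is the simplification relative to Lemma~\ref{lem:firststepmason}). The difference $\mu_i^{(l)}(t)-\widetilde\mu_i^{(l)}(t)$ then solves a linear equation with the discrete-Laplacian-type generator associated to the DBM, forced by the martingale difference $(2n)^{-1/2}\dif\bm\beta^{(l)}-\dif(\text{driving term of }\widetilde{\bm\mu}^{(l)})$. One decomposes this forcing into the small-index part (where the coupling $(\underline{\mathring C}^\#)^{1/2}$ versus $(\tfrac12 I)^{1/2}$ discrepancy enters, contributing a martingale whose quadratic variation is bounded by $t_f\cdot n^{-2\omega_r}\cdot(\text{number of small indices})/n$) and the large-index part (where the two are independent but far away). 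Then one runs the standard short-range/long-range DBM stability estimate — exactly the heat-kernel contraction and energy method used in~\cite[Section~\ref{cplx-sec:noncelpiu}]{1912.04100}, ultimately relying on rigidity of $\bm\mu(t)$ and $\widetilde{\bm\mu}(t)$ (which holds since both are Ginibre DBMs with semicircle limit) and on the local equilibration of DBM after time $t_f=n^{-1+\omega_f}\gg n^{-1}$. Since the forcing is smaller by a factor $n^{-\omega_r}$ than the generic scale and $\omega_r\gg\omega_f$, the accumulated error at small indices $\abs i\le n^{\widehat\omega}$ after time $ct_f$ is $\le n^{-1-\omega}$ with very high probability, for a suitable $\omega$ with $\widehat\omega\ll\omega\ll\omega_f$.

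The main obstacle — though it is essentially already handled by the infrastructure cited — is controlling the martingale term coming from the correlation discrepancy $(\underline{\mathring C}^\#)^{1/2}-(\tfrac12 I)^{1/2}$ uniformly in time and ensuring it does not spoil the finite-speed-of-propagation bookkeeping: one needs $\|\underline{\mathring C}^\#(t)-\tfrac12 I\|\lesssim n^{-\omega_E}$ for \emph{all} $t\in[0,T]$ with very high probability, which is precisely the content of Lemma~\ref{lem:lambound} together with the cutoff built into~\eqref{eq:ringmat}, and then to convert this operator-norm bound into an $\ell^\infty$-type bound on the relevant eigenvalue differences via the DBM Green's function estimates. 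Everything else is a verbatim repetition of the complex argument, and I would simply remark that the proof proceeds ``exactly as in~\cite[Section~\ref{cplx-sec:noncelpiu}]{1912.04100}'' after pointing out these two modifications (same initial data; correlated-versus-$\tfrac12 I$ small-index forcing bounded by $n^{-\omega_r}$).
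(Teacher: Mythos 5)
Your sketch is correct in its essentials and relies on the same key inputs as the paper (identical initial data for $\bm\mu^{(l)}$ and $\widetilde{\bm\mu}^{(l)}$, the operator-norm smallness of the correlation discrepancy from Lemma~\ref{lem:lambound}, and short-range DBM equilibration over time $t_f\gg n^{-1}$). However, you are effectively re-deriving, for this specific instance, the content of Proposition~\ref{pro:ciala}, which the paper has already isolated precisely to cover couplings of two DBMs driven by correlated martingales. The paper's own proof is a one-paragraph black-box application: it checks that the pair $(\widetilde{\bm\mu}^{(l)},\bm\mu^{(l)})$ satisfies Assumption~\ref{ass:close} with the identifications $\nu=\omega_h$, $\omega_K=\omega_A$, $\omega_Q=\omega_E$, with the covariance condition~(a) and the ``very strongly dependent for small indices'' condition~(b) following from the cutoff built into $\underline{\mathring C}^\#$, and then invokes Proposition~\ref{pro:ciala} directly. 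The mismatched prefactor (the factor $(1+n^{-\omega_r})^{-1/2}$ present in $\widetilde{\bm\mu}^{(l)}$ but absent in $\bm\mu^{(l)}$) is dealt with exactly as you suggest: it produces an error of order $n^{-1-\omega_r}$, negligible compared to the target $n^{-1-\omega}$ since $\omega\ll\omega_f\ll\omega_r$. Two minor inaccuracies worth flagging: first, the bound $\norm{\underline{\mathring C}^\#(t)-\tfrac12 I}\lesssim n^{-\omega_E}$ is in fact deterministic, not merely a very-high-probability event, because the indicator cutoff in~\eqref{eq:ringlamthe} makes the truncated overlaps vanish whenever they exceed $n^{-\omega_E}$; second, quoting $n^{-\omega_r}$ as the operator-norm bound for $\underline{\mathring C}^\#-\tfrac12 I$ is harmless but weaker than the actual bound $n^{-\omega_E}\ll n^{-\omega_r}$ — it is the prefactor $(1+n^{-\omega_r})^{-1/2}-1$, not the correlation matrix, that contributes at scale $n^{-\omega_r}$. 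Neither affects the validity of the argument, since only the larger of the two error scales matters and both are absorbed.
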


\subsubsection{Proof of Proposition~\ref{prop:indeig}}\label{sec:INDFI}

In this section we conclude the proof of Proposition~\ref{prop:indeig} using the comparison processes defined in Section~\ref{sec:remlamb} and Section~\ref{sec:COMPPRO}. We recall that \(p=2\) for simplicity. More precisely, we use that the processes \({\bm \lambda}^{z_l}(t)\), \(\mathring{\bm \lambda}^{z_l}(t)\) and \(\mathring{\bm \lambda}^{z_l}(t)\), \(\widetilde{\bm \lambda}^{(l)}(t)\) and \(\widetilde{\bm \mu}^{(l)}(t)\), \({\bm \mu}^{(l)}(t)\) are close path-wise at time \(t_1\), as stated in Proposition~\ref{prop:realprop}, Lemma~\ref{lem:firststepmason}, and Lemma~\ref{lem:secondstepmason}, respectively, choosing \(\omega,\widehat{\omega}\) as the minimum of the ones in the statements of this three results. In particular, by these results and Lemma~\ref{lem:GFTGFT} we readily conclude the following lemma, whose proof is postponed to the end of this section.

\begin{lemma}\label{lem:las}
    Let \({\bm \lambda}^{z_l}\) be the eigenvalues of \(H^{z_l}\), and let \({\bm \mu}^{(l)}(t)\) be the solution of~\eqref{eq:ginev}. Let \(\omega,\widehat{\omega},\omega_h>0\) given as above, and define \(\nu_{z_l}:=\rho_{sc}(0)/\rho^{z_l}(0)\), then for any small \(\omega_f>0\) such that \(\omega_h\ll \omega_f\) there exists \(\delta_0,\delta_1\) such that \(\omega_h\ll \delta_m \ll \widehat{\omega}\), for \(m=0,1\), and that
    \begin{equation}
        \label{eq:hhh4}
        \E  \prod_{l=1}^2\frac{1}{n}\sum_{\abs{i_l}\le n^{\widehat{\omega}}} \frac{\eta_l}{(\lambda_{i_l}^{z_l} )^2+\eta_l^2}= \E \prod_{l=1}^2\frac{1}{n}\sum_{\abs{i_l}\le n^{\widehat{\omega}}} \frac{\eta_l}{(\mu_{i_l}^{(l)}(ct_f) \nu_{z_l})^2+\eta_l^2}+\mathcal{O}(\Psi),
    \end{equation}
    where \(t_f=n^{-1+\omega_f}\), \(\eta_l\in [n^{-1-\delta_0},n^{-1+\delta_1}]\), and the error term is given by
    \begin{equation}
        \label{eq:psierr}
        \Psi:= \frac{n^{\widehat{\omega}}}{n^{1+\omega}}\left(\sum_{l=1}^2\frac{1}{\eta_l}\right)\cdot \prod_{l=1}^2\left(1+\frac{n^\xi}{n\eta_l}\right)+\frac{n^{2\xi+2\delta_0} t_f}{n^{1/2}}\sum_{l=1}^2 \frac{1}{\eta_l}+\frac{n^{2(\delta_1+\delta_0)}}{n^{\widehat{\omega}}}.
    \end{equation}
\end{lemma}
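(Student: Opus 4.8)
The plan is to chain together the four comparison results summarised in Figure~\ref{fig:processes}, combined with Lemma~\ref{lem:GFTGFT}, and then convert the resulting pathwise closeness of eigenvalues into closeness of the relevant linear-statistics-type quantities $\frac{1}{n}\sum_{\abs{i_l}\le n^{\widehat\omega}}\frac{\eta_l}{(\lambda_{i_l}^{z_l})^2+\eta_l^2}$. Concretely, I would first invoke Lemma~\ref{lem:GFTGFT} to replace $H^{z_l}$ by $\wh H_{t_f}^{z_l}$ at the cost of the error term $n^{2\xi+2\delta_0}t_f/n^{1/2}\cdot\sum_l \eta_l^{-1}$, and then use the identity in distribution~\eqref{eq:impGFT} to realise the eigenvalues of $\wh H_{t_f}^{z_l}$ as $\bm\lambda^{z_l}(ct_f)$ from the DBM~\eqref{eq:impnewDBM}. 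After this reduction I would apply, in sequence, Proposition~\ref{prop:realprop} (replacing $\lambda_i^{z_l}(ct_f)$ by the regularised $\mathring\lambda_i^{z_l}(ct_f)$ up to $n^{-1-\omega}$), Lemma~\ref{lem:firststepmason} (replacing $\rho^{z_l}(0)\mathring\lambda_i^{z_l}(ct_f)$ by $\rho_{\mathrm{sc}}(0)\wt\lambda_i^{(l)}(ct_f)$ up to $n^{-1-\omega}$), the equality in distribution~\eqref{eq:nefdis} of $(\wt{\bm\lambda}^{(1)},\wt{\bm\lambda}^{(2)})$ and $(\wt{\bm\mu}^{(1)},\wt{\bm\mu}^{(2)})$, and finally Lemma~\ref{lem:secondstepmason} (replacing $\wt\mu_i^{(l)}(ct_f)$ by $\mu_i^{(l)}(ct_f)$ up to $n^{-1-\omega}$); this is exactly Lemma~\ref{lem:las}.

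The second part of the proof is to extract~\eqref{eq:indA} from Lemma~\ref{lem:las}. The right-hand side of~\eqref{eq:hhh4} involves $\mu_{i_l}^{(l)}(ct_f)\nu_{z_l}$, where the processes $\bm\mu^{(1)}(t),\bm\mu^{(2)}(t)$ are \emph{genuinely independent} by construction (they are built from independent complex Ginibre matrices and independent driving Brownian motions, all measurable with respect to the single filtration $\mathcal F_{\beta,t}$ but with mutually independent generating data). Hence the expectation of the product over $l=1,2$ factorises:
\[
\E\prod_{l=1}^2\frac{1}{n}\sum_{\abs{i_l}\le n^{\widehat\omega}}\frac{\eta_l}{(\mu_{i_l}^{(l)}(ct_f)\nu_{z_l})^2+\eta_l^2}=\prod_{l=1}^2\E\frac{1}{n}\sum_{\abs{i_l}\le n^{\widehat\omega}}\frac{\eta_l}{(\mu_{i_l}^{(l)}(ct_f)\nu_{z_l})^2+\eta_l^2}.
\]
It then remains to run Lemma~\ref{lem:las} backwards for each single factor: applying the chain of comparisons in reverse with $p=1$ shows that $\E\frac{1}{n}\sum_{\abs{i_l}\le n^{\widehat\omega}}\frac{\eta_l}{(\mu_{i_l}^{(l)}(ct_f)\nu_{z_l})^2+\eta_l^2}$ equals $\E\frac{1}{n}\sum_{\abs{i_l}\le n^{\widehat\omega}}\frac{\eta_l}{(\lambda_{i_l}^{z_l})^2+\eta_l^2}$ up to the single-index analogue of the error term $\Psi$, which is dominated by the right-hand side of~\eqref{eq:indA}. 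Substituting both identities into~\eqref{eq:hhh4} yields~\eqref{eq:indA}, with the stated constraints $\omega_h\ll\delta_m\ll\widehat\omega\ll\omega\ll\omega_f$ inherited from the inputs.

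The only genuinely delicate bookkeeping step is the passage from "$\abs{\lambda_{i_l}^{z_l}-(\text{comparison eigenvalue})}\le n^{-1-\omega}$ for $\abs{i_l}\le n^{\widehat\omega}$'' to closeness of the sums $\frac{1}{n}\sum_{\abs{i_l}\le n^{\widehat\omega}}\frac{\eta_l}{\lambda^2+\eta_l^2}$. Here one writes $\frac{\eta}{\lambda^2+\eta^2}=\Im\frac{1}{\lambda-\ii\eta}$, uses the elementary bound $\abs*{\frac{\eta}{a^2+\eta^2}-\frac{\eta}{b^2+\eta^2}}\lesssim \frac{\abs{a-b}}{\eta^2}$ together with rigidity~\eqref{eq:hoplastrig} to control the eigenvalues in $\abs{a^2+\eta^2}\gtrsim\eta^2$, so that the per-eigenvalue error is $\lesssim n^{-1-\omega}\eta_l^{-2}$, and summing over $\abs{i_l}\le n^{\widehat\omega}$ and taking the product over the $p$ factors (each remaining factor bounded by $1+n^\xi/(n\eta_l)$ via the local law) produces precisely the first error term in $\Psi$. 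The main obstacle is therefore not any single estimate but ensuring that the hierarchy of small exponents $\omega_h\ll\delta_0,\delta_1\ll\widehat\omega\ll\omega\ll\omega_r\ll\omega_E,\omega_A\ll\omega_f\ll 1$ can be chosen consistently across Proposition~\ref{prop:realprop}, Lemma~\ref{lem:firststepmason}, Lemma~\ref{lem:secondstepmason}, and Lemma~\ref{lem:GFTGFT} simultaneously, which is done exactly as in~\cite[Section~\ref{cplx-sec:ririri}]{1912.04100}.
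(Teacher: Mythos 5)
Your first and third paragraphs reproduce the paper's own chain of comparisons for Lemma~\ref{lem:las} essentially exactly: GFT replacement of \(H^{z_l}\) by \(\wh H^{z_l}_{t_f}\), the identity \(X_{ct_f}\stackrel{\mathsf{d}}{=}\wh X_{t_f}\), then Proposition~\ref{prop:realprop}, Lemma~\ref{lem:firststepmason}, the equality in distribution~\eqref{eq:nefdis}, and Lemma~\ref{lem:secondstepmason}, after which pathwise closeness of the eigenvalues is converted into closeness of the Lorentzian sums via the Lipschitz estimate \(\abs{\tfrac{\eta}{a^2+\eta^2}-\tfrac{\eta}{b^2+\eta^2}}\lesssim\abs{a-b}\eta^{-2}\) and the crude a priori bound on each factor by \(1+n^\xi/(n\eta_l)\). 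This correctly produces the first two terms of \(\Psi\).

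There is, however, a genuine gap around the GFT step. Lemma~\ref{lem:GFTGFT} is stated for the \emph{full} sums \(\frac1n\sum_{\abs{i_l}\le n}\), whereas the quantity you need to compare in~\eqref{eq:hhh4} has the \emph{truncated} sums \(\frac1n\sum_{\abs{i_l}\le n^{\widehat\omega}}\). You apply Lemma~\ref{lem:GFTGFT} directly to the truncated sums, which it does not cover. The paper first passes from the truncated to the full sum using the rigidity bound~\eqref{eq:hoplastrig} (the quantiles \(\gamma_i^{z_l}\sim i/n\) for \(\abs{i}\ge n^{\widehat\omega}\) give \(\tfrac{\eta_l}{(\lambda_{i_l}^{z_l})^2+\eta_l^2}\lesssim n^2\eta_l/i^2\), so the tail contributes \(\lesssim n\eta_l/n^{\widehat\omega}\lesssim n^{\delta_1-\widehat\omega}\) per factor), then applies GFT to the full sums, then truncates back to \(\abs{i_l}\le n^{\widehat\omega}\) for \(\lambda^{z_l}_{i_l}(ct_f)\) using the same argument. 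This is precisely the source of the third error term \(n^{2(\delta_1+\delta_0)}/n^{\widehat\omega}\) in \(\Psi\), which your proof does not account for. Without this intermediate extension/truncation step the argument as written is not complete.

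Also, as a minor bookkeeping remark, your stated exponent hierarchy places \(\omega_r\) and \(\omega_E\) below \(\omega_f\), whereas the paper requires \(\omega_f\ll\omega_L\ll\omega_c\ll\omega_r\ll\omega_E\): the overlaps must decay at a much faster scale \(n^{-\omega_E}\) than what the DBM time \(t_f=n^{-1+\omega_f}\) can exploit, otherwise the energy estimate in Lemma~\ref{lem:ee} and the well-posedness in Appendix~\ref{sec:wp} (which uses \(\omega_r\ll\omega_E\)) would fail. This does not affect the structure of the argument but should be corrected.
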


We remark that \(\Psi\) in~\eqref{eq:psierr} denotes a different error term compared with the error terms in~\eqref{eq psi error} and~\eqref{eq:smallpsi}.

By the definition of the processes \({\bm \mu}^{(l)}(t)\) in~\eqref{eq:ginev} it follows that \({\bm \mu}^{(l)}(t)\), \({\bm \mu}^{(m)}(t)\) are independent for \(l\ne m\) and so that
\begin{equation}
    \label{eq:hhh6}
    \E  \prod_{l=1}^2\frac{1}{n}\sum_{\abs{i_l}\le n^{\widehat{\omega}}} \frac{\eta_l}{(\mu_{i_l}^{(l)}(ct_f)\nu_{z_l})^2+\eta_l^2}= \prod_{l=1}^2\E \frac{1}{n}\sum_{\abs{i_l}\le n^{\widehat{\omega}}} \frac{\eta_l}{(\mu_{i_l}^{(l)}(ct_f)\nu_{z_l})^2+\eta_l^2}.
\end{equation}
Then, similarly to Lemma~\ref{lem:las}, we conclude that
\begin{equation}
    \label{eq:hhh5}
    \prod_{l=1}^2\E \frac{1}{n}\sum_{\abs{i_l}\le n^{\widehat{\omega}}} \frac{\eta_l}{(\lambda_{i_l}^{z_l})^2+\eta_l^2}= \prod_{l=1}^2\E \frac{1}{n}\sum_{\abs{i_l}\le n^{\widehat{\omega}}} \frac{\eta_l}{(\mu_{i_l}^{(l)}(ct_f)\nu_{z_l})^2+\eta_l^2}+\mathcal{O}(\Psi).
\end{equation}
Finally, combining~\eqref{eq:hhh4}--\eqref{eq:hhh5} we conclude the proof of Proposition~\ref{prop:indeig}.\qed%

We remark that in order to prove~\eqref{eq:hhh5} it would not be necessary to introduce the additional comparison processes \(\widetilde{\bm \lambda}^{(l)}\) and \(\widetilde{\bm \mu}^{(l)}\) of Section~\ref{sec:COMPPRO}, since in~\eqref{eq:hhh5} the product is outside the expectation, so one can compare the expectations one by one; the correlation between these processes for different \(l\)'s plays no role. Hence, already the usual coupling (see e.g.~\cite{MR3541852,MR3916329, MR3914908}) between the processes \({\bm \lambda}^{z_l}(t)\), \({\bm \mu}^{(l)}(t)\) defined in~\eqref{eq:impnewDBM} and~\eqref{eq:ginev}, respectively, would be sufficient to prove~\eqref{eq:hhh5}. On the other hand, the comparison processes \(\mathring{\bm \lambda}^{z_l}(t)\) are anyway needed in order to remove the coefficients \(\Lambda_{ij}\) (which are small with very high probability) from the interaction term in~\eqref{eq:impnewDBM}.

We conclude this section with the proof of Lemma~\ref{lem:las}.

\begin{proof}[Proof of Lemma~\ref{lem:las}]
    In the following, to simplify notations, we assume that the scaling factors \(\nu_{z_l}\) are equal to one. First of all, we notice that the summation over the indices \(n^{\widehat{\omega}}<\abs{i}\le n\) in~\eqref{eq:stgft2} can be removed, using the eigenvalue rigidity~\eqref{eq:hoplastrig} similarly to~\cite[Eq. \eqref{cplx-eq:dedrez}--\eqref{cplx-eq:rigbb}]{1912.04100}, at a price of an additional error term \(n^{2(\delta_1+\delta_0)-\widehat{\omega}}\):
    \begin{equation}
        \label{eq:newsplithh}
        \E  \prod_{l=1}^2\frac{1}{n}\sum_{\abs{i_l}\le n^{\widehat{\omega}}} \frac{\eta_l}{(\lambda_{i_l}(H^{z_l}))^2+\eta_l^2}=  \E  \prod_{l=1}^2\frac{1}{n}\sum_{\abs{i_l}\le n} \frac{\eta_l}{(\lambda_{i_l}(H^{z_l}))^2+\eta_l^2}+\mathcal{O} \left(\frac{n^{2(\delta_1+\delta_0)}}{n^{\widehat{\omega}}}\right).
    \end{equation}
    The error term is negligible by choosing \(\delta_0,\delta_1\) to be such that \(\omega_h\ll\delta_m\ll \widehat{\omega}\), for \(m=0,1\). Then, from the GFT Lemma~\ref{lem:GFTGFT}, and~\eqref{eq:impGFT}, using~\eqref{eq:newsplithh} again, this time for \(\lambda_{i_l}^{z_l}(ct_f)\), we have that
    \begin{equation}\label{eq:hhh0}
        \begin{split}
            \E  \prod_{l=1}^2\frac{1}{n}\sum_{\abs{i_l}\le n^{\widehat{\omega}}} \frac{\eta_l}{(\lambda_{i_l}(H^{z_l}))^2+\eta_l^2}&=\E  \prod_{l=1}^2\frac{1}{n}\sum_{\abs{i_l}\le n^{\widehat{\omega}}} \frac{\eta_l}{(\lambda_{i_l}^{z_l}(ct_f))^2 +\eta_l^2} \\
            &\quad +\mathcal{O}\left(\frac{n^{2\xi+2\delta_0} t_f}{n^{1/2}}\sum_{l=1}^2 \frac{1}{\eta_l}+\frac{n^{2(\delta_1+\delta_0)}}{n^{\widehat{\omega}}}\right).
        \end{split}
    \end{equation}
    We remark that the rigidity for \(\lambda_{i_l}^{z_l}(ct_f)\) is obtained by Theorem~\ref{theo:Gll} exactly as in~\eqref{eq:hoplastrig}. Next, by the same computations as in~\cite[Lemma~\ref{cplx-lem:stanc}]{1912.04100} by writing the difference of l.h.s.\ and r.h.s.\ of~\eqref{eq:hhh1} as a telescopic sum and then using the very high probability bound from Proposition~\ref{prop:realprop} we get
    \begin{equation}
        \label{eq:hhh1}
        \E  \prod_{l=1}^2\frac{1}{n}\sum_{\abs{i_l}\le n^{\widehat{\omega}}} \frac{\eta_l}{(\lambda_{i_l}^{z_l}(ct_f))^2+\eta_l^2}=\E  \prod_{l=1}^2\frac{1}{n}\sum_{\abs{i_l}\le n^{\widehat{\omega}}} \frac{\eta_l}{(\mathring{\lambda}_{i_l}^{(l)}(ct_f))^2+\eta_l^2}+\mathcal{O}(\Psi).
    \end{equation}

    Similarly to~\eqref{eq:hhh1}, by Lemma~\ref{lem:firststepmason} it also follows that
    \begin{equation}
        \E  \prod_{l=1}^2\frac{1}{n}\sum_{\abs{i_l}\le n^{\widehat{\omega}}} \frac{\eta_l}{(\mathring{\lambda}_{i_l}^{z_l}(ct_f))^2+\eta_l^2}=\E  \prod_{l=1}^2\frac{1}{n}\sum_{\abs{i_l}\le n^{\widehat{\omega}}} \frac{\eta_l}{(\widetilde{\lambda}_{i_l}^{(l)}(ct_f))^2+\eta_l^2}+\mathcal{O}(\Psi).
    \end{equation}
    By~\eqref{eq:nefdis} it readily follows that
    \begin{equation}
        \label{eq:hhh2}
        \E  \prod_{l=1}^2\frac{1}{n}\sum_{\abs{i_l}\le n^{\widehat{\omega}}} \frac{\eta_l}{(\widetilde{\lambda}_{i_l}^{(l)}(ct_f))^2+\eta_l^2}=\E  \prod_{l=1}^2\frac{1}{n}\sum_{\abs{i_l}\le n^{\widehat{\omega}}} \frac{\eta_l}{(\widetilde{\mu}_{i_l}^{(l)}(ct_f))^2+\eta_l^2}.
    \end{equation}
    Moreover, by~\eqref{eq:firshpb2}, similarly to~\eqref{eq:hhh1}, we conclude
    \begin{equation}
        \label{eq:hhh3}
        \E  \prod_{l=1}^2\frac{1}{n}\sum_{\abs{i_l}\le n^{\widehat{\omega}}} \frac{\eta_l}{(\widetilde{\mu}_{i_l}^{(l)}(ct_f))^2+\eta_l^2}=\E  \prod_{l=1}^2\frac{1}{n}\sum_{\abs{i_l}\le n^{\widehat{\omega}}} \frac{\eta_l}{(\mu_{i_l}^{(l)}(ct_f))^2+\eta_l^2}+\mathcal{O}(\Psi).
    \end{equation}
    Combining~\eqref{eq:hhh0}--\eqref{eq:hhh3}, we conclude the proof of~\eqref{eq:hhh4}.
\end{proof}

Finally, we conclude Section~\ref{sec:PO} by listing the scales needed in the entire Section~\ref{sec:IND}
and explain the dependences among them.

\subsubsection{Relations among the scales in the proof of Proposition~\ref{prop:indeig}}

Throughout Section~\ref{sec:IND} various scales are characterized by exponents  of \(n\), denoted by \(\omega\)'s, that we will also refer to scales for simplicity.

All the scales in the proof of Proposition~\ref{prop:indeig} depend on the exponents \(\omega_d, \omega_h, \omega_f\ll 1\).
We recall that \(\omega_d, \omega_h\) are  the exponents such that
Lemma~\ref{lem:lambound} on eigenvector overlaps holds under the assumption \(\abs{z_l-z_m},\abs{z_l-\overline{z}_m}, \abs{z_l-\overline{z}_l}\ge n^{-\omega_d}\), and  \(\abs{z_l}\le 1-n^{-\omega_h}\). The exponent \(\omega_f\) determines the time \(t_f= n^{-1+\omega_f}\)
to run the DBM so that it reaches its local equilibrium and thus to prove the asymptotic independence of \(\lambda_i^{z_l}(ct_f)\)
and \(\lambda_j^{z_m}(ct_f)\), with \(c>0\) defined in~\eqref{eq:consOU}, for small indices \(i,j\) and \(l\ne m\).

The most important scales in the proof of Proposition~\ref{prop:indeig}  are \(\omega,\widehat{\omega},\delta_0, \delta_1,\omega_E\). The scale \(\omega_E\)
is determined in Lemma~\ref{lem:lambound} and it
controls the correlations among the driving martingales originating from the eigenvector overlaps in~\eqref{eq:sym}--\eqref{eq:deflamzz}.
The scale \(\omega\) gives the \(n^{-1-\omega}\) precision of the coupling between various processes while
\(\widehat{\omega}\) determines the range of indices \(\abs{i}\le n^{\widehat{\omega}}\) for which this coupling is effective. These scales are chosen much bigger
than \(\omega_h\) and they  are determined in
Proposition~\ref{prop:realprop}, Lemma~\ref{lem:firststepmason} and Lemma~\ref{lem:secondstepmason}, that describe these couplings. Each of these
results gives  an upper bound on the scales \(\omega, \widehat{\omega}\), at the end we will choose the smallest of them.
Finally, \(\delta_0, \delta_1\) describe the scale of the range of the \(\eta\)'s
in Proposition~\ref{prop:indeig}. These two scales
are determined in Lemma~\ref{lem:las}, given \(\omega, \widehat{\omega}\) from the previous step.
Putting all these steps together, we constructed \(\omega, \widehat{\omega}, \delta_0, \delta_1\) claimed in
Proposition~\ref{prop:indeig} and hence also in Proposition~\ref{prop:indmr}.
These scales are related as
\begin{equation}\label{eq:chain}
    \omega_h\ll \delta_m \ll \widehat{\omega}\ll \omega\ll \omega_f\ll \omega_E\ll1, \qquad \omega_E=4\omega_d,
\end{equation}
for \(m=0,1\).

Along the proof of Proposition~\ref{prop:indeig}  four auxiliary scales,
\(\omega_L,\omega_A,\omega_r,\omega_c\), are also introduced.
The scale \(\omega_L\)  describes the range of  interaction in the short range approximation processes
\(\widehat{\bm x}^{z_l}(t,\alpha)\) (see~\eqref{eq:defshortrange} later),
while \(\omega_A\) is the scale for which we can (partially) couple the driving martingales of the regularized
processes  \(\mathring{\bm \lambda}^{z_l}(t)\) with the driving Brownian motions of Ginibre processes \({\bm \mu}^{(l)}(t)\).
The scale  \(\omega_c\) is a cut-off in the energy estimate in Lemma~\ref{lem:ee}, see~\eqref{eq:cutoff1}. Finally,
\(\omega_r\) reduces the variance of the driving martingales by a factor \((1+ n^{-\omega_r})^{-1}\) to ensure the  well-posedness
of the processes \(\mathring{\bm \lambda}^{z_l}(t)\), \(\widetilde{\bm \lambda}^{(l)}(t)\), \(\widetilde{\bm \mu}^{(l)}\), \({\bm x}^{z_l}(t,\alpha)\) defined in~\eqref{eq:nuproc},~\eqref{eq:nuproc3},~\eqref{eq:nuproc2}, and~\eqref{eq:defintpro}, respectively.
These scales are inserted in the chain~\eqref{eq:chain} as follows
\begin{equation}
    \omega_h \ll \omega_A\ll \omega_f\ll \omega_L\ll \omega_c\ll \omega_r\ll \omega_E.
\end{equation}
Note that there are no relations required among \(\omega_A\) and \(\omega, \widehat{\omega},\delta_m\).

\subsection{Universality and independence of the singular values of \texorpdfstring{\(X-z_1,X-z_2\)}{X-z1,X-z2} close to zero: Proof of Theorems~\ref{theo:un} and~\ref{theo:indun}}\label{sec:un}
In the following we present only the proof of Theorem~\ref{theo:indun}, since the proof of Theorem~\ref{theo:un} proceeds exactly in the same way. Universality of the joint distribution of the singular values of \(X-z_1\) and \(X-z_2\) follows by universality for the joint distribution of the eigenvalues of \(H^{z_1}\) and \(H^{z_2}\), which is defined in~\eqref{eq:herm}, since the eigenvalues of \(H^{z_l}\) are exactly the singular values of \(X-z_l\) taken with positive and negative sign. From now on we only consider the eigenvalues of \(H^{z_l}\), with \(z_l\in\C\) such that \(\abs{\Im z_l}\sim 1\), \(|z_1-z_2|,|z_1-\overline{z}_2|\sim 1\), and \(\abs{z_l}\le 1-\epsilon\) for some small fixed \(\epsilon>0\).

For \(l\in [2]\), denote by \(\{\lambda_i^{z_l}\}_{\abs{i}\le n}\) the eigenvalues of \(H^{z_l}\) and by \(\{ \lambda_i^{z_l}(t)\}_{\abs{i}\le n}\) their evolution under the DBM flow~\eqref{eq:impnewDBM}. Define \(\{\mu_i^{(l)}(t)\}_{\abs{i}\le n}\), for \(l\in [2]\), to be the solution of~\eqref{eq:ginev} with initial data \(\{\mu_i^{(l)}\}_{\abs{i}\le n}\), which are the eigenvalues of independent complex Ginibre matrices \(\widetilde{X}^{(1)}, \widetilde{X}^{(2)}\). Then, defining the comparison processes  \(\mathring{\bm \lambda}^{z_l}(t)\), \(\widetilde{\bm \lambda}^{(l)}(t)\), \(\widetilde{\bm \mu}^{(l)}(t)\) as in Sections~\ref{sec:remlamb}--\ref{sec:COMPPRO}, and combining Proposition~\ref{prop:realprop}, Lemma~\ref{lem:firststepmason}, and Lemma~\ref{lem:secondstepmason}, we conclude that for any sufficiently small \(\omega_f>0\) there exist \(\omega, \widehat{\omega}>0\) such that \(\widehat{\omega}\ll \omega\ll \omega_f\), and that
\begin{equation}
    \label{eq:hpunv}
    \abs{\rho^{z_l}(0)\lambda_i^{z_l}(ct_f)-\rho_{\mathrm{sc}}(0)\mu_i^{(l)}(ct_f)}\le n^{-1-\omega}, \qquad \abs{i}\le n^{\widehat{\omega}},
\end{equation}
with very high probability, with \(c>0\) defined in~\eqref{eq:consOU}.

Then, by a simple Green's function comparison argument (GFT) as in Lemma~\ref{lem:GFTGFT}, using~\eqref{eq:hpunv}, by exactly the same computations as in the proof of~\cite[Proposition 3.1 in Section 7]{MR4026551} adapted to the bulk scaling, i.e.\ changing \(\mathfrak{b}_{r,t_1}\to 0\) and \(N^{3/4}\to 2n\), using the notation therein, we conclude Theorem~\ref{theo:indun}.

\subsection{Bound on the eigenvector overlaps}\label{sec:lamblamb}
In this section we prove the bound on the eigenvector overlaps, as stated in Lemma~\ref{lem:lambound}. For any \(T>0\), and any \(t\in [0,T]\), denote by \(\rho_t^z\) the self consistent density of states (scDOS) of the Hermitised matrix \(H_t^z\), and define its quantiles by
\begin{equation}
    \label{eq:defquant}
    \frac{i}{n}=\int_0^{\gamma_i^z(t)} \rho_t^z(x) \dif x, \qquad i\in [n],
\end{equation}
and \(\gamma_{-i}^z(t)=-\gamma_i^z(t)\) for \(i\in [n]\). Similarly to~\eqref{eq:hoplastrig}, as a consequence of Theorem~\ref{theo:Gll} and the fact that the eigenvalues of \(H_t^{z_l}\) are \(\gamma\)-H\"older continuous in time for any \(\gamma\in (0,1/2)\) by Weyl's inequality, by standard application of Helffer-Sj\"ostrand formula, we conclude the following rigidity bound
\begin{equation}
    \label{eq:hoplastrigt}
    \sup_{0\le t\le T}\abs{\lambda_i^{z_l}(t)-\gamma_i^{z_l}(t)}\le \frac{n^{100\omega_h}}{n^{2/3} (n+1-i)^{1/3}}, \qquad i\in [n],
\end{equation}
with very high probability, uniformly in \(\abs{z_l}\le 1-n^{-\omega_h}\). A bound similar to~\eqref{eq:hoplastrigt} holds for negative indices as well. We remark that the H\"older continuity of the eigenvalues of \(H_t^{z_l}\) is used to prove~\eqref{eq:hoplastrigt} uniformly in time, using a standard grid argument.

The main input to prove Lemma~\ref{lem:lambound} is Theorem~\ref{thm local law G2} combined with Lemma~\ref{lem:lowbeta}.
\begin{proof}[Proof of Lemma~\ref{lem:lambound}]
    Recall that \(P_1 {\bm w}_i^z={\bm u}_i^z\) and \(P_2 {\bm w}_i^z=\text{sign}(i){\bm v}_i^z\), for \(\abs{i}\le n\), by~\eqref{eq:defpro}. In the following we consider \(z,z'\in\C\) such that \(\abs{z},\abs{z'}\le 1-n^{-\omega_h}\), \(\abs{z-z'}\ge n^{-\omega_d}\), for some sufficiently small \(\omega_h,\omega_d>0\).

    Eigenvector overlaps can be estimated by traces of products of resolvents. More precisely, for any \(\eta\ge n^{-2/3+\epsilon_*}\), for some small fixed \(\epsilon_*>0\), and any \(\abs{i_0},\abs{j_0}\le n\), using the rigidity bound~\eqref{eq:hoplastrigt}, similarly to~\cite[Eq.~\eqref{cplx-eq:ooo}]{1912.04100}, we have that
    \begin{equation}
        \begin{split}
            &\abs{\braket{ {\bm u}_{i_0}^z(t), {\bm u}_{j_0}^{z'}(t)}}^2\lesssim \eta^2\Tr \bigl( \Im G^z(\gamma_{i_0}^z(t)+\ii \eta)\bigr)E_1\bigl(\Im G^{z'}(\gamma_{j_0}^{z'}(t)+\ii \eta)\bigr)E_1, \\
            &\abs{\braket{ {\bm v}_{i_0}^z(t), {\bm v}_{j_0}^{z'}(t)}}^2\lesssim \eta^2\Tr \bigl( \Im G^z(\gamma_{i_0}^z(t)+\ii \eta)\bigr)E_2\bigl( \Im G^{z'}(\gamma_{j_0}^{z'}(t)+\ii \eta)\bigr)E_2,
        \end{split}
    \end{equation}
    with \(E_1,E_2\) defined in~\eqref{e1e2}. By Theorem~\ref{thm local law G2}, combined with Lemma~\ref{lem:lowbeta}, choosing \(\eta=n^{-12/23}\), say, the error term in the r.h.s.\ of~\eqref{final local law} is bounded by \(n^{-1/23}n^{2\omega_d+100\omega_h}\), hence we conclude the bound in~\eqref{eq:lab1} for any fixed time \(t\in [0,T]\), choosing \(\omega_E=-(2\omega_d+100\omega_h-1/23)\), for any \(\omega_h\ll \omega_d\le 1/100\).

    Moreover, the bound~\eqref{eq:lab1} holds uniformly in time by a union bound, using a standard grid argument and H\"older continuity in the form
    \[ \norm{ \Im G_t^z \Im G_t^{z'}-\Im G_s^z \Im G_s^{z'}}\lesssim n^3 \left(\norm{ H_t^z-H_s^z}+\norm{ H_t^{z'}-H_s^{z'}}\right)\lesssim n^{7/2} \abs{t-s}^{1/2} \]
    for any \(s,t\in [0,T]\), where the spectral parameters in the resolvents have imaginary parts at least \(\eta >1/n\). This concludes the proof of Lemma~\ref{lem:lambound}.
\end{proof}

\subsection{Proof of Proposition~\ref{prop:realprop}}\label{sec:newse}
Throughout this section we use the notation \(z=z_l\), with \(l\in [2]\), with \(z_1,z_2\) fixed as in Proposition Proposition~\ref{prop:realprop}.

\begin{remark}
    In the remainder of this section we assume that \(\abs{z}\le 1-\epsilon\), with some positive \(\epsilon>0\) instead of \(n^{-\omega_h}\), in order to make our presentation clearer. One may follow the \(\epsilon\)-dependence throughout the proofs and find that all the estimates deteriorate with some fixed \(\epsilon^{-1}\) power, say \(\epsilon^{-100}\). Thus, when \(\abs{z}\le 1-n^{-\omega_h}\) is assumed, we get an additional factor \(n^{100\omega_h}\)
    but this does not play any role since \(\omega_h\) is the smallest exponent (e.g.\ see Proposition~\ref{prop:realprop}) in the analysis of the processes~\eqref{eq:impnewDBM},~\eqref{eq:nuproc}.
\end{remark}

The proof of Proposition~\ref{prop:realprop} consists of several parts  that we first sketch. The process \(\mathring{{\bm\lambda}}^z(t)\) differs
from \({\bm \lambda}^z(t)\)  in three aspects: (i) the coefficients \(\Lambda_{ij}^z(t)\) in the SDE~\eqref{eq:impnewDBM} for \({\bm \lambda}^z(t)\) are removed; (ii) large values of the correlation of the driving martingales is cut off, and (iii) the martingale term is slightly reduced by a factor \((1+n^{\omega_r})^{-1/2}\). We deal with these differences in two steps. The substantial step is the first one, from Section~\ref{sec:interpol} to Section~\ref{sec:ee}, where we handle (i) by interpolation, using short range approximation and energy method. This is followed by a more technical second step in Section~\ref{sec:dddaah}, where we handle (ii) and (iii) using a stopping time  controlled by a well chosen Lyapunov function to show that the correlation typically remains below the cut-off level.

A similar  analysis has been done in~\cite[Section 4]{MR4009717} (which has been used in the singular value setup in~\cite[Eq.~(3.13)]{1908.04060}) but our more complicated setting requires major modifications. In particular,~\eqref{eq:impnewDBM} has to be compared to~\cite[Eq.~(4.1)]{MR4009717} with \(\dif M_i=0\), \(Z_i=0\), and identifying \(\Lambda_{ij}^z\) with \(\gamma_{ij}\), using the notations therein. One major difference is that we now have a much weaker estimate \(\abs{\Lambda_{ij}^z}\le n^{-\omega_E}\) than the bound \(\abs{\gamma_{ij}}\le n^{-1+a}\), for some small fixed \(a>0\), used in~\cite{MR4009717}. We therefore need to introduce an additional cut-off function \({\bm \chi}\) in the energy estimate in Section~\ref{sec:ee}.

\subsubsection{Interpolation process}\label{sec:interpol}

In order to compare the processes \({\bm \lambda}^z\) and
\(\mathring{{\bm\lambda}}^z\) from~\eqref{eq:impnewDBM} and~\eqref{eq:nuproc} we start with defining an interpolation process, for any \(\alpha\in [0,1]\), as
\begin{equation}\label{eq:defintpro}
    \dif x_i^z(t,\alpha)= \frac{\dif \mathring{b}_i^z}{\sqrt{n(1+n^{-\omega_r})}}+\frac{1}{2n}\sum_{j\ne i} \frac{1+\alpha\mathring{\Lambda}_{ij}^z(t)}{x_i^z(t,\alpha)-x_j^z(t,\alpha)}\dif t, \quad x_i^z(0,\alpha)=\lambda_i^z(0), \quad \abs{i}\le n.
\end{equation}
We recall that \(\omega_f\ll\omega_r\ll \omega_E\). We use the notation \(x_i^z(t,\alpha)\) instead of \(z_i(t,\alpha)\) as in~\cite[Eq.~(4.12)]{MR4009717} to stress the dependence of \(x_i^z(t,\alpha)\) on \(z\in\C\). The well-posedness of the process~\eqref{eq:defintpro} is proven in Appendix~\ref{sec:wp} for any fixed \(\alpha\in [0,1]\). In particular, the particles keep their order \(x_i^z(t,\alpha)< x_{i+1}^z(t,\alpha)\). Additionally, by Lemma~\ref{lem:lip} it follows that the differentiation with respect to \(\alpha\) of the process \({\bm x}^z(t,\alpha)\) is well-defined.

Note that the process \({\bm x}^z(t,\alpha)\) does not fully interpolate between \(\mathring{{\bm\lambda}}^z(t)\) and \({\bm \lambda}^z(t)\); it handles only the removal of the \(\mathring{\Lambda}_{ij}\) term.
Indeed, it holds \({\bm x}^z(t,0)=\mathring{{\bm\lambda}}^z(t)\) for any \(t\in [0,T]\), but \({\bm x}^z(t,1)\) is not equal to \({\bm \lambda}^z(t)\). Thus we will proceed in two steps as already explained:
\begin{enumerate}[label=Step \arabic*]
    \item\label{step1} The process \({\bm x}^z(t,\alpha)\) does not change much in \(\alpha\in [0,1]\) for particles close to zero (by Lemma~\ref{lem:ee} below), i.e.\ \(x_i^z(t,1)-x_i^z(t,0)\) is much smaller than the rigidity scale \(1/n\) for small indices;
    \item\label{step2} The process \({\bm x}^z(t,1)\) is very close to \({\bm \lambda}^z(t)\) for all indices (see Lemma~\ref{lem:comp} below).
\end{enumerate} We start with the analysis of the interpolation process \({\bm x}^z(t,\alpha)\), then in Section~\ref{sec:dddaah} we state and prove Lemma~\ref{lem:comp}.

\subsubsection{Local law for the interpolation process}

In order to analyse the interpolation process \({\bm x}^z(t,\alpha)\), we first need to establish a local law for the Stieltjes transform of the empirical particle density. This will be used for a rigidity estimate to identify the location of \(x_i(t,\alpha)\) with a precision \(n^{-1+\epsilon}\), for some small \(\epsilon>0\), that is above the final target precision but it is needed as an a priori bound. Note that, unlike for \({\bm \lambda}^z(t)\), for \({\bm x}^z(t,\alpha)\) there is no obvious matrix ensemble behind this process, so local law and rigidity have to be proven directly from
its defining equation~\eqref{eq:defintpro}.

Define the Stieltjes transform of the empirical particle density by
\begin{equation}\label{eq:sttra}
    m_n(w,t,\alpha)=m_n^z(w,t,\alpha):=  \frac{1}{2n}\sum_{\abs{i}\le n} \frac{1}{x_i^z(t,\alpha)-w},
\end{equation}
and denote the Stieltjes transform of \(\rho^z\), the \emph{self-consistent density of states (scDOS)}  of \(H^z\), by \(m^z(w)\). Moreover, we denote the Stieltjes transform of \(\rho_t^z\),  the free convolution of \(\rho^z\) with the semicircular flow up to time \(t\), by \(m_t^z(w)\). Using the definition of the quantiles \(\gamma_i^z(t)\) in~\eqref{eq:defquant}, by Theorem~\ref{theo:Gll} we have that
\begin{equation}\label{eq:llt0}
    \begin{split}
        \sup_{\abs{\Re w}\le 10c_1}\sup_{n^{-1+\gamma}\le \Im w \le 10}\sup_{\alpha\in [0,1]}\abs{m_n(w,0,\alpha)-m^z(w)}&\le \frac{n^\xi C_\epsilon}{n\Im w}, \\
        \sup_{\abs{i}\le 10 c_2 n}\sup_{\alpha\in [0,1]}\abs{x_i^z(0,\alpha)-\gamma_i^z(0)}&\le \frac{C_\epsilon n^\xi}{n},
    \end{split}
\end{equation}
with very high probability for any \(\xi>0\), uniformly in \(\abs{z}\le 1-\epsilon\), for some small fixed \(c_1, c_2, \gamma>0\). We recall that \(C_\epsilon\le \epsilon^{-100}\). The rigidity bound in the second line of~\eqref{eq:llt0} follows by a standard application of Helffer-Sj\"ostrand formula.

In Lemma~\ref{lem:alphall} we prove that~\eqref{eq:llt0} holds true uniformly in \(0\le t\le t_f\).  For its proof,
similarly to~\cite[Section 4.5]{MR4009717}, we follow the analysis of~\cite[Section 3.2]{MR4009708} using~\eqref{eq:llt0} as an input.

\begin{lemma}[Local law and rigidity]\label{lem:alphall}
    Fix \(\abs{z}\le 1-\epsilon\), and assume that~\eqref{eq:llt0} holds with some \(\gamma, c_1,c_2, C_\epsilon>0\), then
    \begin{equation}\label{eq:alphrig}
        \begin{split}
            \sup_{\abs{\Re w}\le 10 c_1}\sup_{n^{-1+\gamma}\le \Im w \le 10}\sup_{\alpha\in [0,1]}\sup_{0\le t\le t_f}\abs{m_n^z(w,t,\alpha)-m_t(w)}&\le \frac{C_\epsilon n^\xi}{n\Im w}, \\
            \sup_{\abs{i}\le 10c_2 n}\sup_{\alpha\in [0,1]}\sup_{0\le t\le t_f}\abs{x_i^z(t,\alpha)-\gamma_i^z(t)}&\le \frac{C_\epsilon n^\xi}{n},
        \end{split}
    \end{equation}
    with very high probability for any \(\xi>0\), with \(\gamma_i^z(t)\sim i/n\) for \(\abs{i}\le 10 c_2 n\) and \(t\in [0,t_f]\).
\end{lemma}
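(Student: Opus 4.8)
The plan is to prove Lemma~\ref{lem:alphall} by a continuity/bootstrap argument in time, treating the interpolation parameter $\alpha\in[0,1]$ essentially as a spectator. The input~\eqref{eq:llt0} gives the local law and rigidity at $t=0$ uniformly in $\alpha$, and the point is to propagate this to all $t\in[0,t_f]$ where $t_f=n^{-1+\omega_f}$ is very short. Since $\omega_f$ is a small exponent, the free convolution $\rho_t^z$ of $\rho^z$ with the semicircular flow for time $t\le t_f$ is only a mild perturbation of $\rho^z$: its scDOS stays comparable, its edges move by $O(t_f)$, and the associated Stieltjes transform $m_t^z(w)$ is uniformly $1/3$-H\"older in $t$ and satisfies the usual stability estimates for the semicircular flow equation $\partial_t m_t = m_t\,\partial_w m_t$. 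I would first record these deterministic properties of $m_t^z$ and of the quantiles $\gamma_i^z(t)$ (in particular $\gamma_i^z(t)\sim i/n$ for $\abs{i}\le 10c_2 n$, which follows from $\rho_t^z\sim 1$ near zero for such short times), citing~\cite{MR4009708} for the free convolution analysis.

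The core of the argument is a self-consistent equation for $m_n^z(w,t,\alpha)$ derived from the SDE~\eqref{eq:defintpro} by It\^o's formula, exactly as in~\cite[Section 3.2]{MR4009708} and~\cite[Section 4.5]{MR4009717}. Applying It\^o to $m_n(w,t,\alpha)=\frac{1}{2n}\sum_{\abs{i}\le n}(x_i^z(t,\alpha)-w)^{-1}$ produces a drift term which, after the usual symmetrization of the double sum $\frac{1}{(2n)^2}\sum_{i\ne j}\frac{1}{(x_i-w)(x_j-w)}\frac{1}{x_i-x_j}$, reproduces $m_n\,\partial_w m_n$ up to (a) a martingale term $dM_t$ driven by $\mathring{\bm b}^z$, (b) a quadratic-variation correction of order $(n\,\Im w)^{-2}$ times the reduced diffusion constant, and (c) the genuinely new error coming from the coefficients $\alpha\mathring\Lambda_{ij}^z(t)$, namely $\frac{\alpha}{(2n)^2}\sum_{i\ne j}\frac{\mathring\Lambda_{ij}^z(t)}{(x_i-w)(x_j-w)}$. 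Using the defining property $\mathring\Lambda_{ij}^z=\Theta_{ij}^{z,\ov z}\cdot\bm1(\mathcal A\le n^{-\omega_E})$, hence $\abs{\mathring\Lambda_{ij}^z}\le n^{-\omega_E}$ deterministically, this last term is bounded by $n^{-\omega_E}\abs{m_n(w,t,\alpha)}^2/(2n)$, which is negligible on the relevant scales because $\omega_E$ is the \emph{largest} exponent (see~\eqref{eq:chain}). The martingale term is controlled by a Burkholder--Davis--Gundy estimate combined with a grid argument in $(w,t,\alpha)$; here one uses that $\langle M\rangle_t$ is governed by $\sum_i\abs{x_i-w}^{-4}$-type quantities, which are a priori bounded by the rigidity at the previous bootstrap scale. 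H\"older continuity of $x_i^z(t,\alpha)$ in $t$ (with exponent any $\gamma<1/2$, from Appendix~\ref{sec:wp}'s well-posedness, or directly from the SDE) and Lipschitz continuity in $\alpha$ (Lemma~\ref{lem:lip}) let us pass from a dense grid of $(t,\alpha)$-values to all of $[0,t_f]\times[0,1]$.

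With the stochastic self-consistent equation
\[
\partial_t m_n = m_n\,\partial_w m_n + \partial_w\mathcal{E}(w,t,\alpha),\qquad \abs{\mathcal E}\prec \frac{n^\xi}{n} + (\text{negligible}),
\]
in hand, the local law follows by the standard stability analysis of the semicircular-flow equation: writing $v:=m_n-m_t$, one obtains a Gr\"onwall-type inequality for $v$ after contour-shifting the characteristics of the complex Burgers equation, exactly as in~\cite[Section 3.2]{MR4009708}. Running the bootstrap from the trivial bound at large $\Im w\sim 1$ down to $\Im w = n^{-1+\gamma}$, and upgrading from averaged to the quantile (rigidity) statement via a standard Helffer--Sj\"ostrand argument applied to the counting function, yields both lines of~\eqref{eq:alphrig}. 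The main obstacle I anticipate is \emph{not} the new $\mathring\Lambda$-term --- that is harmless precisely because the cutoff makes it deterministically $O(n^{-\omega_E})$ --- but rather organizing the uniformity in all three parameters $(w,t,\alpha)$ simultaneously: one must ensure the grid is fine enough that the H\"older-in-$t$ and Lipschitz-in-$\alpha$ fluctuations do not overwhelm the gain from the bootstrap, and that the martingale increments are controlled uniformly; this is routine but bookkeeping-heavy, and is the reason the lemma is isolated rather than folded into the proof of Proposition~\ref{prop:realprop}.
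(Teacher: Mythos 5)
Your framework --- It\^o's formula on \(m_n\), a perturbed complex-Burgers equation, then the stability analysis of~\cite{MR4009708} --- is the same as the paper's. The gap is in how you estimate the new \(\mathring\Lambda\)-dependent error terms, and the crude estimate you propose is precisely what the paper explicitly warns is \emph{not} affordable.

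First, a structural correction: after It\^o and symmetrization, the drift error coming from the coefficients \(\alpha\mathring\Lambda_{ij}\) is not \(\frac{\alpha}{(2n)^2}\sum_{i\ne j}\frac{\mathring\Lambda_{ij}}{(x_i-w)(x_j-w)}\), as you wrote, but \(\frac{\alpha}{4n^2}\sum_{\abs{i},\abs{j}\le n}\frac{\mathring\Lambda_{ij}}{(x_i-w)^2(x_j-w)}\), i.e.\ a third-order pole structure (see~\eqref{eq:hf}); the partial-fraction cancellation that produces \(m_n\partial_w m_n\) for a constant interaction coefficient does not close for the \(i,j\)-dependent \(\mathring\Lambda_{ij}\). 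There is also a diagonal It\^o term \(\propto\sum_i\frac{\mathring\Lambda_{ii}}{(x_i-w)^3}\) (last term of~\eqref{eq:hf}) and, because the \(\mathring b_i\) are correlated via~\eqref{eq:covwp}, a cross-term \(\frac{1}{n^3}\sum_{i,j}\frac{\mathring\Lambda_{ij}}{(x_i-w)^2(x_j-\ov w)^2}\) in the quadratic variation of the martingale; your proposal mentions neither.

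Second, and more fundamentally, the bound \(\abs{\mathring\Lambda_{ij}}\le n^{-\omega_E}\) applied termwise is insufficient for the double-sum drift term, and the paper says so immediately after~\eqref{eq:hf}. Bounding by absolute value gives
\[
\abs*{\frac{\alpha}{4n^2}\sum_{i,j}\frac{\mathring\Lambda_{ij}}{(x_i-w)^2(x_j-w)}}\le\frac{n^{-\omega_E}}{4n^2}\sum_i\frac{1}{\abs{x_i-w}^2}\sum_j\frac{1}{\abs{x_j-w}}\lesssim n^{-\omega_E}\,\frac{(\Im m_n)^{3/2}}{(\Im w)^{3/2}},
\]
whereas the argument requires the error to be \(\lesssim\Im m_n/(n(\Im w)^2)\) to match~\cite[Eq.~(4.64)]{MR4009717}. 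These differ by a factor of order \(n^{1-\omega_E}(\Im m_n\,\Im w)^{1/2}\), which at \(\Im w\sim 1\) is \(\sim n^{1-\omega_E}\gg 1\) because \(\omega_E\ll 1\) in the chain~\eqref{eq:chain}. (Your claimed bound \(n^{-\omega_E}\abs{m_n}^2/(2n)\) is not a legitimate estimate even for your --- incorrect --- form of the error: once you pass \(\abs{\mathring\Lambda_{ij}}\) out of the sum you are left with \(\bigl(\frac{1}{2n}\sum_i\abs{x_i-w}^{-1}\bigr)^2\), not \(\abs{m_n}^2\).)

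The paper's resolution, and the one genuinely new ingredient of the lemma, is to avoid the crude bound entirely: it uses that \(\mathring\Lambda_{ij}(t)=\Lambda_{ij}(t)\) on a very-high-probability event (\eqref{eq:eq}) and that \(\Lambda_{ij}\) is an eigenvector overlap, so the double sum can be \emph{resummed exactly} as a trace of bounded operators \(T,S\) from~\eqref{eq:defoperator}, \(\sum_{i,j}\frac{\Lambda_{ij}}{(x_i-w)^2(x_j-w)}\propto\Tr\bigl(P_1TP_2\,P_2SP_1\bigr)+\mathrm{c.c.}\), which after Cauchy--Schwarz yields the sharp bound \(\lesssim\Im m_n/(n(\Im w)^2)\) independently of \(\omega_E\) --- see~\eqref{eq:term2}; the same resummation controls the cross-terms of the martingale quadratic variation in~\eqref{eq:term3}. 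Without this resummation the bootstrap does not close at moderate and large \(\Im w\).
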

\begin{proof}

    Differentiating~\eqref{eq:sttra}, by~\eqref{eq:defintpro} and It\^{o}'s formula, we get
    \begin{equation}\label{eq:hf}
        \begin{split}
            \dif m_n&=m_n(\partial_w m_n)\dif t-\frac{1}{2n^{3/2}\sqrt{1+n^{-\omega_r}}}\sum_{\abs{i}\le n}\frac{\dif \mathring{b}_i}{(x_i-w)^2}+\frac{\alpha}{4n^2}\sum_{\abs{i},\abs{j}\le n} \frac{\mathring{\Lambda}_{ij}}{(x_i-w)^2(x_j-w)} \dif t \\
            &\quad +\frac{1}{4n^2} \sum_{\abs{i}\le n}\frac{\bigl[1-\alpha-n^{-\omega_r}(1+n^{-\omega_r})^{-1}\bigr]\mathring{\Lambda}_{ii}}{(x_i-w)^3}\dif t.
        \end{split}
    \end{equation}
    Note that by~\eqref{eq:ringmat}--\eqref{eq:ringlamthe} it follows that
    \begin{equation}\label{eq:eq}
        \mathring{\Lambda}_{ij}(t)=\Lambda_{ij}(t),  \qquad \bigl(\mathring{b}_i(s)\bigr)_{0\le s\le t}=\bigl(b_i(s)\bigr)_{0\le s\le t},
    \end{equation}
    with very high probability uniformly in \(0\le t \le t_f\), where \(\Lambda_{ij}\) and \((b_i(s))_{0\le s\le t}\) are defined in~\eqref{eq:defbiglam}--\eqref{eq:deflamzz} and~\eqref{eq:deffiltr}--\eqref{eq:recallBBB}, respectively.

    The equation~\eqref{eq:hf} is the analogue of~\cite[Eq.~(3.20)]{MR4009708} with some differences. First, the last two terms are new and need to be estimated, although the penultimate term in~\eqref{eq:hf} already appeared in~\cite[Eq.~(4.62)]{MR4009717} replacing \(\mathring{\Lambda}_{ij}\) by \(\widehat{\gamma}_{ij}\), using the notation therein.
    Second, the martingales in the second term in the r.h.s.\ of~\eqref{eq:hf} are correlated. Hence, in order to apply the results in~\cite[Section 3.2]{MR4009708} we prove that these additional terms are bounded as in~\cite[Eq.~(4.64)]{MR4009717}.
    Note that in~\cite[Eq.~(4.64)]{MR4009717} the corresponding term to the penultimate term in the r.h.s.\ of~\eqref{eq:hf} is estimated using that \(\widehat{\gamma}_{ij}\le n^{-1+a}\), for some small \(a>0\). In our case, however, the bound on \(\abs{\mathring{\Lambda}}\) is much weaker and a crude estimate by absolute value is not affordable.
    We will use~\eqref{eq:eq} and then the explicit form of \(\Lambda_{ij}\) in~\eqref{eq:defbiglam}--\eqref{eq:deflamzz}, that enables us to perform the two summations and write this term as the trace of the product of two operators (see~\eqref{eq:term2} later).

    Since \(\abs{\mathring{\Lambda}_{ii}}\le n^{-\omega_E}\) by its definition below~\eqref{eq:ringlamthe}, the last term in~\eqref{eq:hf} is easily bounded by
    \begin{equation}
        \label{eq:term1}
        \abs*{\frac{1}{4n^2} \sum_{\abs{i}\le n}\frac{(1-\alpha-n^{-\omega_r}(1+n^{-\omega_r})^{-1})\mathring{\Lambda}_{ii}}{(x_i-w)^3}}\le \frac{\Im m_n(w)}{n^{1+\omega_E}(\Im w)^2}.
    \end{equation}

    Next, we proceed with the estimate of the penultimate term in~\eqref{eq:hf}. Define the operators
    \begin{equation}
        \label{eq:defoperator}
        T(t,\alpha):=  \sum_{\abs{i}\le n} f(x_i(t,\alpha)) {\bm w}_i(t)[{\bm w}_i(t)]^\ast, \qquad S(t,\alpha):=  \sum_{\abs{i}\le n} g(x_i(t,\alpha))\overline{\bm w}_i(t)[\overline{\bm w}_i(t)]^\ast,
    \end{equation}
    where \(\{{\bm w}_i(t)\}_{\abs{i}\le n}\) are the orthonormal eigenvectors in the definition of \(\Lambda_{ij}(t)\) in~\eqref{eq:defbiglam}, and for any fixed \(w\in\HC\) the functions \(f,g\colon\R\to \C\) are defined as
    \begin{equation}
        \label{eq:deffun}
        f(x):= \frac{1}{(x-w)^2}, \qquad g(x):=  \frac{1}{x-w}.
    \end{equation}
    Then, using the definitions~\eqref{eq:defoperator}--\eqref{eq:deffun} and~\eqref{eq:eq}, we bound the last term in the first line of~\eqref{eq:hf} as
    \begin{equation}\label{eq:term2}
        \begin{split}
            \abs*{\frac{\alpha}{4n^2}\sum_{\abs{i},\abs{j}\le n} \frac{\mathring{\Lambda}_{ij}}{(x_i-w)^2(x_j-w)} \dif t }&=\abs*{\frac{\alpha}{2n^2} \left[\Tr \bigl(P_1TP_2 P_2SP_1\bigr)+\overline{\Tr \bigl(P_1TP_2 P_2SP_1\bigr)}\right] } \\
            &\lesssim \frac{1}{n^2}\left[\Im w \Tr  \bigl[P_1TP_2(P_1TP_2)^\ast\bigr]+\frac{\Tr  \bigl[P_1SP_2(P_1SP_2)^\ast\bigr]}{\Im w}\right] \\
            &\lesssim \frac{1}{n^2}\left[\Im w \sum_{\abs{i}\le n} \abs*{ f(x_i)}^2+\frac{1}{\Im w} \sum_{\abs{i}\le n} \abs*{ g(x_i)}^2 \right] \lesssim \frac{\Im m_n(w)}{n(\Im w)^2},
        \end{split}
    \end{equation}
    with very high probability uniformly in \(0\le t\le t_f\). Note that in the first equality of~\eqref{eq:term2} we used that \(\mathring{\Lambda}_{ij}(t)=\Lambda_{ij}(t)\) for any \(0\le t \le t_f\) with very high probability by~\eqref{eq:eq}.

    Finally, in order to conclude the proof, we estimate the martingale term in~\eqref{eq:hf}. For this purpose, using that \(\Exp{\dif \mathring{b}_i \dif \mathring{b}_j\given\mathcal{F}_{b,t}}=(\delta_{i,j}-\delta_{i,-j}+\mathring{\Lambda}_{ij} )/2\dif t\) and proceeding similarly to~\eqref{eq:term2}, we estimate its quadratic variation by
    \begin{equation}\label{eq:term3}
        \begin{split}
            \frac{1}{4n^3(1+n^{-\omega_r})} \sum_{\abs{i},\abs{j}\le n} \frac{\Exp{\dif \mathring{b}_i \dif \mathring{b}_j\given\mathcal{F}_{b,t}}}{(x_i-w)^2(x_j-\overline{w})^2} &=\frac{1}{8n^3(1+n^{-\omega_r})} \sum_{\abs{i}\le n} \frac{1}{\abs{x_i-w}^4} \dif t \\
            &\quad +\frac{1}{8n^3(1+n^{-\omega_r})} \sum_{\abs{i}\le n} \frac{1}{(x_i+w)^2(x_i-\overline{w})^2} \dif t \\
            &\quad +\frac{1}{8n^3(1+n^{-\omega_r})} \sum_{\abs{i},\abs{j}\le n} \frac{\mathring{\Lambda}_{ij}}{(x_i-w)^2(x_j-\overline{w})^2} \dif t \\
            &\lesssim \frac{\Im m_n(w)}{n^2(\Im w)^3}+\frac{1}{n^3} \Tr  \bigl[ P_1 TP_2 (P_1 T P_2)^\ast\bigr]  \dif t \\
            &\lesssim \frac{\Im m_n(w)}{n^2(\Im w)^3},
        \end{split}
    \end{equation}
    where the operator \(T\) is defined in~\eqref{eq:defoperator}, and in the penultimate inequality we used that \(\mathring{\Lambda}_{ij}(t)=\Lambda_{ij}(t)\) for any \(0\le t \le t_f\) with very high probability.

    Combining~\eqref{eq:term1},~\eqref{eq:term2}, and~\eqref{eq:term3} we immediately conclude the proof of the first bound in~\eqref{eq:alphrig} using the arguments of~\cite[Section 3.2]{MR4009708}. The rigidity bound in the second line of~\eqref{eq:alphrig} follows by a standard application of Helffer-Sj\"ostrand (see also below~\eqref{eq:llt0}).
\end{proof}

\subsubsection{Short range approximation}

Since the main contribution to the dynamics of \(x_i^z(t,\alpha)\) comes from the nearby particles, in this section we introduce a \emph{short range approximation} process \(\widehat{\bm x}^z(t,\alpha)\), which will very well approximate
the original process \({\bm x}^z(t,\alpha)\) (see~\eqref{eq:shortlong} below). The actual interpolation analysis comparing \(\alpha=0\)
and \(\alpha=1\) will then be performed on the short range process \(\widehat{\bm x}^z(t,\alpha)\) in Section~\ref{sec:ee}.

Fix \(\omega_L>0\) so that \(\omega_f\ll \omega_L\ll \omega_E\), and define the index set
\begin{equation}
    \label{eq:defA}
    \mathcal{A}:=  \set*{(i,j)\given\abs{i-j}\le n^{\omega_L}}\cup \set*{(i,j)\given \abs{i},\abs{j}> 5c_2n},
\end{equation}
with \(c_2>0\) defined in~\eqref{eq:alphrig}. We remark that in~\cite[Eq.~(4.69)]{MR4009717} the notation \(\omega_l\) is used instead of \(\omega_L\); we decided to change this notation in order to not create confusion with \(\omega_l\) defined in~\cite[Eq.~\eqref{cplx-eq:shortscsetA}]{1912.04100}. Then we define the short range approximation \(\widehat{\bm x}^z(t,\alpha)\) of the process \({\bm x}^z(t,\alpha)\) by
\begin{equation}
    \label{eq:defshortrange}
    \begin{split}
        \dif \widehat{x}_i^z(t,\alpha)&=\frac{\dif \mathring{b}_i^z}{\sqrt{n}}+\frac{1}{2n}\sum_{\substack{j:(i,j)\in\mathcal{A}, \\ j\ne i}} \frac{1+\alpha \mathring{\Lambda}_{ij}(t)}{\widehat{x}_i^z(t,\alpha)-\widehat{x}_j^z(t,\alpha)}\dif t+\frac{1}{2n}\sum_{\substack{j:(i,j)\notin\mathcal{A}, \\ j\ne i}} \frac{1}{x_i^z(t,0)-x_j^z(t,0)}\dif t, \\
        \widehat{x}_i^z(0,\alpha)&=x_i^z(0,\alpha), \qquad \abs{i}\le n.
    \end{split}
\end{equation}
The well-posedness of the process~\eqref{eq:defshortrange} follows by nearly identical computations as in the proof of Proposition~\ref{prop:wp}.

In order to check that the \emph{short range approximation} \(\widehat{\bm x}^z(t,\alpha)\) is close to the process \({\bm x}^z(t,\alpha)\), defined in~\eqref{eq:defintpro}, we start with a trivial bound on \(\abs{x_i^z(t,\alpha)-x_i^z(t,0)}\) (see~\eqref{eq:glo} below) to estimate the difference of particles far away from zero in~\eqref{eq:simpest}, for which we do not have the rigidity bound in~\eqref{eq:alphrig}. Notice that by differentiating~\eqref{eq:defintpro} in \(\alpha\) and estimating \(\abs{\mathring{\Lambda}_{ij}}\) trivially by \(n^{-\omega_E}\), it follows that
\begin{equation}
    \label{eq:glo}
    \sup_{0\le t\le t_f} \sup_{\abs{i}\le n}\sup_{\alpha\in [0,1]} \abs{x_i^z(t,\alpha)-x_i^z(t,0)}\lesssim n^{-\omega_E/2},
\end{equation}
similarly to~\cite[Lemma 4.3]{MR4009717}.

By the rigidity estimate~\eqref{eq:alphrig}, the weak global estimate~\eqref{eq:glo} to estimate the contribution of the far away particles for which we do not know rigidity, and the bound \(\abs{\mathring{\Lambda}_{ij}}\le n^{-\omega_E}\) from~\eqref{eq:ringlamthe} it follows that
\begin{equation}
    \label{eq:simpest}
    \abs*{\frac{1}{2n}\sum_{\substack{j: (i,j)\notin \mathcal{A}, \\ j\ne i}} \frac{1}{x_i^z(t,0)-x_j^z(t,0)}-\frac{1}{2n}\sum_{\substack{j: (i,j)\notin \mathcal{A}, \\ j\ne i}} \frac{1+\alpha\mathring{\Lambda}_{ij}(t)}{x_i^z(t,\alpha)-x_j^z(t,\alpha)}}\lesssim n^{-\omega_E/2}+n^{-\omega_L+\xi},
\end{equation}
for any \(\xi>0\) with very high probability uniformly in \(0\le t \le t_f\). Hence, by exactly the same computations as in~\cite[Lemma 3.8]{MR3914908}, it follows that
\begin{equation}
    \label{eq:shortlong}
    \sup_{\alpha\in [0,1]}\sup_{\abs{i}\le n} \sup_{0\le t\le t_f} \abs{x_i^z(t,\alpha)-\widehat{x}_i^z(t,\alpha)}\le \frac{n^{2\omega_f}}{n}\left(\frac{1}{n^{\omega_E/2}}+\frac{1}{n^{\omega_L}} \right).
\end{equation}

Note that~\eqref{eq:shortlong} implies that the second estimate in~\eqref{eq:alphrig} holds with \(x_i^z\) replaced by \(\widehat{x}_i^z\). In order to conclude the proof of Proposition~\ref{prop:realprop} in the next section we differentiate in  the process \(\widehat{\bm x}^z\) in \(\alpha\) and study the deterministic (discrete) PDE we obtain from~\eqref{eq:defshortrange} after the \(\alpha\)-derivation. Note that the \(\alpha\)-derivative of \(\widehat{\bm x}^z\) is well defined by Lemma~\ref{lem:lip}.

\subsubsection{Energy estimate}\label{sec:ee}

Define \(v_i=v_i^z(t,\alpha):=  \partial_\alpha \widehat{x}_i^z(t,\alpha)\), for any \(\abs{i}\le n\). In the remainder of this section we may omit the \(z\)-dependence since the analysis is performed for a fixed \(z\in\C\) such that \(\abs{z}\le 1-\epsilon\), for some small fixed \(\epsilon>0\). By~\eqref{eq:defshortrange} it follows that \({\bm v}\) is the solution of the equation
\begin{equation}\label{eq:ubo}
    \partial_t v_i=-(B{\bm v})_i+\xi_i, \quad v_i(0)=0, \qquad  \abs{i}\le n,
\end{equation}
where
\begin{equation}\label{eq:kern}
    (B{\bm v})_i:=  \sum_{j:(i,j)\in\mathcal{A}} B_{ij}(v_j-v_i), \quad B_{ij}=B_{ij}(t,\alpha):=  \frac{1+\alpha \mathring{\Lambda}_{ij}(t)}{2n(\widehat{x}_i(t,\alpha)-\widehat{x}_j(t,\alpha))^2} \bm1((i,j)\in\mathcal{A}),
\end{equation}
and
\[\xi_i=\xi_i(t,\alpha):=  \frac{1}{2n}\sum_{j:(i,j)\in\mathcal{A}} \frac{\mathring{\Lambda}_{ij}(t)}{\widehat{x}_i(t,\alpha)-\widehat{x}_j(t,\alpha)}.\]

Before proceeding with the optimal estimate of the \(\ell^\infty\)-norm of \({\bm v}\) in~\eqref{eq:optvb}, we give the following crude bound
\begin{equation}
    \label{eq:infb}
    \sup_{\abs{i}\le n}\sup_{0\le t\le t_f} \sup_{\alpha\in [0,1]} \abs{v_i(t,\alpha)}\lesssim 1,
\end{equation}
that will be needed  as an a priori estimate for the more precise result later.
The bound~\eqref{eq:infb} immediately follows by exactly the same computations as in~\cite[Lemma 4.7]{MR4009717} using that \(\abs{\mathring{\Lambda}_{ij}}\le n^{-\omega_E}\). %

The main technical result to prove~\ref{step1} towards Proposition~\ref{prop:realprop} is the following lemma. In particular,
after integration in \(\alpha\),  Lemma~\ref{lem:ee} proves that the processes \({\bm x}^z(t,1)\) and \({\bm x}^z(t,0)\) are closer than the rigidity scale \(1/n\).

\begin{lemma}\label{lem:ee}
    For any small \(\omega_f>0\) there exist small constants \(\omega, \widehat{\omega}>0\) such that \(\widehat{\omega}\ll \omega\ll \omega_f\) and
    \begin{equation}
        \label{eq:optvb}
        \sup_{\alpha\in [0,1]}\sup_{\abs{i}\le n^{\widehat{\omega}}}\sup_{0\le t\le t_f} \abs{v_i(t)}\le n^{-1-\omega},
    \end{equation}
    with very high probability.
\end{lemma}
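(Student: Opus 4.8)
The plan is to prove \eqref{eq:optvb} via the energy method applied to the parabolic-type equation \eqref{eq:ubo}, following the strategy of \cite[Section 4.8]{MR4009717} but with the crucial modification forced by the weak bound $\abs{\mathring{\Lambda}_{ij}}\le n^{-\omega_E}$: namely the introduction of a spatial cut-off weight ${\bm \chi}$ localizing near zero. First I would fix a scale $\omega_c$ with $\omega_L\ll\omega_c\ll\omega_r$ and introduce a smooth (discrete) cut-off profile $\chi_i$ which equals $1$ for $\abs{i}\le n^{\omega_c}$ and vanishes for $\abs{i}\ge 2n^{\omega_c}$, with $\abs{\chi_i-\chi_j}\lesssim n^{\omega_L-\omega_c}$ whenever $(i,j)\in\mathcal{A}$. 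The idea is that $v_i$ is genuinely small only for indices near zero, while the a priori bound \eqref{eq:infb} controls it elsewhere; the cut-off lets us run an energy estimate that only sees the relevant regime and pays a negligible error from the boundary of the cut-off.

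The core computation is to differentiate in time the weighted energy functional
\[
  \mathcal{E}(t) := \sum_{\abs{i}\le n} \chi_i^2\, v_i(t)^2 + (\text{suitable bulk norms}),
\]
where the additional bulk norms control intermediate scales, exactly mirroring the hierarchy of norms in \cite[Section 4.8]{MR4009717}. Using \eqref{eq:ubo} one gets $\partial_t\mathcal{E} = -2\sum_i \chi_i^2 v_i (B{\bm v})_i + 2\sum_i \chi_i^2 v_i \xi_i$. For the first term one uses the standard summation-by-parts identity
\[
  \sum_i \chi_i^2 v_i (B{\bm v})_i = \frac12\sum_{(i,j)\in\mathcal{A}} B_{ij}(\chi_i^2+\chi_j^2)(v_i-v_j)^2 - \frac12\sum_{(i,j)\in\mathcal{A}} B_{ij}(\chi_i^2-\chi_j^2)(v_i^2-v_j^2),
\]
so that the leading part is a good (dissipative) Dirichlet-form term, and the error term coming from $\chi_i^2-\chi_j^2$ is controlled by the a priori bound \eqref{eq:infb} together with rigidity \eqref{eq:alphrig} (which gives $B_{ij}\lesssim n/(i-j)^2$ in the bulk) and the Lipschitz bound $\abs{\chi_i^2-\chi_j^2}\lesssim n^{\omega_L-\omega_c}$, producing something of size $n^{-\omega_c+O(\omega_L)}$ after summation. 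The forcing term $\xi_i$ is the genuinely new ingredient: here I would \emph{not} estimate $\mathring{\Lambda}_{ij}$ by absolute value but instead use \eqref{eq:eq} to replace $\mathring{\Lambda}_{ij}$ by $\Lambda_{ij}$, then insert the explicit eigenvector-overlap representation \eqref{eq:defbiglam} to write $\sum_j \chi_i^2 v_i \xi_i$ as a trace of products of the operators $T,S$ from \eqref{eq:defoperator} (with $w$ replaced by the relevant finite-scale regularization $\gamma_i(t)+\ii\eta$), exactly as was done in \eqref{eq:term2}; after a Cauchy–Schwarz and the Ward-type bounds on $\sum_i\abs{f(x_i)}^2$, $\sum_i\abs{g(x_i)}^2$ one obtains a bound of order $n^{-1-\omega_E+O(\omega_c)}\cdot(\text{something controlled by }\mathcal{E})$ on this term, which after a Gronwall argument over $t\in[0,t_f]$ with $t_f=n^{-1+\omega_f}$ yields $\mathcal{E}(t_f)\le n^{-2-2\omega}$ for a suitable $\omega>0$ provided $\omega_f\ll\omega_r\ll\omega_E$ and $\widehat{\omega}\ll\omega_c$. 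Finally, restricting to $\abs{i}\le n^{\widehat{\omega}}$ where $\chi_i=1$ gives $\abs{v_i(t_f)}\le n^{-1-\omega}$, and since the estimate is uniform in $\alpha\in[0,1]$ and in $t\le t_f$ one closes the argument; the uniformity in $t$ and $\alpha$ is obtained by a standard grid argument using the Hölder continuity in $t$ and the Lipschitz continuity in $\alpha$ from Lemma~\ref{lem:lip}.

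I expect the main obstacle to be the bookkeeping of the forcing term $\xi_i$: unlike in \cite{MR4009717}, a crude $\ell^\infty$ bound on $\Lambda_{ij}$ is far too weak (it would only give $n^{-\omega_E}$, not beating the rigidity scale $1/n$ after one power of $t_f$), so one must genuinely exploit the structure of $\Lambda_{ij}$ as a sum of rank-one overlaps and convert the double sum over $(i,j)$ into an operator trace that can be bounded by the local law for products of resolvents, Theorem~\ref{thm local law G2}, combined with the lower bound on $\abs{\wh\beta_\ast}$ from Lemma~\ref{lem:lowbeta}. Getting the cut-off interplay right — choosing $\omega_c$ large enough that the $\chi$-boundary error $n^{-\omega_c}$ is negligible but small enough that it does not interfere with the range $n^{\widehat\omega}$ of indices we care about, and checking that the dissipative Dirichlet-form term indeed dominates after the cut-off — is the delicate part, but it is structurally parallel to \cite[Section 4.8]{MR4009717} once the trace representation of the $\xi$-term is in place. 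The well-posedness of $\widehat{\bm x}^z(t,\alpha)$ and the differentiability in $\alpha$ are assumed from Section~\ref{sec:wp} and Lemma~\ref{lem:lip}, and the a priori bound \eqref{eq:infb} is taken as given, so the proof reduces entirely to this weighted energy estimate.
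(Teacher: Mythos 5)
Your overall architecture — energy estimate for $\norm{{\bm v}{\bm\chi}}_2^2$ with a spatial cut-off ${\bm\chi}$ localizing near zero, needed because $\abs{\mathring\Lambda_{ij}}\le n^{-\omega_E}$ is much weaker than the $n^{-1+a}$ bound of~\cite{MR4009717} — is exactly right and matches the paper. But the central mechanism you propose for the forcing term is wrong, and the intuition motivating it is also wrong.

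You claim ``a crude $\ell^\infty$ bound on $\Lambda_{ij}$ is far too weak'' and therefore propose to feed the trace representation $\frac{1}{2n}\sum_j\frac{\mathring\Lambda_{ij}}{\widehat x_i-\widehat x_j}=$ (trace of products of $T,S$) into the energy estimate. The paper does the opposite: in the proof of Lemma~\ref{lem:ee} all $\mathring\Lambda_{ij}$-dependent terms are estimated by the crude bound $\abs{\mathring\Lambda_{ij}}\le n^{-\omega_E}$ (see \eqref{eq:A3} and \eqref{eq:A1}). The point you are missing is that the crude bound becomes effective precisely because of the cut-off: $\chi_i$ decays exponentially in $\abs{\widehat x_i}n^{1-\omega_c}$, so after rigidity \eqref{eq:rigg} the double sums are effectively supported on $\abs{i},\abs{j}\lesssim n^{3\omega_c/2}$, i.e. only $O(n^{3\omega_c})$ pairs survive. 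Combined with a Cauchy--Schwarz against the Dirichlet form (absorbing the $(\widehat x_i-\widehat x_j)^{-1}$ singularity) one ends up with $\frac{C}{n}\sum\abs{\mathring\Lambda_{ij}}^2\chi_i^2\lesssim n^{3\omega_c-1-2\omega_E}$; integrating over $t\in[0,t_f]$ with $t_f=n^{-1+\omega_f}$ then gives $\norm{{\bm v}{\bm\chi}}_2^2\lesssim n^{3\omega_c+\omega_f-2-2\omega_E}\ll n^{-2-2\omega}$ since $\omega_f\ll\omega_c\ll\omega_E$. No operator-trace identity is needed.

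Moreover, the trace route you propose would not go through in this setting: the weights $\chi_iv_i$ are not resolvent-like functions of $\widehat x_i$ (indeed $v_i=\partial_\alpha\widehat x_i$ depends on the whole trajectory), and the sum is restricted to short range $(i,j)\in\mathcal A$, both of which break the clean Ward/resolvent estimates in \eqref{eq:term2}. The trace representation \emph{is} used in this paper, but in Lemma~\ref{lem:alphall} (local law/rigidity for the interpolation process), where the weights are genuinely of the form $(x_i-w)^{-k}$ and the sum runs over all indices; there the crude bound would indeed fail. Conflating these two lemmas is the gap in your argument. Also, you should use an exponentially decaying profile $\chi(x)=e^{-2\abs{x}n^{1-\omega_c}}\varphi(\cdot)$ as in \eqref{eq:cutoff1} rather than a hard plateau in the index variable; the exponential Lipschitz estimates \eqref{eq:lipb}--\eqref{eq:lipbb} are what give the clean $n^{1-\omega_c/2}\norm{{\bm v}{\bm\chi}}_2^2$ Gronwall term, which your $n^{\omega_L-\omega_c}$ Lipschitz claim does not directly reproduce at the relevant scale.
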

This lemma is  based upon the finite speed of propagation mechanism for the dynamics~\eqref{eq:ubo}~\cite[Lemma~9.6]{MR3372074}. Our proof follows~\cite[Lemma~6.2]{MR3606475} that introduced a carefully chosen special cut-off function.
\begin{proof}
    In order to bound \(\abs{v_i(t)}\) for small indices we will bound \(\norm{ {\bm v}{\bm \chi}}_\infty\) for an appropriate cut-off vector \({\bm \chi}\) supported at a few coordinates around zero. More precisely,  we will use an energy estimate to control \(\norm{ {\bm v}{\bm \chi}}_2\) and then we use the trivial bound \(\norm{ {\bm v}{\bm \chi}}_\infty\le \norm{ {\bm v}{\bm \chi}}_2\). This bound would be too crude without the cut-off.

    Fix a small constant \(\omega_c>0\) such that \(\omega_f\ll \omega_L\ll \omega_c\ll \omega_E\), and define
    \begin{equation}
        \label{eq:cutoff1}
        \chi(x):=  e^{-2xn^{1-\omega_c}}, %
    \end{equation}
    for any \(x>0\).
    It is trivial to see that \(\chi\) is Lipschitz, i.e.
    \begin{equation}
        \label{eq:lipb}
        \abs{\chi(x)-\chi(y)}\lesssim e^{-(x\wedge y)n^{1-\omega_c}} \abs{x-y}n^{1-\omega_c},
    \end{equation}
    for any \(x,y\ge 0\), and that
    \begin{equation}
        \label{eq:lipbb}
        \abs{\chi(x)-\chi(y)}\lesssim e^{-(x+y)n^{1-\omega_c}} \abs{x-y}n^{1-\omega_c},
    \end{equation}
    if additionally \(\abs{x-y}\le n^{\omega_c}/(2n)\). Finally we define the vector \({\bm \chi}\) by
    \begin{equation}
        \label{eq:cutoff}
        \chi_i=\chi(\widehat{x}_i):=  e^{-2\abs{\widehat{x}_i}n^{1-\omega_c}}, %
    \end{equation}
    where $\widehat{x}_i=\widehat{x}_i(t,\alpha)$ from \eqref{eq:defshortrange}. Note that \(\chi_i\) is exponentially small if \(n^{3\omega_c/2}\le \abs{i}\le n\) by rigidity~\eqref{eq:alphrig}, the fact that \(\gamma_i^z \sim i/n\), for \(n^{3\omega_c/2}\le \abs{i}\le 10c_2 n\), and $|\widehat{x}_i|\gtrsim 1$ for $\abs{i}> 10c_2 n$. We remark that the lower bound \(n^{3\omega_c/2}\) on \(\abs{i}\) is arbitrary, since \(\chi_i\) is exponentially small for any \(\abs{i}\) much bigger than \(n^{\omega_c}\). Moreover, as a consequence of~\eqref{eq:alphrig} we have that
    \begin{equation}\label{eq:rigg}
        \widehat{x}_i \sim \frac{i}{n} \quad \mbox{for}\quad n^\xi \le \abs{i}\le n, %
    \end{equation}
    with very high probability for any \(\xi>0\).

    By~\eqref{eq:ringmat}--\eqref{eq:circbm}, \eqref{eq:defshortrange}, and \eqref{eq:ubo}, using that $|\widehat{x}_i|=\widehat{x}_{|i|}$ by the symmetry of the spectrum, it follows that
    \begin{equation}\label{eq:dercv}
        \begin{split}
            \dif \norm{ {\bm v}{\bm \chi}}_2^2&=\dif\sum_{\abs{i}\le n} v_i^2 \chi_i^2=-2\sum_i \chi_i^2 v_i(Bv)_i\dif t+\frac{1}{n}\sum_{(i,j)\in\mathcal{A}} \frac{\chi_i^2 v_i\mathring{\Lambda}_{ij}}{\widehat{x}_i-\widehat{x}_j}\dif t\\
            &\quad -2n^{1-\omega_c} \sum_iv_i^2\chi_i^2\dif \widehat{x}_{|i|}+\frac{n^{2-2\omega_c}}{n}\sum_i (1+\mathring{\Lambda}_{ii})v_i^2\chi_i^2\dif t\\
            &= -\sum_{(i,j)\in\mathcal{A}} B_{ij}(v_i\chi_i-v_j\chi_j)^2\dif t+\frac{1}{2n}\sum_{(i,j)\in\mathcal{A}} \frac{(v_i\chi_i-v_j\chi_j)\mathring{\Lambda}_{ij}}{\widehat{x}_i-\widehat{x}_j} \chi_i\dif t \\
            &\quad +\sum_{(i,j)\in\mathcal{A}} B_{ij} v_i v_j(\chi_i-\chi_j)^2\dif t+\frac{1}{2n}\sum_{(i,j)\in\mathcal{A}} \frac{(\chi_i-\chi_j)\mathring{\Lambda}_{ij}}{\widehat{x}_i-\widehat{x}_j} v_j\chi_j\dif t \\
            &\quad -2n^{1-\omega_c} \sum_iv_i^2\chi_i^2\left(\frac{\dif \mathring{b}_{|i|}^z}{\sqrt{n}}+\frac{1}{2n}\sum_{j:(|i|,j)\notin \mathcal{A}}\frac{1}{x_{|i|}^z(t,0)-x_j^z(t,0)}\dif t\right)\\
            &\quad+\frac{n^{2-2\omega_c}}{n}\sum_i (1+\mathring{\Lambda}_{ii})v_i^2\chi_i^2\dif t -\frac{n^{1-\omega_c}}{n}\sum_{(|i|,j)\in\mathcal{A}}\frac{v_i^2\chi_i^2}{\widehat{x}_{|i|}-\widehat{x}_j}\big(1+\alpha \mathring{\Lambda}_{|i|,j}\big)\dif t,
        \end{split}
    \end{equation}
    where, in order to symmetrize the sums, we used that the operator \(B\) and the set \(\mathcal{A}\) are symmetric, i.e.\ \(B_{ij}=B_{ji}\) (see~\eqref{eq:kern}) and \((i,j)\in\mathcal{A}\Leftrightarrow (j,i)\in\mathcal{A}\), and that \(\mathring{\Lambda}_{ij}=\mathring{\Lambda}_{ji}\). In the remainder of the proof we will often use that for $|i-j|\le n^{\omega_L}$, by \eqref{eq:lipb}--\eqref{eq:lipbb}, we have
    \begin{equation}
        \label{eq:usefb}
        |\chi_i-\chi_j|\lesssim e^{-(|\widehat{x}_i|+|\widehat{x}_j|)n^{1-\omega_c}}\big||\widehat{x}_i|-|\widehat{x}_j|\big|n^{1-\omega_c},
    \end{equation}
    which follows by the rigidity~\eqref{eq:rigg} for $|i|,|j|\le 10c_2n$ and from the fact that $|\widehat{x}_i|+|\widehat{x}_j|\sim |\widehat{x}_i|\wedge|\widehat{x}_j|$ for $|i|,|j|>10c_2n$.

    We start estimating the terms in the second line of the r.h.s.\ of~\eqref{eq:dercv}. The most critical term is the first one because of the \((\widehat{x}_i-\widehat{x}_j)^{-2}\) singularity of \(B_{ij}\). We write this term as
    \begin{equation}\label{eq:split}
        \sum_{(i,j)\in\mathcal{A}} B_{ij} v_i v_j(\chi_i-\chi_j)^2=\left(\sum_{\substack{(i,j)\in\mathcal{A}, \\ \abs{i-j}\le n^{\omega_L}}}+\sum_{\substack{(i,j)\in\mathcal{A}, \\ \abs{i-j}>n^{\omega_L}}} \right) B_{ij} v_i v_j(\chi_i-\chi_j)^2.
    \end{equation}
    Then, using~\eqref{eq:lipbb}, \(\norm{ {\bm v}}_\infty\lesssim 1\) by~\eqref{eq:infb}, \(\abs{\mathring{\Lambda}_{ij}}\le n^{-\omega_E}\) by~\eqref{eq:ringlamthe}, the bound~\eqref{eq:usefb}, and that \(\omega_L\ll \omega_c\), we bound the first sum by
    \begin{equation}\label{eq:A2}
        \begin{split}
            &\abs*{\sum_{\substack{(i,j)\in\mathcal{A}, \\ \abs{i-j}\le n^{\omega_L}}} B_{ij} v_i v_j(\chi_i-\chi_j)^2}\\
            &\quad\lesssim \frac{1}{n}\sum_{\substack{(i,j)\in\mathcal{A}, \\ \abs{i-j}\le n^{\omega_L}}}  \frac{1+\abs{\mathring{\Lambda}_{ij}}}{(\widehat{x}_i-\widehat{x}_j)^2} \abs{v_i v_j} \frac{n^2\abs{\widehat{x}_i-\widehat{x}_j}^2}{n^{2\omega_c}}e^{-2(\abs{\widehat{x}_i}+\abs{\widehat{x}_j})n^{1-\omega_c}} \\
            &\quad\lesssim n^{1-2\omega_c}\left(\sum_{\abs{i},\abs{j}\le n^{3\omega_c/2}}+\sum_{\substack{\abs{i} \le n^{3\omega_c/2},\abs{j}\ge n^{3\omega_c/2}, \\\abs{i-j}\le n^{\omega_L}}}\right)\abs{v_i}\abs{v_j}e^{-2(\abs{\widehat{x}_i}+\abs{\widehat{x}_j})n^{1-\omega_c}} \\
            &\quad\lesssim n^{1- \omega_c/2} \norm{ {\bm v}{\bm \chi} }_2^2+e^{-\frac{1}{2}n^{\omega_c/2}},
        \end{split}
    \end{equation}
    with very high probability.

    Define the set
    \[
        \mathcal{A}_1:= \set*{(i,j)\given \abs{i},\abs{j}\ge 5c_2n}\cap \set*{(i,j)\given\abs{i-j}> n^{\omega_L}}=\mathcal{A}\cap \set*{(i,j)\given\abs{i-j}> n^{\omega_L}},
    \]
    which is symmetric. The second sum in~\eqref{eq:split}, using~\eqref{eq:lipb},~\eqref{eq:infb}, is bounded by
    \begin{equation}\label{eq:A21}
        \abs*{\sum_{(i,j)\in\mathcal{A}_1} B_{ij} v_i v_j(\chi_i-\chi_j)^2}\lesssim n^{1-2\omega_c}\sum_{(i,j)\in\mathcal{A}_1}e^{-2(\abs{\widehat{x}_i}\wedge\abs{\widehat{x}_j})n^{1-\omega_c}}\le e^{-n^{1-\omega_c}/2},
    \end{equation}
    with very high probability.

    Next, we consider the second term in the second line of the r.h.s.\ of~\eqref{eq:dercv}. Using~\eqref{eq:lipbb}, and that \(\abs{\mathring{\Lambda}_{ij}}\le n^{-\omega_E}\), proceeding similarly to~\eqref{eq:A2}--\eqref{eq:A21}, we bound this term as
    \begin{equation}
        \label{eq:A3}
        \begin{split}
            \abs*{\frac{1}{n}\sum_{(i,j)\in\mathcal{A}} \frac{(\chi_i-\chi_j)\mathring{\Lambda}_{ij}}{\widehat{x}_i-\widehat{x}_j} v_j\chi_j } &\lesssim \abs*{\frac{1}{n}\sum_{\substack{(i,j)\in\mathcal{A} \\ \abs{i-j}\le n^{\omega_L}}} \frac{(\chi_i-\chi_j)\mathring{\Lambda}_{ij}}{\widehat{x}_i-\widehat{x}_j} v_j\chi_j }+\abs*{\frac{1}{n}\sum_{(i,j)\in\mathcal{A}_1} \frac{(\chi_i-\chi_j)\mathring{\Lambda}_{ij}}{\widehat{x}_i-\widehat{x}_j} v_j\chi_j } \\
            &\lesssim \sum_{\substack{(i,j)\in\mathcal{A} \\ \abs{i-j}\le n^{\omega_L}}} \frac{\abs{\mathring{\Lambda}_{ij}}}{\abs{\widehat{x}_i-\widehat{x}_j}} \frac{\abs{\widehat{x}_i-\widehat{x}_j}}{n^{\omega_c}}\abs{v_j}\chi_j e^{-(\abs{\widehat{x}_i}+\abs{\widehat{x}_j})n^{1-\omega_c}}+e^{-n/2} \\
            &\lesssim \frac{1}{n^{\omega_c+\omega_E}} \sum_{\abs{i},\abs{j}\le n^{3\omega_c/2}}\abs{v_j}\chi_j+e^{-\frac{1}{2}n^{\omega_c/2}} \\
            &\lesssim \frac{1}{n^{\omega_c/4+\omega_E}} \norm{ {\bm v}{\bm \chi}}_2+e^{-\frac{1}{2}n^{\omega_c/2}},
        \end{split}
    \end{equation}
    with very high probability uniformly in \(0\le t\le t_f\).

    We now consider the first line in the r.h.s.\ of~\eqref{eq:dercv}. Since \(1+\alpha\mathring{\Lambda}_{ij}\ge 1/2\), we conclude that
    \begin{equation}
        \label{eq:A1}
        \begin{split}
            \abs*{\frac{1}{n}\sum_{(i,j)\in\mathcal{A}} \frac{(v_i\chi_i-v_j\chi_j)\mathring{\Lambda}_{ij}}{\widehat{x}_i-\widehat{x}_j} \chi_i}&\le \frac{1}{C}\sum_{(i,j)\in\mathcal{A}} B_{ij}(v_i\chi_i-v_j\chi_j)^2+\frac{C}{n}\sum_{(i,j)\in\mathcal{A}} \abs{\mathring{\Lambda}_{ij}}^2 \chi_i^2\\
            &\le \frac{1}{C}\sum_{(i,j)\in\mathcal{A}} B_{ij}(v_i\chi_i-v_j\chi_j)^2+ \frac{C}{n}\sum_{\abs{i},\abs{j}\le n^{3\omega_c/2}} \abs{\mathring{\Lambda}_{ij}}^2 \chi_i^2+e^{-\frac{1}{2}n^{\omega_c/2}} \\
            &\le  \frac{1}{C}\sum_{(i,j)\in\mathcal{A}} B_{ij}(v_i\chi_i-v_j\chi_j)^2+\frac{n^{3\omega_c}}{n^{1+2\omega_E}},
        \end{split}
    \end{equation}
    for some large \(C>0\). The error term in the r.h.s.\ of~\eqref{eq:A1} is affordable since \(\omega_c\ll \omega_E\).

    We are now left with the estimate of the terms in the last two lines of \eqref{eq:dercv}. It is easy to see that the second term in the penultimate line and the first term in the last line of \eqref{eq:dercv} are estimated by $n^{1-\omega_c}\norm{ {\bm v}{\bm \chi}}_2^2$. For the stochastic term by the martingale inequality \cite[Appendix B, Eq. (18)]{doi:10.1137/1.9780898719017}, with $c=0$ for continuous martingales, by \eqref{eq:ringmat}--\eqref{eq:circbm} and the a priori bound \eqref{eq:infb}, we have
    \begin{equation}
        \label{eq:estmart}
        \sup_{0\le t\le t_f}\left|\int_0^tn^{1-\omega_c}\sum_iv_i^2\chi_i^2\frac{\dif \mathring{b}_{|i|}^z}{\sqrt{n}}\right|\lesssim n^{\omega_f/2-\omega_c} \sup_{0\le t\le t_f}\norm{ {\bm v}{\bm \chi}}_2+n^{\omega_f-2\omega_c-\omega_E} \sup_{0\le t\le t_f}\norm{ {\bm v}{\bm \chi}}_2^2
    \end{equation}
    with very high probability for any small $\xi>0$. We now estimate the last term in \eqref{eq:dercv}. Using that $\widehat{x}_{-i}=-\widehat{x}_i$, $v_{-i}=-v_i$, and $\chi_{-i}=\chi_i$, we start by rewriting and estimating it as
    \begin{equation}
        \label{eq:finbhop}
        \begin{split}
            &-n^{-\omega_c}\sum_{(i,j)\in\mathcal{A}, \atop i\ne j,\, i,j>0} \big(1+\alpha\mathring{\Lambda}_{ij}\big)\left[\frac{v_i^2\chi_i^2-v_j^2\chi_j^2}{\widehat{x}_i-\widehat{x}_j}+\frac{v_i^2\chi_i^2+v_j^2\chi_j^2}{\widehat{x}_i+\widehat{x}_j}+\frac{v_i^2\chi_i^2}{2\widehat{x}_i}\bm1(j=i)\right] \\
            &\qquad\quad\le - n^{-\omega_c}\sum_{(i,j)\in\mathcal{A}, \atop i,j>0} \big(1+\alpha\mathring{\Lambda}_{ij}\big)\frac{(v_i\chi_i-v_j\chi_j)(v_i\chi_i+v_j\chi_j)}{\widehat{x}_i-\widehat{x}_j} \\
            &\qquad\quad\le\frac{1}{100}\sum_{(i,j)\in\mathcal{A}, \atop i,j>0}B_{ij}(v_i\chi_i-v_j\chi_j)^2+100n^{1-2\omega_c} \norm{ {\bm v}{\bm \chi}}_2^2 \\
            &\qquad\quad\le\frac{1}{100}\sum_{(i,j)\in\mathcal{A}}B_{ij}(v_i\chi_i-v_j\chi_j)^2+100n^{1-2\omega_c} \norm{ {\bm v}{\bm \chi}}_2^2
        \end{split}
    \end{equation}
    where in the first inequality we used that $\widehat{x}_j\ge0$ for $j>0$ and $1+\alpha\mathring{\Lambda}_{ij}>0$, in the second inequality a Schwarz (recall the definition of $B_{ij}$ from \eqref{eq:kern}), and in the last inequality we dropped the restriction $i,j>0$, using that $B_{ij}>0$. Note that the first term in the rhs. of \eqref{eq:finbhop} can be incorporated in the first negative term in the rhs. of \eqref{eq:dercv}.

    Hence, combining~\eqref{eq:dercv}--\eqref{eq:finbhop}, integrating~\eqref{eq:dercv} from \(0\) to \(t_f=n^{-1+\omega_f}\), ignoring the negative term in the third line of \eqref{eq:dercv},  and using that \(n^{1-\omega_c/2}t_f=n^{\omega_f-\omega_c/2}\) with \(\omega_f\ll \omega_c\ll\omega_E\), we get
    \[
        \sup_{0\le t\le t_f}\norm{ {\bm v}{\bm \chi}}_2^2\le \frac{n^{3\omega_c}t_f}{n^{1+2\omega_E}}.
    \]
    Hence, using the bound
    \[
        \sup_{0\le t\le t_f}\sup_{\abs{i}\le n^{\widehat{\omega}}} \abs{v_i(t)}\le \sup_{0\le t\le t_f}\norm{ {\bm v}{\bm \chi}}_2\le \sqrt{\frac{n^{3\omega_c}t_f}{n^{1+2\omega_E}}},
    \]
    we conclude~\eqref{eq:optvb} for some \(\omega, \widehat{\omega}>0\) such that \(\widehat{\omega}\ll \omega\ll \omega_f\ll\omega_L\ll\omega_c\ll \omega_E\).
\end{proof}

With this proof we completed the main~\ref{step1} in the proof of Proposition~\ref{prop:realprop}, the analysis of the interpolation process \({\bm x}^z(t,\alpha)\).

\subsubsection{The processes \texorpdfstring{\({\bm \lambda}(t)\)}{l(t)} and \texorpdfstring{\({\bm x}^z(t,1)\)}{xz(t,1)} are close}\label{sec:dddaah}
In~\ref{step2} towards the proof of Proposition~\ref{prop:realprop}, we now prove that the processes \({\bm \lambda}(t)\) and \({\bm x}^z(t,1)\) are very close for any \(t\in [0,t_f]\):

\begin{lemma}\label{lem:comp}
    Let \({\bm \lambda}^z(t)\), \({\bm x}^z(t,1)\) be defined in~\eqref{eq:impnewDBM} and~\eqref{eq:defintpro}, respectively, and let \(t_f=  n^{-1+\omega_f}\), then
    \begin{equation}
        \label{eq:closappr}
        \sup_{\abs{i}\le n}\sup_{0\le t\le t_f} \abs{x_i^z(t,1)-\lambda_i^z(t)}\lesssim \frac{n^{\omega_f}}{n^{1+\omega_r}}.
    \end{equation}
    with very high probability.
\end{lemma}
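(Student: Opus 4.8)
The plan is to compare the two stochastic processes by a direct pathwise argument, exploiting the fact that they are driven by \emph{the same} martingales $\mathring{b}_i^z$ and differ only in a small, explicitly controllable way. Writing $w_i(t) := x_i^z(t,1) - \lambda_i^z(t)$, we have $w_i(0)=0$ and, subtracting~\eqref{eq:impnewDBM} from~\eqref{eq:defintpro}, the martingale differentials essentially cancel up to the factor $(1+n^{-\omega_r})^{-1/2}$, so that $w_i$ solves a deterministic (given the driving data) equation of the form $\partial_t w_i = -(\mathcal{B}w)_i + \Xi_i$, where $\mathcal{B}=\mathcal{B}(t)$ is the (time-dependent) discrete elliptic operator with kernel $\mathcal{B}_{ij} = \bigl(2n(x_i^z(t,1)-x_j^z(t,1))(\lambda_i^z(t)-\lambda_j^z(t))\bigr)^{-1}$ coming from linearising the singular interaction term (this is positivity-preserving because particles keep their order, cf.\ Proposition~\ref{prop:wpmainDBM} and the well-posedness of~\eqref{eq:defintpro}), and the forcing $\Xi_i$ collects (a) the term proportional to $\mathring{\Lambda}_{ij}$ present in~\eqref{eq:defintpro}, and (b) the $O(n^{-\omega_r})$ discrepancy in the diffusion coefficient, the latter contributing a martingale whose quadratic variation is $O(n^{-\omega_r}/n)$ per unit time.

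The key steps, in order, are: first, record that by Lemma~\ref{lem:alphall} (rigidity for ${\bm x}^z(t,\alpha)$) together with the rigidity~\eqref{eq:hoplastrigt} for ${\bm \lambda}^z(t)$, both particle configurations satisfy $x_i^z(t,1), \lambda_i^z(t) \sim i/n$ for $n^\xi \le |i| \le 10c_2 n$ with very high probability, uniformly on $[0,t_f]$; this controls the kernel $\mathcal{B}_{ij}$. Second, estimate the forcing: the $\mathring{\Lambda}$-term is bounded by $n^{-\omega_E}$ times the gradient flow heat kernel applied to $|\widehat{x}_i - \widehat{x}_j|^{-1}$, and since $\omega_r \ll \omega_E$ this is much smaller than $n^{-1-\omega_r}$ after integrating; the diffusion-discrepancy martingale, by Burkholder--Davis--Gundy and the quadratic variation bound, contributes $O(\sqrt{n^{-\omega_r} t_f / n}) = O(n^{-1+\omega_f/2-\omega_r/2})$, again of lower order. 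Third, run a Grönwall / maximum-principle (or $\ell^\infty$ energy) argument on $w_i$ along the lines of the energy estimate already used for Lemma~\ref{lem:ee}: since $\mathcal{B}$ generates a contraction semigroup on $\ell^\infty$ (being a generator of a pure jump process with positive rates), one has $\|{\bm w}(t)\|_\infty \le \int_0^t \|{\bm \Xi}(s)\|_\infty\, ds$ plus the martingale term, which yields~\eqref{eq:closappr}. One can alternatively, and more cleanly, reuse the short-range reduction already set up in Section~\ref{sec:newse}, comparing ${\bm x}^z(t,1)$ not with ${\bm \lambda}^z(t)$ directly but via $\widehat{\bm x}^z(t,1)$ and the analogous short-range approximation of ${\bm \lambda}^z(t)$, so that the long-range discrepancies are already known to be negligible by~\eqref{eq:shortlong} and its analogue.

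The main obstacle I expect is \textbf{not} the interpolation itself — that is essentially the energy method of Lemma~\ref{lem:ee} run at $\alpha=1$ rather than differentiating in $\alpha$ — but rather the bookkeeping needed to ensure the factor $(1+n^{-\omega_r})^{-1/2}$ in the diffusion term of~\eqref{eq:defintpro} really does produce only a lower-order error at the scale $n^{-1-\omega_r}$ claimed. Because the target precision in~\eqref{eq:closappr} is itself $n^{\omega_f}/n^{1+\omega_r}$ — i.e.\ \emph{exactly} at the scale of the variance reduction — the constants must be tracked with care: one uses that $\omega_f \ll \omega_r$ so that the $n^{\omega_f}$ loss from running the flow up to $t_f = n^{-1+\omega_f}$ does not overwhelm the $n^{-\omega_r}$ gain, and the martingale contribution $O(n^{-1+\omega_f/2-\omega_r/2})$ is comfortably below $n^{-1-\omega_r}$ precisely when $\omega_f < \omega_r/3$, which is guaranteed by the ordering $\omega_f \ll \omega_r$. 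A secondary technical point is that, strictly speaking, the naive linearisation kernel $\mathcal{B}_{ij}$ has a $(x_i - x_j)^{-1}(\lambda_i - \lambda_j)^{-1}$ singularity for nearby indices; this is handled exactly as in the well-posedness proofs (Appendix~\ref{sec:wp} and Proposition~\ref{prop:wp}), using that both processes retain strict ordering and that the stochastic repulsion keeps consecutive gaps $\gtrsim n^{-1-\xi}$, so the kernel is summable and the maximum principle applies.
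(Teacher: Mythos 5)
Your high-level strategy matches the paper's: define the difference, observe that it satisfies a parabolic-type equation with a positivity-preserving kernel, and conclude via a maximum-principle argument driven by the small diffusion discrepancy. However there are three concrete issues, one of which is fatal to the estimate as written. First, the martingale bound is wrong by a square root: the quadratic variation of $\int_0^{t_f} \frac{A_n}{\sqrt n}\,d\mathring b_i$ with $A_n=(1+n^{-\omega_r})^{-1/2}-1=\mathcal{O}(n^{-\omega_r})$ is $\sim A_n^2 t_f/n$, giving a martingale of size $A_n\sqrt{t_f/n}\sim n^{-1-\omega_r+\omega_f/2}$, \emph{not} $n^{-1-\omega_r/2+\omega_f/2}$ as you wrote; with your version the contribution is \emph{larger} than the target $n^{-1-\omega_r+\omega_f}$ (since $\omega_f\ll\omega_r$), so the lemma does not follow. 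Your parenthetical "comfortably below $n^{-1-\omega_r}$ when $\omega_f<\omega_r/3$" is also arithmetically incorrect on its own terms: $n^{-1+\omega_f/2-\omega_r/2}>n^{-1-\omega_r}$ for any $\omega_f>0$. The correct $A_n^2$ puts the martingale at order $n^{-1-\omega_r+\omega_f/2}$, which does suffice.

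Second, your forcing term "(a)" is spurious: on the very-high-probability event controlled by the stopping time $\tau_1$ one has $\mathring{\Lambda}_{ij}=\Lambda_{ij}$ and $\mathring b_i=b_i$, so the $(1+\mathring{\Lambda}_{ij})/(x_i-x_j)$ term in~\eqref{eq:defintpro} and the $(1+\Lambda_{ij})/(\lambda_i-\lambda_j)$ term in~\eqref{eq:impnewDBM} combine into the single kernel $B_{ij}=\frac{1+\Lambda_{ij}}{2n(\lambda_i-\lambda_j)(x_i-x_j)}>0$ acting on the difference — there is no leftover $\mathring\Lambda$-forcing. (The stopping-time step is also needed to justify your implicit assumption that both processes are "driven by the same martingales", since $b$ and $\mathring b$ are a priori distinct.) Third, the heuristic $\ell^\infty$/maximum-principle step is not rigorous as stated because $\max_i u_i$ is not differentiable; the paper makes this precise by passing to the softmax Lyapunov function $F(t)=\nu^{-1}\log\sum_i e^{\nu u_i}$ with $\nu=n^{1+\omega_r}$ and applying It\^o's formula, in which case the dominant term is actually the It\^o (quadratic-variation) correction $t_f\nu A_n^2/n\sim n^{-1-\omega_r+\omega_f}$, not the martingale; that term is absent from your accounting. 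The framework is right, but these gaps — the dropped square on $A_n$, the missing It\^o correction in a rigorous implementation, and the unremarked reliance on the stopping time to identify $\mathring b$ with $b$ — need to be fixed before the proof closes.
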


\begin{proof}[Proof of Proposition~\ref{prop:realprop}]
    Proposition~\ref{prop:realprop} follows by exactly the same computations as in~\cite[Section (4.10)]{MR4009717}, combining~\eqref{eq:closappr},~\eqref{eq:shortlong},~\eqref{eq:infb}--\eqref{eq:optvb}.
\end{proof}

\begin{proof}[Proof of Lemma~\ref{lem:comp}]
    The proof of this lemma closely follows~\cite[Lemma 4.2]{MR4009717}. We remark that in our case \(\dif M_i=Z_i=0\) compared to~\cite[Lemma 4.2]{MR4009717}, using the notation therein. Recall the definitions of \(C(t),\Lambda_{ij}^{z_l}(t), \Theta_{ij}^{z_1,z_2}(t), \Theta_{ij}^{z_1,\overline{z}_2}(t)\) and \(\mathring{C}(t),\mathring{\Lambda}_{ij}^{z_l}(t), \mathring{\Theta}_{ij}^{z_1,z_2}(t), \mathring{\Theta}_{ij}^{z_1,\overline{z}_2}(t)\) in~\eqref{eq:defCC},~\eqref{eq:defbiglam},\eqref{eq:deflamzz} and~\eqref{eq:ringmat}--\eqref{eq:ringlamthe}, respectively. In the following we may omit the \(z\)-dependence. Introduce the stopping times
    \begin{align}\label{eq:stoptime1}
        \tau_1 & := \inf\set*{t\ge 0\given \exists \abs{i},\abs{j}\le n;  l\in [2] \text{ s.t. } \abs{\Lambda_{ij}^{z_l}(t)}+\abs{\Theta_{ij}^{z_1,z_2}(t)}+\abs{\Theta_{ij}^{z_1,\overline{z}_2}(t)}> n^{-\omega_E}}, \\\label{eq:stoptime2}
        \tau_2 & := \inf\set*{t\ge 0\given \exists \abs{i}\le n \text{ s.t.\ } \abs{x_i(t,1)}+\abs{\lambda_i(t)}>2R},
    \end{align}
    for some large \(R>0\), and
    \begin{equation}
        \label{eq:taudef}
        \tau:=\tau_1\wedge \tau_2\wedge t_f.
    \end{equation}
    Note that \(\abs{\lambda_i(t)}\le R\) with very high probability, since \({\bm \lambda}(t)\) are the eigenvalues of \(H_t^z\), whose norm is typically bounded. Furthermore, by~\eqref{eq:glo} and the fact that the process \({\bm x}(t,0)\) stays bounded by~\cite[Section 3]{MR4009708} it follows that \(\abs{x_i(t,\alpha)}\le R\) for any \(t\in [0,t_f]\) and \(\alpha \in [0,1]\). We remark that the analysis in~\cite[Section 3]{MR4009708} is done for a process of the form~\eqref{eq:defintpro}, with \(\alpha=0\), when it has i.i.d.\ driving Brownian motions, but the same results apply for our case as well since the correlation in~\eqref{eq:ringmat} does not play any role (see~\eqref{eq:term3}). This, together with Lemma~\ref{lem:ee} applied for \(z=z_1, z'=z_2\) and \(z=z_1, z'=\overline{z}_2\) and \(z=z_l, z'=\overline{z}_l\), implies that
    \[
        \tau=t_f
    \]
    with very high probability. In particular, \(\mathring{\Theta}_{ij}(t)=\Theta_{ij}(t)\) for any \(t\le \tau\),
    hence
    \begin{equation}
        \label{eq:eqbigC}
        C(t)=\mathring{C}(t)
    \end{equation}
    for any \(t\le \tau\).

    In the remainder of the proof, omitting the time- and \(z\)-dependence, we use the notation \({\bm x}={\bm x}^z(t,1)\), \({\bm \lambda}={\bm \lambda}(t)\). Define
    \[
        u_i:=  \lambda_i-x_i, \qquad \abs{i}\le n,
    \]
    then, as a consequence of~\eqref{eq:eqbigC}, subtracting~\eqref{eq:impnewDBM} and~\eqref{eq:defintpro}, it follows that
    \begin{equation}
        \dif u_i=\sum_{j\ne i}B_{ij}(u_j-u_i) \dif t+\frac{A_n}{\sqrt{n}}\dif b_i,
    \end{equation}
    for any \(0\le t\le\tau\), where
    \begin{equation}
        \label{eq:ker}
        B_{ij}=\frac{1+\Lambda_{ij}}{2n(\lambda_i-\lambda_j)(x_i-x_j)}>0,
    \end{equation}
    since \(\abs{\Lambda_{ij}(t)}=\abs{\mathring{\Lambda}_{ij}(t)}\le n^{-\omega_E}\), and
    \begin{equation}
        \label{eq:err}
        A_n=\frac{1}{\sqrt{1+n^{-\omega_r}}}-1=\mathcal{O}(n^{-\omega_r}).
    \end{equation}
    Let \(\nu:=  n^{1+\omega_r}\), and define the Lyapunov function
    \begin{equation}
        \label{eq:defF}
        F(t):=  \frac{1}{\nu}\log \left(\sum_{\abs{i}\le n}e^{\nu u_i(t)}\right).
    \end{equation}

    By It\^{o}'s lemma, for any \(0\le t \le \tau\), we have that
    \begin{equation}
        \label{eq:itoF}
        \begin{split}
            \dif F&= \frac{1}{\sum_{\abs{i}\le n} e^{\nu u_i}}\sum_{\abs{i}\le n} e^{\nu u_i}\sum_{j\ne i} B_{ij}(u_j-u_i) \dif t+\frac{n^{-1/2}A_n}{\sum_{\abs{i}\le n} e^{\nu u_i}}\sum_{\abs{i}\le n} e^{\nu u_i} \dif b_i \\
            &\quad +\frac{n^{-1}\nu A_n^2}{4\sum_{\abs{i}\le n} e^{\nu u_i}}\sum_{\abs{i}\le n} e^{\nu u_i} (1+\Lambda_{ii}) \dif t-\frac{4n^{-1}\nu A_n^2}{\left(\sum_{\abs{i}\le n} e^{\nu u_i}\right)^2}\sum_{\abs{i}, \abs{j}\le n} e^{\nu u_i}e^{\nu u_j} \Exp*{ \dif b_i \dif b_j\given \widetilde{\mathcal{F}}_{b,t}}.
        \end{split}
    \end{equation}
    Note that the first term in the r.h.s.\ of~\eqref{eq:itoF} is negative since the map \(x\mapsto e^{\nu x}\) is increasing. The second and third term in the r.h.s.\ of~\eqref{eq:itoF}, using that \(1+\Lambda_{ii}\le 2\), are bounded exactly as in~\cite[Eqs.~(4.37)--(4.38)]{MR4009717} by
    \[
        \frac{n^\xi t_f^{1/2}}{n^{1/2+\omega_r}}+\frac{t_f\nu}{n^{1+2\omega_r}},
    \]
    with very high probability for any \(\xi>0\).

    Note that
    \[
        \sum_{\abs{i}, \abs{j}\le n} e^{\nu u_i}e^{\nu u_j}\Exp*{ \dif b_i \dif b_j\given \widetilde{\mathcal{F}}_{b,t}} \ge 0,
    \]
    hence, the last term in the r.h.s.\ of~\eqref{eq:itoF} is always non positive. This implies that
    \[
        \sup_{0\le t\le t_f} F(t)\le F(0)+\frac{t_f\nu A_n^2}{n}+\frac{n^\xi t_f^{1/2}A_n}{n^{1/2}},
    \]
    for any \(\xi>0\). Then, since
    \[
        F(0)=\frac{\log (2n)}{n^{1+\omega_r}}, \qquad F(t)\ge \sup_{\abs{i}\le n} u_i(t),
    \]
    we conclude the upper bound in~\eqref{eq:closappr}. Then noticing that \(u_{-i}=-u_i\) for \(i\in [n]\), we conclude the lower bound as well.
\end{proof}

\subsection{Path-wise coupling close to zero: Proof of Lemmata~\ref{lem:firststepmason}--\ref{lem:secondstepmason}}\label{sec:hos}

This section is the main technical result used in the proof of Lemmata~\ref{lem:firststepmason}--\ref{lem:secondstepmason}. In Proposition~\ref{pro:ciala} we will show that the points with small indices in the two processes become very close to each other on a certain time scale \(t_f=n^{-1+\omega_f}\), for any small \(\omega_f>0\).

The main result of this section (Proposition~\ref{pro:ciala}) is stated for general deterministic initial data \({\bm s(0)}\) satisfying a certain regularity condition (see Definition~\ref{eq:defregpro}  later) even if for its applications in the proof of Proposition~\ref{prop:indeig} we only consider initial data which are eigenvalues of i.i.d.\ random matrices. The initial data \({\bm r}(0)\), without loss of generality, are assumed to be the singular values of a Ginibre matrix (see also below~\eqref{eq:mupr} for a more detailed explanation). For notational convenience we formulate the result for two general processes \({\bm s}\) and \({\bm r}\) and later we specialize them to our
application.

Fix a small constant \(0<\omega_r\ll 1\), and define the processes \(s_i(t)\), \(r_i(t)\) to be the solution of
\begin{equation}
    \label{eq:lambdapr}
    \dif s_i(t)=\sqrt{\frac{1}{2 n(1+n^{-\omega_r})}}\dif \mathfrak{b}^s_i(t)+\frac{1}{2n}\sum_{j\ne  i} \frac{1}{s_i(t)-s_j(t)} \dif t, \qquad 1\le \abs{i}\le n,
\end{equation}
and
\begin{equation}
    \label{eq:mupr}
    \dif r_i(t)=\sqrt{\frac{1}{2 n(1+n^{-\omega_r})}}\dif \mathfrak{b}^r_i(t)+\frac{1}{2n}\sum_{j\ne  i} \frac{1}{r_i(t)-r_j(t)} \dif t, \qquad 1\le \abs{i}\le n,
\end{equation}
with initial data \(s_i(0)=s_i\), \(r_i(0)=r_i\), where \({\bm s}=\{s_{\pm i}\}_{i\in [n]}\) and \({\bm r}=\{r_{\pm i}\}_{i\in  [n]}\) are two independent sets of particles such that \(s_{-i}=-s_i\) and \(r_{-i}=-r_i\) for \(i\in [n]\). The driving martingales \(\{\mathfrak{b}^s_i\}_{i\in [n]}\), \(\{\mathfrak{b}^r_i\}_{i\in [n]}\) in~\eqref{eq:lambdapr}--\eqref{eq:mupr} are two families satisfying Assumption~\ref{ass:close} below, and they are such that \(\mathfrak{b}^s_{-i}=-\mathfrak{b}^s_i\), \(\mathfrak{b}^r_{-i}=-\mathfrak{b}^r_i\) for \(i\in [n]\). The coefficient \((1+n^{-\omega_r})^{-1/2}\) ensures the well-posedness of the processes~\eqref{eq:lambdapr}--\eqref{eq:mupr} (see Appendix~\ref{sec:wp}), but it does not play any role in the proof of Proposition~\ref{pro:ciala} below.

For convenience we also assume that \(\{r_{\pm i}\}_{i=1}^n\) are the singular values of \(\widetilde{X}\), with \(\widetilde{X}\) a Ginibre matrix. This is not a restriction; indeed, once a process with general initial data \({\bm s}\) is shown to be close to the reference process with Ginibre initial data, then processes with any two initial data will be close.

On the correlation structure between the two families of i.i.d.\ Brownian motions \(\{\mathfrak{b}^s_i\}_{i=1}^n\), \(\{\mathfrak{b}^r_i\}_{i=1}^n\) and the initial data \(\{s_{\pm i}\}_{i\in[n]}\) we make the following assumptions.

\begin{assumption}\label{ass:close}
    Fix \(\omega_K,\omega_Q>0\) such that \(\omega_K\ll \omega_r\ll \omega_Q\ll 1\), with \(\omega_r\) defined in~\eqref{eq:lambdapr}--\eqref{eq:mupr}, and define  the \(n\)-dependent parameter \(K=K_n=n^{\omega_K}\). Suppose that the families \(\{\mathfrak{b}^s_{\pm i}\}_{i=1}^n\), \(\{\mathfrak{b}^r_{\pm i}\}_{i=1}^n\) in~\eqref{eq:lambdapr}--\eqref{eq:mupr} are realised on a common probability space with a common filtration \(\mathcal{F}_t\). Let
    \begin{equation}\label{eq:defL}
        L_{ij}(t) \dif t:= \Exp*{\bigl(\dif \mathfrak{b}^s_i(t)-\dif \mathfrak{b}^r_i(t)\bigr) \bigl(\dif \mathfrak{b}^s_j(t)-\dif \mathfrak{b}^r_j(t)\bigr)\given \mathcal{F}_t}
    \end{equation}
    denote the covariance of the increments conditioned on \(\mathcal{F}_t\). The processes satisfy the following assumptions:
    \begin{enumerate}[label= (\alph*)]
        \item\label{close1}The two families of martingales \(\{ \mathfrak{b}^s_i\}_{i=1}^n\), \(\{ \mathfrak{b}^r_i\}_{i=1}^n\) are such that
        \begin{equation}\label{eq:newbyet}
            \Exp*{\dif \mathfrak{b}^{q_1}_i(t)\dif \mathfrak{b}^{q_2}_j(t)\given \mathcal{F}_t}=\bigl[\delta_{ij}\delta_{q_1q_2}+\Xi_{ij}^{q_1,q_2}(t)\bigr]\dif t, \quad \abs{\Xi_{ij}^{q_1,q_2}(t)}\le n^{-\omega_Q},
        \end{equation}
        for any \(i,j\in [n]\), \(q_1,q_2\in\{s,r\}\). The quantities in~\eqref{eq:newbyet} for negative \(i,j\)-indices are defined by symmetry.
        \item\label{close3} The subfamilies \(\{\mathfrak{b}^s_{\pm i}\}_{i=1}^K\), \(\{\mathfrak{b}^r_{\pm i}\}_{i=1}^K\) are very strongly dependent  in the sense that for any \(\abs{i}, \abs{j}\le K\) it holds
        \begin{equation}\label{eq:assbqv}
            \abs{L_{ij}(t)}\le n^{-\omega_Q}
        \end{equation}
        with very high probability for any fixed \(t\ge 0\).
    \end{enumerate}
\end{assumption}

\begin{definition}[{\((g,G)\)-regular points~\cite[Definition~\ref{cplx-eq:defregpro}]{1912.04100}}]\label{eq:defregpro}
    Fix a very small \(\nu>0\), and choose \(g\), \(G\) such that
    \[
        n^{-1+\nu}\le g\le n^{-2\nu}, \qquad G\le n^{-\nu}.
    \]
    A set of \(2n\)-points \({\bm s}=\{s_i\}_{\abs{i}\le n}\) on \(\R \) is called \((g,G)\)-\emph{regular} if there exist constants \(c_\nu,C_\nu>0\) such that
    \begin{equation}
        \label{eq:upbv}
        c_\nu \le \frac{1}{2n}\Im \sum_{i=-n}^n \frac{1}{s_i-(E+\ii \eta)}\le C_\nu,
    \end{equation}
    for any \(\abs{E}\le G\), \(\eta \in [g, 10]\), and if there is a constant \(C_s\) large enough such that \(\norm{ {\bm s}}_\infty\le n^{C_s}\). Moreover, \(c_\nu,C_\nu\sim 1\) if \(\eta \in [g, n^{-2\nu}]  \) and \(c_\nu\ge n^{-100\nu}\), \(C_\nu\le n^{100\nu}\) if
    \(\eta\in \interval{oc}{n^{-2\nu}, 10}\).
\end{definition}

Let \(\rho_{\mathrm{fc},t}(E)\) be the scDOS of the particles \(\{s_{\pm i}(t)\}_{i\in [n]}\) that is given by the semicircular flow acting on the scDOS of the initial data \(\{s_{\pm i}(0)\}_{i\in [n]}\), see~\cite[Eqs.~(2.5)--(2.6)]{MR3914908}.

\begin{proposition}[Path-wise coupling close to zero]\label{pro:ciala}
    Let the processes \({\bm s}(t)=\{s_{\pm i}(t)\}_{i\in [n]}\), \({\bm r}(t)=\{r_{\pm i}(t)\}_{i\in [n]}\) be the solutions of~\eqref{eq:lambdapr} and~\eqref{eq:mupr}, respectively, and assume that the driving martingales in~\eqref{eq:lambdapr}--\eqref{eq:mupr} satisfy Assumption~\ref{ass:close} for some \(\omega_K,
    \omega_Q>0\). Additionally, assume that \({\bm s}(0)\) is \((g,G)\)-regular in the sense of Definition~\ref{eq:defregpro} and that \({\bm r}(0)\) are
    the singular values of a Ginibre matrix. Then for any small \(\omega_f,\nu>0\) such that \(\nu\ll \omega_K\ll \omega_f\ll \omega_Q\) and that \(gn^\nu\le t_f\le n^{-\nu}G^2\), there exist constants \(\omega, \widehat{\omega}>0\) such that \(\nu\ll \widehat{\omega}\ll\omega\ll\omega_f\), and
    \begin{equation}
        \label{eq:hihihi}
        \abs*{ \rho_{\mathrm{fc},t_1}(0) s_i(t_f)- \rho_{\mathrm{sc}}(0) r_i(t_f)}\le n^{-1-\omega}, \qquad \abs{i}\le n^{\widehat{\omega}},
    \end{equation}
    with very high probability, where \(t_f:= n^{-1+\omega_f}\).
\end{proposition}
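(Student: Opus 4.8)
\textbf{Proof strategy for Proposition~\ref{pro:ciala}.}
The plan is to follow the three-scale coupling scheme for Dyson Brownian motion developed in~\cite{MR3916329, MR3914908} and already used in~\cite[Proposition~\ref{cplx-pro:ciala}]{1912.04100}, adapting it to the present situation where the driving martingales \({\bm{\mathfrak b}}^s, {\bm{\mathfrak b}}^r\) are only \emph{approximately} the same standard Brownian motions rather than exactly equal. The key point is that the extra correlation terms \(\Xi_{ij}^{q_1,q_2}\) and \(L_{ij}\) are all bounded by \(n^{-\omega_Q}\), with \(\omega_Q\) much larger than the running time exponent \(\omega_f\), so they will contribute only negligible perturbations to every estimate. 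First I would record the \emph{rigidity} and \emph{local law} along both flows: since \({\bm s}(0)\) is \((g,G)\)-regular and \({\bm r}(0)\) are Ginibre singular values, the Stieltjes transforms of \(\rho_{\mathrm{fc},t}\) and of the empirical measures of \({\bm s}(t)\), \({\bm r}(t)\) are controlled for \(t\in[0,t_f]\) by the standard semicircular-flow analysis of~\cite[Section~3.2]{MR4009708} (this is exactly what Lemma~\ref{lem:alphall} supplies in the companion setting), giving \(s_i(t)\approx \gamma_{i,\mathrm{fc},t}\) and \(r_i(t)\approx \rho_{\mathrm{sc}}(0)^{-1} i/n\) with precision \(n^{-1+\nu}\) for \(|i|\le n^{\widehat\omega}\). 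The correlation corrections are handled precisely as in~\eqref{eq:term1}--\eqref{eq:term3}: they enter the It\^o term and the drift only through traces of products \(P_1 T P_2 (P_1 T P_2)^\ast\) with the weight \(n^{-\omega_Q}\), hence are lower order.

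Next I would set up the difference flow. Write \(u_i(t) := \rho_{\mathrm{fc},t_f}(0)\, s_i(t) - \rho_{\mathrm{sc}}(0)\, r_i(t)\) (after a time rescaling that equalises the local densities at the origin, as in~\cite[Section~3]{MR3914908}) and subtract~\eqref{eq:lambdapr} from~\eqref{eq:mupr}. The drift difference produces a discrete parabolic operator \(\mathcal B_t\) with kernel \(\mathcal B_{ij}(t) = [2n(s_i-s_j)(r_i-r_j)]^{-1}>0\) acting on \({\bm u}\), exactly as in~\eqref{eq:ker}, while the martingale difference has conditional covariance \(L_{ij}(t)\,\mathrm dt/(2n(1+n^{-\omega_r}))\). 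I would then run the \emph{interpolation/homogenisation} argument: split \(\mathcal B_t\) into a short-range part (indices with \(|i-j|\le n^{\omega_L}\)) and a long-range part, approximate the short-range dynamics by the solution of the continuous homogenised equation near the edge of the semicircle (the parabolic kernel is \(\sim |t-s|^{-1}\) contracting on the scale \(1/n\)), and control the long-range contribution by rigidity. The \emph{finite-speed-of-propagation} estimate shows that after time \(t_f = n^{-1+\omega_f}\gg gn^\nu\) the differences at small indices relax below \(n^{-1-\omega}\); the noise input contributes at most \((t_f L /n)^{1/2}\lesssim (n^{-1+\omega_f}n^{-\omega_Q}/n)^{1/2} = n^{-1-(\omega_Q-\omega_f)/2}\), which is acceptable since \(\omega_f\ll \omega_Q\). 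A Gronwall/maximum-principle argument combined with a cutoff weight \({\bm \chi}\) localising near index zero (as in the energy estimate of Lemma~\ref{lem:ee}) yields the \(\ell^\infty\) bound~\eqref{eq:hihihi}.

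The main obstacle, as in~\cite{MR4009717, 1908.04060}, is the \emph{well-posedness and a priori boundedness near zero}: the repulsion between \(s_1(t)\) and \(s_{-1}(t)=-s_1(t)\) is already critically weak (this is the \(\beta=1\) borderline, cf.~\cite[Appendix~A]{MR3916329}), so one cannot simply invoke standard DBM existence results. This is why the variance-reducing factor \((1+n^{-\omega_r})^{-1/2}\) was inserted in~\eqref{eq:lambdapr}--\eqref{eq:mupr}: with it the process is well-posed by the argument in Appendix~\ref{sec:wp}, and one must keep track throughout the homogenisation that this factor, and the perturbations \(\Xi, L\), never destroy the strict positivity of the kernel \(\mathcal B_{ij}\) nor the rigidity used to bound \((s_i-s_j)^{-1}\). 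A secondary technical point is that \({\bm{\mathfrak b}}^s\) need not be an honest Brownian motion (only a martingale), so one works throughout with conditional quadratic variations and Doob's martingale representation, exactly as around~\eqref{eq:circbm} and~\eqref{eq:defpor}; the homogenisation estimates are insensitive to this because they only use the covariance structure. Once these points are in place, the conclusion~\eqref{eq:hihihi} follows verbatim as in~\cite[Proposition~\ref{cplx-pro:ciala}]{1912.04100} with the extra \(n^{-\omega_Q}\)-terms absorbed into the error.
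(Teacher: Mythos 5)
Your proposal is correct and essentially mirrors the paper's proof, which simply inherits the homogenisation argument of~\cite[Proposition~\ref{cplx-pro:ciala}]{1912.04100} (itself based on~\cite{MR3541852,MR3914908,MR3916329}) and then checks that the correlation of the driving martingales, bounded by \(n^{-\omega_Q}\) with \(\omega_Q\gg\omega_f\), only perturbs the quadratic-variation estimate of the homogenisation martingale \(M_t\) negligibly, via Kunita--Watanabe, Cauchy--Schwarz, and the rapid decay \(\abs{f_i}+\abs{f_i'}+\abs{w_i}\le n^{-D}\) for \(\abs{i}>n^{\omega_A}=n^{\omega_K}\); your heuristic \((t_f L/n)^{1/2}\lesssim n^{-1-(\omega_Q-\omega_f)/2}\) is the right order. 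Two small slips worth flagging: what localises the noise to \(\abs{i}\le K=n^{\omega_K}\) is this rapid decay of the observable \((w,f,f')\), not a finite-speed-of-propagation statement about the particle dynamics (indeed \(K\ll nt_f=n^{\omega_f}\), so information certainly propagates past \(K\) in time \(t_f\)); and the precision \(n^{-1-\omega}\) in~\eqref{eq:hihihi} for particles with \emph{different} initial data comes from the discrete parabolic \(\ell^2\to\ell^\infty\) smoothing of~\cite{MR3914908}, which is a distinct and harder mechanism than the energy estimate of Lemma~\ref{lem:ee} (whose role is only to remove the \(\Lambda_{ij}\) coefficient in Proposition~\ref{prop:realprop}, where the \(\alpha\)-interpolation has the same initial data).
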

\begin{proof}
    The proof of Proposition~\ref{pro:ciala} is nearly identical to the proof of~\cite[Proposition~\ref{cplx-pro:ciala}]{1912.04100}, which itself follows the proof of fixed energy universality in~\cite{MR3541852, MR3914908}, adapted to the block structure~\eqref{eq:herher} in~\cite{MR3916329} (see also~\cite{1812.10376} for a different technique to prove universality, adapted to the block structure in~\cite{1912.05473}). We will not repeat the whole proof, just explain the modification. The only difference of Proposition~\ref{pro:ciala} compared to~\cite[Proposition~\ref{cplx-pro:ciala}]{1912.04100} is that here we allow the driving martingales in~\eqref{eq:lambdapr}--\eqref{eq:mupr} to have a (small) correlation (compare Assumption~\ref{ass:close} with a non zero \(\Xi_{ij}^{q_1,q_2}\) to~\cite[Assumption~\ref{cplx-ass:close}]{1912.04100}). The additional pre-factor \((1+n^{-\omega_r})^{-1/2}\) does not play any role.

    The correlation of the driving martingales in~\eqref{eq:lambdapr}--\eqref{eq:mupr} causes a difference in the estimate of~\cite[Eq.~\eqref{cplx-eq:replace2}]{1912.04100}. In particular, the bound on
    \begin{equation}
        \label{eq:newma}
        \dif M_t=\frac{1}{2n}\sum_{\abs{i}\le n}(w_i-f_i) f_i' \dif C_i(t,\alpha), \qquad \dif C_i(t,\alpha):=\frac{\alpha\dif \mathfrak{b}^s+(1-\alpha)\dif \mathfrak{b}^r}{\sqrt{2n(1+n^{-\omega_r})}},
    \end{equation}
    using the notation in~\cite[Eq.~\eqref{cplx-eq:replace2}]{1912.04100}, will be slightly different. In the remainder of the proof we present how~\cite[Eqs.~\eqref{cplx-eq:replace2}--\eqref{cplx-eq:qM4}]{1912.04100} changes in the current setup. Using that by~\cite[Eqs.~(3.119)--(3.120)]{MR3914908} we have
    \begin{equation}
        \label{eq:ffsfinh}
        \abs{f_i}+\abs{f_i'}+\abs{w_i}\le n^{-D}, \qquad n^{\omega_A}< \abs{i}\le n,
    \end{equation}
    for \(\omega_A=\omega_K\) (with \(\omega_K\) defined in Assumption~\ref{ass:close}), and for any \(D>0\) with very high probability, we bound the quadratic variation of~\eqref{eq:newma} by
    \begin{equation}
        \label{eq:sperboh}
        \dif \braket{ M}_t=\frac{1}{4n^2}\sum_{1\le \abs{i}, \abs{j}\le n^{\omega_A}} (w_i-f_i)(w_j-f_j) f_i' f_j' \Exp*{\dif C_i(\alpha,t)\dif C_j(\alpha,t)\given\mathcal{F}_t} +\mathcal{O}\left(n^{-100}\right).
    \end{equation}
    We remark that here we estimated the regime when \(\abs{i}\) or \(\abs{j}\) are larger than \(n^{\omega_A}\) differently compared to~\cite[Eq.~\eqref{cplx-eq:qM1}]{1912.04100}, since, unlike in~\cite[Eq.~\eqref{cplx-eq:qM1}]{1912.04100}, \(\Exp*{\dif C_i(t,\alpha)\dif C_j(t,\alpha)\given\mathcal{F}_t}\ne \delta_{ij}\), hence here we anyway need to estimate the double sum using~\eqref{eq:ffsfinh}.

    Then, by~\ref{close1}--\ref{close3} of Assumption~\ref{ass:close}, for \(\abs{i}, \abs{j}\le n^{\omega_A}\) we have
    \begin{equation}
        \label{eq:qM2}
        \begin{split}
            \Exp*{\dif C_i(t,\alpha)\dif C_j(t,\alpha)\given\mathcal{F}_t}&= \frac{\delta_{ij}+\alpha^2\Xi_{ij}^{s,s}(t)+(1-\alpha)^2\Xi_{ij}^{r,r}(t)}{2n(1+n^{-\omega_r})} \dif t \\
            &\quad +\frac{\alpha(1-\alpha)}{2n(1+n^{-\omega_r})}\Exp*{\bigl(\dif \mathfrak{b}_i^s\dif \mathfrak{b}_j^r+\dif \mathfrak{b}_i^r\dif \mathfrak{b}_j^s\bigr)\given\mathcal{F}_t},
        \end{split}
    \end{equation}
    and that
    \begin{equation}
        \label{eq:qM3}
        \begin{split}
            \abs*{ \Exp*{\dif \mathfrak{b}_i^s\dif \mathfrak{b}_j^r \given \mathcal{F}_t} } &=\abs*{\Exp*{ (\dif \mathfrak{b}_i^s-\dif \mathfrak{b}_i^r)\dif \mathfrak{b}_j^r\given\mathcal{F}_t}+(\delta_{ij}+\Xi_{ij}^{r,r}(t)) \dif t} \\
            &\lesssim (\abs{L_{ii}(t)}^{1/2}+\abs{\Xi_{ij}^{r,r}(t)}+\delta_{ij} )\dif t,
        \end{split}
    \end{equation}
    where in the last step we used Kunita-Watanabe inequality for the quadratic variation \((\dif \mathfrak{b}_i^s-\dif \mathfrak{b}_i^r)\dif \mathfrak{b}_j^r\).

    Combining~\eqref{eq:sperboh}--\eqref{eq:qM3}, and adding back the sum over \(n^{\omega_A}<\abs{i}\le n\) of \((w_i-f_i)^2(f_i')^2\) at the price of an additional error \(\mathcal{O}(n^{-100})\), omitting the \(t\)-dependence, we finally conclude that
    \begin{equation}
        \label{eq:qM4}
        \begin{split}
            \dif \braket{ M}_t&\lesssim \frac{1}{n^3} \sum_{1\le \abs{i}\le n} (w_i-f_i)^2 (f_i')^2 \dif t \\
            &\quad + \frac{1}{n^3}\sum_{\abs{i}, \abs{j}\le n^{\omega_A}} \left(\abs{L_{ii}}^{1/2}+\abs{\Xi_{ij}^{s,s}}+\abs{\Xi_{ij}^{r,r}}\right) \abs*{ (w_i-f_i)(w_j-f_j) f_i' f_j' } \dif t+\mathcal{O}\left(n^{-100}\right).
        \end{split}
    \end{equation}
    Since \(\abs{L_{ii}}+\abs{\Xi_{ij}^{q_1,q_2}}\le n^{-\omega_Q}\), for any \(\abs{i},\abs{j}\le n\), \(q_1,q_2\in \{s,r\}\), and \(\omega_A=\omega_K\ll \omega_Q\) by~\eqref{eq:newbyet}--\eqref{eq:assbqv}, using Cauchy-Schwarz in~\eqref{eq:qM4}, we conclude that
    \begin{equation}
        \label{eq:qM5}
        \dif \braket{ M}_t\lesssim\frac{1}{n^3} \sum_{1\le \abs{i}\le n} (w_i-f_i)^2 (f_i')^2 \dif t+\mathcal{O}\left(n^{-100}\right),
    \end{equation}
    which is exactly the same bound as in~\cite[Eq.~\eqref{cplx-eq:qM5}]{1912.04100} (except for the tiny error \(\mathcal{O}(n^{-100})\) that is negligible).
    Proceeding exactly as in~\cite{1912.04100}, we conclude the proof of Proposition~\ref{pro:ciala}.
\end{proof}

\subsubsection{Proof of Lemma~\ref{lem:firststepmason} and  Lemma~\ref{lem:secondstepmason}}\label{sec:noncelpiu}

The fact that the processes \(\mathring{\bm \lambda}(t)\), \(\widetilde{\bm \lambda}(t)\) and \(\widetilde{\bm \mu}(t)\), \({\bm \mu}(t)\) satisfy the hypotheses of Proposition~\ref{pro:ciala} for the choices \(\nu=\omega_h\), \(\omega_K=\omega_A\), \(\omega_Q=\omega_E\), and \(\Xi_{ij}^{q_1,q_2}=\Theta_{ij}^{z_1,\overline{z}_2}\) follows by Lemma~\ref{lem:lambound} applied for \(z=z_1, z'=z_2\) and \(z=z_1, z'=\overline{z}_2\) and \(z=z_l, z'=\overline{z}_l\), and exactly the same computations as in~\cite[Section~\ref{cplx-sec:noncelpiu}]{1912.04100}. We remark that the processes \({\bm \mu}^{(l)}(t)\) do not have the additional coefficient \((1+n^{-\omega_r})\) in the driving Brownian motions, but this does not play any role in the application of Proposition~\ref{pro:ciala} since it causes an error term \(n^{-1-\omega_r}\) that is much smaller then the bound \(n^{-1-\omega}\) in~\eqref{eq:firshpb2}. Then, by Proposition~\ref{pro:ciala}, the results in Lemma~\ref{lem:firststepmason} and Lemma~\ref{lem:secondstepmason} immediately follow.\qed%

\subsection{Proof of Proposition~\ref{prop:wpmainDBM}}\label{sec:wpe}
First of all we notice that \({\bm \lambda}(t)\) is \(\gamma\)-H\"older continuous for any \(\gamma\in (0,1/2)\) by Weyl's inequality. Then the proof of Proposition~\ref{prop:wpmainDBM} consists of two main steps, (i) proving that the eigenvalues \({\bm \lambda}(t)\) are a strong solution of~\eqref{eq:impnewDBM} as long as there are no collisions, and (ii) proving that there are no collisions for almost all \(t\in [0,T]\).

The proof that the eigenvalues \({\bm \lambda}(t)\) are a solution of~\eqref{eq:impnewDBM} is deferred to Appendix~\ref{sec:derdbm}. The fact that there are no collisions for almost all \(t\in [0,T]\) is ensured by~\cite[Lemma 6.2]{1908.04060} following nearly the same computations as in~\cite[Theorem 5.2]{MR4009717} (see also~\cite[Theorem 6.3]{1908.04060} for its adaptation to the \(2\times 2\) block structure).  The only difference in our case compared to the proof of~\cite[Theorem 5.2]{MR4009717} is that the martingales \(\dif M_i(t)\) (cf.~\cite[Eq.~(5.4)]{MR4009717}) are defined as
\begin{equation}
    \label{eq:diffmart}
    \dif M_i(t):=\frac{\dif b_i^z(t)}{\sqrt{n}}, \qquad \abs{i}\le n,
\end{equation}
with \(\{b_i^z\}_{i\in [n]}\) having non trivial covariance~\eqref{eq:recallBBB}. This fact does not play any role in that proof, since the only information about \(\dif {\bm M}=\{\dif M_i\}_{\abs{i}\le n}\) used in~\cite[Theorem 5.2]{MR4009717} is that it has bounded quadratic variation and that \({\bm M}(t)\) is \(\gamma\)-H\"older continuous for any \(\gamma\in (0,1/2)\), which is clearly the case for \(\dif {\bm M}\) defined in~\eqref{eq:diffmart}.\qed%

\appendix

\section{The interpolation process is well defined}\label{sec:wp}
We recall that the eigenvectors of \(H^z\) are of the form \({\bm w}_{\pm i}^z=({\bm u}_i^z,\pm {\bm v}_i^z)\) for any \(i\in [n]\), as a consequence of the symmetry of the spectrum of \(H^z\) with respect to zero. Consider the matrix flow
\begin{equation}
    \label{eq:mflowapp}
    \dif X_t=\frac{\dif B_t}{\sqrt{n}}, \qquad X_0=X,
\end{equation}
with \(B_t\) being a standard real matrix valued Brownian motion. Let \(H_t^z\) denote the Hermitisation of \(X_t-z\), and \(\{{\bm w}_i^z(t)\}_{\abs{i}\le n}\) its eigenvectors. We recall that the eigenvectors \(\{{\bm w}_i^z(t)\}_{\abs{i}\le n}\) are almost surely well defined, since \(H_t^z\) does not have multiple eigenvalues almost surely by~\eqref{eq:gpert}. We set the eigenvectors equal to zero where they are not well defined. Recall the definitions of the coefficients \(\Lambda_{ij}^z(t)\), \(\mathring{\Lambda}_{ij}^z(t)\) from~\eqref{eq:defbiglam},~\eqref{eq:deflamzz} and~\eqref{eq:ringlamthe}, respectively. Set
\[
    \Delta_n:= \set*{(x_i)_{\abs{i}\le n}\in \R^{2n}\given 0<x_1<\cdots< x_n, \, x_{-i}=-x_i, \, \forall i\in [n]},
\]
and let \(C(\R_+,\Delta_n)\) be the space of continuous functions \(f:\R_+\to \Delta_n\). Let \(\omega_E>0\) be the exponent in~\eqref{eq:ringlamthe}, and let \(\omega_r>0\) be such that \(\omega_r\ll \omega_E\). In this appendix we prove that for any \(\alpha\in [0,1]\) the system of SDEs
\begin{equation}\label{eq:impnewDBMwp}
    \dif x_i^z(t,\alpha)=\frac{\dif \mathring{b}_i^z(t)}{\sqrt{n(1+n^{-\omega_r})}}+\frac{1}{2n}\sum_{j\ne i} \frac{1+\alpha\mathring{\Lambda}_{ij}^z(t)}{x_i^z(t,\alpha)-x_j^z(t,\alpha)}\dif t, \quad x_i^z(0,\alpha)=x_i(0), \quad \abs{i}\le n,
\end{equation}
with \({\bm x}(0)\in \Delta_n\), admits a strong solution for any \(t\ge 0\). For \(T>0\), by~\eqref{eq:ringmat}, the martingales \(\{\mathring{b}_i^z\}_{\abs{i}\le [n]}\), defined on a filtration \((\widetilde{\mathcal{F}}_{b,t})_{0\le t\le T}\), are such that \(\mathring{b}_{-i}^z=-\mathring{b}_i^z\) for \(i\in [n]\), and that
\begin{equation}\label{eq:covwp}
    \Exp*{\dif \mathring{b}_i^z \dif \mathring{b}_j^z\given\widetilde{\mathcal{F}}_{b,t}}=\frac{\delta_{i,j}-\delta_{i,-j}+\mathring{\Lambda}_{ij}^z(t)}{2} \dif t, \qquad \abs{i},\abs{j}\le n.
\end{equation}

The main result of this section is Proposition~\ref{prop:wp} below.
Its proof follows closely~\cite[Proposition 5.4]{MR4009717}, which is inspired by the proof of~\cite[Lemma~4.3.3]{MR2760897}. We nevertheless present the proof of Proposition~\ref{prop:wp} for completeness, explaining the differences compared with~\cite[Proposition 5.4]{MR4009717} as a consequence of the correlation in~\eqref{eq:covwp}.

\begin{proposition}\label{prop:wp}
    Fix any \(z\in \C\), and let \({\bm x}(0)\in \Delta_n\). Then for any fixed  \(\alpha\in [0,1]\) there exists a unique strong solution \({\bm x}(t,\alpha)=
    {\bm x}^z(t,\alpha)\in C(\R_+,\Delta_n)\) to the system of SDE~\eqref{eq:impnewDBMwp} with initial condition \({\bm x}(0)\).
\end{proposition}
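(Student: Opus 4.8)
The plan is to adapt the strategy of~\cite[Proposition~5.4]{MR4009717}, which in turn follows~\cite[Lemma~4.3.3]{MR2760897}, and to track carefully the two features specific to our situation: the correlation~\eqref{eq:covwp} in the driving martingales, and the symmetry constraint \(x_{-i}=-x_i\), which forces the critical pair \((x_1,x_{-1})\) to have reduced repulsion. Throughout we work with a fixed \(\alpha\in[0,1]\), a fixed \(z\in\C\), and the filtration \((\widetilde{\mathcal F}_{b,t})\). The skeleton is: (i) local existence and uniqueness of a strong solution up to the first collision time, (ii) an \emph{a priori} non-explosion estimate, so that the solution does not reach \(+\infty\) or \(-\infty\) in finite time, and (iii) the key step, showing that the collision time is almost surely \(+\infty\), i.e.\ the ordered configuration is preserved for all \(t\), so the solution stays in \(C(\R_+,\Delta_n)\).

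For step (i), note that \(\mathring\Lambda^z_{ij}(t)\) is a bounded (by \(1\), and by its cutoff definition below~\eqref{eq:ringlamthe} in fact by \(n^{-\omega_E}\) on the relevant event, but boundedness suffices here) adapted process, and the matrix \(\mathring C(t)\) is positive semi-definite by construction, so by Doob's martingale representation~\cite[Theorem~18.12]{MR1876169} the increments \(\dif\mathring{\bm b}^z\) can be written as \(\sqrt{\mathring C(t)}\,\dif\mathfrak w\) for a genuine \(2n\)-dimensional Brownian motion \(\mathfrak w\) (after the harmless extension of the probability space already performed in Section~\ref{sec:remlamb}). On the open set where all gaps \(x_i-x_j\) with \(i\neq j\) are bounded away from zero and \(\norm{\bm x}_\infty\) is bounded, the drift and diffusion coefficients of~\eqref{eq:impnewDBMwp} are locally Lipschitz, so a unique strong solution exists up to the stopping time \(\tau:=\tau_{\mathrm{coll}}\wedge\tau_{\mathrm{expl}}\), where \(\tau_{\mathrm{coll}}\) is the first time two coordinates coincide and \(\tau_{\mathrm{expl}}\) is the first explosion time. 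Step (ii) is routine: using \(1+\alpha\mathring\Lambda^z_{ij}(t)\ge 1/2\) together with the standard Lyapunov function \(\sum_{\abs i\le n}\log(1+x_i^2)\) (or simply \(\sum x_i^2\)) and It\^o's formula, the drift contributes a term of order \(\sum_{i}\sum_{j\ne i} x_i(1+\alpha\mathring\Lambda_{ij})/(x_i-x_j)\), which after symmetrisation is bounded by \(\lesssim n\) uniformly, and the martingale part has quadratic variation controlled by \(\mathring C(t)\lesssim 1\); a Gronwall argument then gives \(\Exp{\norm{\bm x(t\wedge\tau)}_\infty^2}\le C_{T,n}\), ruling out explosion.

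The main obstacle is step (iii), preventing collisions — and this is exactly where the symmetry \(x_{-1}=-x_1\) makes the problem genuinely more delicate than a standard \(\beta\ge 1\) DBM, because the repulsion between \(x_1\) and \(x_{-1}=-x_1\) is \((2n)^{-1}(1+\alpha\mathring\Lambda^z_{1,-1})/(2x_1)\), i.e.\ effectively the \(\beta=1\) critical case (cf.\ the discussion around~\cite[Appendix A]{MR3916329} in the introduction). The plan, following~\cite[Proof of Theorem~5.2]{MR4009717} and its block adaptation in~\cite[Theorem~6.3]{1908.04060}, is to consider, for a small parameter \(\kappa>0\), the process
\[
  f_\kappa(\bm x):=\sum_{i\ne j}\log\frac{1}{x_i-x_j}\;+\;\kappa\sum_{i}\log(1+x_i^2),
\]
or rather a localised/regularised version thereof, and to show via It\^o's formula that \(f_\kappa(\bm x(t\wedge\tau))\) is a supermartingale up to a controlled additive drift, so that it cannot reach \(+\infty\) (which is what a collision would force). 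The crucial computation is that the It\^o correction from the diffusion term, which is \(\frac{1}{4n}\sum_{i\ne j}(x_i-x_j)^{-2}\big[\mathring C_{ii}+\mathring C_{jj}-2\mathring C_{ij}\big]\), is dominated by the \emph{negative} part of the drift \(-\frac{1}{2n}\sum_{i\ne j,\,k\ne i}\frac{1+\alpha\mathring\Lambda_{ik}}{(x_i-x_j)(x_i-x_k)}\); here one uses that \(\mathring C_{ii}=1/2\) exactly while \(|\mathring C_{ij}|\le \frac12 n^{-\omega_E}\) for \(i\ne j\) (and the corresponding statement for the \(\Lambda\)-deformed diagonal coefficient), so the effective diffusion coefficient is \(\frac12+O(n^{-\omega_E})\) and the effective repulsion coefficient is \(1+O(n^{-\omega_E})\), i.e.\ we are in the regime \(\beta_{\mathrm{eff}}=2(1+O(n^{-\omega_E}))>1\) except precisely for the symmetric pair. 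For the symmetric pair \((i,-i)\) one handles the \(\beta=1\)-type criticality exactly as in~\cite{MR4009717,1908.04060}: the extra term \(\kappa\sum_i\log(1+x_i^2)\) provides just enough of a restoring force (and the bound \(\mathring\Lambda^z_{i,-i}(t)\ge -n^{-\omega_E}>-1\), so \(1+\alpha\mathring\Lambda_{i,-i}>0\) strictly) to close the supermartingale estimate. Since, as emphasised in the excerpt (Remark~\ref{rem:gpaaa}), the only properties of \(\dif\bm M=\{\dif M_i\}\) actually used in~\cite[Theorem~5.2]{MR4009717} are that it has bounded quadratic variation and is \(\gamma\)-H\"older continuous for \(\gamma\in(0,1/2)\) — both of which hold for \(\dif M_i=\dif\mathring b^z_i/\sqrt{n(1+n^{-\omega_r})}\) in view of \(\mathring C\lesssim 1\) — the argument goes through verbatim after inserting the above bookkeeping of the correlation coefficients. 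Collecting (i)–(iii), the unique strong solution lives in \(C(\R_+,\Delta_n)\), which is the assertion of Proposition~\ref{prop:wp}.
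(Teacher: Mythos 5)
Your three-step skeleton (local existence via Lipschitz truncation, non-explosion via an $\ell^2$-Lyapunov estimate, then non-collision) is the same as the paper's, and your bookkeeping of the It\^o correction in terms of the matrix $\mathring C$ is correct. The substantive divergence --- and the place where the proposal has a gap --- is the choice of Lyapunov function in step (iii).

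You propose $f_\kappa(\bm x)=\sum_{i\ne j}\log\frac{1}{x_i-x_j}+\kappa\sum_i\log(1+x_i^2)$, i.e.\ \emph{uniform} weights on the logarithmic repulsion. The paper instead uses $f(\bm x)=-2\sum_{k\ne l}a_{\abs{k-l}}\log\abs{x_k-x_l}$ with $a_{k+1}:=a_k^5$, a \emph{doubly-exponentially increasing} weight in the index distance. These are not interchangeable here. With uniform weights, the standard argument relies on the identity
\(
\sum_{\substack{i,j,l \text{ distinct}}}\frac{1}{(x_i-x_j)(x_i-x_l)}=0,
\)
which cancels the dangerous third-order-pole cross terms in the drift of $-\sum\log$. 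This cancellation requires a \emph{constant} repulsion coefficient and fails once the coefficient $1+\alpha\mathring\Lambda^z_{ij}(t)$ genuinely depends on the pair $(i,j)$, as it does here. The residual cross term is then of order $\sum_{i,j,l}\frac{\abs{\delta\theta_{il}}}{\abs{x_i-x_j}\abs{x_i-x_l}}$ with $\abs{\delta\theta_{il}}\lesssim n^{-\omega_E}$, and a naive Cauchy--Schwarz converts it into something of order $n^{1-\omega_E}\sum\frac{1}{(x_i-x_j)^2}$, which is \emph{not} dominated by the negative term $-\,n^{-\omega_r}\sum\frac{1}{(x_i-x_j)^2}$ that the diffusion-reduction factor buys you (recall $\omega_r\ll\omega_E\ll1$, so $n^{1-\omega_E}\gg n^{-\omega_r}$). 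The doubly-exponential weights $a_{\abs{i-j}}$ are precisely the device, from~\cite[Proposition~5.4]{MR4009717}, that replaces the lost cancellation: after splitting by the sign of $(i-l)(i-j)$ one can bound the bad cross contribution by $\frac{C}{a}\sum_{j\ne i}\frac{a_{\abs{i-j}}}{(x_i-x_j)^2}$, with a factor $1/a$ that can be made smaller than $c_1=n^{-\omega_r}/2$, which is what gives $A(\bm x)\le 0$ in the paper's~\eqref{eq:boundonA}. Your write-up asserts the argument ``goes through verbatim'' but this is exactly the step that does not go through with uniform weights, and no mechanism to absorb the residual is supplied.

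Two further points. First, the extra term $\kappa\sum\log(1+x_i^2)$ you append is not what saves the critical symmetric pair $(i,-i)$: it is a confinement term preventing escape to infinity, not a device for preventing the Bessel-type coalescence at $0$. The paper handles non-explosion separately with $\norm{\bm x}_2^2$; what makes the symmetric pair strictly subcritical is the $(1+n^{-\omega_r})^{-1}$ reduction in the diffusion, which produces the strictly positive lower bound $\theta_{ij}\ge c_1=n^{-\omega_r}/2$ after the paper's time rescaling, and is what the $c_1$-weighted negative term in~\eqref{eq:revA} feeds on. You do invoke this reduction, but the conclusion you draw from $1+\alpha\mathring\Lambda^z_{i,-i}>0$ alone is insufficient: positivity of the Bessel drift coefficient does not prevent hitting zero; one needs the quantitative supercriticality $c\ge\sigma^2/2$, which is exactly what the $\omega_r$-reduction provides and the $\kappa\log$ term does not. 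Second, you cite~\cite[Theorem~5.2]{MR4009717} and~\cite[Theorem~6.3]{1908.04060} at the key point, but those concern actual eigenvalue processes of matrix flows, where one has the matrix at one's disposal; the interpolation process $\bm x^z(t,\alpha)$ for $\alpha\in(0,1)$ is an abstract SDE with no underlying matrix, so those results do not apply directly. The paper accordingly uses~\cite[Proposition~5.4]{MR4009717} (the SDE-level well-posedness with the weighted Lyapunov function) for Proposition~\ref{prop:wp}, and reserves the Theorem~5.2-type argument for the matrix-backed Proposition~\ref{prop:wpmainDBM}.
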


We will mostly omit the \(z\)-dependence since the analysis of~\eqref{eq:impnewDBMwp} is done for any fixed \(z\in\C\); in particular, we will use the notation \(\mathring{\Lambda}_{ij}=\mathring{\Lambda}_{ij}^z\). By~\eqref{eq:defbiglam},~\eqref{eq:deflamzz} and~\eqref{eq:ringlamthe} it follows that \(\mathring{\Lambda}_{ij}(t)=\mathring{\Lambda}_{ji}(t)\), and that \(\abs{\mathring{\Lambda}_{ij}(t)}\le n^{-\omega_E}\), for any \(t\ge 0\).

\begin{proof}
    We follow the notations used in the proof of~\cite[Proposition 5.4]{MR4009717} to make  the comparison clearer.
    Moreover, we do not keep track of the \(n\)-dependence of the constants, since throughout the proof \(n\) is fixed. By a simple time rescaling, we rewrite the process~\eqref{eq:impnewDBMwp} as
    \begin{equation}
        \label{eq:impnewDBMwpresc}
        \dif x_i(t,\alpha)=\dif \mathring{b}_i(t)+\frac{1}{2}\sum_{j\ne i} \frac{1+\theta_{ij}(t)}{x_i(t,\alpha)-x_j(t,\alpha)}\dif t, \qquad \abs{i}\le n,
    \end{equation}
    where \(\theta_{ij}(t):= \alpha\mathring{\Lambda}_{ij}(1+n^{-\omega_r})+n^{-\omega_r}\) is such that \(\theta_{ij}(t)=\theta_{ji}(t)\). Note that \(c_1\le \theta_{ij}(t)\le c_2\) for any \(t\ge 0\) and \(\alpha\in [0,1]\), with \(c_1=n^{-\omega_r}/2\), \(c_2=1\). For any \(\epsilon>0\) define the bounded Lipschitz function \(\phi_\epsilon\colon\R\to\R\) as
    \[
        \phi_\epsilon(x):=  \begin{cases}
            x^{-1},          & \abs{x}\ge \epsilon, \\
            \epsilon^{-2} x, & \abs{x}< \epsilon,
        \end{cases}
    \]
    that cuts off the singularity of \(x^{-1}\) at zero.

    Introduce the system of cut-off SDEs
    \begin{equation}\label{eq:cphi}
        \dif x_i^\epsilon(t,\alpha)=\dif \mathring{b}_i(t)+\frac{1}{2}\sum_{j\ne i} (1+\theta_{ij}(t))\phi_\epsilon(x_i^\epsilon(t,\alpha)-x_j^\epsilon(t,\alpha)) \dif t, \quad \abs{i}\le n,
    \end{equation}
    which admits a unique strong solution (see e.g.~\cite[Theorem 2.9 of Section 5]{MR0917065}) as a consequence of \(\phi_\epsilon\) being Lipschitz and the fact that \(\dif \mathring{\bm b}=(\mathring{C})^{1/2} \dif \mathfrak{w}\) (see~\eqref{eq:circbm}). Define the stopping times
    \begin{equation}
        \label{eq:stpt}
        \tau_\epsilon=\tau_\epsilon(\alpha):=  \inf \set*{t\given\min_{\abs{i},\abs{j}\le n}\abs*{x_i^\epsilon(t,\alpha)-x_j^\epsilon(t,\alpha)}\le \epsilon \quad\text{or}\quad \norm{ {\bm x}^\epsilon(t,\alpha)}_\infty\ge \epsilon^{-1}}.
    \end{equation}
    By strong uniqueness we have that \({\bm x}^{\epsilon_2}(t,\alpha)={\bm x}^{\epsilon_1}(t,\alpha)\) for any \(t\in [0,\tau_{\epsilon_2}]\) if \(0<\epsilon_1<\epsilon_2\). Note that \(\tau_{\epsilon_2}\le \tau_{\epsilon_1}\) for \(\epsilon_1<\epsilon_2\), thus the limit \(\tau=\tau(\alpha):=\lim_{\epsilon\to 0} \tau_\epsilon(\alpha)\) exists, and \({\bm x}(t,\alpha):=\lim_{\epsilon\to 0} {\bm x}^\epsilon(t,\alpha)\) defines a strong solution to~\eqref{eq:impnewDBMwpresc} on \( \interval{co}{0,\tau} \). Moreover, by continuity in time, \({\bm x}(t,\alpha)\) remains ordered as \(0<x_1(t,\alpha)<\cdots< x_n(t,\alpha)\) and \(x_{-i}(t,\alpha)=-x_i(t,\alpha)\) for \(i\in [n]\). Additionally, for the square of the \(\ell^2\)-norm \(\norm{ {\bm x}}_2^2=\sum_i x_i^2\) a simple calculation shows that
    \begin{equation}
        \label{eq:simpc}
        \dif \norm{ {\bm x}(t,\alpha)}_2^2=\frac{1}{2}\left(\sum_{j\ne i} (1+\theta_{ij})+\sum_{\abs{i},\abs{j}\le n} \mathring{\Lambda}_{ij}\right)\dif t+\dif M_1,
    \end{equation}
    with \(\dif M_1\) being a martingale term. This implies that \(\E \norm{ {\bm x}(t\wedge s)}_2^2\le c(1+t)\) for any stopping time \(s<\tau\) and
    for any \(t\ge 0\), where \(c\) depends on \(n\).

    Let \(a>0\) be a large constant that we will choose later in the proof, and define \(a_k\) recursively by \(a_0:=a\), \(a_{k+1}:=a_k^5\) for \(k\ge 0\). Consider the Lyapunov function
    \begin{equation}
        \label{eq:lyafun}
        f({\bm x}):=-2\sum_{k\ne l} a_{\abs{k-l}}\log \abs{x_k-x_l}.
    \end{equation}
    Then by It\^{o}'s formula we get
    \begin{equation}
        \label{eq:itoA}
        \dif f({\bm x})=A({\bm x}(t,\alpha))\dif t+\dif M_2(t),
    \end{equation}
    with
    \begin{equation}
        \label{eq:defAxta}
        \begin{split}
            A({\bm x}(t,\alpha)):={}&-2\sum_{l\ne i, j\ne i}\frac{(1+\theta_{ij})a_{\abs{i-l}}}{(x_i(t,\alpha)-x_l(t,\alpha))(x_i(t,\alpha)-x_j(t,\alpha))}+\sum_{\abs{i}\le n}\frac{a_{\abs{2i}}}{(2x_i(t,\alpha))^2} \\
            &\quad+\sum_{j\ne i} \frac{a_{\abs{i-j}}(1+\mathring{\Lambda}_{ii}(t)-\mathring{\Lambda}_{ij}(t))}{(x_i(t,\alpha)-x_j(t,\alpha))^2},
        \end{split}
    \end{equation}
    where \(\dif M_2\) is a martingale given by
    \[
        \dif M_2(t)=-2\sum_{j\ne i}\frac{a_{\abs{i-j}}\dif \mathring{b}_i(t)}{x_i(t,\alpha)-x_j(t,\alpha)}.
    \]
    In the following we will often omit the time dependence. Note that the term in~\eqref{eq:defAxta} containing \(\mathring{\Lambda}_{ii}-\mathring{\Lambda}_{ij}\) is new compared to~\cite[Eq.~(5.39)]{MR4009717}, since it comes from the correlation of the martingales \(\{\mathring{b}_i\}_{\abs{i}\le n}\), whilst in~\cite[Eq.~(5.39)]{MR4009717} i.i.d.\ Brownian motions have been considered. In the remainder of the proof we show that the term \(\mathring{\Lambda}_{ii}-\mathring{\Lambda}_{ij}\) is negligible using the fact that \(\abs{\mathring{\Lambda}_{ij}}\le n^{-\omega_E}\), and so that this term can be absorbed in the negative term coming from the first sum in the r.h.s.\ of~\eqref{eq:defAxta} for \(l=j\).

    We now prove that \(A({\bm x}(t,\alpha))\le 0\) if \(a>0\) is sufficiently large. Firstly, we write \(A({\bm x}(t,\alpha))\) as
    \begin{equation}
        \label{eq:rewA}
        \begin{split}
            A({\bm x}(t,\alpha))&=-2\sum_{\substack{l\ne i, j\ne i \\ j\ne l}}\frac{(1+\theta_{ij})a_{\abs{i-l}}}{(x_i-x_l)(x_i-x_j)}-\sum_{j\ne \pm i}\frac{a_{\abs{i-j}}(1+2\theta_{ij}-\mathring{\Lambda}_{ii}+\mathring{\Lambda}_{ij})}{(x_i-x_j)^2} \\
            &\quad -2\sum_{\abs{i}\le n} \frac{a_{\abs{2i}}(\theta_{-i,i}-\mathring{\Lambda}_{ii})}{(2x_i)^2}.
        \end{split}
    \end{equation}
    Then, using that the first sum in~\eqref{eq:rewA} is non-positive for \((i-l)(i-j)>0\), and that \(c_1\le \theta_{ij}\le c_2\), with \(c_1=n^{-\omega_r}\), we bound \(A({\bm x}(t,\alpha))\) as follows
    \begin{equation}
        \label{eq:revA}
        A({\bm x}(t,\alpha))\le -2(1+c_2)\sum_{\abs{i}\le n}\sum_{(i-l)(i-j)<0}\frac{a_{\abs{i-l}}}{(x_i-x_l)(x_i-x_j)}-c_1 \sum_{j\ne i}\frac{a_{\abs{i-j}}}{(x_i-x_j)^2}-\sum_{j\ne \pm i}\frac{a_{\abs{i-j}}}{(x_i-x_j)^2}.
    \end{equation}
    In~\eqref{eq:revA} we used that
    \[
        \theta_{ij}-\mathring{\Lambda}_{ii}+\mathring{\Lambda}_{ij}\ge \frac{c_1}{2}, \qquad \theta_{-i,i}-\mathring{\Lambda}_{ii}\ge \frac{c_1}{2},
    \]
    since \(\theta_{ij}\ge c_1=n^{-\omega_r}\) and \(\abs{\mathring{\Lambda}_{ij}}\le n^{-\omega_E}\), where \(\omega_r\ll \omega_E\). This shows that the correlations of the martingales \(\{\mathring{b}_i\}_{\abs{i}\le n}\) is negligible. Note that the r.h.s.\ of~\eqref{eq:revA} has exactly the same form as~\cite[Eq.~(5.42)]{MR4009717}, since the third term in~\eqref{eq:revA} is non-positive. Hence, following exactly the same computations as in~\cite[Eqs.~(5.43)--(5.46)]{MR4009717}, choosing \(a>n^{10}\), we conclude that
    \begin{equation}
        \label{eq:boundonA}
        A({\bm x}(t,\alpha))\le \left[\frac{2(1+c_2)}{a}-c_1\right]\sum_{j\ne i}\frac{a_{\abs{i-j}}}{(x_i-x_j)^2},
    \end{equation}
    which is negative for \(a\) sufficiently large.

    Fix \(a>0\) large enough so that \(A({\bm x}(t,\alpha))\le 0\), then for any stopping time \(s<\tau\), and any \(t\ge 0\) we have
    \begin{equation}
        \E [f({\bm x}(t\wedge s,\alpha))]\le \E  [f({\bm x}(0,\alpha))].
    \end{equation}
    Hence, by~\cite[Eqs.~(5.48)--(5.49)]{MR4009717}, using that \(\E \norm{ {\bm x}(t\wedge \tau_\epsilon)}_2^2\le c(1+t)\), it follows that
    \[
        \log(\epsilon^{-1})\Prob (\tau_\epsilon< t)\le c,
    \]
    and so that \(\Prob (\tau<t)=0\), letting \(\epsilon\to 0\). Since \(t\ge 0\) is arbitrary, this implies that \(\Prob (\tau<+\infty)=0\), i.e.~\eqref{eq:impnewDBMwpresc} has a unique strong solution on \( (0, \infty) \) such that \({\bm x}(t,\alpha)\in \Delta_n\) for any \(t\ge 0\) and \(\alpha\in [0,1]\).
\end{proof}

Additionally, by a similar argument as in~\cite[Proposition~5.5]{MR4009717}, we conclude the following lemma.

\begin{lemma}\label{lem:lip}
    Let \({\bm x}(t,\alpha)\) be the unique strong solution of~\eqref{eq:impnewDBMwp} with initial data \({\bm x}(0,\alpha)\in \Delta_n\), for any \(\alpha\in [0,1]\), and assume that there exists \(L>0\) such that \(\norm{ {\bm x}(0,\alpha_1)-{\bm x}(0,\alpha_2)}_2\le L \abs{\alpha_1-\alpha_2}\), for any \(\alpha_1,\alpha_2\in [0,1]\). Then \({\bm x}(t,\alpha)\) is Lipschitz in \(\alpha\in [0,1]\) for any \(t\ge 0\) on an event \(\Omega\) such that \(\Prob (\Omega)=1\), and its derivative satisfies
    \begin{equation}
        \begin{split}
            \partial_\alpha x_i(t,\alpha)=\partial_\alpha x_i(0,\alpha)+\frac{1}{2 n}\int_0^t \sum_{j\ne i} \frac{[1+\alpha\mathring{\Lambda}_{ij}(s)][\partial_\alpha x_j(s,\alpha)-\partial_\alpha x_i(s,\alpha)]}{(x_i(s,\alpha)-x_j(s,\alpha))^2} \dif s \\
            +\frac{1}{2 n}\int_0^t \sum_{j\ne i} \frac{\mathring{\Lambda}_{ij}(s)}{x_i(s,\alpha)-x_j(s,\alpha)} \dif s.
        \end{split}
    \end{equation}
\end{lemma}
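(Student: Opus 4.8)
The plan is to follow \cite[Proposition~5.5]{MR4009717}, the only genuinely new feature being the terms involving $\mathring\Lambda_{ij}$. The structural point that makes everything work is that in~\eqref{eq:impnewDBMwp} neither the driving martingales $\mathring b_i^z$ nor the coefficients $\mathring\Lambda_{ij}^z(t)$ — the latter being functions of the matrix flow~\eqref{eq:mflowapp} only — carry any $\alpha$-dependence. Consequently, differentiating~\eqref{eq:impnewDBMwp} in $\alpha$ formally produces a \emph{deterministic} (given the realisation of $X_t$) linear integral equation for $v_i:=\partial_\alpha x_i^z(t,\alpha)$, which is precisely the one in the statement. The task is therefore to justify this differentiation rigorously and to extract the almost sure Lipschitz bound.

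First I would work with the cut-off processes $\bm x^\epsilon(t,\alpha)$ of~\eqref{eq:cphi}, whose drift is globally Lipschitz in $(\bm x,\alpha)$ with an $(\epsilon,n)$-dependent constant. Standard parameter-dependence results for such SDEs give, on a full-measure event, that $(t,\alpha)\mapsto\bm x^\epsilon(t,\alpha)$ is jointly continuous, that $\alpha\mapsto\bm x^\epsilon(\cdot,\alpha)$ is sup-norm continuous on compact time intervals, and that $v_i^\epsilon:=\partial_\alpha x_i^\epsilon$ exists and solves the linearised equation
\begin{equation*}
  \partial_t v_i^\epsilon=\frac{1}{2n}\sum_{j\ne i}\Bigl[\mathring\Lambda_{ij}(t)\,\phi_\epsilon(x_i^\epsilon-x_j^\epsilon)+(1+\alpha\mathring\Lambda_{ij}(t))\,\phi_\epsilon'(x_i^\epsilon-x_j^\epsilon)(v_i^\epsilon-v_j^\epsilon)\Bigr],\quad v_i^\epsilon(0)=\partial_\alpha x_i(0,\alpha)
\end{equation*}
(written here in the un-rescaled variables, so that the $1/(2n)$ factor is the one appearing in the statement; the $(1+n^{-\omega_r})^{-1/2}$ prefactor plays no role). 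Next I would localise at each $\alpha_0\in[0,1]$: by Proposition~\ref{prop:wp} one has $\tau(\alpha_0)=+\infty$ almost surely, hence for fixed $T>0$ the minimal gap $\delta:=\min_{i\ne j}\inf_{t\le T}\abs{x_i(t,\alpha_0)-x_j(t,\alpha_0)}$ is positive and $\sup_{t\le T}\norm{\bm x(t,\alpha_0)}_\infty$ is finite almost surely. Choosing $\epsilon_1$ a suitably small multiple of their minimum gives $\tau_{\epsilon_1}(\alpha_0)>T$, and along the trajectory $\bm x(\cdot,\alpha_0)$ the cut-off is inactive, i.e.\ $\phi_{\epsilon_1}(x_i-x_j)=(x_i-x_j)^{-1}$. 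Using the $\alpha$-continuity of $\bm x^{\epsilon_1}$ one then produces a random $\eta>0$ such that $\tau_{\epsilon_1/2}(\alpha)>T$ for $\abs{\alpha-\alpha_0}<\eta$, whence, by strong uniqueness and consistency of the cut-offs in~\eqref{eq:stpt}, $\bm x(t,\alpha)=\bm x^{\epsilon_1}(t,\alpha)$ for all $t\le T$ on this $\alpha$-neighbourhood. Differentiability in $\alpha$ thus transfers from $\bm x^{\epsilon_1}$ to $\bm x$, and substituting $\phi_{\epsilon_1}(x_i-x_j)=(x_i-x_j)^{-1}$, $\phi_{\epsilon_1}'(x_i-x_j)=-(x_i-x_j)^{-2}$ into the displayed equation reproduces exactly the integral equation in the statement.

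The Lipschitz bound then follows by a Grönwall estimate: along the true trajectory $\sum_{j\ne i}(x_i-x_j)^{-2}\lesssim n\delta^{-2}$ on $[0,T]$, while $\abs{\mathring\Lambda_{ij}}\le n^{-\omega_E}$, so the linear equation for $\bm v$ yields $\sup_{t\le T}\norm{\bm v(t,\alpha)}_2\lesssim (L+n^{1-\omega_E}\delta^{-1}T)\,e^{CnT\delta^{-2}}<\infty$ almost surely, where $L$ bounds $\norm{\partial_\alpha\bm x(0,\cdot)}_2$ (in the application $\bm x(0,\alpha)$ is independent of $\alpha$, so $L=0$; in general one runs the identical estimate on the difference quotients $(\bm x(t,\alpha_1)-\bm x(t,\alpha_2))/(\alpha_1-\alpha_2)$, using the hypothesised Lipschitz continuity of the initial data, and identifies the limit with $\bm v$ at points where the initial data is differentiable). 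Covering $[0,1]$ by finitely many of the neighbourhoods constructed above produces an almost sure Lipschitz constant for $\bm x(t,\cdot)$ on all of $[0,1]$, for every $t\ge0$. The one delicate point is the $\alpha$-uniform localisation — passing from $\tau(\alpha_0)=\infty$ for a single $\alpha_0$ to a fixed cut-off level $\epsilon_1$ valid on a whole $\alpha$-neighbourhood; once this is in place, everything is the routine argument of~\cite[Proposition~5.5]{MR4009717}, only carrying along the extra $\mathring\Lambda_{ij}$-terms, which are harmless since $\omega_r\ll\omega_E$.
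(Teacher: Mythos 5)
Your proof is correct and follows the same route the paper invokes: the paper itself only cites~\cite[Proposition~5.5]{MR4009717} without detail, and your argument — localisation via the cut-off processes of~\eqref{eq:cphi}, the $\alpha$-uniform choice of cut-off level on a neighbourhood of each $\alpha_0$ (which you rightly flag as the one delicate point), linearisation once the cut-off is inactive, and a Grönwall bound absorbing the new $\mathring\Lambda_{ij}$-terms via $\abs{\mathring\Lambda_{ij}}\le n^{-\omega_E}$ — is precisely that argument carried over to the present setting. The key structural observation you lead with, that neither the driving martingales nor the coefficients $\mathring\Lambda_{ij}$ depend on $\alpha$ so the linearised equation is deterministic given the matrix flow, is exactly what makes the transfer work.
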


\section{Derivation of the DBM for singular values in the real case}\label{sec:derdbm}
Let \(X\) be an \(n\times n\) real random matrix, and define \(Y^z:=  X-z\). Consider the matrix flow~\eqref{eq:mflowapp} defined on a probability space \(\Omega\) equipped with a filtration \((\mathcal{F}_t)_{0\le t\le T}\), and denote by \(H_t^z\) the Hermitisation of \(X_t-z\). We now derive~\eqref{eq:impnewDBM}, under the assumption that the eigenvalues are all distinct. This derivation is easily made complete by the argument in the proof Proposition~\ref{prop:wpmainDBM} in Section~\ref{sec:wpe}.

Let \(\{\lambda^z_i(t),-\lambda^z_i(t)\}_{i\in [n]}\) be the eigenvalues of \(H_t^z\), and denote by \(\{{\bm w}^z_i(t), {\bm w}^z_{-i}(t)\}_{i\in [n]}\) their corresponding orthonormal eigenvectors, i.e.\ for any \(i,j\in[n]\), omitting the \(t\)-dependence, we have that
\begin{equation}
    \label{eigeq}
    H^z{\bm w}^z_{\pm i}=\pm\lambda^z_i {\bm w}_{\pm i}^z, \qquad ({\bm w}_i^z)^\ast {\bm w}^z_j=\delta_{ij},\qquad ({\bm w}_i^z)^\ast {\bm w}_{-j}^z=0.
\end{equation}
In particular, for any \(i\in [n]\), by the block structure of \(H^z\) it follows that
\begin{equation}
    \label{singval}
    {\bm w}^z_{\pm i}=( {\bm u}^z_i, \pm {\bm v}^z_i), \qquad Y^z{\bm v}^z_i=\lambda^z_i {\bm u}^z_i, \qquad (Y^z)^\ast {\bm u}^z_i=\lambda^z_i {\bm v}^z_i.
\end{equation}
Moreover, since \(\{{\bm w}^z_{\pm i}\}_{i=1}^n\) is an orthonormal basis, we conclude that
\begin{equation}
    \label{hnorm}
    ({\bm u}^z_i)^\ast {\bm u}^z_i=({\bm v}^z_i)^\ast {\bm v}^z_i=\frac{1}{2}.
\end{equation}

In the following, for any fixed entry \(x_{ab}\) of \(X\), we denote the derivative in the \(x_{ab}\) direction by
\begin{equation}
    \dot{f}:=\frac{\partial f}{\partial x_{ab}},
\end{equation}
where \(f=f(X)\) is a function of the matrix \(X\). From now on we only consider positive indices \(1\le i\le n\). We may also drop the \(z\) and \(t\) dependence to make our notation lighter. For any \(i,j\in [n]\), differentiating~\eqref{eigeq} we obtain
\begin{align}
    \label{findl1}
    \dot{H} {\bm w}_i+H\dot{\bm w}_i                         & =\dot{\lambda}_i {\bm w}_i+\lambda_i \dot{\bm w}_i, \\
    \label{findl2}
    \dot{\bm w}_i^\ast{\bm w}_j+{\bm w}_i^\ast\dot{\bm w}_j  & =0,                                                 \\
    \label{findl3}
    {\bm w}_i^\ast \dot{\bm w}_i+\dot{\bm w}_i^\ast{\bm w}_i & =0.
\end{align}
Note that~\eqref{findl3} implies that \(\Re[{\bm w}_i^\ast \dot{\bm w}_i]=0\). Moreover, since the eigenvectors are defined modulo a phase, we can choose eigenvectors such that \(\Im[{\bm w}_i^\ast \dot{\bm w}_i]=0\) for any \(t\ge 0\) hence \({\bm w}^\ast_i{\bm w}_i=0\). Then, multiplying~\eqref{findl1} by \({\bm w}_i^\ast\) we conclude that
\begin{equation}
    \label{dereig}
    \dot{\lambda}_i={\bm u}_i^\ast\dot{Y}{\bm v}_i+{\bm v}_i^\ast\dot{Y}^\ast{\bm u}_i.
\end{equation}
Moreover, multiplying~\eqref{findl1} by \({\bm w}_j^\ast\), with \(j\ne i\), and by \({\bm w}_{-j}^\ast\), we get
\begin{equation}
    \label{diffl}
    (\lambda_i-\lambda_j){\bm w}_j^\ast\dot{\bm w}_i={\bm w}_j^\ast \dot{H} {\bm w}_i, \qquad (\lambda_i+\lambda_j){\bm w}_{-j}^\ast\dot{\bm w}_i={\bm w}_{-j}^\ast \dot{H} {\bm w}_i,
\end{equation}
respectively.
By~\eqref{findl3} and \({\bm w}_i^\ast {\bm w}_i=0\) it follows that
\begin{equation}
    \label{dereigv1}
    \dot{\bm w}_i=\sum_{\substack{j \in [n], \\ j\ne i}} ({\bm w}_j^\ast\dot{\bm w}_i) {\bm w}_j+\sum_{j\in [n]} ({\bm w}_{-j}^\ast \dot{\bm w}_i){\bm w}_{-j},
\end{equation}
hence, by~\eqref{diffl}, we conclude
\begin{equation}
    \label{dereigv2}
    \dot{\bm w}_i=\sum_{j\ne i} \frac{{\bm v}_j^\ast\dot{Y}^\ast{\bm u}_i+{\bm u}_j^\ast\dot{Y}{\bm v}_i}{\lambda_i-\lambda_j} {\bm w}_j+\sum_j \frac{{\bm u}_j^\ast\dot{Y}{\bm v}_i-{\bm v}_j^\ast\dot{Y}^\ast {\bm u}_i}{\lambda_i+\lambda_j} {\bm w}_{-j}.
\end{equation}

Throughout this appendix we use the convention that for any vectors \({\bm v}\in \C^n\) we denote its entries by \(v(a)\), with \(a\in [n]\). By~\eqref{dereig}--\eqref{dereigv2} it follows that
\begin{equation}
    \label{eq:firstderlam}
    \frac{\partial \lambda_i}{\partial x_{ab}}=2\Re[u_i^\ast(a)v_i(b)],
\end{equation}
and that
\[
    \begin{split}
        \frac{\partial w_i}{\partial x_{ab}}(k)&=\sum_{j\ne i} \left[ \frac{u_j^\ast(a)v_i(b)+v_j^\ast(b)u_i(a)}{\lambda_i-\lambda_j}w_j(k)+  \frac{u_j^\ast(a)v_i(b)-v_j^\ast(b)u_i(a)}{\lambda_i+\lambda_j}w_{-j}(k)\right] \\
        &\quad+\frac{u_i^\ast(a)v_i(b)-v_i^\ast(b)u_i(a)}{2\lambda_i}w_{-i}(k).
    \end{split}
\]
By Ito's formula we have that
\begin{equation}
    \label{eq:itoforder}
    \dif \lambda_i=\sum_{ab} \frac{\partial \lambda_i}{\partial x_{ab}} \dif x_{ab}+\frac{1}{2}\sum_{ab}\sum_{kl} \frac{\partial^2 \lambda_i}{\partial x_{ab}\partial x_{kl}}\dif x_{ab} \dif  x_{kl}.
\end{equation}
Then we compute
\begin{equation}
    \label{eq:secondderlam}
    \begin{split}
        \frac{\partial^2 \lambda_i}{\partial x_{ab}\partial x_{kl}}&=2\Re\left[\frac{\partial v_i^\ast}{\partial x_{ab}}(l)u_i(k)+v_i^\ast(l)\frac{\partial u_i}{\partial x_{ab}}(k)\right] \\
        &=2\Re\Bigg[\sum_{j\ne i} \left[ \frac{u_j(a)v_i^\ast(b)+v_j(b)u_i^\ast(a)}{\lambda_i-\lambda_j}v_j^\ast(l)u_i(k)-  \frac{u_j(a)v_i^\ast(b)-v_j(b)u_i^\ast(a)}{\lambda_i+\lambda_j}v^\ast_j(l)u_i(k)\right] \\
            &\quad-\frac{u_i(a)v_i^\ast(b)-v_i(b)u_i^\ast(a)}{2\lambda_i}v^\ast_i(l)u_i(k)+\frac{u_i^\ast(a)v_i(b)-v_i^\ast(b)u_i(a)}{2\lambda_i}u_i(k)v_i^\ast(l) \\
            &\quad +\sum_{j\ne i} \left[ \frac{u_j^\ast(a)v_i(b)+v_j^\ast(b)u_i(a)}{\lambda_i-\lambda_j}u_j(k)v_i^\ast(l)+  \frac{u_j^\ast(a)v_i(b)-v_j^\ast(b)u_i(a)}{\lambda_i+\lambda_j}u_j(k)v_i^\ast(l)\right]\Bigg].
    \end{split}
\end{equation}
Hence, combining~\eqref{eq:firstderlam}--\eqref{eq:secondderlam}, we finally conclude that
\begin{equation}
    \label{eq:newdbmrc}
    \begin{split}
        \dif \lambda^z_i &=\frac{\dif b_i^z}{\sqrt{n}}+\frac{1}{2n}\sum_{j\ne i} \left[\frac{1+4\Re[\braket{ \overline{u^z_j},u^z_i}\braket{ v^z_i,\overline{v_j^z}}]}{\lambda^z_i-\lambda^z_j}+\frac{1+4\Re[\braket{ \overline{u_j^z},u^z_i}\braket{ v_i^z,-\overline{v_j^z}}]}{\lambda^z_i+\lambda^z_j}\right] \dif t \\
        &\quad + \frac{1+4\Re[\braket{ \overline{u_i^z},u^z_i}\braket{ v^z_i,-\overline{v_i^z}}]}{4n\lambda_i^z} \dif t.
    \end{split}
\end{equation}
In~\eqref{eq:newdbmrc} we used the convention that for any vector \({\bm v}\in \C^n\) by \(\overline{\bm v}\) we denote the vector with entries \(\overline{v}(a)=\overline{v(a)}\), for any \(a\in [n]\). The driving martingales in~\eqref{eq:newdbmrc} are defined as
\begin{equation}
    \label{eq:defBBB}
    \dif b^z_i:=\dif B^z_{ii}+\dif \overline{B^z_{ii}},\quad \text{with} \quad  \dif B^z_{ij}:= \sum_{ab} (u_i^z)^\ast(a) \dif B_{ab} v^z_j(b),
\end{equation}
with \(B=B_t\) the matrix valued Brownian motion in~\eqref{eq:mflowapp}, and their covariance given by
\begin{equation}
    \label{eq:halcov}
    \Exp*{ \dif b_i^z \dif b_j^z\given\mathcal{F}_t}=\frac{\delta_{ij}+4\Re\left[\braket{ \overline{u_j^z},u_i^z}\braket{ v_i^z, \overline{v_j^z}} \right]}{2} \dif t.
\end{equation}
Note that \(\{b_i^z\}_{i\in [n]}\) defined in~\eqref{eq:defBBB} are not Brownian motions, as a consequence of the non deterministic quadratic variation~\eqref{eq:halcov}.

\printbibliography%

\end{document}